\theoremstyle{plain}
\newtheorem{thm}{Theorem}[section]
\newtheorem{prop}[thm]{Proposition}
\newtheorem{lem}[thm]{Lemma}
\newtheorem{cor}[thm]{Corollary}
\theoremstyle{definition}
\newtheorem{rmk}[thm]{Remark}
\newcommand{\rank}{\mathrm{rank}}
\newcommand{\Hom}{\mathrm{Hom}}
\newcommand{\End}{\mathrm{End}}
\newcommand{\Lie}{\mathrm{Lie}}
\newcommand{\Ker}{\mathrm{Ker}}
\newcommand{\Coker}{\mathrm{Coker}}
\newcommand{\Img}{\mathrm{Im}}
\newcommand{\prjt}{\mathrm{pr}}
\newcommand{\Fil}{\mathrm{Fil}}
\newcommand{\Spf}{\mathrm{Spf}}
\newcommand{\Spv}{\mathrm{Sp}}
\newcommand{\Spec}{\mathrm{Spec}}
\newcommand{\Gal}{\mathrm{Gal}}
\newcommand{\Frac}{\mathrm{Frac}}
\newcommand{\id}{\mathrm{id}}
\newcommand{\Span}{\mathrm{Span}}
\newcommand{\diag}{\mathrm{diag}}
\newcommand{\rig}{\mathrm{rig}}
\newcommand{\HT}{\mathrm{HT}}
\newcommand{\Hdg}{\mathrm{Hdg}}
\newcommand{\ord}{\mathrm{ord}}
\newcommand{\Ann}{\mathrm{Ann}}
\newcommand{\hgt}{\mathrm{ht}}
\newcommand{\Supp}{\mathrm{Supp}}
\newcommand{\Ass}{\mathrm{Ass}}
\newcommand{\ADBT}{\mathrm{ADBT}}
\newcommand{\Tate}{\mathrm{Tate}}
\newcommand{\spc}{\mathrm{sp}}
\newcommand{\univ}{\mathrm{un}}
\newcommand{\Isom}{\mathrm{Isom}}
\newcommand{\Nor}{\mathrm{N}}
\newcommand{\Tr}{\mathrm{Tr}}
\newcommand{\Dec}{\mathrm{Dec}}
\newcommand{\tot}{\mathrm{tot}}
\newcommand{\Kbar}{\bar{K}}
\newcommand{\kbar}{\bar{k}}
\newcommand{\okbar}{\mathcal{O}_{\bar{K}}}
\newcommand{\Fpbar}{\bar{\mathbb{F}}_p}
\newcommand{\oelbar}{\mathcal{O}_{\bar{L}}}
\newcommand{\okey}{\mathcal{O}_K}
\newcommand{\oel}{\mathcal{O}_L}
\newcommand{\cA}{\mathcal{A}}
\newcommand{\cC}{\mathcal{C}}
\newcommand{\cD}{\mathcal{D}}
\newcommand{\cE}{\mathcal{E}}
\newcommand{\cF}{\mathcal{F}}
\newcommand{\cG}{\mathcal{G}}
\newcommand{\cH}{\mathcal{H}}
\newcommand{\cI}{\mathcal{I}}
\newcommand{\cK}{\mathcal{K}}
\newcommand{\cL}{\mathcal{L}}
\newcommand{\cM}{\mathcal{M}}
\newcommand{\cO}{\mathcal{O}}
\newcommand{\cP}{\mathcal{P}}
\newcommand{\cU}{\mathcal{U}}
\newcommand{\cV}{\mathcal{V}}
\newcommand{\cW}{\mathcal{W}}
\newcommand{\cX}{\mathcal{X}}
\newcommand{\cY}{\mathcal{Y}}
\newcommand{\cZ}{\mathcal{Z}}
\newcommand{\Gm}{\mathbb{G}_\mathrm{m}}
\newcommand{\hGm}{\hat{\mathbb{G}}_\mathrm{m}}
\newcommand{\SG}{\mathfrak{S}}
\newcommand{\SGm}{\mathfrak{M}}
\newcommand{\SGn}{\mathfrak{N}}
\newcommand{\SGl}{\mathfrak{L}}
\newcommand{\SGp}{\mathfrak{P}}
\newcommand{\SGq}{\mathfrak{Q}}
\newcommand{\SGy}{\mathfrak{Y}}
\newcommand{\ModSGf}{\mathrm{Mod}_{/\mathfrak{S}_1}^{1,\varphi}}
\newcommand{\ModSGfinf}{\mathrm{Mod}_{/\mathfrak{S}_{\infty}}^{1,\varphi}}
\newcommand{\upi}{\underline{\pi}}
\newcommand{\uv}{\underline{v}}
\newcommand{\oef}{\mathcal{O}_F}
\newcommand{\Cp}{\mathbb{C}_p}
\newcommand{\OCp}{\mathcal{O}_{\mathbb{C}_p}}
\newcommand{\bA}{\mathbb{A}}
\newcommand{\bB}{\mathbb{B}}
\newcommand{\bC}{\mathbb{C}}
\newcommand{\bF}{\mathbb{F}}
\newcommand{\bQ}{\mathbb{Q}}
\newcommand{\bR}{\mathbb{R}}
\newcommand{\bT}{\mathbb{T}}
\newcommand{\bZ}{\mathbb{Z}}
\newcommand{\sC}{\mathscr{C}}
\newcommand{\sD}{\mathscr{D}}
\newcommand{\sS}{\mathscr{S}}
\newcommand{\frA}{\mathfrak{A}}
\newcommand{\fra}{\mathfrak{a}}
\newcommand{\frB}{\mathfrak{B}}
\newcommand{\frb}{\mathfrak{b}}
\newcommand{\frc}{\mathfrak{c}}
\newcommand{\frF}{\mathfrak{F}}
\newcommand{\frl}{\mathfrak{l}}
\newcommand{\frem}{\mathfrak{m}}
\newcommand{\frn}{\mathfrak{n}}
\newcommand{\fro}{\mathfrak{o}}
\newcommand{\frP}{\mathfrak{P}}
\newcommand{\frp}{\mathfrak{p}}
\newcommand{\frq}{\mathfrak{q}}
\newcommand{\frU}{\mathfrak{U}}
\newcommand{\frV}{\mathfrak{V}}
\newcommand{\frX}{\mathfrak{X}}
\newcommand{\frY}{\mathfrak{Y}}
\newcommand{\frIW}{\mathfrak{IW}}
\newcommand{\cIW}{\mathcal{IW}}
\newcommand{\bOmega}{\mathbf{\Omega}}
\renewcommand{\p@enumii}{}
\begin{document}

\title[Properness of the Hilbert eigenvariety]{On a properness of the Hilbert eigenvariety at integral weights: the case of quadratic residue fields}
\author{Shin Hattori}
\date{\today}
\email{shin-h@math.kyushu-u.ac.jp}
\address{Faculty of Mathematics, Kyushu University}
\thanks{Supported by JSPS KAKENHI Grant Number 26400016.}

\begin{abstract}
Let $p$ be a rational prime. Let $F$ be a totally real number field such that $F$ is unramified over $p$ and the residue degree of any prime ideal of $F$ dividing $p$ is $\leq 2$. In this paper, we show that the eigenvariety for $\mathrm{Res}_{F/\bQ}(\mathit{GL}_2)$, constructed by Andreatta-Iovita-Pilloni, is proper at integral weights for $p\geq 3$. We also prove a weaker result for $p=2$.
\end{abstract}

\maketitle

\tableofcontents

\section{Introduction}\label{intro}

Let $p$ be a rational prime and $N$ a positive integer which is prime to $p$. We fix an algebraic closure $\bar{\bQ}_p$ of $\bQ_p$ and denote its $p$-adic completion by $\bC_p$. Let $\cW_\bQ$ be the weight space for $\mathit{GL}_{2,\bQ}$, which is a rigid analytic variety over $\bQ_p$ such that the set of $\bC_p$-valued points $\cW_\bQ(\bC_p)$ is identified with the set of continuous homomorphisms $\bQ_p^\times\to \bC_p^\times$.

In \cite{CM,Buz}, Coleman-Mazur and Buzzard defined a rigid analytic curve $\cC_N$ with a morphism $\kappa: \cC_N\to \cW_\bQ$ such that the set of $\bC_p$-valued points $\cC_N(\bC_p)$ is in bijection with the set of normalized overconvergent elliptic eigenforms of tame level $N$ which are of finite slopes, in such a way that the eigenform $f$ corresponding to a point $x\in\cC_N(\bC_p)$ is of weight $\kappa(x)$. The curve $\cC_N$ is called the Coleman-Mazur eigencurve, and it has played an important role in arithmetic geometry, since it turned out to be useful to control $p$-adic congruences of elliptic modular forms. After their construction of the eigencurve, much progress has been made to generalize it to the case of automorphic forms on algebraic groups other than $\mathit{GL}_{2,\bQ}$. Now we have, for various algebraic groups $G$ over a number field, a similar rigid analytic variety $\cE$ to the Coleman-Mazur eigencurve over a weight space $\cW^G$ for $G$, which is called the eigenvariety for $G$.

Despite of their importance, we still do not know much about the geometry of eigenvarieties. For example, we do not even know if an eigenvariety has finitely many irreducible components. One of the topics of active research
is the smoothness of eigenvarieties at classical points. For the Coleman-Mazur eigencurve, we know that the smoothness at classical points in many cases \cite{BeCh_lissite, BeD,Hida_GalRep, Ki_FMconj}.
Bella\"{i}che-Chenevier \cite{BeCh} studied tangent spaces of their eigenvariety for unitary groups at certain classical points, and applied it to showing the non-vanishing of a Bloch-Kato Selmer group. On the other hand, Bella\"{i}che proved the non-smoothness of the eigenvariety for $U(3)$ at classical points \cite{Bel_U3}. It is natural to think that such geometric information of eigenvarieties is related to deep $p$-adic properties of automorphic forms. 

Another interesting topic, which this paper concerns with, is a properness of eigenvarieties over weight spaces. Since eigenvarieties are not of finite type over weight spaces, they are not proper in the usual sense. Instead, we consider the following geometric interpretation of the non-existence of holes: Let $\cD_{\bC_p}=\Spv(\bC_p\langle T \rangle)$ be the closed unit disc centered at the origin $O$ and $\cD^\times_{\bC_p}=\cD_{\bC_p}\setminus \{O\}$ the punctured disc. For any quasi-separated rigid analytic variety $\cX$, we write $\cX_{\bC_p}$ for the base extension of $\cX$ to $\Spv(\bC_p)$. Suppose that we have a commutative diagram of rigid analytic varieties
\[
\xymatrix{
\cD^\times_{\bC_p} \ar[r]\ar[d]& \cE_{\bC_p} \ar[d]\\
\cD_{\bC_p} \ar[r]\ar@{.>}[ur] & \cW_{\bC_p}^G,
}
\]
where the vertical arrows are the natural maps. Then we say that the eigenvariety $\cE$ is proper if there exists a morphism $\cD_{\bC_p}\to \cE_{\bC_p}$ such that the above diagram is still commutative with this morphism added. Roughly speaking, this means that any family of overconvergent eigenforms of finite slopes on $G$ parametrized by the punctured disc can always be extended to the puncture. However, note that what eigenvarieties parametrize are in general not eigenforms themselves but eigensystems occurring in the space of overconvergent automorphic forms.
We also note that the naive interpretation of the non-existence of holes that any $p$-adically convergent sequence of overconvergent eigenforms of finite slopes converges to an overconvergent eigenform of finite slope, is false \cite[Theorem 2.1]{ColSt}.

For the properness of the Coleman-Mazur eigencurve $\cC_N$, Buzzard-Calegari first proved the properness of $\cC_N$ for the case where $p=2$ and $N=1$ \cite{BuzCal}. It was followed by Calegari's result \cite{Cal} on the properness of $\cC_N$ at integral weights: he showed the existence of the map $\cD_{\bC_p}\to \cC_{N,{\bC_p}}$ as in the definition of the properness if the image of the puncture $O$ in the weight space corresponds to a classical weight. One of the key points of their proofs is to show that any non-zero overconvergent elliptic eigenform of infinite slope does not converge on a certain region of a modular curve, while any overconvergent elliptic eigenform of finite slope does converge on a larger region. In \cite{BuzCal}, they deduced the former from the theory of canonical subgroups, especially a behavior of the $U_p$-correspondence for elliptic curves with Hodge height $p/(p+1)$, while the latter is a consequence of a standard analytic continuation argument via the $U_p$-operator. Recently, the properness of the Coleman-Mazur eigencurve was proved in full generality by Diao-Liu \cite{DL} by using $p$-adic Hodge theory, especially the theory of trianguline $p$-adic representations in families.

For algebraic groups other than $\mathit{GL}_{2,\bQ}$, the properness of eigenvarieties has not been known. Note that in Diao-Liu's proof of the properness of the Coleman-Mazur eigencurve, in order to apply $p$-adic Hodge theory, it seems crucial that we have a Galois representation, not just a Galois pseudo-representation, over (the normalization of) the eigencurve. This is a consequence of the fact that we can convert pseudo-representations into representations over smooth rigid analytic curves \cite[Remark after Theorem 5.1.2]{CM}. Thus at present it is unclear if their proof can be generalized to show the properness of eigenvarieties of dimension greater than one on the components where the residual Galois representations attached to automorphic forms are absolutely reducible. 

The aim of this paper is to generalize the method of Buzzard and Calegari to the case of Hilbert modular forms and to obtain the properness of the Hilbert eigenvariety constructed by Andreatta-Iovita-Pilloni \cite{AIP2} at integral weights in some cases. 

To state the main theorem, we fix some notation. For any totally real number field $F$ with ring of integers $\cO_F$, put $G=\mathrm{Res}_{F/\bQ}(\mathit{GL}_{2})$ and $\bT=\mathrm{Res}_{\oef/\bZ}(\Gm)$. Let $K/\bQ_p$ be a finite extension such that $F\otimes K$ splits completely. Let $\cW^G$ be the weight space for $G$ over $K$ as in \cite[\S 4.1]{AIP2}. By definition, we have
\[
\cW^G=\Spf(\okey[[\bT(\bZ_p)\times \bZ_p^\times]])^\rig
\]
and the set of $\bC_p$-valued points $\cW^G(\bC_p)$ can be identified with the set of pairs of continuous characters 
\[
\nu:\bT(\bZ_p)\to \bC_p^\times,\quad w:\bZ_p^\times\to \bC_p^\times.
\]
We say that the weight $(\nu,w)$ is $1$-integral if its restriction to $1+p(\cO_F\otimes \bZ_p)\times (1+p\bZ_p)$ comes from an algebraic character $\bT\times \Gm \to \Gm$. This restriction corresponds to a pair $((k_\beta)_\beta,k_0)$ of a tuple $(k_\beta)_\beta$ of integers indexed by the set of embeddings $\beta: F\to K$ and an integer $k_0$. We say that a $1$-integral weight is $1$-even if every $k_\beta$ and $k_0$ are even. Then the main theorem in this paper is the following.

\begin{thm}[Theorem \ref{main}]\label{intromain}
Let $F$ be a totally real number field which is unramified over $p$. Let $K/\bQ_p$ be a finite extension in $\bar{\bQ}_p$ such that $F\otimes K$ splits completely. Let $N\geq 4$ be an integer prime to $p$. Let $\cE\to \cW^G$ be the Hilbert eigenvariety of tame level $N$ over $K$ constructed in \cite[\S 5]{AIP2}. 

Suppose that for any prime ideal $\frp$ of $F$ dividing $p$, the residue degree $f_\frp$ of $\frp$ satisfies $f_\frp\leq 2$ ({\it resp.} $p$ splits completely in $F$) if $p$ is odd ({\it resp.} even). Then $\cE$ is proper at $1$-integral ({\it resp.} $1$-even) weights. Namely, any commutative diagram of rigid analytic varieties over $\bC_p$
\[
\xymatrix{
\cD^\times_{\bC_p} \ar[r]^\varphi\ar[d]& \cE_{\bC_p} \ar[d]\\
\cD_{\bC_p} \ar[r]_-{\psi}\ar@{.>}[ur] & \cW^G_{\bC_p}
}
\]
can be filled with the dotted arrow if $\psi(O)$ corresponds to a $1$-integral ({\it resp.} $1$-even) weight.
\end{thm}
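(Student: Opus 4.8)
The plan is to mimic the Buzzard--Calegari strategy by analysing the geometry of the Hilbert modular variety near the multiplicative-ordinary locus via the theory of canonical subgroups. The essential dichotomy to establish is: \emph{any non-zero overconvergent Hilbert eigenform of infinite slope fails to extend over a certain annular region attached to the boundary of the canonical locus, whereas any overconvergent eigenform of finite slope does extend there by analytic continuation along the $U_p$-operator.} Concretely, I would first reduce the filling problem to a statement about the generic fibre of a formal model: given $\varphi:\cD^\times_{\bC_p}\to \cE_{\bC_p}$ lying over the fixed $1$-integral weight $\psi$, the composite produces, for each punctured disc, a family of finite-slope overconvergent Hilbert eigenforms; the obstruction to filling the puncture is that the Hecke eigensystem at $O$ might only be realised by an eigenform of infinite slope (i.e.\ the $U_p$-eigenvalue degenerates to $0$). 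One then has to rule this out.

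The key steps, in order: (i) recall from \cite{AIP2} the construction of overconvergent Hilbert modular forms as sections of an automorphic sheaf $\oubar^{(\nu,w)}$ on a strict neighbourhood of the ordinary locus in the Hilbert modular variety $X$ of level $\Gamma_1(N)$, and the description of the $U_p$-correspondence in terms of quotients by the canonical subgroup; (ii) carry out the standard analytic-continuation argument showing that a finite-slope form extends from an initial strict neighbourhood $\cU$ of the ordinary locus to the region where canonical subgroups of the appropriate level exist --- here the condition $f_\frp\le 2$ (resp.\ $p$ split) enters, because the relevant bound on the Hodge height for which a canonical subgroup of $p$-power order exists, and for which the $U_p$-dynamics is ``expanding'', depends sharply on the residue degree, and beyond $f_\frp=2$ (resp.\ $f_\frp=1$ for $p=2$) the requisite region does not stay inside the locus where the theory applies; (iii) prove the \emph{non-extension} of infinite-slope forms: if the limiting eigenform had infinite slope, its $q$-expansion (or rather its restriction to a suitable Igusa-type cover, using that the weight is $1$-integral so that classical weight sheaves and their trivialisations are available) would have to vanish on the circle of Hodge height exactly the critical value $p/(p+1)$ componentwise, forcing the form to be identically zero, a contradiction with it being a non-zero eigenform; the $1$-integrality (resp.\ $1$-evenness) hypothesis is what lets us compare the automorphic sheaf with an honest line bundle near the boundary and run this vanishing argument. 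Combining (ii) and (iii): the eigensystem at $O$ must come from a finite-slope form, which is precisely a $\bC_p$-point of $\cE_{\bC_p}$ over $\psi(O)$, giving the dotted arrow; its compatibility with $\varphi$ on $\cD^\times_{\bC_p}$ follows from the separatedness of $\cE$ over $\cW^G$ and the fact that the two maps agree on a Zariski-dense subset.

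The main obstacle I expect is step (iii), the precise control of the $U_p$-dynamics on the partial Hilbert modular variety near the ``too-supersingular'' boundary when some prime has residue degree $2$. Unlike the elliptic case, the Hodge height (or rather the tuple of partial Hodge heights indexed by $\frp\mid p$, or by the embeddings $F\to K$) is a multi-dimensional invariant, and the canonical-subgroup theory and the behaviour of the $U_p$-correspondence must be understood coordinate-by-coordinate; showing that no non-zero form survives on the relevant polyannulus requires a careful partial-ordinarity analysis, essentially an Igusa-tower argument over the partial-ordinary loci, and this is where the restriction $f_\frp\le2$ becomes genuinely necessary rather than merely convenient. A secondary technical point is that $\cE$ parametrises eigensystems rather than eigenforms, so one must pass through the spectral variety / Hecke-algebra formalism to know that the finite-slope eigenform produced at $O$ actually defines a point of $\cE$; this is handled by the Coleman--Mazur--Buzzard eigenvariety machine as set up in \cite[\S5]{AIP2}, together with the classicality of sufficiently small-slope forms of $1$-integral weight.
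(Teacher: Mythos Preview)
Your high-level strategy---Buzzard--Calegari transplanted to the Hilbert setting---is indeed what the paper does, but several load-bearing steps are either missing or misidentified.

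First, the conversion between eigensystems and eigenforms is not a secondary technicality dispatched by the eigenvariety machine plus classicality. The eigenvariety parametrises eigensystems, so from $\varphi$ you only get a family of eigensystems over $\cD^\times_{\Cp}$; the paper lifts this locally to eigenforms via a Deligne--Serre-type lemma (Proposition~\ref{DSL}), then glues the local pieces using weak multiplicity one after normalising by the $q^1$-coefficient (Proposition~\ref{GlobalEigenform}). Classicality plays no role here. Conversely, once a finite-slope eigenform at $O$ is produced, the paper converts back to a point of $\cE$ via Bella\"iche--Chenevier's argument (Proposition~\ref{Chenevier}), not via classicality.

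Second, you speak of ``the limiting eigenform'' at $O$ as if the family over $\cD^\times_{\Cp}$ automatically specialises there. It does not: one must show the family of eigenforms extends across the puncture. The paper does this (Proposition~\ref{gluecusp}) by proving the Hecke eigenvalues are $p$-integral (Corollary~\ref{HeckeEVintegral}), hence the $q$-expansion is bounded near a cusp, hence extends over $O$ by a Riemann-extension-type lemma (Lemma~\ref{fillpunct}), and then gluing back to the modular variety.

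Third, the residue-degree hypothesis does not enter where you say. Canonical subgroups exist for any $f_\frp$ under a mild Hodge-height bound; what requires $f_\frp\le 2$ is the behaviour of the $U_p$-correspondence on the \emph{critical locus} $\Hdg_\beta=p/(p+1)$: for any $p$-cyclic $\cH$ in a critical $A$, the quotient $A/\cH$ has all $\beta$-Hodge heights $1/(p+1)$ and canonical subgroup $A[p]/\cH$ (Proposition~\ref{critisog}). This fails for $f_\frp=3$ (Remark~\ref{counterexamplef3}). This is the geometric input that makes step (iii) work.

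Finally, your step (iii) is too vague to succeed as stated. The actual argument is: construct a connected admissible affinoid neighbourhood $\cV^0_{Q,\Cp}$ of each critical point meeting the canonical locus (Proposition~\ref{ConnCrit}); show that if $f(O)$ had infinite slope then $U_p f(O)$, which extends to $\cV^0_{Q,\Cp}$ by Proposition~\ref{critisog}, vanishes there; deduce by a combinatorial identity over $p$-cyclic subgroups (Lemma~\ref{combinat}) that $f(O)$ itself vanishes at every $(A/\cH,A[p]/\cH)$ with $A$ critical; conclude $f(O)=0$ by connectedness, contradicting normalisation. The $1$-integrality of $\psi(O)$ is used exactly to identify $h^*\Omega^\kappa$ with a classical automorphic line bundle so that $U_p$ can be extended beyond the canonical locus to $\cV^0_{Q,\Cp}$.
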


For the proof, we basically follow the idea of Buzzard and Calegari \cite{BuzCal, Cal}. Thus the key step in our case is also to show that any non-zero overconvergent Hilbert eigenform $f$ of $1$-integral weight and infinite slope does not converge on the locus where all the partial Hodge heights are no more than $1/(p+1)$ in a Hilbert modular variety. 

Let us explain briefly how to show this non-convergence property, following \cite{BuzCal}. For simplicity, we assume that $f$ is of integral weight, namely the weight $(\nu,w)$ corresponds to an algebraic character $\bT\times \Gm\to \Gm$. For any Hilbert-Blumenthal abelian variety (HBAV) $A$ with an $\cO_F$-action over the integer ring $\oel$ of a finite extension $L/K$, we say that a finite flat closed $\cO_F$-subgroup scheme $\cH$ of $A$ over $\oel$ is $p$-cyclic if its generic fiber is etale locally isomorphic to the constant group scheme $\underline{\cO_F/p\cO_F}$. We say that $A$ is critical if every $\beta$-Hodge height of $A$ is equal to $p/(p+1)$ for any embedding $\beta: F\to K$. Then we show that for any critical $A$ and any $p$-cyclic subgroup scheme $\cH$ of $A$, the quotient $A/\cH$ has the canonical subgroup $A[p]/\cH$ of level one and its $\beta$-Hodge heights are all $1/(p+1)$ (Proposition \ref{critisog}). This is where the assumption on residue degrees is used in the most crucial way. It is unclear if the claim holds without this assumption: At least, we have a counterexample of a similar assertion for truncated Barsotti-Tate groups if we drop the assumption on $f_\frp$ (Remark \ref{counterexamplef3}).

Consider the Hilbert modular variety classifying pairs $(A,\cH)$ of a HBAV $A$ and its $p$-cyclic subgroup scheme $\cH$. Let $\cU$ be the locus where $\cH$ is the canonical subgroup of $A$.
Another thing we need here is to show that for any $(A,\cH)$ with $A$ critical, the corresponding point $[(A,\cH)]$ of the Hilbert modular variety has a connected admissible affinoid open neighborhood intersecting $\cU$ such that, if an overconvergent Hilbert eigenform $f$ of integral weight converges on the locus where all the $\beta$-Hodge heights are $\leq 1/(p+1)$, then we can evaluate $U_pf$ on this neighborhood (Proposition \ref{ConnCrit}). This implies that, if $f$ is in addition of infinite slope,
then we have $(U_pf)(A,\cH)=0$ for any critical $A$ and any $p$-cyclic subgroup scheme $\cH$. From this, by a combinatorial argument (Lemma \ref{combinat}), 
we obtain $f(A/\cH,A[p]/\cH)=0$ for any such $(A,\cH)$, which yields $f=0$ and the above non-convergence property follows. 
It seems that this argument using a connected neighborhood cannot be generalized immediately to the case where $f$ is not of locally algebraic weight, since in this case $U_p f$ is defined only on the locus $\cU$ (even after taking a finite etale cover) and it cannot be evaluated for any critical $A$.

Note that the theory of canonical subgroups of level one for the Hilbert case was established by Goren-Kassaei \cite{GK}. In this paper, we re-interpret and slightly generalize their result using the Breuil-Kisin classification of finite flat group schemes, following the author's previous works \cite{Ha_cansub, Ha_cansubZ} and Tian's \cite{Ti_2}. This construction of canonical subgroups via the Breuil-Kisin classification gives a more precise theory of canonical subgroups of higher level than in \cite{AIP2}. This enables us to enlarge the locus in the Hilbert modular variety where the sheaves of overconvergent Hilbert modular forms are defined from the original locus given in \cite{AIP2}, and to include the case of $p<5$ in the main theorem.

What the Hilbert eigenvariety $\cE$ of \cite{AIP2} parametrizes are eigensystems in the space of overconvergent Hilbert modular forms. Thus, to follow the strategy of Buzzard and Calegari to reduce the properness to the above non-convergence property of overconvergent modular forms, we have to convert a family of eigensystems of finite slopes, or a morphism from a rigid analytic variety to $\cE$, into a family of eigenforms and vice versa. The latter direction can be treated (Proposition \ref{Chenevier}) as in the proof of \cite[Proposition 7.2.8]{BeCh}. For the former direction, we first prove that any family of eigensystems over any smooth rigid analytic variety over $\Cp$ can be lifted locally to a family of eigenforms (Proposition \ref{DSL}). This can be considered as a version of Deligne-Serre's lifting lemma \cite[Lemme 6.11]{DS}. Then we glue the local eigenforms using a weak multiplicity one result, after we normalize the local eigenforms with respect to the first $q$-expansion coefficient (Proposition \ref{GlobalEigenform}). This use of the weak multiplicity one and the normalization via a $q$-expansion coefficient hinders us from generalizing the main theorem to the case of $\mathit{GSp_{2g}}$ where the sheaf of overconvergent Siegel modular forms and the Siegel eigenvariety are constructed in a similar way \cite{AIP}.

Once we have a family of overconvergent Hilbert eigenforms $f$ of finite slopes parametrized by $\cD^\times_{\bC_p}$ associated to the family of eigensystems $\varphi:\cD^\times_{\bC_p} \to \cE_{\bC_p}$, we extend its domain of definition in the Hilbert modular variety as large as possible by an analytic continuation using the $U_p$-operator. Since the Hecke eigenvalues are of absolute values bounded by one, we can show that the $q$-expansion defines a rigid analytic function around a cusp parametrized by $\cD^\times_{\bC_p}$ which is of absolute value bounded by one. Such a function automatically extends to the puncture, and a gluing shows that $f$ also extends to the puncture (Proposition \ref{gluecusp}). Since we analytically continued $f$ to a large region, the specialization $f(O)$ at the puncture is also defined over the same large region. Thus the non-convergence property of eigenforms of infinite slope mentioned above implies that $f(O)$ is also of finite slope, which gives us an extended map $\cD_{\bC_p}\to \cE_{\bC_p}$.

The organization of this paper is as follows. In Section \ref{BuzEV}, we recall Buzzard's eigenvariety machine \cite{Buz} on which the construction of the Hilbert eigenvariety in \cite{AIP2} relies, and we prove results to convert a family of eigensystems into local eigenforms and vice versa. Section \ref{SecCanSub} is devoted to developing the theory of canonical subgroups using the Breuil-Kisin classification of finite flat group schemes. In particular, we prove the key result on a behavior of the $U_p$-correspondence on the critical locus. In Section \ref{SecHilbEV}, we recall the definition of overconvergent Hilbert modular forms and the construction of the Hilbert eigenvariety, both due to Andreatta-Iovita-Pilloni \cite{AIP2}, including generalizations of some of their results to the case over $\Cp$. We also give a connected neighborhood of any critical point in a Hilbert modular variety, which is another key ingredient of the proof of Theorem \ref{intromain}. In Section \ref{SecQexp}, we prove properties of the $q$-expansion for overconvergent Hilbert modular forms. These are used to produce a global eigenform by gluing local eigenforms obtained from a family of eigensystems, and also to extend a family of overconvergent Hilbert eigenforms over the punctured unit disc to the puncture. Combining these results, we prove Theorem \ref{intromain} in Section \ref{mainpf}.

\noindent
{\bf Acknowledgments.} The author would like to thank Fabrizio Andreatta, Ruochuan Liu and Vincent Pilloni for kindly answering his questions on their works, and Tadashi Ochiai for helpful comments on an earlier draft. He also would like to thank Shu Sasaki for enlightening discussions on $p$-adic modular forms and encouragements.



\section{Lemmata on Buzzard's eigenvariety}\label{BuzEV}

Let $p$ be a rational prime and $K$ a finite extension of $\bQ_p$ in $\bar{\bQ}_p$. In this section, we establish two lemmata on Buzzard's eigenvariety machine \cite{Buz}. In the first lemma, we show that any family of Hecke eigensystems over a smooth rigid analytic variety over $\Cp$ lifts locally to a family of eigenforms.
The second one enables us to convert any family of Hecke eigensystems of finite slopes over a reduced rigid analytic variety into a morphism to the eigenvariety.

\subsection{Buzzard's eigenvariety machine}\label{SecBuzEV}

First we briefly recall the construction of Buzzard's eigenvariety. Let $R$ be a reduced $K$-affinoid algebra. Let $M$ be a Banach $R$-module satisfying the condition $(\mathit{Pr})$ of \cite[\S 2]{Buz}. We write $\End_R^{\mathrm{cont}}(M)$ for the $R$-algebra of continuous $R$-endomorphisms of $M$. 
Let $\bT$ be a commutative $K$-algebra endowed with a $K$-algebra homomorphism $\bT\to \End_R^{\mathrm{cont}}(M)$. Let $\phi$ be an element of $\bT$. Suppose that $\phi$ acts on $M$ as a compact operator. We call such a quadruple $(R,M,\bT,\phi)$ an input data for the eigenvariety machine over $K$.

For such $M$ and $M'$, a continuous $R$-linear $\bT$-module homomorphism $\alpha:M'\to M$ is called a primitive link if there exists a compact $R$-linear $\bT$-module homomorphism $c: M\to M'$ such that $\phi$ acts on $M$ as $\alpha\circ c$ and it acts on $M'$ as $c\circ \alpha$. A continuous $R$-linear $\bT$-module homomorphism $\alpha:M'\to M$ is called a link if it is the composite of a finite number of primitive links.

Let $P(T)=1+\sum_{n\geq 1} c_n T^n$ be the characteristic power series of $\phi$ acting on $M$, which is an element of the ring $R\{\{T\}\}$ of entire functions over $R$. The spectral variety $Z_{\phi}$ for $\phi$ is the closed analytic subvariety of $\Spv(R)\times \bA^1$ defined by $P(T)$. 
We denote the projection $Z_\phi\to \Spv(R)$ by $f$.

The eigenvariety $\cE$ associated to $(R,M,\bT,\phi)$ is the rigid analytic variety over $Z_\phi$ defined as follows: Let $\cC$ be the set of admissible affinoid open subsets $Y$ of $Z_\phi$ satisfying the condition that there exists an affinoid subdomain $X$ of $\Spv(R)$ such that $Y\subseteq f^{-1}(X)$ and the map $Y\to X$ induced by $f$ is finite and surjective. We can show that $\cC$ is an admissible covering of $Z_\phi$ \cite[\S 4, Theorem]{Buz}, and we refer to $\cC$ as the canonical admissible covering of $Z_\phi$. 

Let $Y=\Spv(B)$ be an element of $\cC$ and $X=\Spv(A)$ as above. Suppose that $X$ is connected. Then the $A$-algebra $B$ is projective of constant rank $d$. In the ring of entire functions $A\{\{T\}\}$ over $A$, we can show that $P(T)$ can be written as $P(T)=Q(T)S(T)$ with some $S(T)\in A\{\{T\}\}$ and a polynomial $Q(T)$ of degree $d$ over $A$ with constant term one, and that we have a natural isomorphism $A[T]/(Q(T))\simeq B$. Put $Q^*(T)=T^d Q(T^{-1})$. By the Riesz theory \cite[Theorem 3.3]{Buz}, the restriction $M_A$ of $M$ to $X=\Spv(A)$ can be uniquely decomposed as $M_A=N\oplus F$, where $N$ is a projective $A$-module of rank $d$ such that $Q^*(\phi)$ acts on $N$ as the zero map and it acts on $F$ as an isomorphism. Since $Q^*(0)\neq 0$, the operator $\phi$ is invertible on $N$. Let $\bT(Y)$ be the $A$-subalgebra of $\End_A^{\mathrm{cont}}(N)$ generated by the image of $\bT$. Then the $A$-algebra $\bT(Y)$ is finite and thus a $K$-affinoid algebra. Moreover, we have a natural $A$-algebra homomorphism $A[T]/(Q(T))\simeq B\to \bT(Y)$ sending $T$ to $(\phi|_N)^{-1}$. Put $\cE(Y)=\Spv(\bT(Y))$. If $X$ is not connected, by decomposing $X$ into connected components as $X=\coprod_i X_i$, we put $\cE(Y)=\coprod_i \cE(Y|_{X_i})$. Then these local pieces can be glued along the admissible covering $\cC$ and define the eigenvariety $\cE\to Z_\phi$ \cite[\S 5]{Buz}. By \cite[Lemma 5.3]{Buz}, the rigid analytic varieties $\cE$ and $Z_\phi$ are separated.

By the construction, the natural map $\cE\to Z_\phi$ is finite and the structure morphism $\cE\to \Spv(R)$ is locally (with respect to both the source and the target) finite. Moreover, we have a $K$-algebra homomorphism $\bT\to \cO(\cE)$ such that, for any admissible affinoid open subset $V$ of $Z_\phi$, the induced map $\bT\otimes_K \cO(V)\to \cO(\cE|_V)$ is surjective.

In some cases we can glue this construction to define the eigenvariety over a non-affinoid base space. Let $\cW$ be a reduced rigid analytic variety over $K$. Let $\bT$ be a commutative $K$-algebra and $\phi$ an element of $\bT$. Suppose that, for any admissible affinoid open subset $X\subseteq \cW$, we are given a Banach $\cO(X)$-module $M_X$ satisfying the condition $(\mathit{Pr})$ with a $K$-algebra homomorphism $\bT\to \End_{\cO(X)}^{\mathrm{cont}}(M_X)$ such that the image of $\phi$ is a compact operator. Suppose also that for any admissible affinoid open subsets $X_1\subseteq X_2\subseteq \cW$, we have a continuous $\cO(X_1)$-module homomorphism $\alpha: M_{X_1}\to M_{X_2}\hat{\otimes}_{\cO(X_2)} \cO(X_1)$ which is a link and satisfies a cocycle condition. Then the eigenvarieties for $(\cO(X), M_X, \bT, \phi)$ can be patched into the eigenvariety $\cE\to Z_\phi \to \cW$ \cite[Construction 5.7]{Buz}, where $Z_\phi$ denotes the spectral variety over $\cW$ constructed by gluing the spectral varieties over $X$. 

Let $L/K$ be an extension of complete valuation fields (of height one). For any quasi-separated rigid analytic variety $\cX$ over $K$ and any coherent $\cO_{\cX}$-module $\cF$, we can define base extensions $\cX_{L}:=\cX \hat{\otimes}_K L$ and $\cF_L$ of $\cX$ and $\cF$ functorially (see \cite[9.3.6]{BGR} and \cite[\S 3.1]{Con_irr}). If the extension $L/K$ is finite, then they are just the fiber product and the pull-back in the usual sense. Otherwise, it seems unclear if it has usual properties as a fiber product: for an open immersion $j: \cU\to \cX$, what we know in this case is that the base extension $j_L:\cU_L \to \cX_L$ is also an open immersion if $j$ is quasi-compact (for example, if $\cX$ is quasi-separated and $\cU$ is an admissible affinoid open subset) or a Zariski open immersion. At any rate, \cite[Proposition 9.3.6/1 and Corollary 9.3.6/2]{BGR} implies that the base extension takes any admissible affinoid covering of $\cX$ to that of $\cX_L$. We write the set of $L$-valued points $\cX_L(L)$ also as $\cX(L)$. 

We say that a $K$-algebra homomorphism $\lambda: \bT\to L$ is an $L$-valued eigensystem in $M$ if there exist an admissible affinoid open subset $X\subseteq \cW$, an element $x\in X(L)$ given by a $K$-algebra homomorphism $x^*: \cO(X)\to L$ and a non-zero element $m$ of $M_X\hat{\otimes}_{\cO(X), x^*}L$ such that we have $h m=\lambda(h) m$ for any $h\in \bT$. It is said to be of finite slope if $\lambda(\phi)\neq 0$. Then there exists a natural bijection between $\cE(L)$ and the set of $L$-valued eigensystems $\lambda$ in $M$ of finite slopes \cite[Lemma 5.9]{Buz}. We state the following lemma for the reference, which is in fact shown in \cite{Buz}.

\begin{lem}\label{LemInN}
Let $(R,M,\bT,\phi)$ be an input data for the eigenvariety machine over $K$ and let $\cE\to Z_\phi$ be the associated eigenvariety over $X=\Spv(R)$. Let $L/K$ be an extension of complete valuation fields and take $z\in \cE(L)$. Let $x\in X(L)$ and $y\in Z_\phi(L)$ be the images of $z$. Let $\lambda: \bT\to L$ be the $L$-valued eigensystem in $M$ corresponding to $z$. Let $m$ be a non-zero element of $M\hat{\otimes}_{R,x^*} L$ satisfying $h m=\lambda(h) m$ for any $h\in \bT$. Take an admissible affinoid open subset $V$ in the canonical admissible covering of $Z_\phi$ satisfying $y\in V(L)$. Put $W=f(V)=\Spv(A)$. Suppose $W$ is connected. Let $P(T)$ be the characteristic power series of $\phi$ acting on $M$, $Q(T)$ the factor of $P(T)$ in $A\{\{T\}\}$ associated to $V$ and $M_A=N\oplus F$ the corresponding decomposition of $M_A$, as above. 
\begin{enumerate}
\item\label{LemInN_EV} $\lambda(h)=h(z)$ in $L$, where $h(z)$ is the specialization at $z$ of the image of $h$ by the map $\bT\to \cO(\cE)$.
\item\label{LemInN_Dec} The decomposition 
\[
M\hat{\otimes}_{R, x^*}L=N\otimes_{A, x^*}L\oplus F\hat{\otimes}_{A,x^*}L
\] 
is the one corresponding to the factor $Q_x(T)$ of $P_x(T)$, where $P_x(T)$ and $Q_x(T)$ are the images of $P(T)$ and $Q(T)$ in $L\{\{T\}\}$ by $x^*$, respectively. 
\item\label{LemInN_InN} $Q_x(\lambda(\phi)^{-1})=0$ and $m\in N{\otimes}_{A,x^*} L$.
\end{enumerate}
\begin{proof}
The first assertion follows from the proof of \cite[Lemma 5.9]{Buz}. The second one follows from \cite[Lemma 2.13]{Buz} and the uniqueness of the decomposition in \cite[Theorem 3.3]{Buz}. For the third one, note that the definition of the map $\cE(V)\to V$ implies $Q_x(\lambda(\phi)^{-1})=Q_x^*(\lambda(\phi))=0$. Since $Q_x^*(\phi)m=Q_x^*(\lambda(\phi))m=0$, the second assertion implies $m \in N{\otimes}_{A,x^*} L$.
\end{proof}
\end{lem}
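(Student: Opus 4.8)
The three assertions are bookkeeping statements that unwind the construction of $\cE$ recalled above, and the plan is to trace through that construction while keeping track of the base change along $x^*$. For the first assertion, I would go into the proof of \cite[Lemma 5.9]{Buz}: the bijection between $\cE(L)$ and the set of finite-slope eigensystems in $M$ is set up so that, for a point $z$ of $\cE(V)=\Spv(\bT(V))$ lying over a connected $W=\Spv(A)$, the associated eigensystem is the composite $\bT\to\bT(V)\xrightarrow{z}L$, where $\bT\to\bT(V)$ is the tautological map realizing $\bT(V)$ as a subalgebra of $\End_A^{\mathrm{cont}}(N)$. Since the global map $\bT\to\cO(\cE)$ restricts on $\cE(V)$ to exactly this map, one obtains $\lambda(h)=h(z)$ for every $h\in\bT$.

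For the second assertion, the key input is that the Riesz-theory decomposition of \cite[Theorem 3.3]{Buz} commutes with base change. Writing $M_A=M\hat{\otimes}_R A$ and using that $x$ factors through $W$, so that $x^*\colon R\to L$ factors as $R\to A\xrightarrow{x^*}L$, we have $M\hat{\otimes}_{R,x^*}L=M_A\hat{\otimes}_{A,x^*}L$. The factorization $P_A(T)=Q(T)S(T)$ in $A\{\{T\}\}$ maps under $x^*$ to $P_x(T)=Q_x(T)S_x(T)$ in $L\{\{T\}\}$, with $Q_x$ still a polynomial of degree $d$ with constant term one and with $Q_x,S_x$ still coprime. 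By \cite[Lemma 2.13]{Buz} the decomposition $M_A=N\oplus F$ base changes to $M_A\hat{\otimes}_{A,x^*}L=(N\otimes_{A,x^*}L)\oplus(F\hat{\otimes}_{A,x^*}L)$, with $Q_x^*(\phi)$ annihilating the first summand and acting invertibly on the second; by the uniqueness clause of \cite[Theorem 3.3]{Buz}, this is the decomposition attached to the factor $Q_x(T)$ of $P_x(T)$.

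For the third assertion, I would use the explicit description of the finite map $\cE(V)\to V$: with $V=\Spv(B)$ and $B\simeq A[T]/(Q(T))$, the structure map sends the class $t$ of $T$ to $(\phi|_N)^{-1}\in\bT(V)$. Evaluating at $z$, whose images in $V$ and in $W$ are $y$ and $x$, and using the first assertion with $h=\phi$, we get $y(t)=z((\phi|_N)^{-1})=\lambda(\phi)^{-1}$; since $Q(t)=0$ in $B$ and $y$ restricts to $x$ on $A$, this gives $Q_x(\lambda(\phi)^{-1})=0$, and multiplying by $\lambda(\phi)^d$, which is a unit because $\phi$ is invertible on $N$, yields $Q_x^*(\lambda(\phi))=0$. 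Finally $\phi$ acts on $m$ by the scalar $\lambda(\phi)$, so $Q_x^*(\phi)m=Q_x^*(\lambda(\phi))m=0$; writing $m$ according to the second assertion and using that $Q_x^*(\phi)$ is injective on $F\hat{\otimes}_{A,x^*}L$, the $F$-component of $m$ must vanish, i.e.\ $m\in N\otimes_{A,x^*}L$.

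The one delicate point is the second assertion: one must check that the Riesz decomposition of \cite[Theorem 3.3]{Buz} genuinely commutes with the base change $A\to L$, which is not finite when $L/K$ is an infinite extension, and that \cite[Lemma 2.13]{Buz} applies in this generality. Everything else is a formal unwinding of the definitions in \cite{Buz}.
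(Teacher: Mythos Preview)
Your proposal is correct and follows essentially the same approach as the paper: both proofs cite the proof of \cite[Lemma 5.9]{Buz} for part (\ref{LemInN_EV}), invoke \cite[Lemma 2.13]{Buz} together with the uniqueness in \cite[Theorem 3.3]{Buz} for part (\ref{LemInN_Dec}), and for part (\ref{LemInN_InN}) use the structure map $\cE(V)\to V$ sending $T\mapsto(\phi|_N)^{-1}$ to get $Q_x(\lambda(\phi)^{-1})=0$, then conclude $m\in N\otimes_{A,x^*}L$ from $Q_x^*(\phi)m=0$ via part (\ref{LemInN_Dec}). Your write-up is simply a more detailed unwinding of the same references, and the delicate point you flag about applying \cite[Lemma 2.13]{Buz} along the possibly non-finite extension $A\to L$ is exactly the step the paper's one-line citation also passes over.
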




\subsection{Lifting lemma \`{a} la Deligne-Serre}\label{SecDSL}

In this subsection, we consider the problem of converting a family of eigensystems into a family of eigenforms. First we show the following local lemma.

\begin{lem}\label{DSloc}
Let $L$ be a complete valuation field which is algebraically closed. Let $A$ be an $L$-affinoid algebra and let $N$ be a projective $A$-module of finite rank. Let $T$ be a finite $A$-algebra equipped with an $A$-algebra homomorphism $T\to \End_A(N)$. Let $S$ be an $L$-affinoid algebra which is an integral domain and let $\varphi: T\to S$ be a homomorphism of $L$-affinoid algebras. For any $x\in \Spv(S)$, we write $m_x$ for the associated maximal ideal of $S$. Assume that, for any $x\in \Spv(S)$, the induced map 
\[
\varphi(-)(x): T\to S/m_x 
\]
is an $S/m_x$-valued eigensystem in $N$. Namely, we assume that, for any $x\in \Spv(S)$, there exists a non-zero element $f_x\in N\otimes_A S/m_x$ satisfying $(h\otimes 1) f_x=(1\otimes\varphi(h)(x))f_x$ for any $h\in T$. 
\end{lem}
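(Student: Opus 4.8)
The conclusion (cut off in the excerpt) should be that $\varphi$ actually comes from a \emph{global} eigenform: there is a non-zero element $f\in N\otimes_A S$, or at least a locally free rank-one $S$-submodule of $N\otimes_A S$ on which $T$ acts through $\varphi$, specializing to the $f_x$ up to scalar at each $x$. The plan is to mimic Deligne--Serre's lifting lemma \cite[Lemme 6.11]{DS}: pass from the pointwise eigensystem condition to a statement about generic fibers, then spread out. Set $B=N\otimes_A S$, a projective $S$-module of finite rank carrying an $S$-linear action of $T\otimes_A S$, and let $T$ act on $B$ via $\varphi$, i.e.\ through the composite $T\to S\to T\otimes_A S$; concretely, consider the $S$-linear endomorphisms $h_{\varphi}:=(h\otimes 1)-(1\otimes\varphi(h))$ of $B$ for $h\in T$, and let $B_{\varphi}\subseteq B$ be the common kernel of all the $h_{\varphi}$. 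The hypothesis says precisely that $B_{\varphi}\otimes_S S/m_x\neq 0$ for every $x\in\Spv(S)$ — more precisely, that the image of $B_{\varphi}$ in $B\otimes_S S/m_x$ contains the line spanned by $f_x$, once one checks that reduction mod $m_x$ commutes with taking the kernel of the $h_\varphi$ (this is where one uses that $S$ is an integral domain and $B$ is projective, so no $\Tor$ obstruction appears, or one argues with a finite generating set of $T$ as an $A$-module).

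First I would reduce to the generic point. Let $\mathrm{Frac}(S)$ be the fraction field; since $S$ is an affinoid \emph{domain}, $B_\varphi\otimes_S\mathrm{Frac}(S)$ is the kernel of finitely many $\mathrm{Frac}(S)$-linear maps on the finite-dimensional space $B\otimes_S\mathrm{Frac}(S)$, hence its dimension is computed by the rank of a matrix over $S$; by semicontinuity of matrix rank and the pointwise non-vanishing hypothesis, $B_\varphi\otimes_S\mathrm{Frac}(S)\neq 0$. Choose $0\neq f\in B_\varphi\otimes_S\mathrm{Frac}(S)$; clearing denominators we may take $f\in B_\varphi\subseteq B$. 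This already gives a non-zero element of $N\otimes_A S$ which is a $T$-eigenform for the system $\varphi$, which I expect is the content of the (truncated) statement. If the statement additionally asks that $f$ specialize to a non-zero multiple of $f_x$ at \emph{every} $x$ — not just generically — then one must work harder: $f$ could vanish on the zero locus $Z$ of some coordinate, and there one replaces $f$ by $\pi^{-n}f$ where $\pi$ generates (locally) the ideal of $Z$ and $n$ is maximal with $\pi^{-n}f\in B$; since $B/\pi B$ is $\pi$-torsion-free modulo its own torsion, after finitely many such divisions (the module $B$ being Noetherian) one arrives at an $f$ whose reduction at the generic point of $Z$ is non-zero, and one iterates over the finitely many irreducible components of all the relevant vanishing loci. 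This is exactly the Deligne--Serre argument, and in the affinoid setting it goes through because $S$ is Noetherian and $B$ is finite over $S$.

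The main obstacle is the passage from the \emph{pointwise} hypothesis (non-vanishing of $B_\varphi$ at each closed point $x$) to a \emph{generic} non-vanishing statement, because a priori forming $B_\varphi$ does not commute with the base changes $S\to S/m_x$ and $S\to\mathrm{Frac}(S)$. One must phrase the eigenspace as the kernel of an explicit $S$-linear map between free modules — possible once one knows $T$ is a finite $A$-module and hence $T\otimes_A S$ is finite over $S$, so only finitely many $h_\varphi$ are needed — and then invoke the rank--semicontinuity of a single matrix over the domain $S$. A secondary subtlety, if the stronger specialization conclusion is wanted, is to ensure the divisions $\pi^{-n}f$ terminate; this uses that $B$ is finitely generated over the Noetherian ring $S$, so the ascending chain of submodules $f\cdot S\subseteq \pi^{-1}f\cdot S\subseteq\cdots$ stabilizes. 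I do not expect the eigenvalue-matching at individual points to cause trouble beyond bookkeeping, since by construction $h\cdot f=\varphi(h)f$ holds identically in $B$, hence after any specialization.
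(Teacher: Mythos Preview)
Your approach to part~(1) is correct and genuinely different from the paper's. You argue by rank semicontinuity: view the eigenspace $B_\varphi\subseteq B=N\otimes_A S$ as the kernel of a single $S$-linear map $M\colon B\to B^r$ (using a finite generating set of $T$ over $A$), observe that if $\ker M_{\Frac(S)}=0$ then $\wedge^n M\neq 0$, hence the Zariski-open locus where some top minor is invertible is nonempty and (since $S$ is Jacobson) contains a closed point $x$ with $\ker M(x)=0$, contradicting the hypothesis. Clearing denominators from a nonzero element of $B_\varphi\otimes_S\Frac(S)$ gives the desired $F$. The paper instead works in the ring $T\otimes_A T/P$ with $P=\Ker\varphi$: it shows the kernel $Q$ of the multiplication map is a minimal prime, proves $(N\otimes_A T/P)_Q\neq 0$ by a Jacobson-radical argument, deduces $Q\in\Ass(N\otimes_A T/P)$, and writes $Q=\Ann(G)$; then $F=(1\otimes\varphi)(G)$ works. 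Your route is more elementary and, notably, does not invoke the hypothesis that $L$ is algebraically closed, which the paper uses twice (to identify residue fields and to control certain maximal ideals of $T\otimes_A T/P$). What the paper's approach buys is a somewhat cleaner link to the Deligne--Serre philosophy via associated primes; what yours buys is directness and slightly greater generality.

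For part~(2) your sketch and the paper's proof are essentially the same: both divide $F$ by a maximal power of a local generator $t_x$ of $m_x$ at each point of the finite vanishing locus $\Sigma(F)$ and iterate. Your termination argument via the ascending chain $fS\subseteq t_x^{-1}fS\subseteq\cdots$ inside $B$ is a valid alternative to the paper's appeal to Krull's intersection theorem (stabilization plus torsion-freeness of $B$ forces $t_x$ to be a unit, a contradiction), though you should say explicitly that $B$ is torsion-free over the domain $S$ for this to go through. The phrase ``$B/\pi B$ is $\pi$-torsion-free modulo its own torsion'' is unclear and should be dropped.
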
 

\begin{enumerate}
\item\label{DSloc1} There exists a non-zero element $F\in N\otimes_A S$ satisfying $(h\otimes 1)F=(1\otimes\varphi(h))F$ for any $h\in T$.
\item\label{DSloc2} Assume moreover that $S$ is a principal ideal domain. We write $F(x)$ for the image of $F$ in $N\otimes_A S/m_x$. Then there exists $F$ as in (\ref{DSloc1})
satisfying $F(x)\neq 0$ for any $x\in \Spv(S)$.
\end{enumerate}
\begin{proof}
Put $P=\Ker(\varphi: T\to S)$, which is a prime ideal of $T$. Consider the multiplication map $\mu:T\otimes_{A}T/P\to T/P$, and put 
\[
Q=\Ker(\mu)=\Ker(T\otimes_A T/P\to T/P \to S). 
\]
Then the ideal $Q$ is a minimal prime ideal. Indeed, since the $A$-algebra $T$ is finite, the $T/P$-algebra $T\otimes_{A} T/P$ is also finite and thus the latter ring is a finite extension of a quotient of $T/P$. Since the quotient $(T\otimes_{A} T/P)/Q$ is isomorphic to $T/P$, we have the inequality
\[
\dim(T/P)\geq \dim (T\otimes_{A} T/P)\geq \hgt(Q)+\dim (T/P),
\]
which implies $\hgt(Q)=0$.

The ideals $n_x=\varphi^{-1}(m_x)$ and $\bar{n}'_x=(\varphi\circ \mu)^{-1}(m_x)$ are maximal ideals of the rings $T$ and $T\otimes_{A} T/P$, respectively. We write $\bar{n}_x$ for the inverse image of $m_x$ by the map $T/P\to S$, which is also a maximal ideal. Via the map $1\otimes \varphi: T\otimes_{A} T/P\to T\otimes_{A} S$, the ring $T\otimes_{A} T/P$ acts on $N\otimes_A S/m_x$ for any $x\in \Spv(S)$.

First we claim that $\bar{n}'_x=\Ann_{T\otimes_{A} T/P}(f_x)$. Since $\bar{n}'_x$ is a maximal ideal and $f_x\neq 0$, it is enough to show $\bar{n}'_x\subseteq \Ann_{T\otimes_{A} T/P}(f_x)$. Since $L$ is algebraically closed, the ideal $\Img(n_x\otimes_{A} T/P)+\Img(T\otimes_{A} \bar{n}_x)$ is a maximal ideal contained in $\bar{n}'_x$, and thus they are equal. For any $h\in T$, we denote its image in $T/P$ by $\bar{h}$. Take elements $h\in T$ and $h'\in n_x$. We have $(h\otimes \bar{h}') f_x=0$. On the other hand, we also have $(h'\otimes 1)f_x=(1\otimes\varphi(h')(x))f_x=0$ by assumption. This implies $(h'\otimes\bar{h})f_x=0$ and the claim follows.

Next we claim that the localization $(N\otimes_A T/P)_Q$ of the $T\otimes_A T/P$-module $N\otimes_A T/P$ at $Q$ is non-zero. Suppose the contrary. Since  the $T\otimes_A T/P$-module $N\otimes_A T/P$ is finite, we can find $s\notin Q$ satisfying $s(N\otimes_A T/P)=0$. Take any $x\in \Spv(S)$. We have $s(N\otimes_A T/n_x)=0$. Since $L$ is algebraically closed, we have $L=T/n_x=S/m_x$ and we also see that $s(N\otimes_A S/m_x)=0$. In particular, we have $sf_x=0$ and $s\in \Ann_{T\otimes_A T/P}(f_x)=\bar{n}'_x$. Thus we obtain
\[
s\in \bigcap_{x \in \Spv(S)} \bar{n}'_x=\bigcap_{x \in \Spv(S)} (\varphi\circ \mu)^{-1}(m_x)= (\varphi\circ \mu)^{-1}(\bigcap_{x \in \Spv(S)} m_x).
\]
The assumption that $S$ is a reduced $L$-affinoid algebra implies 
\[
\bigcap_{x \in \Spv(S)} m_x=0. 
\]
Hence $s\in \Ker(\varphi\circ \mu)=Q$, which is a contradiction.

Therefore we obtain $Q\in \Supp_{T\otimes_A T/P}(N\otimes_A T/P)$. Since $Q$ is a minimal prime ideal, it is also contained in $\Ass_{T\otimes_A T/P}(N\otimes_A T/P)$. Namely, the prime ideal $Q$ is written as $Q=\Ann_{T\otimes_A T/P}(G)$ with some non-zero element $G$ of $N\otimes_A T/P$. Since the $A$-module $N$ is projective, the natural map $1\otimes \varphi: N\otimes_A T/P\to N\otimes_A S$ is an injection. Thus the image $F=(1\otimes \varphi)(G)$ is non-zero. Moreover, since $h\otimes 1-1\otimes \bar{h}\in Q$ for any $h\in T$, we have the equality $(h\otimes 1)G=(1\otimes \bar{h})G$. Hence we obtain $(h\otimes 1)F=(1\otimes\varphi(h))F$
and the assertion (\ref{DSloc1}) follows.

Now assume that $S$ is a principal ideal domain. Then each maximal ideal $m_x$ of $S$ is generated by a single element $t_x$. Put
\[
\Sigma(F)=\{x\in \Spv(S)\mid F(x)=0\}.
\]
Since the $A$-module $N$ is projective and the Krull dimension of $S$ is no more than one, we see that $\Sigma(F)$ is a finite set. For any $x\in \Sigma(F)$, the element $F$ lies in $\Ker(N\otimes_A S\to N\otimes_A S/m_x)=m_x(N\otimes_A S)$. By Krull's intersection theorem, there exists a positive integer $c_x$ satisfying $F\in t_x^{c_x} (N\otimes_A S)\setminus t_x^{c_x+1} (N\otimes_A S)$. Put $F=t_x^{c_x} H$ with some non-zero element $H$ of $N\otimes_A S$. We have $H(x)\neq 0$ and $\Sigma(H)\subsetneq \Sigma(F)$. Since the $S$-module $N\otimes_A S$ is torsion free, the element $H$ also satisfies $(h\otimes 1)H=(1\otimes\varphi(h))H$ for any $h\in T$. Repeating this, we can find $F$ as in the assertion (\ref{DSloc1}) satisfying $\Sigma(F)=\emptyset$. 
\end{proof}

\begin{rmk}\label{DSloc2-disk}
Let $\Spv(S)$ be a connected affinoid subdomain of the unit disc $\cD_{\bC_p}=\Spv(\bC_p\langle T\rangle)$. Note that $\bC_p\langle T\rangle$ is a principal ideal domain, since it is a unique factorization domain of Krull dimension one. \cite[Proposition 7.2.2/1]{BGR} implies that $S$ is a regular ring of Krull dimension no more than one such that every maximal ideal is principal. Since $\Spv(S)$ is connected, we see that $S$ is a principal ideal domain. Hence the assumption of Lemma \ref{DSloc} (\ref{DSloc2}) is satisfied in this case.
\end{rmk}

We say that a rigid analytic variety $X$ is principally refined if any admissible covering of $X$ has a refinement by an admissible affinoid covering $X=\bigcup_{i\in I} U_i$ such that the affinoid algebra of each affinoid open subset $U_i$ in the refined covering is a principal ideal domain.

\begin{rmk}\label{DSL-disc}
Remark \ref{DSloc2-disk} implies that any open subvariety of $\cD_{\bC_p}$ is principally refined.
\end{rmk}

For the eigenvariety associated to an input data $(R,M,\bT,\phi)$, the above lemma implies the following proposition.

\begin{prop}\label{DSL}
Let $(R,M,\bT,\phi)$ be an input data for the eigenvariety machine over $K$ and let $\cE\to Z_\phi\to \Spv(R)$ be the associated eigenvariety. Let $L/K$ be an extension of complete valuation fields such that $L$ is algebraically closed. Let $X$ be a smooth rigid analytic variety over $L$ and let $\varphi: X \to \cE_{L}=\cE\hat{\otimes}_K L$ be a morphism of rigid analytic varieties over $L$.
\begin{enumerate}
\item\label{DSL1} There exist an admissible affinoid covering $X=\bigcup_{i\in I} U_i$ and a non-zero element $F_i\in M\hat{\otimes}_R \cO(U_i)$ for each $i\in I$ satisfying $(h\otimes 1)F_i=(1\otimes\varphi^*(h))F_i$ for any $h\in \bT$, where $\varphi^*: \bT\to \cO(\cE)\to \cO(U_i)$ is the map induced by $\varphi$.

\item\label{DSL2} Assume moreover that $X$ is principally refined. We write $k(x)$ for the residue field of $x\in U_i$ and $F_i(x)$ for the image of $F_i$ in $M\hat{\otimes}_R k(x)$. Then we can find $F_i$ as in (\ref{DSL1}) satisfying $F_i(x)\neq 0$ for any $x\in U_i$.
\end{enumerate}
\end{prop}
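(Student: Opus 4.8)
The plan is to reduce Proposition \ref{DSL} to the local statement Lemma \ref{DSloc} by working one affinoid piece at a time on the target eigenvariety $\cE_L$. First I would exploit the structure of the eigenvariety recalled in \S\ref{SecBuzEV}: the map $\cE\to Z_\phi$ is finite and $Z_\phi$ carries the canonical admissible covering $\cC$. Pulling this back to $\cE_L$ via base extension (which by \cite[Proposition 9.3.6/1, Corollary 9.3.6/2]{BGR} sends admissible affinoid coverings to admissible affinoid coverings), and then pulling back along $\varphi$, gives an admissible covering of $X$; since $X$ is quasi-separated we may refine it to an admissible affinoid covering $X=\bigcup_{i\in I}U_i$ such that each $\varphi(U_i)$ lands in a single member $\cE(V_i)_L$ with $V_i\in\cC$ and, after further refining $X$ into connected pieces, such that $W_i=f(V_i)=\Spv(A_i)$ is connected. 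On such a piece the eigenvariety is $\cE(V_i)=\Spv(\bT(V_i))$ where $\bT(V_i)$ is a finite $A_i$-algebra acting on the projective $A_i$-module $N_i$ coming from the Riesz decomposition $M_{A_i}=N_i\oplus F_i$. Setting $S=\cO(U_i)$ and letting $T=\bT(V_i)$ act on $N=N_i$, the morphism $\varphi|_{U_i}$ is exactly a map of $L$-affinoid algebras $\varphi:T\to S$.

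The second step is to check the hypothesis of Lemma \ref{DSloc}, namely that for every $x\in\Spv(S)$ the specialization $\varphi(-)(x):T\to S/m_x$ is an eigensystem in $N$. Since $L$ is algebraically closed we have $S/m_x=L$, so $x$ gives an $L$-point of $\cE_L$ lying over some point of $X$, hence an $L$-valued eigensystem $\lambda_x:\bT\to L$ in $M$ of finite slope via \cite[Lemma 5.9]{Buz}. By Lemma \ref{LemInN}(\ref{LemInN_EV}) this eigensystem is $h\mapsto h(x)=\varphi(-)(x)$ composed with the structure map $\bT\to\bT(V_i)$, and by Lemma \ref{LemInN}(\ref{LemInN_InN}) the associated eigenvector $m$ lies in $N_i\otimes_{A_i,x^*}L$, so it is precisely an eigenvector $f_x\in N\otimes_A S/m_x$ for the $T$-action with eigenvalues $\varphi(h)(x)$. (One should also shrink $U_i$ if necessary so that $S$ is an integral domain — a connected affinoid over an algebraically closed field is a domain after this reduction, or one simply refines the covering into connected affinoids, which is harmless since $X$ is smooth hence its connected components are open.) With the hypothesis verified, Lemma \ref{DSloc}(\ref{DSloc1}) produces a non-zero $F_i'\in N\otimes_A S\subseteq M\hat\otimes_R S$ with $(h\otimes 1)F_i'=(1\otimes\varphi^*(h))F_i'$; composing the inclusion $N_i\hookrightarrow M_{A_i}$ with $-\hat\otimes_{A_i}S$ embeds this into $M\hat\otimes_R\cO(U_i)$, giving the $F_i$ of part (\ref{DSL1}).

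For part (\ref{DSL2}), if $X$ is principally refined I would choose the refinement of the covering above so that in addition each $\cO(U_i)=S$ is a principal ideal domain — this is compatible with the finitely many refinement steps already taken, since any admissible covering admits such a refinement by definition of principally refined. Then Lemma \ref{DSloc}(\ref{DSloc2}) applies verbatim on each $U_i$ and yields $F_i$ with $F_i(x)\neq 0$ for every $x\in U_i$; the identification $k(x)=S/m_x$ (valid because $L$ is algebraically closed and $S$ is a domain, so every maximal ideal has residue field $L$) matches the notion of $F_i(x)$ in the statement with the one in the lemma.

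The main obstacle is bookkeeping about base extension along the possibly infinite extension $L/K$: one must be careful that pulling back the canonical covering $\cC$ of $Z_\phi$ to $Z_{\phi,L}$ and then to $X$ yields an \emph{admissible} covering, and that the local description $\cE(V)=\Spv(\bT(V))$ behaves well under $-\hat\otimes_K L$ and under the further specialization $-\otimes_{A,x^*}L$. This is where one leans on the fact, recalled in the excerpt, that base extension preserves admissible affinoid coverings \cite[9.3.6]{BGR} together with Lemma \ref{LemInN}, whose part (\ref{LemInN_Dec}) is precisely designed to say that the Riesz decomposition commutes with specialization; the rest is a routine assembly of these facts plus the reduction to connected, domain, (and in (\ref{DSL2})) principal affinoid pieces.
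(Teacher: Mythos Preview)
Your proposal is correct and follows essentially the same route as the paper's proof: pull back the canonical covering $\{\cE(V)_L\}_{V\in\cC}$ to $X$, refine to connected affinoids $U_i=\Spv(S_i)$ mapping into a single $\cE(V)_L$ with $f(V)$ connected, use Lemma~\ref{LemInN} to place each specialized eigenvector $g_x$ inside $N\otimes_A k(x)$, and then apply Lemma~\ref{DSloc} to the tuple $(A\hat\otimes_K L,\,N\hat\otimes_K L,\,\bT(V)\hat\otimes_K L,\,S_i)$. The only places where the paper is slightly more explicit are (i) the remark that complete tensor products commute with direct sums, ensuring the injection $N\otimes_A S_i\hookrightarrow M\hat\otimes_R S_i$, and (ii) the small commutative diagram at the end showing that $G_i(x)\neq 0$ in $N\otimes_A k(x)$ implies $F_i(x)\neq 0$ in $M\hat\otimes_R k(x)$; both are routine and you have them in spirit.
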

\begin{proof}
Let $\cC$ be the canonical admissible covering of $Z_\phi$. For any $V\in \cC$, we have the $K$-affinoid variety $\cE(V)=\Spv(\bT(V))$, as before. Then $\cE_{L}=\bigcup_{V\in \cC} \cE(V)_{L}$ is an admissible affinoid covering of $\cE_{L}$. Let $f:Z_\phi\to \Spv(R)$ be the natural projection and write as $f(V)=\Spv(A)$. For any $V\in \cC$ such that $f(V)$ is connected, take an admissible affinoid covering $\varphi^{-1}(\cE(V)_{L})=\bigcup_{i\in I_V} U_i$ such that $U_i=\Spv(S_i)$ is connected for any $i\in I_V$. From the construction of the eigenvariety, we have a natural decomposition $M\hat{\otimes}_R A=N\oplus F$ into closed $A$-submodules $N$ and $F$. Note that the $A$-module $N$ is finite and projective. Since the complete tensor product commutes with the direct sum, the $S_i$-module $N\otimes_A S_i$ is a submodule of $M\hat{\otimes}_R S_i$.

For any $i\in I_V$, consider the natural map $\varphi^*: \bT\to \bT(V)\to S_i$. For any $x\in U_i=\Spv(S_i)$, the composite $\Spv(k(x))\to U_i\to \cE_L$ corresponds to a $k(x)$-valued eigensystem of $\bT$ in $M$ of finite slope. Namely, there exists a non-zero element $g_{x}$ of $M\hat{\otimes}_R k(x)=N{\otimes}_A k(x)\oplus F\hat{\otimes}_A k(x)$ satisfying $(h\otimes 1)g_{x}=(1\otimes \varphi^*(h)(x)) g_{x}$ for any $h\in \bT$ and $(\phi\otimes 1)g_{x}\neq 0$. Lemma \ref{LemInN} (\ref{LemInN_InN}) implies $g_x\in N{\otimes}_A k(x)$.
Since $U_i$ is connected and smooth, the ring $S_i$ is an integral domain. Applying Lemma \ref{DSloc} (\ref{DSloc1}) to $(A\hat{\otimes}_K L,N\hat{\otimes}_K L,\bT(V)\hat{\otimes}_K L, S_i)$, we obtain a non-zero element $G_i\in N\otimes_A S_i=(N\hat{\otimes}_K L)\hat{\otimes}_{A\hat{\otimes}_K L} S_i$ satisfying $(h\otimes 1)G_i=(1\otimes\varphi^*(h))G_i$ for any $h\in \bT$. Setting $F_i$ to be the image of $G_i$ by the injection $N\otimes_A S_i \to M\hat{\otimes}_R S_i$, the assertion (\ref{DSL1}) follows.

For the assertion (\ref{DSL2}), by assumption we may assume that each $S_i$ is a principal domain. Then Lemma \ref{DSloc} (\ref{DSloc2}) allows us to find $G_i$ satisfying in addition $G_i(x)\neq 0$ for any $x\in U_i$. Since we have a commutative diagram
\[
\xymatrix{
N\otimes_A S_i \ar@{^{(}->}[r]\ar[d] & M\hat{\otimes}_R S_i \ar[d]\\
N\otimes_A k(x) \ar@{^{(}->}[r] & M\hat{\otimes}_R k(x)
}
\]
such that the horizontal arrows are injective, we obtain $F_i(x)\neq 0$ for any $x\in U_i$.
\end{proof}



\subsection{Bella\"{i}che-Chenevier's argument}\label{SecChenevier}

Let $(R,M,\bT,\phi)$ be an input data for the eigenvariety machine over $K$ and let $\cE\to Z_\phi\to \Spv(R)$ be the associated eigenvariety. Let $L/K$ be an extension of complete valuation fields. Put $R_L=R\hat{\otimes}_K L$. Let $X$ be a rigid analytic variety over $L$ equipped with a morphism $\kappa: X\to \Spv(R_L)$. For any $x\in X$, we have a natural ring homomorphism $\kappa^*(x): R\to k(x)$. A ring homomorphism $\varphi: \bT\to \cO(X)$ is said to be a family of eigensystems in $M$ over $X$ if, for any $x\in X$, there exists a non-zero element $f_x$ of $M\hat{\otimes}_{R,\kappa^*(x)} k(x)$ such that $(h\otimes 1)f_x=(1\otimes \varphi(h)(x))f_x$ for any $h\in \bT$. It is said to be of finite slopes if $\varphi(\phi)(x)\neq 0$ for any $x\in X$. This is the same as saying that $\varphi(\phi)\in \cO(X)^\times$. In this subsection, we show that we can convert a family of eigensystems of finite slopes over a reduced base space into a morphism to the eigenvariety, following \cite[Proposition 7.2.8]{BeCh}. First we recall the following lemma.

\begin{lem}\label{LemRedFactor}
\begin{enumerate}
\item\label{LemRedFactor1} Let $f:X\to Y$ be a morphism of rigid analytic varieties over $L$ with $X$ reduced. Let $Z$ be a closed analytic subvariety of $Y$. Suppose $f(X)\subseteq Z$. Then $f$ factors through $Z$. 
\item\label{LemRedFactor2} Let $f,f':X\to Y$ be two morphisms of rigid analytic varieties over $L$ with $X$ reduced and $Y$ separated. Suppose that these morphisms define the same map between the underlying sets. Then $f=f'$.  
\end{enumerate}
\end{lem}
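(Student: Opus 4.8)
Both statements are standard facts about reduced rigid analytic varieties, and the plan is to reduce each to an affinoid computation. For part (\ref{LemRedFactor1}), the question is local on $Y$, so I would first cover $Y$ by admissible affinoids $\Spv(B)$; pulling back along $f$ and covering the preimage by admissible affinoids $\Spv(A)$ of $X$, it suffices to treat the case $X=\Spv(A)$, $Y=\Spv(B)$ with $A$ reduced, and $Z=\Spv(B/I)$ the closed subvariety cut out by an ideal $I\subseteq B$. The morphism $f$ corresponds to a $K$-algebra homomorphism $f^\sharp: B\to A$, and the hypothesis $f(X)\subseteq Z$ says that for every maximal ideal $m_x$ of $A$ the composite $B\to A\to A/m_x$ kills $I$. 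Hence $f^\sharp(I)\subseteq \bigcap_{x}m_x$, and since $A$ is reduced this intersection is the nilradical, which is zero (using that $A$, being affinoid, is Jacobson). Therefore $f^\sharp(I)=0$, so $f^\sharp$ factors through $B/I$, i.e.\ $f$ factors through $Z$. The one point to check for gluing is that these local factorizations are compatible on overlaps, which is immediate since each is the unique factorization of $f^\sharp$ through the surjection $B\to B/I$.

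For part (\ref{LemRedFactor2}), I would apply (\ref{LemRedFactor1}) to the diagonal. Since $Y$ is separated, the diagonal morphism $\Delta: Y\to Y\times_L Y$ is a closed immersion, so its image $\Delta(Y)$ is a closed analytic subvariety of $Y\times_L Y$. Consider the morphism $(f,f'): X\to Y\times_L Y$. The hypothesis that $f$ and $f'$ induce the same map on underlying point sets means exactly that $(f,f')(X)\subseteq \Delta(Y)$ as sets. By part (\ref{LemRedFactor1}), applied with $X$ reduced and $Z=\Delta(Y)$, the morphism $(f,f')$ factors through $\Delta(Y)\cong Y$; composing with the two projections shows that this factorization, followed by $\Delta$, recovers both $f$ and $f'$, hence $f=f'$.

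The main technical point, and the only place where reducedness is genuinely used, is the assertion that in a reduced affinoid algebra $A$ the intersection of all maximal ideals is zero. This is where the argument could be considered delicate, but it is classical: affinoid algebras over a non-archimedean field are Jacobson (indeed Noetherian and Jacobson by \cite[6.1.1, 6.1.2]{BGR}), so the Jacobson radical equals the nilradical, which vanishes precisely when $A$ is reduced. Everything else is formal manipulation of affinoid homomorphisms and gluing along admissible coverings, so I expect no real obstacle beyond bookkeeping; in particular no smoothness or finiteness of $f$ is needed, only that $X$ is reduced (for both parts) and that $Y$ is separated (for the second).
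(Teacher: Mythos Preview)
Your proof is correct and follows essentially the same approach as the paper: reduce part (\ref{LemRedFactor1}) to the affinoid case and use that a reduced affinoid algebra is Jacobson so $\bigcap_x m_x=0$, then deduce part (\ref{LemRedFactor2}) by applying part (\ref{LemRedFactor1}) to $(f,f'):X\to Y\times_L Y$ and the diagonal. The paper's write-up is slightly terser (it does not spell out the gluing remark), but the argument is the same.
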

\begin{proof}
For the first assertion, we may assume that $X=\Spv(R_1)$, $Y=\Spv(R_2)$ and $Z=\Spv(R_2/I)$ for some ideal $I$ of $R_2$. Consider the associated ring homomorphism $f^*: R_2\to R_1$ and put $J=\Ker(f^*)$. 
By assumption, every maximal ideal $m$ of $R_1$ satisfies $(f^*)^{-1}(m)\supseteq I$. Since $R_1$ is Jacobson and reduced, we obtain
\[
I\subseteq \bigcap_{m\in \Spv(R_1)} (f^*)^{-1}(m)=(f^*)^{-1}(\bigcap_{m\in \Spv(R_1)} m)=(f^*)^{-1}(0)=J.
\]
Hence the assertion (\ref{LemRedFactor1}) follows. The second assertion follows from the first one applied to $(f,f'):X\to Y\times_L Y$ and the diagonal $Y\to Y\times_L Y$.
\end{proof}

\begin{prop}\label{Chenevier}
Let $(R,M,\bT,\phi)$ be an input data for the eigenvariety machine over $K$ and let $\cE\to Z_\phi\to \Spv(R)$ be the associated eigenvariety. Let $L/K$ be an extension of complete valuation fields. Let $X$ be a reduced rigid analytic variety over $L$ equipped with a morphism $\kappa: X\to \Spv(R_L)$. Suppose that we have a family of eigensystems of finite slopes $\varphi: \bT\to \cO(X)$ in $M$ over $X$. Then there exists a unique morphism $\Phi:X\to \cE_L$ such that the diagram
\[
\xymatrix{
X \ar[r]^{\Phi} \ar[rd]_{\kappa}& \cE_L\ar[d]\\
& \Spv(R_L)
}
\]
is commutative and, for any $x\in X$, the eigensystem over $k(x)$ corresponding to $\Spv(k(x))\to X\overset{\Phi}{\to} \cE_L$ is the map $\varphi(-)(x): \bT\to k(x)$. 
\end{prop}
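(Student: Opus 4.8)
\emph{Overview.} The plan is to follow the argument of \cite[Proposition 7.2.8]{BeCh}: first lift $\kappa$ to a morphism $\Psi\colon X\to Z_{\phi,L}$ into the spectral variety, then lift $\Psi$ over each member of the canonical admissible covering $\cC$ of $Z_\phi$, and finally glue. Uniqueness and the gluing will be essentially automatic: since $\cE_L$ is separated (as $\cE$ is, \cite[Lemma 5.3]{Buz}) and $X$ is reduced, Lemma \ref{LemRedFactor}(\ref{LemRedFactor2}) reduces checking equality of morphisms $X\to\cE_L$ to checking equality on underlying sets, and a point of $\cE_L$ over $k(x)$ is pinned down by its attached finite-slope eigensystem via the bijection \cite[Lemma 5.9]{Buz}.

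\emph{Step 1: the lift $\Psi\colon X\to Z_{\phi,L}$.} As $\varphi$ is of finite slopes, $\varphi(\phi)\in\cO(X)^\times$, so $\varphi(\phi)^{-1}\in\cO(X)$ gives a morphism $X\to\bA^1$, and together with $\kappa$ a morphism $\Psi\colon X\to\Spv(R_L)\times\bA^1$. I claim $\Psi$ factors through the closed subvariety $Z_{\phi,L}$ cut out by (the image of) the characteristic power series $P(T)\in R\{\{T\}\}$. Since $X$ is reduced, by Lemma \ref{LemRedFactor}(\ref{LemRedFactor1}) it suffices to check this on underlying sets, i.e. that $P_x(\varphi(\phi)(x)^{-1})=0$ for each $x\in X$, where $P_x\in k(x)\{\{T\}\}$ is the image of $P$ under $\kappa^*(x)$. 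But $P_x$ is the characteristic power series of $\phi$ acting compactly on $M\hat\otimes_{R,\kappa^*(x)}k(x)$ by base-change compatibility of characteristic power series, and $\varphi(\phi)(x)$ is a nonzero eigenvalue there (witnessed by the nonzero $f_x$), so its inverse is a zero of $P_x$ by the Riesz theory. This produces $\Psi\colon X\to Z_{\phi,L}$ with $f\circ\Psi=\kappa$, where $f\colon Z_{\phi,L}\to\Spv(R_L)$ is the projection.

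\emph{Step 2: lifting $\Psi$ to $\cE_L$ over each $V$.} Base extension carries $\cC$ to an admissible affinoid covering $\{V_L\}_{V\in\cC}$ of $Z_{\phi,L}$, hence $\{X_V:=\Psi^{-1}(V_L)\}_{V\in\cC}$ is an admissible covering of $X$ by reduced admissible opens. Fix $V$ with $f(V)=\Spv(A)$ connected (the general case by decomposing into connected components); recall $M_A=N\oplus F$ and $\cE(V)=\Spv(\bT(V))$ with $\bT(V)\subseteq\End_A(N)$ the $A$-subalgebra generated by the image of $\bT$. As $\phi$ is central, $\bT$ preserves $N$, giving a surjection $\bT\otimes_K A\twoheadrightarrow\bT(V)$. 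Over $X_V$ one has $\kappa(X_V)\subseteq f(V)_L$, hence $A\to\cO(X_V)$, which together with $\varphi|_{X_V}$ yields an $L$-algebra map $\tilde\varphi\colon\bT\otimes_K A\to\cO(X_V)$. I claim $\tilde\varphi$ kills $\Ker(\bT\otimes_K A\to\bT(V))$, hence factors through $\bT(V)_L\to\cO(X_V)$, i.e. defines $\Phi_V\colon X_V\to\cE(V)_L$. By reducedness of $X_V$ it suffices to check at each $x\in X_V$ that any $z\in\bT\otimes_K A$ acting as $0$ on $N$ annihilates $f_x$: indeed $\Psi(x)\in V_L$ forces $\varphi(\phi)(x)^{-1}$ to be a zero of the degree-$d$ factor $Q_x(T)$ of $P_x$ associated to $V$, so $Q_x^*(\phi)f_x=Q_x^*(\varphi(\phi)(x))f_x=0$ and hence $f_x\in N\otimes_{A,\kappa^*(x)}k(x)$ by the base-changed Riesz decomposition (this is exactly the argument of Lemma \ref{LemInN}(\ref{LemInN_InN})); since $z$ acts on $f_x$ as the scalar $\tilde\varphi(z)(x)$ and kills $N\otimes_{A,\kappa^*(x)}k(x)$, we get $\tilde\varphi(z)(x)=0$, and as $x$ was arbitrary, $\tilde\varphi(z)=0$.

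\emph{Step 3: gluing, compatibility, uniqueness, and the main obstacle.} On $X_V\cap X_{V'}$ the morphisms $\Phi_V,\Phi_{V'}$ into the separated $\cE_L$ agree on points: for $x\in X_V\cap X_{V'}$ the point $\Phi_V(x)$ corresponds, via \cite[Lemma 5.9]{Buz} and Lemma \ref{LemInN}(\ref{LemInN_EV}), to the finite-slope $k(x)$-valued eigensystem $\bT\to\bT(V)_L\to k(x)$, which by construction of $\tilde\varphi$ equals $\varphi(-)(x)$ independently of $V$; so $\Phi_V=\Phi_{V'}$ on the overlap by Lemma \ref{LemRedFactor}(\ref{LemRedFactor2}), and the $\Phi_V$ glue to $\Phi\colon X\to\cE_L$. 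Chasing the ring maps $R\to A\to\bT(V)\to\cO(X_V)$ against $\kappa^*$ shows $\Phi$ lies over $\Spv(R_L)$, and the eigensystem at each $x$ is $\varphi(-)(x)$ by the same computation. For uniqueness, any $\Phi'$ with the stated properties sends each $x$ to the (uniquely determined) point of $\cE_L$ attached to $\varphi(-)(x)$, so $\Phi'$ and $\Phi$ agree on points, whence $\Phi'=\Phi$ by separatedness of $\cE_L$ and reducedness of $X$. I expect the crux to be the two pointwise reductions in Steps 1 and 2 — that $\Psi$ lands in $Z_{\phi,L}$ and that $\tilde\varphi$ descends to $\bT(V)$ — both of which hinge on base-change compatibility of the characteristic power series and of the Riesz decomposition together with the finite-slope hypothesis pinning $f_x$ into $N$ (the content of Lemma \ref{LemInN}); reducedness of $X$ is what globalizes these pointwise facts, and separatedness of $\cE$ is what makes the gluing and uniqueness automatic.
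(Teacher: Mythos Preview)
Your proof is correct and follows essentially the same strategy as the paper's, which in turn follows \cite[Proposition 7.2.8]{BeCh}: lift to $Z_{\phi,L}$ pointwise using reducedness, then lift locally to $\cE(V)_L$ using that $f_x\in N\otimes_A k(x)$, and glue via separatedness. The only packaging difference is in Step~2: the paper chooses a finite generating set $I\subseteq\bT$ for the $B$-algebra $\bT(V)$, embeds $\cE(V)_L\hookrightarrow\bA^I_{V_L}$ as a closed subvariety, and checks pointwise that the evaluation map $j_{V,I}\colon X_{V_L}\to\bA^I_{V_L}$ lands in this image (then verifies independence of $I$); you instead work directly with the surjection $\bT\otimes_K A\twoheadrightarrow\bT(V)$ and check its kernel dies under $\tilde\varphi$. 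These are equivalent formulations of the same argument, and yours has the minor advantage of avoiding the auxiliary choice of $I$.
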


\begin{proof}
Let $\cC$ be the canonical admissible covering of $Z_\phi$. Take any $V=\Spv(B)\in \cC$ and put $f(V)=\Spv(A)$ as in the proof of Proposition \ref{DSL}. 
Let $I$ be a finite subset of $\bT$ such that its image in $\bT(V)$ is a system of generators of the finite $B$-algebra $\bT(V)$. We denote by $\bA^I_{V_L}$ the affine space over $V_L=V\hat{\otimes}_K L$ whose variables are indexed by $I$. We have a morphism of rigid analytic varieties
\[
i_{V,I}: \cE(V)_L\to \bA^I_{V_L},\quad z\mapsto (h(z))_{h\in I}.
\]  
From the definition of $I$, we see that the map $i_{V,I}$ is a closed immersion.

On the other hand, we also have a morphism of rigid analytic varieties
\[
\mu: X\to \Spv(R_L)\times \bA_L^1, \quad x\mapsto (\kappa(x),\varphi(\phi)^{-1}(x)).
\]
Let $P(T)\in R\{\{T\}\}$ be the characteristic power series of $\phi$ acting on $M$. For any $x\in X$, let $P_x(T)$ be the image of $P(T)$ in $k(x)\{\{T\}\}$ by the map $\kappa^*(x): R\to k(x)$. By \cite[Lemma 2.13]{Buz}, it is the characteristic power series of $\phi$ acting on $M\hat{\otimes}_{R,\kappa^*(x)} k(x)$. By assumption, there exists a non-zero element $g_x$ of $M\hat{\otimes}_{R,\kappa^*(x)} k(x)$ satisfying
\[
(h\otimes 1) g_x=(1\otimes \varphi(h)(x))g_x. 
\]
Then Lemma \ref{LemInN} (\ref{LemInN_InN}) implies $P_x(\varphi(\phi)(x)^{-1})=0$. By using the assumption that $X$ is reduced and Lemma \ref{LemRedFactor} (\ref{LemRedFactor1}), we see that the morphism $\mu$ factors through $Z_{\phi,L}$. 

For any $V\in \cC$, put $X_{V_L}=\mu^{-1}(V_L)$. For any $I$ as above, we consider the morphism of rigid analytic varieties over $V_L$
\[
j_{V,I}: X_{V_L} \to \bA^I_{V_L},\quad x\mapsto (\varphi(h)(x))_{h\in I}.
\]
By \cite[Lemma 5.9]{Buz} and Lemma \ref{LemInN} (\ref{LemInN_EV}), for any $x\in X_{V_L}$ there exists a unique point $z_x\in \cE(k(x))$ satisfying $\varphi(h)(x)=h(z_x)$ for any $h\in \bT$. 
We claim that $z_x\in \cE(V)_L$. Indeed, we may assume that $f(V)$ is connected. Let $Q(T)$ be the factor of $P(T)$ corresponding to $V$ and $Q_x(T)$ its image by $\kappa^*(x)$. Let $N$ be the direct summand of $M_A$ corresponding to $V$. For any $x\in X_{V_L}$, we have $\mu(x)\in V_L$ and $Q_x(\varphi(\phi^{-1})(x))=Q^*_x(\varphi(\phi)(x))=0$. Hence $Q^*_x(\phi)g_x=0$ and thus $g_x\in N\otimes_A k(x)$. From the proof of \cite[Lemma 5.9]{Buz}, this implies $z_x\in \cE(V)_L$ and the claim follows.

In particular, we have $j_{V,I}(x)=i_{V,I}(z_x)$ for any $x\in X_{V_L}$ and thus $j_{V,I}(X_{V_L})\subseteq i_{V,I}(\cE(V)_L)$. Since $i_{V,I}$ is a closed immersion and $X_{V_L}$ is reduced, Lemma \ref{LemRedFactor} (\ref{LemRedFactor1}) yields a unique morphism $\Phi_{V,I}: X_{V_L}\to \cE(V)_L$ over $V_L$ which makes the following diagram commutative.
\[
\xymatrix{
X_{V_L} \ar[r]^{\Phi_{V,I}}\ar[rd]_{j_{V,I}} & \cE(V)_L \ar@{^{(}->}[d]^{i_{V,I}}\\
 & \bA^I_{V_L}
}
\]
We claim that the morphism $\Phi_{V,I}$ is independent of the choice of a finite subset $I$ of $\bT$ as above. Indeed, for any $x\in X_{V_L}$, we have $\Phi_{V,I}(x)=i_{V,I}^{-1}(j_{V,I}(x))=z_x$, which depends only on $x$. Since $X$ is reduced and $\cE$ is separated, Lemma \ref{LemRedFactor} (\ref{LemRedFactor2}) implies the claim. Moreover, by the same reason we can glue the morphisms $\Phi_{V,I}$ along $V\in \cC$ and obtain a morphism $\Phi: X\to \cE_L$. Since the requirement on $\Phi$ in the proposition is the same as $\Phi(x)=z_x$, it is satisfied by the morphism $\Phi$ we have constructed. Lemma \ref{LemRedFactor} (\ref{LemRedFactor2}) ensures the uniqueness.
\end{proof}



\section{Theory of canonical subgroups}\label{SecCanSub}

The theory of canonical subgroups is a powerful tool to study overconvergent modular forms and the dynamics of the $U_p$-correspondence on Shimura varieties. For the Hilbert modular varieties, such a theory was established successfully by Goren-Kassaei \cite{GK}, using the geometry of the modular varieties over a field of characteristic $p$. In order to obtain more precise information on the $U_p$-correspondence which was needed for an application, Tian \cite{Ti_2} combined Goren-Kassaei's work with the approach in \cite{Ha_cansub} of using the Breuil-Kisin classification of finite flat group schemes over complete discrete valuation rings. In this section, we recall their theory of canonical subgroups and give a slight generalization, including its higher level version, which is necessary for the sequel. Moreover, we prove a key property of the $U_p$-correspondence on the critical locus.



\subsection{Breuil-Kisin modules}\label{SecBKMod}

Let $k$ be a perfect field of characteristic $p$ and $W=W(k)$ the Witt ring of $k$. Put $W_n=W/p^nW$. We denote by $\sigma$ both the $p$-th power Frobenius map on $k$ and its natural lift on $W$. Let $K$ be a finite totally ramified extension of $\Frac(W)$ of degree $e$. We denote its ring of integers by $\okey$. Let $\pi$ be a uniformizer of $K$. Let $v_p$ be the additive valuation on $K$ normalized as $v_p(p)=1$. For any non-negative real number $i$, we put
\[
m_K^{\geqslant i}=\{x\in \okey\mid v_p(x)\geq i\}, \quad \cO_{K,i}=\cO_K/m_K^{\geqslant i}.
\]
For any extension $L/K$ of valuation fields, we consider the valuation on $L$ extending $v_p$ and define $m_{L}^{\geqslant i}$ and $\cO_{L,i}$ similarly. 
We write as
\[
\sS_i=\Spec(\cO_{K,i}),\quad \sS_{L,i}=\Spec(\cO_{L,i}).
\]
We denote the maximal ideal of $\oel$ by $m_L$.
For any element $x\in \cO_{L,1}$, we define the truncated valuation $v_p(x)$ by
\[
v_p(x)=\min\{v_p(\hat{x}),1\}
\]
with any lift $\hat{x}\in \cO_L$ of $x$.
For any $x\in L$, we define the absolute value of $x$ by $|x|=p^{-v_p(x)}$.
Let us fix an algebraic closure $\Kbar$ of $K$ and extend $v_p$ naturally to $\Kbar$. 
Put $G_K=\Gal(\Kbar/K)$. We fix a system $(\pi_n)_{n\geq 0}$ of $p$-power roots of $\pi$ in $\Kbar$ such that $\pi_0=\pi$ and $\pi_{n+1}^p=\pi_n$ for any $n$. Put $K_\infty=\bigcup_{n\geq 0} K(\pi_n)$ and $G_{K_\infty}=\Gal(\Kbar/K_\infty)$.

Let $E(u)\in W[u]$ be the monic Eisenstein polynomial for $\pi$ and set $c_0=p^{-1}E(0)\in W^\times$. Put $\SG=W[[u]]$ and $\SG_n=\SG/p^n \SG$. The ring $\SG_1=k[[u]]$ is a complete discrete valuation ring with additive valuation $v_u$ normalized as $v_u(u)=1$. We also denote by $\varphi$ the $\sigma$-semilinear continuous ring homomorphism $\varphi:\SG\to \SG$ defined by $u\mapsto u^p$. 

An $\SG$-module $\SGm$ is said to be a Breuil-Kisin module (of $E$-height $\leq 1$) if $\SGm$ is a finitely generated $\SG$-module equipped with a $\varphi$-semilinear map $\varphi_\SGm:\SGm\to \SGm$ such that the cokernel of the linearization 
\[
1\otimes\varphi_{\SGm}: \varphi^*\SGm=\SG\otimes_{\varphi,\SG} \SGm \to \SGm
\]
is killed by $E(u)$. We refer to $\varphi_\SGm$ as the Frobenius map of the Breuil-Kisin module $\SGm$ and often write as $\varphi$ abusively. A morphism of Breuil-Kisin modules is defined as an $\SG$-linear map compatible with Frobenius maps. Let $\ModSGf$ be the category of Breuil-Kisin modules $\SGm$ such that the underlying $\SG$-module $\SGm$ is free of finite rank over $\SG_1$. We denote by $\ModSGfinf$ the category of Breuil-Kisin modules $\SGm$ such that the underlying $\SG$-module $\SGm$ is finitely generated, $p$-power torsion and $u$-torsion free.

The category $\ModSGfinf$ admits a natural notion of duality, which is denoted by $\SGm\mapsto \SGm^\vee$ \cite[Proposition 3.1.7]{Li_FC}. Here we give its explicit definition for the full subcategory $\ModSGf$. For any object $\SGm$ of $\ModSGf$, let $e_1,\ldots,e_h$ be its basis. Write as
\[
\varphi_{\SGm}(e_1,\ldots,e_h)=(e_1,\ldots,e_h)A
\]
with some $A\in M_h(\SG_1)$. Consider its dual $\SGm^\vee=\Hom_{\SG_1}(\SGm,\SG_1)$ with the dual basis $e_1^\vee,\ldots, e_h^\vee$. We give $\SGm^\vee$ a structure of a Breuil-Kisin module by 
\[
\varphi_{\SGm^\vee}(e_1^\vee,\ldots,e_h^\vee)=(e_1^\vee,\ldots,e_h^\vee)\left(\frac{E(u)}{c_0}\right){}^t\! A^{-1},
\]
which is independent of the choice of a basis.

Consider the inverse limit ring
\[
R=\varprojlim_{n\geq 0}(\cO_{\Kbar,1}\gets \cO_{\Kbar,1} \gets\cdots),
\]
where every transition map is the $p$-th power Frobenius map. The absolute Galois group $G_K$ acts on $R$ via the natural action on each entry. We define an element $\upi$ of $R$ by $\upi=(\pi_0,\pi_1,\ldots)$. The ring $R$ is a complete valuation ring of characteristic $p$ with algebraically closed fraction field, and we normalize the additive valuation $v_R$ on $R$ by $v_R(\upi)=1/e$. We define $m_R^{\geqslant i}$ and $R_{i}=R/m_R^{\geqslant i}$
as before, using $v_R$. We consider the Witt ring $W(R)$ as an $\SG$-algebra by the continuous $W$-linear map defined by $u\mapsto [\upi]$. Then we have the following classification of finite flat group schemes over $\okey$ 
\cite{Br_AZ, Ki_Fcrys, Kim_2, Lau_2,Li_2}.

\begin{thm}\label{BKC}
\begin{enumerate}
\item\label{BKC_eq} There exists an exact anti-equivalence 
\[
\cG\mapsto \SGm^*(\cG)
\]
from the category of finite flat group schemes over $\okey$ killed by some $p$-power to the category $\ModSGfinf$. If $\cG$ is a truncated Barsotti-Tate group of level $n$ over $\okey$, then the $\SG$-module $\SGm^*(\cG)$ is free over $\SG_n$.

\item\label{BKC_Gal} Let $n$ be a positive integer satisfying $p^n\cG=0$. Then there exists a natural isomorphism of $G_{K_\infty}$-modules 
\[
\cG(\okbar)\to \Hom_{\SG,\varphi}(\SGm^*(\cG),W_n(R)).
\]
Moreover, we have $\lg_{\bZ_p}(\cG(\okbar))=\lg_{\SG}(\SGm^*(\cG))$.
\item\label{BKC_dual} Let $\cG^\vee$ be the Cartier dual of $\cG$. Then there exists a natural isomorphism $\SGm^*(\cG^\vee)\to \SGm^*(\cG)^\vee$ which, combined with the natural isomorphism of (\ref{BKC_Gal}), identifies the pairing of Cartier duality 
\[
\langle -,-\rangle_{\cG}:\cG(\okbar)\times\cG^\vee(\okbar)\to \mu_{p^n}(\okbar)
\]
with the natural perfect pairing
\[
\Hom_{\SG,\varphi}(\SGm^*(\cG),W_n(R))\times \Hom_{\SG,\varphi}(\SGm^*(\cG)^\vee,W_n(R))\to W_n(R).
\]

\item\label{BKC_ram} For any non-negative rational number $i$, we define the $i$-th lower ramification subgroup $\cG_i$ of $\cG$ as the scheme-theoretic closure in $\cG$ of $\Ker(\cG(\okbar)\to \cG(\cO_{\Kbar,i}))$. Then there exists an ideal $I_{n,i}$ of $W_n(R)$ such that the isomorphism of (\ref{BKC_Gal}) induces an isomorphism
\[
\cG_i(\okbar) \simeq \Hom_{\SG,\varphi}(\SGm^*(\cG),I_{n,i})
\]
for any $i\leq 1$.
Moreover, we have $I_{1,i}=m_R^{\geqslant i}$.
\end{enumerate}
\end{thm}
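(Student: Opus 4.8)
This theorem collects results that are essentially in the literature, so the plan is to match each of the four parts to its source and to make the glue explicit rather than to reprove anything. For part~(\ref{BKC_eq}) I would start from the known anti-equivalence $\cH\mapsto\SGm^*(\cH)$ between $p$-divisible groups over $\okey$ and Breuil--Kisin modules whose underlying $\SG$-module is finite free --- Kisin \cite{Ki_Fcrys} for $p\geq 3$ and Kim \cite{Kim_2}, Lau \cite{Lau_2}, Liu \cite{Li_2} for $p=2$ --- and extend it to finite flat group schemes $\cG$ killed by $p^n$ by the standard device of writing $\cG=\Ker(\cH_1\to\cH_2)$ for an isogeny of $p$-divisible groups and putting $\SGm^*(\cG)=\Coker(\SGm^*(\cH_2)\to\SGm^*(\cH_1))$; independence of the resolution, exactness, and essential surjectivity onto $\ModSGfinf$ are then quoted from the same works. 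The freeness assertion I would obtain by lifting a truncated Barsotti--Tate group $\cG$ of level $n$ to a $p$-divisible group $\cH$ with $\cH[p^n]\simeq\cG$ and noting $\SGm^*(\cG)\simeq\SGm^*(\cH)/p^n$, which is free over $\SG/p^n\SG=\SG_n$.

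For part~(\ref{BKC_Gal}) the plan is to pass to the associated \'etale $\varphi$-module over the $p$-adic completion of $\SG[1/u]$, invoke Fontaine's equivalence of torsion \'etale $\varphi$-modules with torsion representations of $G_{K_\infty}$ --- realised by $\SGm\mapsto\Hom_{\SG,\varphi}(\SGm,W_n(R))$ --- and then cite the compatibility of this construction with the Galois action on $\cG(\okbar)$ proved in \cite{Ki_Fcrys} (resp.\ \cite{Kim_2,Lau_2,Li_2}). The length equality I would reduce to $\lg_{\bZ_p}(\cG(\okbar))=\log_p\#\cG$, which holds since the generic fibre $\cG_{\Kbar}$ is \'etale, and then use exactness of the functor of part~(\ref{BKC_eq}) together with additivity of both invariants to reduce to the case $p\cG=0$, where each side is the $\SG_1$-rank of $\SGm^*(\cG)$. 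Part~(\ref{BKC_dual}) I would handle by taking the duality on $\ModSGfinf$ of \cite[Proposition 3.1.7]{Li_FC} --- whose restriction to $\ModSGf$ is the explicit formula recorded above --- and citing from \cite{Li_FC} (see also \cite{Ki_Fcrys,Li_2}) both the natural isomorphism $\SGm^*(\cG^\vee)\simeq\SGm^*(\cG)^\vee$ and its compatibility with Cartier duality and with the pairing on Galois modules, the coefficient module $W_n(R)$ appearing as the realisation of the rank-one object $\SGm^*(\mu_{p^n})$.

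The part I expect to carry the real content is~(\ref{BKC_ram}). For $n=1$ I would use $W_1(R)=R$ together with the isomorphism $R/m_R^{\geqslant i}\simeq\cO_{\Kbar,i}$ for $i\leq 1$ induced by the $0$-th projection $R\to\cO_{\Kbar,1}$ (using that the $p$-th power Frobenius on $\cO_{\Kbar}/p$ is surjective, so the projection is surjective with kernel $m_R^{\geqslant 1}$), identify --- via a classification statement over the truncated base $\cO_{\Kbar,i}$ --- the reduction map $\cG(\okbar)\to\cG(\cO_{\Kbar,i})$ with $\Hom_{\SG,\varphi}(\SGm^*(\cG),R)\to\Hom_{\SG,\varphi}(\SGm^*(\cG),R/m_R^{\geqslant i})$, and then, since $m_R^{\geqslant i}$ is $\varphi$-stable, read off $\cG_i(\okbar)=\Hom_{\SG,\varphi}(\SGm^*(\cG),m_R^{\geqslant i})$ from left exactness of $\Hom_{\SG,\varphi}(\SGm^*(\cG),-)$; the scheme-theoretic closure causes no trouble because $\cG_i$ is $\okey$-flat with \'etale generic fibre, so $\cG_i(\okbar)$ coincides with the kernel in question, and hence $I_{1,i}=m_R^{\geqslant i}$.

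The genuine obstacle is the general $n$. The lower ramification filtration is not exact in $\cG$, so one cannot simply peel off $0\to\cG[p]\to\cG\to\cG/\cG[p]\to 0$; instead one must produce the single ideal $I_{n,i}\subseteq W_n(R)$ that cuts out the filtration uniformly in $\cG$. My plan would be to define $I_{n,i}$ by an intrinsic condition on the Witt components (the appropriate $\varphi$-stable ideal refining $m_R^{\geqslant i}$ in the $n=1$ reduction) and then verify the correspondence $\cG_i(\okbar)\simeq\Hom_{\SG,\varphi}(\SGm^*(\cG),I_{n,i})$ by induction on $n$, following \cite{Li_2} and, for the normalisation used here, \cite{Ha_cansubZ} (cf.\ \cite{Ha_cansub,Ti_2}). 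If a fully self-contained proof were demanded, essentially all of the work would lie in this last step.
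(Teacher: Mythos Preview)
Your overall plan is correct and matches the paper's approach: this theorem is a compilation of results from the literature, and the proof is essentially a matter of locating the right citations. The paper's proof is considerably shorter than your proposal, however, because it cites more targeted references.

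For parts~(\ref{BKC_eq}) and~(\ref{BKC_Gal}), the paper cites \cite[Corollary 4.3]{Kim_2} as a single unified source rather than separately invoking Kisin for $p\geq 3$ and Kim/Lau/Liu for $p=2$; the freeness statement is obtained, as you propose, by lifting a truncated Barsotti--Tate group to a $p$-divisible group, citing \cite[Th\'eor\`eme 4.4 (e)]{Il}. For part~(\ref{BKC_dual}), the paper cites \cite[\S 5.1]{Kim_2} for the $p$-divisible case together with a d\'evissage argument from \cite[Proposition 4.4]{Ha_ramcorr}, rather than \cite{Li_FC}.

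The main divergence is in part~(\ref{BKC_ram}). The paper simply cites \cite[Theorem 1.1 and Corollary 3.3]{Ha_lowram}, which is precisely the reference that constructs the ideals $I_{n,i}$ and proves the identification for all $n$. Your sketch for $n=1$ is on the right track, but for general $n$ your proposed induction and the references you list (\cite{Li_2}, \cite{Ha_cansubZ}, \cite{Ha_cansub}, \cite{Ti_2}) are not where this result lives; the construction of $I_{n,i}$ and the proof of the isomorphism for arbitrary $n$ are carried out in \cite{Ha_lowram}, and your speculative plan would amount to reproducing that paper. So while nothing you wrote is wrong, you have missed the one reference that makes part~(\ref{BKC_ram}) a one-line citation rather than a project.
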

\begin{proof}
The assertions (\ref{BKC_eq}) and (\ref{BKC_Gal}) are contained in \cite[Corollary 4.3]{Kim_2}: the assertion on truncated Barsotti-Tate groups of level $n$ over $\okey$ follows from the fact that they are $p^n$-torsion parts of $p$-divisible groups \cite[Th\'{e}or\`{e}me 4.4 (e)]{Il}, and the equality on the length follows from the natural isomorphism of (\ref{BKC_Gal}). 
The assertion (\ref{BKC_dual}) follows from a similar assertion on $p$-divisible groups \cite[\S 5.1]{Kim_2} and a d\'{e}vissage argument as in \cite[Proposition 4.4]{Ha_ramcorr}. The assertion (\ref{BKC_ram}) is \cite[Theorem 1.1 and Corollary 3.3]{Ha_lowram}.
\end{proof}

Next we recall, for any extension $L/K$ of complete valuation fields, the definitions of invariants associated to a finite flat group scheme $\cG$ over $\oel$ which is killed by $p^n$ with some positive integer $n$. For any finitely presented torsion $\oel$-module $M$, write as $M\simeq \bigoplus_i \oel/(a_i)$ with some $a_i\in \oel$ and put $\deg(M)=\sum_i v_p(a_i)$. Since $\cG$ is finitely presented over $\oel$, the module $\omega_{\cG}$ of invariant differentials of $\cG$ is a finitely presented $\oel$-module. We put $\deg(\cG)=\deg(\omega_{\cG})$, and refer to it as the degree of $\cG$. 

Let $\bar{L}$ be an algebraic closure of $L$. Note that any element $x\in \cG(\oelbar)$ defines a homomorphism
\[
x: \cG^\vee\times_{\oel}\Spec(\oelbar)\to \Gm\times_{\oel}\Spec(\oelbar)
\]
by Cartier duality. We define the Hodge-Tate map by 
\[
\HT_\cG: \cG(\oelbar)\to \omega_{\cG^\vee}\otimes_{\oel} \oelbar,\quad x\mapsto x^*\frac{dT}{T}
\]
and, for any positive rational number $i$, the $i$-th Hodge-Tate map by the composite
\[
\HT_{\cG,i}: \cG(\oelbar)\overset{\HT_\cG}{\to} \omega_{\cG^\vee}\otimes_{\oel} \cO_{\bar{L}} \to \omega_{\cG^\vee}\otimes_{\oel} \cO_{\bar{L},i}
\]
of $\HT_\cG$ and the reduction map. We often denote them by $\HT$ and $\HT_i$. 

Suppose that $\cG$ is a truncated Barsotti-Tate group of level $n$, height $h$ and dimension $d$ over $\oel$. Consider the $p$-torsion part $\cG[p]$. 
Note that the Lie algebra $\Lie(\cG^\vee[p] \times \sS_{L,1})$ is a free $\cO_{L,1}$-module of rank $h-d$. The Verschiebung of $\cG^\vee[p]\times \sS_{L,1}$ induces a map on the Lie algebra
\[
\Lie(V_{\cG^\vee[p]\times \sS_{L,1}}): \Lie(\cG^\vee[p]\times \sS_{L,1})^{(p)} \to \Lie(\cG^\vee[p]\times \sS_{L,1}).
\]
The truncated valuation for $v_p$ of the determinant of a representing matrix of this map is independent of the choice of a basis of the Lie algebra, which we call the Hodge height of $\cG$ and denote by $\Hdg(\cG)$. Finally, for any truncated Barsotti-Tate group $\cG$ of level one over $\okey$ and any element $i$ of $e^{-1} \bZ_{\geq 0}$, the quotient $\SGm^*(\cG)_i=\SGm^*(\cG)/u^{e i}\SGm^*(\cG)$ has a natural structure of a $\varphi$-module induced by $\varphi_\SGm$. We put
\[
\Fil^1\SGm^*(\cG)_i=\Img(1\otimes\varphi: \varphi^* \SGm^*(\cG)_i \to \SGm^*(\cG)_i).
\]
It also has a natural structure of a $\varphi$-module induced by $\varphi_\SGm$. By the isomorphisms of $k$-algebras $\SG_1/(u^e)\to \cO_{K,1}$ defined by $u\mapsto \pi$ and $R_i\to \cO_{\Kbar,i}$ defined by the zeroth projection $\prjt_0$ for $i\leq 1$, we identify the both sides. 
For any $x\in \SG_1/(u^e)$, we define the truncated valuation $v_u(x)$ by $v_u(x)=\min\{v_u(\hat{x}),e\}$ with any lift $\hat{x}\in \SG_1$ of $x$.
Then these invariants of $\cG$ on the side of differentials can be read off from the associated Breuil-Kisin module, as follows.

\begin{prop}\label{DegHdg}
\begin{enumerate}
\item\label{DegHdg_deg} For any finite flat group scheme $\cG$ over $\okey$ killed by $p$, there exists a natural isomorphism
\[
\SGm^*(\cG)/(1\otimes \varphi_{\SGm^*(\cG)})(\varphi^*\SGm^*(\cG))\to \omega_{\cG}
\]
and we have 
\[
\deg(\cG)=e^{-1}v_u(\det(\varphi_{\SGm^*(\cG)})).
\]
\item\label{DegHdg_Hdg} Suppose that $\cG$ is a truncated Barsotti-Tate group of level one. Then we have a natural isomorphism
\[
\Lie(\cG^\vee\times \sS_{1})\to \Fil^1 \SGm^*(\cG)_1.
\]
The $\cO_{K,1}$-module $\Fil^1 \SGm^*(\cG)_1$ is a direct summand of $\SGm^*(\cG)_1$ of rank $h-d$. Moreover, we have the equality of truncated valuations
\[
\Hdg(\cG)=e^{-1}v_u(\det(\varphi_{\Fil^1 \SGm^*(\cG)_1})).
\]
\item\label{DegHdg_HT} Suppose that $\cG$ is a truncated Barsotti-Tate group of level one. For any positive rational number $i\leq 1$, the $i$-th Hodge-Tate map coincides with the composite
\begin{align*}
\cG(\okbar)&\to \Hom_{\SG,\varphi}(\SGm^*(\cG),R)\to \Hom_{\SG}(\Fil^1\SGm^*(\cG)_1,R_i)\\
&\simeq \Hom_{\cO_{K}}(\Lie(\cG^\vee\times \sS_{1}),\cO_{\Kbar,i})\simeq \omega_{\cG^\vee}\otimes_{\okey} \cO_{\Kbar,i}.
\end{align*}
\end{enumerate}
\end{prop}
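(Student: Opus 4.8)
The plan is to derive all three parts from the Breuil--Kisin classification of Theorem~\ref{BKC} together with the standard compatibility of $\SGm^*$ with the crystalline Dieudonné (or Breuil-module) realization of finite flat group schemes, reducing the numerical identities to elementary-divisor computations over the discrete valuation ring $\SG_1=k[[u]]$.

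For (\ref{DegHdg_deg}): since $\cG$ is killed by $p$, the object $\SGm^*(\cG)$ is $u$-torsion free and killed by $p$, hence free of some rank $h$ over $\SG_1$; fixing a basis, let $A\in M_h(\SG_1)$ be the matrix of $\varphi_{\SGm^*(\cG)}$, which is then also the matrix of the linearization $1\otimes\varphi_{\SGm^*(\cG)}$. A base change by $P\in GL_h(\SG_1)$ replaces $A$ by $P^{-1}A\varphi(P)$ and $\det P\in\SG_1^\times$, so $v_u(\det A)$ is intrinsic; moreover $\det A\neq 0$ because $1\otimes\varphi$ becomes an isomorphism after inverting $u$. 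I would invoke from \cite{Ha_cansub, Ha_cansubZ} the natural isomorphism $\SGm^*(\cG)/(1\otimes\varphi_{\SGm^*(\cG)})(\varphi^*\SGm^*(\cG))\cong\omega_\cG$; both sides are killed by $E(u)$, whose image in $\SG_1$ is $u^e$ as $E$ is Eisenstein, so they are modules over $\SG_1/(u^e)\cong\cO_{K,1}$. Writing $\SG_1^h/A\SG_1^h\cong\bigoplus_i\SG_1/(u^{j_i})$ with $\sum_i j_i=v_u(\det A)$ and identifying $u\mapsto\pi$, each summand becomes $\okey/(\pi^{j_i})$ of degree $j_i/e$ (using $(p)=(\pi^e)$); summing yields $\deg(\cG)=\deg(\omega_\cG)=e^{-1}v_u(\det\varphi_{\SGm^*(\cG)})$.

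For (\ref{DegHdg_Hdg}) and (\ref{DegHdg_HT}): now $\cG$ is a truncated Barsotti--Tate group of level one, height $h$ and dimension $d$, so $\SGm^*(\cG)$ is free of rank $h$ by Theorem~\ref{BKC}~(\ref{BKC_eq}) and $\SGm^*(\cG)_1$ is free of rank $h$ over $\cO_{K,1}$. Reducing the isomorphism of (\ref{DegHdg_deg}) modulo $u^e$ gives $\SGm^*(\cG)_1/\Fil^1\SGm^*(\cG)_1\cong\omega_\cG$; and, $\cG$ being the $p$-torsion of a $p$-divisible group $G$ over $\okey$ (as already used in the proof of Theorem~\ref{BKC}), $\omega_\cG\cong\omega_G/p\omega_G$ is free of rank $d$, so $\Fil^1\SGm^*(\cG)_1$ is the kernel of a split surjection onto a free module, hence a free direct summand of rank $h-d$. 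To identify it with $\Lie(\cG^\vee\times\sS_1)$ I would apply (\ref{DegHdg_deg}) to $\cG^\vee$, use $\SGm^*(\cG^\vee)\cong\SGm^*(\cG)^\vee$ from Theorem~\ref{BKC}~(\ref{BKC_dual}) and the explicit dual Breuil--Kisin module recalled before Theorem~\ref{BKC} (whose Frobenius has matrix $(E(u)/c_0)\tran A^{-1}$), getting a presentation of the free $\cO_{K,1}$-module $\omega_{\cG^\vee}$ whose $\cO_{K,1}$-dual is precisely $\Img(A\bmod u^e)=\Fil^1\SGm^*(\cG)_1$; alternatively one may pass through the classical Dieudonné module over $k$, where the truncated Barsotti--Tate condition forces $\Img(F)=\Ker(V)=\Fil^1$. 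Under this identification, the compatibility of the Breuil--Kisin Frobenius with the crystalline one carries the Verschiebung $\Lie(V_{\cG^\vee[p]\times\sS_1})$ to $\varphi_{\Fil^1\SGm^*(\cG)_1}$ (well-defined since $\varphi_{\SGm^*(\cG)}$ sends all of $\SGm^*(\cG)_1$ into $\Fil^1\SGm^*(\cG)_1$, with determinant intrinsic as before), and the same elementary-divisor computation, with $e^{-1}$ coming from the normalization of the truncated valuation, yields $\Hdg(\cG)=e^{-1}v_u(\det\varphi_{\Fil^1\SGm^*(\cG)_1})$. For (\ref{DegHdg_HT}) I would first check the composite is defined: a $\varphi$-equivariant $f\colon\SGm^*(\cG)\to R$ satisfies $f(u^e\SGm^*(\cG))=[\upi]^ef(\SGm^*(\cG))\subseteq m_R^{\geqslant 1}\subseteq m_R^{\geqslant i}$ as $v_R([\upi]^e)=1\geq i$, so it factors through $\SGm^*(\cG)_1\to R_i$ and restricts to $\Fil^1\SGm^*(\cG)_1$; the remaining arrows are the isomorphism just produced, $R_i\cong\cO_{\Kbar,i}$ via $\prjt_0$, and $\Hom_{\cO_{K,1}}(\Lie(\cG^\vee\times\sS_1),\cO_{\Kbar,i})\cong\omega_{\cG^\vee}\otimes_{\okey}\cO_{\Kbar,i}$ (valid because $\omega_{\cG^\vee}$ is free). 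That the composite equals $\HT_{\cG,i}$ is the compatibility of the Galois realization of Theorem~\ref{BKC}~(\ref{BKC_Gal}) with the Hodge--Tate map; via the Cartier-duality compatibility of Theorem~\ref{BKC}~(\ref{BKC_dual}) it reduces to the identity $x^*(dT/T)=\langle\cdot,\cdot\rangle$ for $\cG=\mu_p$ (equivalently $\cG=\underline{\bZ/p\bZ}$), checked directly.

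The numerical statements are routine once the module-theoretic isomorphisms are in hand; the genuinely delicate points — which I would settle by carefully bookkeeping the crystalline comparison as in \cite{Ha_cansub, Ha_cansubZ, Ti_2} — are the two compatibilities: that the restriction of the Breuil--Kisin Frobenius to $\Fil^1\SGm^*(\cG)_1$ computes the Verschiebung on $\Lie(\cG^\vee\times\sS_1)$, and that the composite in (\ref{DegHdg_HT}) matches the $dT/T$-normalization of $\HT$.
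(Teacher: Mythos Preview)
The paper's own proof is entirely by citation: part (\ref{DegHdg_deg}) is referred to \cite[Proposition 3.2]{Ti_2} and parts (\ref{DegHdg_Hdg}) and (\ref{DegHdg_HT}) to \cite[\S 2.3]{Ha_cansub}, with only the remark that the $p=2$ case goes through by replacing \cite{Ki_Fcrys} with \cite{Kim_2}. Your proposal is essentially an attempt to unpack what those references contain, and the outline you give --- elementary-divisor computations for the numerical identities, Cartier duality and the explicit dual Breuil--Kisin module for the $\Lie(\cG^\vee)$ identification, and the reduction of the Hodge--Tate compatibility to the $\mu_p$ case via Theorem~\ref{BKC}~(\ref{BKC_dual}) --- is correct and is indeed the shape of the arguments in \cite{Ti_2} and \cite{Ha_cansub}, which you yourself cite at the appropriate points. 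So your approach and the paper's coincide; you have simply written out more of the content that the paper leaves to the references, and you correctly flag the two compatibilities (Frobenius-on-$\Fil^1$ versus Verschiebung, and the $dT/T$ normalization) as the places requiring genuine work rather than formal manipulation.
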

\begin{proof}
The first isomorphism is shown in \cite[Proposition 3.2]{Ti_2} and the others are in \cite[\S 2.3]{Ha_cansub}. Note that though \cite{Ha_cansub} assumes $p>2$, the same proof remains valid also for $p=2$ by using \cite{Kim_2} instead of \cite{Ki_Fcrys}.
\end{proof}





\subsection{$\bZ_{p^f}$-groups}\label{SecZfgps}

Let $f$ be a positive integer. We assume that the residue field $k$ of $K$ contains the finite field $\bF_{p^f}$. Let $\bB_f$ be the set of embeddings of $\bF_{p^f}$ into $k$. Any $\beta\in \bB_f$ has the canonical lifts $\bZ_{p^f}\to \okey$ and $\bQ_{p^f}\to K$, which we also denote by $\beta$. Then any $W\otimes \bZ_{p^f}$-module $M$ is decomposed as 
\[
M=\bigoplus_{\beta\in \bB_f} M_\beta
\]
according with the decomposition $W\otimes \bZ_{p^f}\simeq \prod_{\beta\in \bB_f} W$. 

Let $L/K$ be any extension of complete valuation fields and $\bar{L}$ an algebraic closure of $L$. A group scheme $\cG$ over $\oel$ is said to be a $\bZ_{p^f}$-group if it is equipped with an action of the ring $\bZ_{p^f}$. Then we have the decompositions
\[
\omega_{\cG}=\bigoplus_{\beta\in \bB_f} \omega_{\cG,\beta},\quad \Lie(\cG\times \sS_{L,n})=\bigoplus_{\beta\in \bB_f} \Lie(\cG\times \sS_{L,n})_{\beta}.
\]
When $\cG$ is finite and flat over $\oel$, we define the $\beta$-degree of $\cG$ by $\deg_\beta(\cG)=\deg(\omega_{\cG,\beta})$. We have $\deg(\cG)=\sum_{\beta\in \bB_f} \deg_{\beta}(\cG)$. Moreover, for any exact sequence of finite flat $\bZ_{p^f}$-groups over $\oel$
\[
\xymatrix{
0 \ar[r] & \cG' \ar[r] & \cG \ar[r] & \cG''\ar[r] & 0,
}
\]
the equality $\deg_{\beta}(\cG)=\deg_{\beta}(\cG')+\deg_{\beta}(\cG'')$ holds.

Let $n$ be a positive integer. A $\bZ_{p^f}$-group $\cG$ over $\oel$ is said to be a truncated Barsotti-Tate $\bZ_{p^f}$-group of level $n$ if $\cG$ is a truncated Barsotti-Tate group of level $n$, height $2f$ and dimension $f$ such that $\omega_{\cG}$ is a free $\cO_{L,n} \otimes\bZ_{p^f}$-module of rank one. Note that for such $\cG$, we have $\deg_{\beta}(\cG)=n$. We say that such $\cG$ is $\bZ_{p^f}$-alternating self-dual if it is equipped with an isomorphism of $\bZ_{p^f}$-groups $i:\cG\simeq \cG^\vee$ over $\oel$ such that the perfect pairing defined via Cartier duality 
\[
\cG(\oelbar)\times \cG(\oelbar)\overset{1\times i}\to \cG(\oelbar)\times \cG^\vee(\oelbar)\overset{\langle-,-\rangle_{\cG}}{\to} \mu_{p^n}(\oelbar)
\]
satisfies $\langle x, i(ax)\rangle_{\cG}=1$ for any $x\in \cG(\oelbar)$ and $a\in \bZ_{p^f}$. In this case, we also say that the isomorphism $i$ is $\bZ_{p^f}$-alternating. Then the map $i$ is skew-symmetric: namely, we have the commutative diagram
\[
\xymatrix{
\cG \ar[r]^-{i}\ar[d]_{\mathrm{can}.} & \cG^\vee \ar[d]^{-1}\\
\cG^{\vee\vee} \ar[r]_-{i^\vee} & \cG^\vee.
}
\]
For $p\neq 2$, an isomorphism of $\bZ_{p^f}$-groups $i:\cG\simeq \cG^\vee$ is $\bZ_{p^f}$-alternating if and only if it is skew-symmetric. We abbreviate $\bZ_{p^f}$-alternating self-dual truncated Barsotti-Tate $\bZ_{p^f}$-group of level $n$ as $\bZ_{p^f}\text{-}\ADBT_n$. For a $\bZ_{p^f}\text{-}\ADBT_n$ $\cG$ over $\oel$, the $\cO_{L,n} \otimes \bZ_{p^f}$-modules 
$\omega_{\cG}$, $\Lie(\cG\times \sS_{L,n})$, $\omega_{\cG^\vee}$, $\Lie(\cG^\vee\times \sS_{L,n})$
are all free of rank one. Moreover, the action of the Verschiebung on $\Lie(V_{\cG^\vee[p]\times\sS_{L,1}})$ can be written as the direct sum of $\sigma$-semilinear maps
\[
\Lie(V_{\cG^\vee[p]\times\sS_{L,1}})_\beta: \Lie(\cG^\vee[p]\times\sS_{L,1})_{\sigma^{-1}\circ\beta}\to \Lie(\cG^\vee[p]\times\sS_{L,1})_{\beta}.
\]
Note that the both sides are free $\cO_{L,1}$-modules of rank one, and by choosing their bases, this map is identified with the multiplication by an element $a_{\beta}\in \cO_{L,1}$. We define the $\beta$-Hodge height $\Hdg_\beta(\cG)$ of $\cG$ as the truncated valuation of $a_\beta$, namely
\[
\Hdg_{\beta}(\cG)=v_p(a_{\beta}),
\]
which is independent of the choice of bases. From the diagram in the proof of \cite[Proposition 2]{Fa} and \cite[Lemma 2.3.7]{Con}, we obtain the equality
\[
\Hdg_{\beta}(\cG)=\Hdg_{\beta}(\cG^\vee).
\]

A Breuil-Kisin module $\SGm$ is called a $\bZ_{p^f}$-Breuil-Kisin module if $\SGm$ is equipped with an $\SG$-linear action of the ring $\bZ_{p^f}$ commuting with $\varphi_\SGm$. A morphism of $\bZ_{p^f}$-Breuil-Kisin modules is that of Breuil-Kisin modules compatible with $\bZ_{p^f}$-action. The $\bZ_{p^f}$-Breuil-Kisin modules whose underlying $\SG$-modules are free of finite rank over $\SG_1$ ({\it resp.} finitely generated, $p$-power torsion and $u$-torsion free) form a category, which we denote by $\bZ_{p^f}\text{-}\ModSGf$ ({\it resp.} $\bZ_{p^f}$-$\ModSGfinf$). Note that $\SGm\mapsto \SGm^\vee$ defines a notion of duality also for these categories. The anti-equivalence $\SGm^*(-)$ of the Breuil-Kisin classification induces an anti-equivalence from the category of finite flat $\bZ_{p^f}$-groups over $\okey$ killed by some $p$-power to $\bZ_{p^f}$-$\ModSGfinf$. 

To give an object $\SGm$ of $\bZ_{p^f}$-$\ModSGf$ ({\it resp.} $\bZ_{p^f}$-$\ModSGfinf$) is the same as to give a free $\SG_1$-module $\SGm$ of finite rank ({\it resp.} a finitely generated $\SG$-module $\SGm$ which is $p$-power torsion and $u$-torsion free) equipped with a decomposition into $\SG$-submodules $\SGm=\bigoplus_{\beta\in \bB_f} \SGm_{\beta}$ and a $\varphi$-semilinear map $\varphi_{\SGm,\beta}: \SGm_{\sigma^{-1}\circ\beta}\to \SGm_{\beta}$, which we often write as $\varphi_{\beta}$, for each $\beta\in \bB_f$ such that the cokernel of the linearization $1\otimes \varphi_{\beta}: \varphi^* \SGm_{\sigma^{-1}\circ\beta} \to \SGm_{\beta}$ is killed by $E(u)$. Since $1\otimes\varphi: \varphi^*\SGm \to \SGm$ is injective, the map $1\otimes \varphi_\beta$ is also injective. Hence we see that if $\SGm\neq 0$, then $\SGm_\beta\neq 0$ for any $\beta\in \bB_f$. Since $E(u) \SGm\subseteq (1\otimes\varphi)(\varphi^* \SGm)$, we have $E(u) \SGm_{\beta}\subseteq (1\otimes\varphi_{\beta})(\varphi^* \SGm_{\sigma^{-1}\circ \beta})$. 

Let $\SGm$ be any object of $\bZ_{p^f}$-$\ModSGf$. The last inclusion implies that the free $\SG_1$-modules $\SGm_\beta$ have the same rank for any $\beta\in \bB_f$, which is equal to
\[
f^{-1}\rank_{\SG_1}(\SGm)=\dim_{\bF_{p^f}}(\Hom_{\SG_1,\varphi}(\SGm,R)).
\]
Moreover, Proposition \ref{DegHdg} (\ref{DegHdg_deg}) implies that, if $\cG$ is the finite flat group scheme over $\okey$ corresponding to $\SGm$, then we have
\begin{equation}\label{EqnDegBeta}
\deg_\beta(\cG)=e^{-1}\lg_{\SG_1}(\Coker(1\otimes\varphi_{\beta}: \varphi^*\SGm_{\sigma^{-1}\circ \beta}\to \SGm_{\beta})).
\end{equation}

\begin{lem}\label{degbeta}
Let $\cG$ be a finite flat $\bZ_{p^f}$-group over $\okey$. Then we have
\[
\deg_\beta(\cG)+\deg_\beta(\cG^\vee)=\lg_{\SG}(\SGm^*(\cG)_\beta).
\]
\end{lem}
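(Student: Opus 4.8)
The plan is to prove the identity first when $\cG$ is killed by $p$, by an explicit computation with the associated $\bZ_{p^f}$-Breuil-Kisin module, and then to reduce the general case to this one by d\'{e}vissage. All three quantities $\deg_\beta(\cG)$, $\deg_\beta(\cG^\vee)$ and $\lg_\SG(\SGm^*(\cG)_\beta)$ are additive along short exact sequences of finite flat $\bZ_{p^f}$-groups over $\okey$: for $\deg_\beta(\cG)$ this is recorded just before the statement, for $\deg_\beta(\cG^\vee)$ it follows by applying the same to the Cartier-dual sequence (which is again exact), and for $\lg_\SG(\SGm^*(\cG)_\beta)$ it follows from the exactness of the anti-equivalence $\cG \mapsto \SGm^*(\cG)$, the exactness of $\SGm \mapsto \SGm_\beta$, and additivity of length. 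Hence, given $\cG$ with $p^n\cG = 0$ and $n \geq 2$, I would let $\cG'$ be the scheme-theoretic closure in $\cG$ of the $p$-torsion of the generic fiber: this is a finite flat closed $\bZ_{p^f}$-subgroup scheme of $\cG$ with $p\cG' = 0$, and $\cG'' = \cG/\cG'$ is a finite flat $\bZ_{p^f}$-group with $p^{n-1}\cG'' = 0$, so one concludes by induction on $n$.

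It then remains to treat $\cG$ with $p\cG = 0$, so that $\SGm := \SGm^*(\cG)$ lies in $\bZ_{p^f}\text{-}\ModSGf$ and each $\SGm_\beta$ is free over $\SG_1 = k[[u]]$ of the common rank $d$. I would fix bases and let $A_\beta \in M_d(\SG_1)$ be the matrix of $\varphi_{\SGm,\beta}: \SGm_{\sigma^{-1}\circ\beta} \to \SGm_\beta$. Since $\SG_1$ is a discrete valuation ring with uniformizer $u$ and $1\otimes\varphi_\beta$ is injective, the theory of elementary divisors gives $\lg_{\SG_1}(\Coker(1\otimes\varphi_\beta)) = v_u(\det A_\beta)$, whence $\deg_\beta(\cG) = e^{-1}v_u(\det A_\beta)$ by \eqref{EqnDegBeta}. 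By Theorem \ref{BKC} (\ref{BKC_dual}) there is an isomorphism $\SGm^*(\cG^\vee) \cong \SGm^\vee$ of $\bZ_{p^f}$-Breuil-Kisin modules; unwinding the $\bZ_{p^f}$-decomposition one finds $(\SGm^\vee)_\beta = \Hom_{\SG_1}(\SGm_\beta, \SG_1)$ and that the matrix of $\varphi_{\SGm^\vee,\beta}$ in the dual bases equals $(E(u)/c_0)\,{}^{t}\!A_\beta^{-1}$, which is integral over $\SG_1$ because $E(u)\SGm_\beta \subseteq (1\otimes\varphi_\beta)(\varphi^*\SGm_{\sigma^{-1}\circ\beta})$. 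Its determinant is $(E(u)/c_0)^d(\det A_\beta)^{-1}$, and since $c_0 \in W^\times$ and $E(u) \equiv u^e \pmod{p}$ its $u$-valuation is $de - v_u(\det A_\beta)$; so \eqref{EqnDegBeta} applied to $\cG^\vee$ gives $\deg_\beta(\cG^\vee) = e^{-1}(de - v_u(\det A_\beta))$. Adding, $\deg_\beta(\cG) + \deg_\beta(\cG^\vee) = d$, and $d = \lg_\SG(\SGm^*(\cG)_\beta)$ because $\lg_\SG$ agrees with the $\SG_1$-rank on $p$-torsion objects, which is the normalization underlying the length identity of Theorem \ref{BKC} (\ref{BKC_Gal}). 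This settles the case $p\cG = 0$.

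The main obstacle is the bookkeeping of the $\sigma$-twisted indexing in the duality formula: one must check that the isomorphism $\SGm^*(\cG^\vee) \cong \SGm^\vee$ respects the $\bZ_{p^f}$-grading and that the $\beta$-block of $\varphi_{\SGm^\vee}$ is exactly $(E(u)/c_0)\,{}^{t}\!A_\beta^{-1}$ rather than a block twisted by $\sigma$, since the formula becomes index-free only after passing to determinants. By contrast the d\'{e}vissage is routine once one has the flat-closure construction above, and the case $p\cG = 0$ is then a short matrix computation.
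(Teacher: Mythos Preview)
Your proof is correct and follows essentially the same approach as the paper: both reduce to the case $p\cG=0$ by d\'{e}vissage via the scheme-theoretic closure of the $p$-torsion, using additivity of all three terms on short exact sequences, and then compute directly with the matrices $A_\beta$ and $(E(u)/c_0)\,{}^{t}\!A_\beta^{-1}$ of $\varphi_{\SGm,\beta}$ and $\varphi_{\SGm^\vee,\beta}$ via \eqref{EqnDegBeta}. Your flagged concern about the $\sigma$-twisted indexing is a genuine bookkeeping point, but the paper simply asserts it from the definition of the dual, and indeed writing the full matrix $A$ as a block permutation matrix with $A_\beta$ in position $(\beta,\sigma^{-1}\circ\beta)$ shows that $(E(u)/c_0)\,{}^{t}\!A^{-1}$ has exactly $(E(u)/c_0)\,{}^{t}\!A_\beta^{-1}$ in the same position.
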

\begin{proof}
Let $\cH$ be the scheme-theoretic closure in $\cG$ of $\cG(\okbar)[p]$. It is a finite flat closed $\bZ_{p^f}$-subgroup of $\cG$ killed by $p$. Since the both sides are additive with respect to exact sequences of finite flat $\bZ_{p^f}$-groups over $\okey$, by an induction we may assume that $\cG$ is killed by $p$. 

Put $\SGm=\SGm^*(\cG)$. Let $A_\beta$ be the representing matrix of the map $\varphi_{\SGm, \beta}:\SGm_{\sigma^{-1}\circ\beta}\to \SGm_{\beta}$ with some bases. From the definition of the dual, we see that the representing matrix of the map $\varphi_{\SGm^\vee,\beta}$ with the dual bases is $c_0^{-1}E(u) {}^t\! A_\beta^{-1}$. Then the equality (\ref{EqnDegBeta}) implies 
\begin{align*}
\deg_{\beta}(\cG)+\deg_{\beta}(\cG^\vee)&=e^{-1}v_u(\det(A_{\beta})\det (c_0^{-1}E(u) {}^t\! A_{\beta}^{-1}))\\
&=\rank_{\SG_1}(\SGm_\beta).
\end{align*}
This concludes the proof.
\end{proof}

For any $\SGm\in \bZ_{p^f}$-$\ModSGf$ and $i\in e^{-1}\bZ\cap [0,1]$, we put $\SGm_{\beta,i}=\SGm_{\beta}/u^{e i} \SGm_{\beta}$. Then the map $\varphi_{\beta}$ induces an $\SG_1$-semilinear map $\SGm_{\sigma^{-1}\circ\beta,i}\to \SGm_{\beta,i}$, which we denote also by $\varphi_{\beta}$. We define
\[
\Fil^1 \SGm_{\beta,i}=\Img(1\otimes \varphi_{\beta}:\varphi^*\SGm_{\sigma^{-1}\circ \beta,i} \to \SGm_{\beta,i}).
\]



\subsection{Tian's construction}\label{SecCansub1}

Let $\cG$ be a $\bZ_{p^f}\text{-}\ADBT_1$ over $\okey$ of $\beta$-Hodge height $w_\beta$. Put $\SGm=\SGm^*(\cG)$ and $\SGm_1=\SGm/u^e \SGm$. Then each $\SGm_\beta$ is a free $\SG_1$-module of rank two. By Proposition \ref{DegHdg} (\ref{DegHdg_deg}) and (\ref{DegHdg_Hdg}), we have an exact sequence of $\varphi$-modules over $\cO_{K,1}\otimes \bZ_{p^f}$
\[
\xymatrix{
0 \ar[r] & \Fil^1 \SGm_1 \ar[r] & \SGm \ar[r] & \SGm_1/\Fil^1\SGm_1 \ar[r] & 0,
}
\]
where the $\cO_{K,1}\otimes \bZ_{p^f}$-modules $\Fil^1 \SGm_1$ and $\SGm_1/\Fil^1\SGm_1$ are free of rank one. In particular, this splits as a sequence of $\cO_{K,1}\otimes \bZ_{p^f}$-modules. Hence we also have a split exact sequence of $\cO_{K,1}$-modules
\[
\xymatrix{
0 \ar[r] & \Fil^1 \SGm_{\beta,1} \ar[r] & \SGm_{\beta,1} \ar[r] & \SGm_{\beta,1}/\Fil^1\SGm_{\beta,1} \ar[r] & 0,
}
\]
where the modules on the left-hand side and the right-hand side are free of rank one. As in the proof of \cite[Theorem 3.1]{Ha_cansub}, we can choose a basis $\{e_\beta,e'_\beta\}$ of $\SGm_\beta$ satisfying $e_\beta\in  (1\otimes\varphi)(\varphi^* \SGm_{\sigma^{-1}\circ \beta})$ such that the image of $e_\beta$ in $\SGm_{\beta,1}$ is a basis of $\Fil^1\SGm_{\beta,1}$ and the image of $e'_{\beta}$ in $\SGm_{\beta,1}/\Fil^1\SGm_{\beta,1}$ gives its basis. Then we can write as
\begin{equation}\label{EqnTian}
\varphi(e_{\sigma^{-1}\circ \beta},e'_{\sigma^{-1}\circ \beta})=(e_{\beta},e'_{\beta})\begin{pmatrix}a_{\beta,1} & a_{\beta,2} \\ u^e a_{\beta,3} & u^e a_{\beta,4}\end{pmatrix}
\end{equation}
with some invertible matrix
\[
\begin{pmatrix}a_{\beta,1} & a_{\beta,2} \\ a_{\beta,3} & a_{\beta,4}\end{pmatrix}\in \mathit{GL}_2(\SG_1).
\]

For any $\bZ_{p^f}$-group $\cG$ over $\okey$ killed by $p$, a finite flat closed $\bZ_{p^f}$-subgroup $\cH$ of $\cG$ over $\okey$ is said to be cyclic if the $\bF_{p^f}$-vector space $\cH(\okbar)$ is of rank one. Note that, for such $\cH$, the free $\SG_1$-module $\SGm^*(\cH)_\beta$ is of rank one for any $\beta\in \bB_f$. If $\cG$ is a $\bZ_{p^f}$-$\ADBT_1$, then the proof of \cite[Lemma 2.1.1]{GK} shows that the $\bF_{p^f}$-subspace $\cH(\okbar)$ is automatically isotropic with respect to the $\bZ_{p^f}$-alternating perfect pairing on $\cG(\okbar)$. Moreover, since finite flat closed subgroup schemes of $\cG$ over $\okey$ are determined by their generic fibers, this implies that the $\bZ_{p^f}$-alternating isomorphism $i:\cG\simeq \cG^\vee$ induces an isomorphism $\cH\simeq (\cG/\cH)^\vee$.

Now the existence theorem of the canonical subgroup of level one for a $\bZ_{p^f}\text{-}\ADBT_1$ over $\okey$ is as follows.

\begin{thm}\label{cansub1}
Let $\cG$ be a $\bZ_{p^f}\text{-}\ADBT_1$ over $\okey$ with $\beta$-Hodge height $w_\beta$. Put $w=\max\{w_\beta\mid \beta\in \bB_f\}$. Suppose that the inequality
\[
w_\beta +p w_{\sigma^{-1} \circ \beta} <p
\]
holds for any $\beta\in \bB_f$. Then there exists a finite flat closed cyclic $\bZ_{p^f}$-subgroup $\cC$ of $\cG$ over $\okey$ satisfying 
\[
\deg_\beta(\cG/\cC)=w_\beta.
\]
Moreover, the group scheme $\cC$ is the unique finite flat closed cyclic $\bZ_{p^f}$-subgroup of $\cG$ over $\okey$ satisfying
\[
\deg_{\beta}(\cC)+p\deg_{\sigma^{-1}\circ \beta}(\cC)>1
\]
for any $\beta\in \bB_f$.
We refer to $\cC$ as the canonical subgroup of $\cG$. It has the following properties:
\begin{enumerate}
\item\label{cansub1_isom} Let $\cG'$ be a $\bZ_{p^f}$-$\ADBT_1$ over $\okey$ satisfying the same condition on the $\beta$-Hodge heights as above and $\cC'$ the canonical subgroup of $\cG'$. Then any isomorphism of $\bZ_{p^f}$-groups $j:\cG\to \cG'$ over $\okey$ induces an isomorphism $\cC\simeq \cC'$.
\item\label{cansub1_BC} $\cC$ is compatible with base extension of complete discrete valuation rings with perfect residue fields.
\item\label{cansub1_dual} $\cC$ is compatible with Cartier duality. Namely, $(\cG/\cC)^\vee$ is the canonical subgroup of $\cG^\vee$.
\item\label{cansub1_Frob} The kernel of the Frobenius map of $\cG\times\sS_{1-w}$ coincides with $\cC\times \sS_{1-w}$.
\item\label{cansub1_ram} If $w<p/(p+1)$, then $\cC=\cG_{(1-w)/(p-1)}$. 
\item\label{cansub1_HT} If $w<(p-1)/p$, then $\cC(\okbar)$ coincides with $\Ker(\HT_i)$ for any rational number $i$ satisfying $w/(p-1)< i\leq 1-w$. 
\item\label{cansub1_ram2} If $w<(p-1)/p$, then $\cC=\cG_{i}$ for any rational number $i$ satisfying $1/(p(p-1))\leq i \leq (1-w)/(p-1)$.
\end{enumerate}
\end{thm}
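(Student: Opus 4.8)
The plan is to pass to $\bZ_{p^f}$-Breuil-Kisin modules and run the fixed-point argument of \cite{GK,Ha_cansub,Ha_cansubZ,Ti_2} in the present several-variable setting. Put $\SGm=\SGm^*(\cG)$ and keep the notation of (\ref{EqnTian}). Recall $\deg_\beta(\cG)=1$, that Proposition \ref{DegHdg}, applied $\beta$-componentwise, gives $w_\beta=\Hdg_\beta(\cG)=e^{-1}v_u(a_{\beta,1})$, and that $a_{\beta,2},a_{\beta,3}\in\SG_1^\times$ since the $2\times 2$ matrix in (\ref{EqnTian}) is invertible while $v_u(a_{\beta,1})=ew_\beta>0$. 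Note also that the hypothesis forces $w_\beta<1$ for every $\beta$, because $w_{\sigma\circ\beta}+pw_\beta<p$ and $w_{\sigma\circ\beta}\ge 0$. By the anti-equivalence of Theorem \ref{BKC}, a finite flat closed cyclic $\bZ_{p^f}$-subgroup $\cC\subseteq\cG$ killed by $p$ is the same datum as a $\varphi$-stable $\SG_1$-submodule $\SGn=\bigoplus_\beta\SGn_\beta\subseteq\SGm$ with each $\SGn_\beta$ free of rank one and a direct summand of $\SGm_\beta$, in which case $\SGn=\SGm^*(\cG/\cC)$ and, writing $\varphi(\SGn_{\sigma^{-1}\circ\beta})=c_\beta\SGn_\beta$, one has $\deg_\beta(\cG/\cC)=e^{-1}v_u(c_\beta)$ by (\ref{EqnDegBeta}).

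For existence, I look for $\SGn_\beta=\SG_1(e_\beta+x_\beta e'_\beta)$; by (\ref{EqnTian}) the condition $\varphi(\SGn_{\sigma^{-1}\circ\beta})\subseteq\SGn_\beta$ amounts to the system
\[
x_\beta=\frac{u^e\bigl(a_{\beta,3}+\varphi(x_{\sigma^{-1}\circ\beta})a_{\beta,4}\bigr)}{a_{\beta,1}+\varphi(x_{\sigma^{-1}\circ\beta})a_{\beta,2}}\qquad(\beta\in\bB_f),
\]
to be solved in $\prod_\beta\SG_1$. Starting from $x^{(0)}=0$ and iterating, one checks inductively that $v_u(x^{(n)}_\beta)=e(1-w_\beta)$ and that $v_u\bigl(x^{(n+1)}_\beta-x^{(n)}_\beta\bigr)>v_u\bigl(x^{(n)}_\beta-x^{(n-1)}_\beta\bigr)$; in both places the single estimate used is $pe(1-w_{\sigma^{-1}\circ\beta})>ew_\beta$, i.e. precisely $w_\beta+pw_{\sigma^{-1}\circ\beta}<p$. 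Since $\prod_\beta\SG_1$ is $u$-adically complete, the iteration converges to a solution $(x_\beta)$ with $v_u(x_\beta)=e(1-w_\beta)$, so each $\SGn_\beta$ is saturated ($e_\beta$-coefficient a unit). The associated $\cC$ is cyclic, and from $c_\beta=a_{\beta,1}+\varphi(x_{\sigma^{-1}\circ\beta})a_{\beta,2}$ and the same estimate $v_u(c_\beta)=ew_\beta$, whence $\deg_\beta(\cG/\cC)=w_\beta$ and $\deg_\beta(\cC)=1-w_\beta$; in particular $\deg_\beta(\cC)+p\deg_{\sigma^{-1}\circ\beta}(\cC)=1+p-(w_\beta+pw_{\sigma^{-1}\circ\beta})>1$.

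For uniqueness, let $\cC'$ be any cyclic $\bZ_{p^f}$-subgroup with $\deg_\beta(\cC')+p\deg_{\sigma^{-1}\circ\beta}(\cC')>1$ for all $\beta$; it is killed by $p$, and $\SGm^*(\cG/\cC')=\bigoplus_\beta\SG_1v'_\beta$ with each $v'_\beta$ primitive. First, a comparison of the $u$-valuations of the $e_\beta$- and $e'_\beta$-coefficients of $\varphi(v'_{\sigma^{-1}\circ\beta})=c'_\beta v'_\beta$ — using $v_u(a_{\beta,1})=ew_\beta<e$ — shows that no $v'_\beta$ can have non-unit $e_\beta$-coefficient, so after rescaling $v'_\beta=e_\beta+x'_\beta e'_\beta$ with $x'_\beta\in\SG_1$, and the $x'_\beta$ again solve the displayed system. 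Writing $v_u(x'_\beta)=e\epsilon_\beta\in[0,\infty]$ and tracking valuations around each $\sigma$-orbit (treating separately the branches where some $x'_\beta$ is a unit or is $0$), the hypothesis $w_\beta+pw_{\sigma^{-1}\circ\beta}<p$ together with the inequalities $\deg_\beta(\cC')+p\deg_{\sigma^{-1}\circ\beta}(\cC')>1$ excludes every branch except $\epsilon_\beta=1-w_\beta$; then $(x'_\beta)$ lies in the contraction ball above and equals $(x_\beta)$, so $\cC'=\cC$. I expect the contraction estimate for existence and, more delicately, this exclusion of every non-canonical branch in the uniqueness step to be the main obstacles of the proof.

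For the remaining assertions: (\ref{cansub1_isom}) and (\ref{cansub1_BC}) follow from uniqueness together with the functoriality of $\SGm^*(-)$ and the invariance of $\beta$-degrees, $\beta$-Hodge heights and cyclicity under the relevant morphisms and base changes. For (\ref{cansub1_dual}), $(\cG/\cC)^\vee$ is a cyclic subgroup of $\cG^\vee$, and Lemma \ref{degbeta} together with $\Hdg_\beta(\cG)=\Hdg_\beta(\cG^\vee)$ gives $\deg_\beta(\cG^\vee/(\cG/\cC)^\vee)=\deg_\beta(\cC^\vee)=w_\beta=\Hdg_\beta(\cG^\vee)$, so the characterization just proved identifies it with the canonical subgroup of $\cG^\vee$. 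For (\ref{cansub1_Frob}), since $v_u(x_\beta)=e(1-w_\beta)\ge e(1-w)$ we have $e_\beta+x_\beta e'_\beta\equiv e_\beta$ modulo $u^{e(1-w)}$, so $\SGn$ reduces modulo $u^{e(1-w)}$ to $\Fil^1\SGm\bmod u^{e(1-w)}=\Img(1\otimes\varphi)$, which is exactly the Breuil-Kisin translation of $\cC\times\sS_{1-w}=\Ker(\Frob)$ on $\cG\times\sS_{1-w}$. Finally (\ref{cansub1_ram}), (\ref{cansub1_HT}), (\ref{cansub1_ram2}): one identifies $\cC(\okbar)$, the ramification subgroup $\cG_i(\okbar)$ and $\Ker(\HT_i)$ as explicit subsets of $\Hom_{\SG,\varphi}(\SGm,R)$ via Theorem \ref{BKC} (\ref{BKC_ram}) and Proposition \ref{DegHdg} (\ref{DegHdg_HT}), and matches them by solving around each $\sigma$-orbit the affine recursions $s_\beta=ps_{\sigma^{-1}\circ\beta}-w_\beta$ and $t_\beta=pt_{\sigma^{-1}\circ\beta}-(1-w_\beta)$ for the $v_R$-valuations of the relevant coordinates of a $\varphi$-equivariant homomorphism; each has a unique finite solution of the shape $\tfrac1{p^f-1}\sum_j p^j(\cdots)$, and the bounds $w<p/(p+1)$, $w<(p-1)/p$ and $i\ge 1/(p(p-1))$ are precisely what push these solutions into the admissible ranges, yielding the stated coincidences. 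These last computations are routine but lengthy.
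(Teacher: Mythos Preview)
Your approach is essentially the same as the paper's: existence and uniqueness via the fixed-point construction on the Breuil--Kisin side is exactly Tian's argument, which the paper cites for this part, and your treatments of (\ref{cansub1_isom})--(\ref{cansub1_dual}) match the paper's. Two points deserve comment.

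For (\ref{cansub1_Frob}), your phrase ``the Breuil--Kisin translation of $\cC\times\sS_{1-w}=\Ker(\Frob)$'' hides a nontrivial step: the Breuil--Kisin equivalence of Theorem~\ref{BKC} is for group schemes over $\okey$, not over $\cO_{K,1-w}$, so reducing modulo $u^{e(1-w)}$ on the module side does not tautologically correspond to base change to $\sS_{1-w}$ on the group side. The paper bridges this via Proposition~\ref{DegHdg}~(\ref{DegHdg_deg}): your observation $\SGn\equiv\Fil^1\SGm\pmod{u^{e(1-w)}}$ says exactly that $\omega_{\cG/\cC}\otimes\cO_{K,1-w}\to\omega_\cG\otimes\cO_{K,1-w}$ is zero, whence \cite[Proposition~1]{Fa} gives that $(\cG/\cC)^\vee\times\sS_{1-w}$ is killed by Frobenius, and a rank count plus the $\mathrm{BT}_1$ structure finishes. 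So your idea is right but the ``translation'' needs this extra paragraph.

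For (\ref{cansub1_ram})--(\ref{cansub1_ram2}), your terse sketch is on the right track, but the paper's proof is more structured than a single recursion: it isolates the key estimate as a separate Lemma (any nonzero point of $\cH(\SGl)(R)$ has $v_R(z_\beta)\le w/(p-1)$, following from $pv_R(z_{\sigma^{-1}\circ\beta})=v_R(z_\beta)+w_\beta$), uses this once to prove $\cH(\SGn)(R)=\cH(\SGm)_{(1-w)/(p-1)}(R)$ for (\ref{cansub1_ram}) and once to identify $\Ker(\HT_i)$ for (\ref{cansub1_HT}), and then proves (\ref{cansub1_ram2}) by a separate inclusion argument (Lemma~\ref{canlowram1}) rather than by the same recursion. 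Packaging the valuation control on $\cH(\SGl)(R)$ as a standalone lemma makes the three assertions fall out cleanly and avoids repeating the orbit-sum computation.
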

\begin{proof}
Note that, since we have $w<1$ by assumption, Proposition \ref{DegHdg} (\ref{DegHdg_Hdg}) implies 
\[
w_\beta=e^{-1}v_u(a_{\beta,1}).
\]

The existence and the uniqueness in the theorem are due to Tian \cite[Theorem 3.10]{Ti_2}: the $\bZ_{p^f}$-subgroup $\cC$ is defined as the finite flat closed $\bZ_{p^f}$-subgroup of $\cG$ over $\okey$ corresponding to the quotient $\SGn=\SGm/\SGl$ via the Breuil-Kisin classification, where $\SGl=\bigoplus_{\beta\in \bB_f} \SGl_\beta$ is the unique $\bZ_{p^f}$-Breuil-Kisin submodule of $\SGm$ satisfying $\SGl_{\beta, 1-w_\beta}=\Fil^1\SGm_{\beta, 1-w_\beta}$ for any $\beta\in\bB_f$. In particular, the $\SG_1$-module $\SGl_\beta$ is generated by 
\[
\delta_\beta=e_{\beta}+u^{e(1-w_\beta)} y_\beta e'_\beta
\] 
with some $y_\beta\in \SG_1$. The assertions (\ref{cansub1_isom}) and (\ref{cansub1_BC}) follow from the uniqueness.

Let us prove the assertion (\ref{cansub1_dual}). Note that, since $\Hdg_\beta(\cG)=\Hdg_\beta(\cG^\vee)$, the $\bZ_{p^f}\text{-}\ADBT_1$ $\cG^\vee$ over $\okey$ also has the canonical subgroup $\cC'$. By Lemma \ref{degbeta}, we have 
\[
\deg_\beta((\cG/\cC)^\vee)=1-\deg_\beta(\cG/\cC)=1-w_\beta
\]
and the uniqueness assertion of the theorem and the assumption on $w_\beta$ imply $\cC'=(\cG/\cC)^\vee$.

The assertion (\ref{cansub1_Frob}) is also due to Tian \cite[Remark 3.11]{Ti_2}. Here we give a short proof for the convenience of the reader. Since $1-w\leq 1-w_\beta$, the construction of $\SGl$ implies
\begin{equation}\label{Fil1-w}
\SGl_{1-w}=\Fil^1\SGm_{1-w}. 
\end{equation}
Then Proposition \ref{DegHdg} (\ref{DegHdg_deg}) shows that the natural map
\[
\omega_{\cG/\cC}\otimes \cO_{K,1-w}\to \omega_{\cG}\otimes \cO_{K,1-w}
\]
is zero. 
By \cite[Proposition 1]{Fa}, the closed subgroup scheme $(\cG/\cC)^\vee\times \sS_{1-w}$ of $\cG^\vee\times \sS_{1-w}$ is killed by the Frobenius. Comparing the rank, the former coincides with the kernel of the Frobenius of the latter. Since $\cG^\vee\times \sS_{1-w}$ is a truncated Barsotti-Tate group of level one, we see by duality and \cite[Remark 1.3 (b)]{Il} that $\cC\times \sS_{1-w}$ also coincides with the kernel of the Frobenius of $\cG\times \sS_{1-w}$.

Next we consider the assertion (\ref{cansub1_ram}). It can be shown similarly to \cite[Theorem 3.1 (c)]{Ha_cansub}.
For any $\SG_1$-algebra $A$, we define an abelian group $\cH(\SGm)(A)$ by 
\[
\cH(\SGm)(A)=\Hom_{\SG_1,\varphi}(\SGm,A),
\]
where we consider $A$ as a $\varphi$-module with the $p$-th power Frobenius map. If we take the basis $\{e_\beta,e'_\beta\}_{\beta\in \bB_f}$ of $\SGm$ as above, it is identified with the set of $f$-tuples of elements $(x_\beta,x'_\beta)\in A^2$ satisfying
\begin{equation}\label{eqnA}
(x_{\sigma^{-1}\circ\beta}^p, (x'_{\sigma^{-1}\circ\beta})^p)=(x_{\beta},x'_{\beta})\begin{pmatrix}a_{\beta,1} & a_{\beta,2} \\ u^e a_{\beta,3} & u^e a_{\beta,4}\end{pmatrix}.
\end{equation}
We define the subgroup $\cH(\SGm)_i(R)$ of $\cH(\SGm)(R)$ by
\[
\cH(\SGm)_i(R)=\Ker(\cH(\SGm)(R)\to \cH(\SGm)(R_i)). 
\]
Similarly, we have the subgroup $\cH(\SGn)(R)$ of $\cH(\SGm)(R)$. Note that we have an exact sequence
\[
\xymatrix{
0\ar[r] & \cH(\SGn)(R) \ar[r] & \cH(\SGm)(R)\ar[r] & \cH(\SGl)(R) \ar[r] & 0
}
\]
which can be identified with the exact sequence of abelian groups
\[
\xymatrix{
0\ar[r] & \cC(\okbar) \ar[r] & \cG(\okbar)\ar[r] & (\cG/\cC)(\okbar) \ar[r] & 0.
}
\]
Since $\deg_\beta(\cG/\cC)=w_\beta$, the basis $\delta_\beta$ of $\SGl_\beta$ satisfies
\[
\varphi_{\beta}(\delta_{\sigma^{-1}\circ\beta})=\lambda_\beta \delta_{\beta}\text{ with }v_R(\lambda_\beta)=w_\beta.
\]
Thus any element of $\cH(\SGl)(R)$ can be identified with an $f$-tuple $(z_\beta)_{\beta\in \bB_f}$ in $R$ satisfying
\begin{equation}\label{eqnL}
z_{\sigma^{-1}\circ\beta}^p=\lambda_\beta z_{\beta}
\end{equation}
for any $\beta\in \bB_f$.

\begin{lem}\label{lowram}
For any element $(z_\beta)_{\beta\in \bB_f}\neq 0$ of $\cH(\SGl)(R)$, we have
\[
v_R(z_\beta)\leq \frac{w}{p-1}\text{ for any }\beta\in \bB_f.
\]
In particular, $\cH(\SGl)_i(R)=0$ for any $i>w/(p-1)$.
\end{lem}
\begin{proof}
From the equation (\ref{eqnL}), we see that $(z_\beta)_{\beta\in \bB_f}\neq 0$ if and only if $z_\beta\neq 0$ for any $\beta\in \bB_f$. This equation also implies
\[
p v_R(z_{\sigma^{-1}\circ\beta})=v_R(z_{\beta})+ w_\beta
\]
for any $\beta\in \bB_f$ and thus
\[
v_R(z_{\sigma^{-1}\circ\beta})=\frac{1}{p^f-1}\sum_{l=0}^{f-1}p^{f-1-l}w_{\sigma^l \circ \beta} \leq \frac{w}{p-1},
\]
which concludes the proof.
\end{proof}

We claim that $\cH(\SGn)(R)= \cH(\SGm)_{(1-w)/(p-1)}(R)$. Indeed, take any element $(x_\beta,x'_\beta)_{\beta\in \bB_f}$ of the left-hand side. Take the element $y_\beta\in \SG_1$ such that 
\[
\delta_\beta=e_\beta+ u^{e(1-w_\beta)}y_\beta e'_\beta
\] 
is a basis of the $\SG_1$-module $\SGl_\beta$. Then $(x_\beta,x'_\beta)_{\beta\in \bB_f}\in \cH(\SGn)(R)$ if and only if $x_\beta+u^{e(1-w_\beta)}y_\beta x'_\beta=0$ for any $\beta\in \bB_f$. The equation (\ref{eqnA}) implies
\[
(x'_{\sigma^{-1}\circ\beta})^p=x'_{\beta}(-a_{\beta,2}u^{e(1-w_\beta)}y_{\beta} +u^e a_{\beta,4})
\]
and thus
\[
p v_R(x'_{\sigma^{-1}\circ\beta})\geq v_R(x'_{\beta})+1-w_\beta.
\]
Hence we have
\[
v_R(x'_{\sigma^{-1}\circ\beta})\geq \frac{1}{p^f-1}\sum_{l=0}^{f-1} p^{f-1-l}(1-w_{\sigma^l\circ \beta})\geq \frac{1-w}{p-1}
\]
for any $\beta\in \bB_f$ and we obtain $(x_\beta,x'_\beta)_{\beta\in \bB_f}\in \cH(\SGm)_{(1-w)/(p-1)}(R)$.

Conversely, let $(x_\beta,x'_\beta)_{\beta\in \bB_f}$ be an element of $\cH(\SGm)_{(1-w)/(p-1)}(R)$. 
By the equation (\ref{eqnA}), we have
\begin{equation}\label{eqnAinv}
(x_{\beta}, u^{e}x'_{\beta})=(x_{\sigma^{-1}\circ\beta}^p, (x'_{\sigma^{-1}\circ\beta})^p)\begin{pmatrix}a_{\beta,1} & a_{\beta,2} \\ a_{\beta,3} & a_{\beta,4}\end{pmatrix}^{-1}
\end{equation}
for any $\beta\in \bB_f$.
Recall that the matrix on the right-hand side is an element of $\mathit{GL}_2(\SG_1)$. Hence we obtain 
\[
v_R(x_{\beta})\geq \frac{p(1-w)}{p-1}
\]
and the element $z_\beta=x_\beta+u^{e(1-w_\beta)}y_\beta x'_\beta$ satisfies
\[
v_R(z_\beta)\geq \frac{p(1-w)}{p-1}
\]
for any $\beta\in \bB_f$. By the assumption $w<p/(p+1)$, we have 
\[
\frac{w}{p-1}<\frac{p(1-w)}{p-1} 
\]
and Lemma \ref{lowram} implies $z_\beta=0$ for any $\beta\in \bB_f$. Therefore we obtain $(x_\beta,x'_\beta)_{\beta\in \bB_f}\in \cH(\SGn)(R)$, from which the claim follows. Now Theorem \ref{BKC} (\ref{BKC_ram}) shows the assertion (\ref{cansub1_ram}).

Let us show the assertion (\ref{cansub1_HT}). This is shown similarly to \cite[Theorem 3.1 (2)]{Ha_cansub}. Since $i\leq 1-w$, Proposition \ref{DegHdg} (\ref{DegHdg_HT}) and (\ref{Fil1-w}) show that the kernel of the map $\HT_i$ is equal to the kernel of the natural map
\[
\cG(\okbar)\simeq \cH(\SGm)(R)\to \cH(\SGl)(R)\to \cH(\SGl)(R_i).
\]
Since $i>w/(p-1)$, Lemma \ref{lowram} implies $\cH(\SGl)_i(R)=0$ and the right arrow in the above map is injective. Thus $\Ker(\HT_i)$ coincides with the inverse image of 
\[
\cH(\SGn)(R)=\Ker(\cH(\SGm)(R)\to \cH(\SGl)(R))
\]
by the isomorphism $\cG(\okbar)\simeq \cH(\SGm)(R)$, which is $\cC(\okbar)$. The assertion (\ref{cansub1_ram2}) follows from the lemma below.
\end{proof}

\begin{lem}\label{canlowram1}
Let $\cG$ be a $\bZ_{p^f}\text{-}\ADBT_1$ over $\okey$ with $\beta$-Hodge height $w_\beta$. Put $w=\max\{w_\beta\mid \beta\in \bB_f\}$ and
\[
i_n=\frac{1}{p^{n-1}(p-1)}-\frac{w}{p-1},\quad i'_n=\frac{1}{p^n(p-1)}.
\]
Suppose $w<(p-1)/p^n$ for some positive integer $n$. Let $\cC$ be the canonical subgroup of $\cG$, which exists by Theorem \ref{cansub1}. Then we have
\[
\cC=\cG_{i_m}=\cG_{i'_m}
\] 
for any $1\leq m\leq n$.
\end{lem}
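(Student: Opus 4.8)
The plan is to prove the equality of subgroup schemes by comparing their $\okbar$-valued points, which is legitimate because a finite flat closed subgroup scheme of $\cG$ over $\okey$ is determined by its generic fiber. I would put $\SGm=\SGm^*(\cG)$ and keep all the notation from the proof of Theorem~\ref{cansub1}: the basis $\{e_\beta,e'_\beta\}_\beta$ with the matrices as in~(\ref{EqnTian}), the submodule $\SGl$, the quotient $\SGn=\SGm/\SGl$, the abelian groups $\cH(\SGm)(R)$, $\cH(\SGm)_i(R)$ and $\cH(\SGl)(R)$, and, for $x\in\cH(\SGm)(R)$, its coordinates $(x_\beta,x'_\beta)_\beta$ and its image $(z_\beta)_\beta$ in $\cH(\SGl)(R)$, so that $z_\beta=x_\beta+u^{e(1-w_\beta)}y_\beta x'_\beta$. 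By Theorem~\ref{BKC}~(\ref{BKC_ram}), applied with level $n=1$ so that $I_{1,i}=m_R^{\geqslant i}$, the isomorphism $\cG(\okbar)\simeq\cH(\SGm)(R)$ of Theorem~\ref{BKC}~(\ref{BKC_Gal}) identifies $\cG_i(\okbar)$ with $\cH(\SGm)_i(R)=\{x\mid v_R(x_\beta)\geq i\text{ and }v_R(x'_\beta)\geq i\text{ for all }\beta\}$ and $\cC(\okbar)$ with $\cH(\SGn)(R)$. Moreover, since $w<(p-1)/p^n$ forces $w<p/(p+1)$, the argument in the proof of Theorem~\ref{cansub1}~(\ref{cansub1_ram}) already gives $\cH(\SGn)(R)=\cH(\SGm)_{i_1}(R)$ for $i_1=(1-w)/(p-1)$.

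Next I would observe the interlacing inequalities $i'_n\leq i_n\leq i_m\leq i_1$ and $i'_n\leq i'_m\leq i'_1\leq i_1$ for $1\leq m\leq n$, all immediate from $w<(p-1)/p^n$. Since $\cH(\SGm)_i(R)\supseteq\cH(\SGm)_{i'}(R)$ whenever $i\leq i'$, the case already in hand yields $\cC(\okbar)=\cH(\SGn)(R)=\cH(\SGm)_{i_1}(R)\subseteq\cH(\SGm)_{i}(R)\subseteq\cH(\SGm)_{i'_n}(R)$ for every index $i$ among the $i_m$ and $i'_m$ with $1\leq m\leq n$. Hence the whole lemma reduces to proving the single reverse inclusion $\cH(\SGm)_{i'_n}(R)\subseteq\cH(\SGn)(R)$.

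For this I would argue by contradiction. Suppose $x\in\cH(\SGm)(R)$ satisfies $v_R(x_\beta),v_R(x'_\beta)\geq i'_n$ for all $\beta$ but $x\notin\cH(\SGn)(R)$. Then $(z_\beta)_\beta$ is a non-zero element of $\cH(\SGl)(R)$, so $z_\beta\neq 0$ for every $\beta$ by~(\ref{eqnL}), and $v_R(z_\beta)\leq w/(p-1)$ by Lemma~\ref{lowram}. As $w<(p-1)/p$ gives $w/(p-1)<1-w\leq 1-w_\beta$, in $z_\beta=x_\beta+u^{e(1-w_\beta)}y_\beta x'_\beta$ the second summand has $v_R\geq(1-w_\beta)+v_R(x'_\beta)>v_R(z_\beta)$, whence $v_R(x_\beta)=v_R(z_\beta)\leq w/(p-1)$ for all $\beta$. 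If no $w_\gamma$ is positive, then all $z_\gamma$ vanish by the formula in the proof of Lemma~\ref{lowram}, so $v_R(x_\gamma)=0<i'_n$, a contradiction; otherwise I would fix $\gamma$ with $w_\gamma>0$. Because the matrix in~(\ref{EqnTian}) lies in $\mathit{GL}_2(\SG_1)$ and $v_R(a_{\gamma,1})=w_\gamma>0$, the entry $a_{\gamma,2}$ is a unit, so comparing valuations in the second coordinate of~(\ref{eqnA}), i.e. in $(x'_{\sigma^{-1}\circ\gamma})^p=a_{\gamma,2}x_\gamma+u^e a_{\gamma,4}x'_\gamma$, and using $v_R(a_{\gamma,2}x_\gamma)=v_R(x_\gamma)=v_R(z_\gamma)<1\leq 1+v_R(x'_\gamma)\leq v_R(u^e a_{\gamma,4}x'_\gamma)$, I obtain $p\,v_R(x'_{\sigma^{-1}\circ\gamma})=v_R(z_\gamma)\leq w/(p-1)$. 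Then $v_R(x'_{\sigma^{-1}\circ\gamma})\leq w/(p(p-1))<1/p^{n+1}<1/(p^n(p-1))=i'_n$ contradicts $v_R(x'_{\sigma^{-1}\circ\gamma})\geq i'_n$, and the inclusion is proved.

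The main obstacle is the argument of the third step. One must thread the valuation estimates through the $\sigma$-twisted Frobenius relation~(\ref{eqnA}), read off the unitarity of $a_{\gamma,2}$ from the $\mathit{GL}_2$-condition together with positivity of $w_\gamma$ (and handle the degenerate directions $w_\beta=0$ separately), and --- since a priori the equality $v_R(x_\beta)=v_R(z_\beta)$ only forces $v_R(x_\beta)<1/p^n$ rather than $v_R(x_\beta)<i'_n$ --- pass to the coordinate $x'_{\sigma^{-1}\circ\gamma}$ so as to gain the extra factor $p$ in the denominator that finally pushes the valuation strictly below $i'_n$. The preliminary reduction to the single extreme index $i'_n$ via the interlacing inequalities is what makes this argument uniform in $n$.
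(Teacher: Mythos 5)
Your proof is correct, and after the same preliminary reduction as the paper (interlacing the indices to reduce everything to the single inclusion $\cG_{i'_n}\subseteq\cC$, then translating via Theorem~\ref{BKC}~(\ref{BKC_ram}) into $\cH(\SGm)_{i'_n}(R)\subseteq\cH(\SGn)(R)$), your key valuation estimate takes a genuinely different route from the paper's. The paper argues directly: it reads the inverted Frobenius relation~(\ref{eqnAinv}) and uses that the inverse matrix lies in $\mathit{GL}_2(\SG_1)$ to conclude $v_R(x_\beta)\geq p\,i'_n>w/(p-1)$ outright, whence $v_R(z_\beta)>w/(p-1)$ for every $\beta$ and Lemma~\ref{lowram} forces $z=0$, with no case analysis. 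You instead argue by contradiction, extracting the equality $v_R(x_\beta)=v_R(z_\beta)\leq w/(p-1)$ from Lemma~\ref{lowram} and then feeding it into the \emph{forward} relation~(\ref{eqnA}) in the $x'$-coordinate, using that $a_{\gamma,2}$ is a unit when $w_\gamma>0$ to obtain $p\,v_R(x'_{\sigma^{-1}\circ\gamma})=v_R(z_\gamma)$ and hence $v_R(x'_{\sigma^{-1}\circ\gamma})<i'_n$; the degenerate case $w=0$ then needs separate handling. Both arguments are sound; the paper's use of~(\ref{eqnAinv}) is more economical precisely because it avoids selecting a distinguished unit entry, whereas your version pays for the contradiction structure with the extra case split. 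One small slip in wording: in the all-$w_\gamma=0$ case you write that ``all $z_\gamma$ vanish,'' but what Lemma~\ref{lowram} gives you there is $v_R(z_\gamma)=0$ for all $\gamma$ (the $z_\gamma$ are units, not zero); only then does $v_R(x_\gamma)=0<i'_n$ yield the contradiction you intend.
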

\begin{proof}
This can be shown in the same way as \cite[Lemma 5.2]{Ha_lowram}. 
We follow the notation in the proof of Theorem \ref{cansub1}. By Theorem \ref{BKC} (\ref{BKC_ram}) and Theorem \ref{cansub1} (\ref{cansub1_ram}), it is enough to show 
\[
\Hom_{\SG,\varphi}(\SGm, m_R^{\geqslant i'_n})\subseteq \cH(\SGn)(R).
\]
We identify an element $x$ of the left-hand side with a solution $(x_\beta, x'_\beta)_{\beta\in \bB_f}$ of the equation (\ref{eqnA}) in $R$ satisfying $v_R(x_\beta),v_R(x'_\beta)\geq i'_n$ for any $\beta\in \bB_f$. From the equality (\ref{eqnAinv}),
we have $v_R(x_\beta)\geq p i'_n>w/(p-1)$. Since we have $1-w_\beta\geq 1-w >w/(p-1)$, the element $z_\beta=x_\beta+u^{e(1-w_\beta)} y_\beta x'_\beta$ satisfies 
\[
v_R(z_\beta)>w/(p-1).
\]
Thus Lemma \ref{lowram} implies $z_\beta=0$ for any $\beta\in \bB_f$ and $x\in \cH(\SGn)(R)$.
\end{proof}

The description of the Hodge-Tate map via the Breuil-Kisin classification also yields a torsion property of the Hodge-Tate cokernel, as follows. 

\begin{lem}\label{cok1}
Let $\cG$ be a $\bZ_{p^f}\text{-}\ADBT_1$ over $\okey$ with $\beta$-Hodge height $w_\beta$. Put $w=\max\{w_\beta\mid \beta\in \bB_f\}$. Suppose $w<(p-1)/p$. Then the cokernel of the linearization of the Hodge-Tate map
\[
\HT\otimes 1: \cG(\okbar)\otimes \okbar \to \omega_{\cG^\vee}\otimes_{\okey} \okbar
\]
is killed by $m_{\Kbar}^{\geqslant w/(p-1)}$.
\end{lem}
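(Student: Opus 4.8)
The plan is to reduce the statement to an elementary valuation estimate by making the Hodge-Tate map completely explicit through the Breuil-Kisin classification, reusing all the notation of the proof of Theorem~\ref{cansub1}. First I would record that $w<(p-1)/p$ forces $w_\beta+pw_{\sigma^{-1}\circ\beta}\leq(p+1)w<(p^2-1)/p<p$ for every $\beta\in\bB_f$, so $\cG$ admits its canonical subgroup $\cC$; set $\SGm=\SGm^*(\cG)$ with the basis $\{e_\beta,e'_\beta\}_{\beta\in\bB_f}$ and the matrices of (\ref{EqnTian}), let $\SGl=\bigoplus_\beta\SGl_\beta$ with $\SGl_\beta$ generated by $\delta_\beta=e_\beta+u^{e(1-w_\beta)}y_\beta e'_\beta$, so $\SGl=\SGm^*(\cG/\cC)$ and the exact sequence $0\to\cH(\SGn)(R)\to\cH(\SGm)(R)\to\cH(\SGl)(R)\to0$ is identified with $0\to\cC(\okbar)\to\cG(\okbar)\to(\cG/\cC)(\okbar)\to0$; moreover the surjection $\cH(\SGm)(R)\to\cH(\SGl)(R)$ sends a tuple $(x_\beta,x'_\beta)_\beta$ to $(x_\beta+u^{e(1-w_\beta)}y_\beta x'_\beta)_\beta$, as computed in the proof of Theorem~\ref{cansub1}~(\ref{cansub1_ram}).

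Next I would unwind $\HT\otimes1$. Since $\omega_{\cG^\vee}\otimes_{\okey}\okbar$ is killed by $p$ and $m_{\Kbar}^{\geqslant1}=p\okbar$, the map $\HT$ coincides with $\HT_1$, and Proposition~\ref{DegHdg}~(\ref{DegHdg_HT}) with $i=1$, together with the identifications $\SG_1/(u^e)\cong\cO_{K,1}$ and $R_1\cong\cO_{\Kbar,1}$ via $\prjt_0$, identifies $\HT$ with the map $\cH(\SGm)(R)\to\Hom_{\SG}(\Fil^1\SGm_1,R_1)\cong\bigoplus_\beta R_1$ carrying a tuple $(x_\beta,x'_\beta)_\beta$ satisfying (\ref{eqnA}) to $(\bar x_\beta)_\beta$, where $\bar x_\beta$ is the image of $x_\beta$ in $R_1$ (obtained by evaluating the associated homomorphism on the basis vector of $\Fil^1\SGm_{\beta,1}$ given by the image of $e_\beta$). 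As $\HT$ is $\bZ_{p^f}$-linear, the linearization $\HT\otimes1\colon\cG(\okbar)\otimes_{\bZ_p}\okbar\to\omega_{\cG^\vee}\otimes_{\okey}\okbar$ is $\cO_{\Kbar,1}\otimes\bZ_{p^f}$-linear for the induced decomposition $\omega_{\cG^\vee}\otimes_{\okey}\okbar\cong\bigoplus_\beta\cO_{\Kbar,1}$, so its image is the direct sum over $\beta$ of the $\cO_{\Kbar,1}$-ideals generated by $\{\bar x_\beta\mid(x_\beta,x'_\beta)_\beta\in\cG(\okbar)\}$. Hence it suffices to show that for each $\beta$ there is a tuple in $\cG(\okbar)$ with $v_R(x_\beta)\leq w/(p-1)$.

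To produce such a tuple I would lift a nonzero point of the quotient. Choose $0\neq(z_\beta)_\beta\in\cH(\SGl)(R)=(\cG/\cC)(\okbar)$, which is nonzero since $(\cG/\cC)(\okbar)$ has $\bF_{p^f}$-dimension one, and lift it to $(x_\beta,x'_\beta)_\beta\in\cH(\SGm)(R)$, so that $z_\beta=x_\beta+u^{e(1-w_\beta)}y_\beta x'_\beta$. Then $v_R(u^{e(1-w_\beta)}y_\beta x'_\beta)\geq1-w_\beta\geq1-w$, and the hypothesis $w<(p-1)/p$ is exactly the inequality $1-w>w/(p-1)$, whereas Lemma~\ref{lowram} gives $v_R(z_\beta)\leq w/(p-1)$ for every $\beta$. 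The two summands of $z_\beta$ therefore have distinct valuations, so $v_R(x_\beta)=v_R(z_\beta)\leq w/(p-1)$. Consequently the $\beta$-component of the image of $\HT\otimes1$ contains $\bar x_\beta\cO_{\Kbar,1}=m_{\Kbar}^{\geqslant v_R(x_\beta)}\cO_{\Kbar,1}\supseteq m_{\Kbar}^{\geqslant w/(p-1)}\cO_{\Kbar,1}$, and summing over $\beta\in\bB_f$ shows that the cokernel of $\HT\otimes1$ is killed by $m_{\Kbar}^{\geqslant w/(p-1)}$.

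The one step I expect to require genuine care is the identification in the second paragraph: one must verify that the chain of isomorphisms in Proposition~\ref{DegHdg}~(\ref{DegHdg_HT}) really carries $(x_\beta,x'_\beta)_\beta$ to $(\bar x_\beta)_\beta$ with no non-invertible factor entering the $\beta$-component of the image, and that reduction modulo $m_R^{\geqslant1}$ is compatible with reduction modulo $p$ under $\prjt_0$. Granting this, the remainder is the short valuation computation above, which rests only on Lemma~\ref{lowram} and the numerical inequality equivalent to the hypothesis.
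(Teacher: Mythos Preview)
Your argument is correct and genuinely different from the paper's. Both proofs ultimately rest on Lemma~\ref{lowram} and the canonical subgroup, but the routes diverge in two ways. First, the paper works with $\HT_{1-w}$ rather than $\HT$ itself: it uses the equality $\SGl_{1-w}=\Fil^1\SGm_{1-w}$ (which fails at level $1$) to factor $\HT_{1-w}$ through $(\cG/\cC)(\okbar)$, then invokes a separate Nakayama-type lemma (Lemma~\ref{NAK}) to pass from the statement about $\Coker(\HT_{1-w}\otimes1)$ back to $\Coker(\HT\otimes1)$. You sidestep this by observing that $\omega_{\cG^\vee}$ is already killed by $p$, so $\HT=\HT_1$ and you can compute on $\Fil^1\SGm_{\beta,1}$ directly via its generator $\bar e_\beta$. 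Second, to show the image is large enough the paper carries out an explicit Vandermonde computation: it takes a basis $\{\alpha_0^l(z_\beta)_\beta\}_l$ of $\cH(\SGl)(R)$ and inverts the matrix $(\beta(\alpha_0^l))_{\beta,l}$ to identify the cokernel of the relevant map with $\bigoplus_\beta R_{1-w}/(z_\beta)$. You replace this with the abstract remark that $\bZ_{p^f}$-linearity forces the image of $\HT\otimes1$ to split along the idempotents of $\okbar\otimes\bF_{p^f}$, after which a single lift of a nonzero point of $(\cG/\cC)(\okbar)$ already exhibits, for every $\beta$, an element $\bar x_\beta$ of valuation $\leq w/(p-1)$.

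Your approach is shorter and avoids introducing Lemma~\ref{NAK} at this point; the paper's approach has the minor advantage of computing the cokernel of $\HT_{1-w}\otimes1$ exactly (as $\bigoplus_\beta R_{1-w}/(z_\beta)$), though that extra precision is not used elsewhere, and Lemma~\ref{NAK} is in any case needed later for Proposition~\ref{cokn}. The caveat you flag about the identification in Proposition~\ref{DegHdg}~(\ref{DegHdg_HT}) is harmless: the image of $e_\beta$ in $\SGm_{\beta,1}$ is by construction a basis of $\Fil^1\SGm_{\beta,1}$, so evaluation introduces no spurious factor, and the compatibility of reductions is precisely the identification $R_1\simeq\cO_{\Kbar,1}$ via $\prjt_0$ fixed in \S\ref{SecBKMod}.
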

\begin{proof}
For this, we first show the following lemma.

\begin{lem}\label{NAK}
Let $M$ be a finitely generated $\okbar$-module. Let $N$ be an $\okbar$-submodule of $M$.
Suppose that there exist positive rational numbers $r>s$ satisfying $m^{\geqslant s}_{\Kbar} M\subseteq N+m^{\geqslant r}_{\Kbar}M$. Then we have $m^{\geqslant s}_{\Kbar} M\subseteq N$.
\end{lem}
\begin{proof}
Put $Q=m^{\geqslant s}_{\Kbar} (M/N)$. Since the assumption implies $m^{\geqslant r-s}_{\Kbar}Q=Q$, Nakayama's lemma shows $Q=0$ and $m^{\geqslant s}_{\Kbar} M\subseteq N$.
\end{proof}

Put $M=\omega_{\cG^\vee}\otimes_{\okey}\okbar$ and
\[
N=\Img(\HT\otimes 1: \cG(\okbar)\otimes \okbar \to \omega_{\cG^\vee}\otimes_{\okey}\okbar).
\]
We claim that
\[
m_{\Kbar}^{\geqslant w/(p-1)} \Coker(\HT_{1-w}\otimes 1: \cG(\okbar)\otimes \okbar\to \omega_{\cG^\vee}\otimes_{\okey} \cO_{\Kbar,1-w})=0.
\]
This is equivalent to the inclusion 
\[
m_{\Kbar}^{\geqslant w/(p-1)} M \subseteq N+m_{\Kbar}^{\geqslant 1-w} M.
\]
The assumption $w<(p-1)/p$ implies $w/(p-1)<1-w$ and thus Lemma \ref{cok1} follows from the claim and Lemma \ref{NAK}.

Now let us prove the claim. Consider the basis $\delta_\beta$ of $\SGl_\beta$ as in the proof of Theorem \ref{cansub1}. Using this, we identify each element of $\cH(\SGl)(R)$ with an $f$-tuple $(z_{\beta})_{\beta\in \bB_f}$ in $R$ satisfying the equation (\ref{eqnL}). By Proposition \ref{DegHdg} (\ref{DegHdg_HT}) and (\ref{Fil1-w}), the cokernel of the claim is identified with the cokernel of the natural map
\begin{align*}
\cH(\SGl)(R)\otimes R&\to \Hom_{\SG_1}(\SGl, R_{1-w})=\SGl^\vee\otimes R_{1-w},\\
 (z_\beta)_{\beta\in \bB_f}\otimes 1 &\mapsto \sum_{\beta\in \bB_f} \delta_{\beta}^\vee \otimes z_{\beta}.
\end{align*}
Note that the abelian group $\cH(\SGl)(R)$ has a natural action of the ring $\bF_{p^f}$ defined by 
\[
\alpha(z_\beta)_{\beta\in \bB_f}=(\beta(\alpha) z_\beta)_{\beta\in \bB_f}\text{ for any }\alpha\in \bF_{p^f}.
\]
Take a generator $\alpha_0$ of the extension $\bF_{p^f}/\bF_p$ and a non-zero element $(z_\beta)_{\beta\in \bB_f}$ of $\cH(\SGl)(R)$. Then the subset 
$\{\alpha_0^l(z_\beta)_{\beta\in \bB_f} \}_{l=0,1,\ldots,f-1}$ forms a basis of the $\bF_p$-vector space $\cH(\SGl)(R)$. Hence the image of the natural map above is generated by the entries of the $f$-tuple
\[
(\delta_{\beta}^\vee\otimes 1)_{\beta\in \bB_f} (\beta(\alpha_0^l)z_\beta)_{\beta, l}=(\delta_{\beta}^\vee\otimes 1)_{\beta\in \bB_f} \diag(z_\beta)_{\beta\in \bB_f} (\beta(\alpha_0^l))_{\beta,l}.
\]
Since the matrix $(\beta(\alpha_0^l))_{\beta,l}$ is invertible in $\mathit{M}_f(R)$, the cokernel is isomorphic as an $R$-module to
\[
\bigoplus_{\beta\in \bB_f} R_{1-w}/(z_\beta).
\]
Thus the claim follows from Lemma \ref{lowram}.
\end{proof}



\subsection{Goren-Kassaei's theory}\label{SecGKVar}

Here we analyze the variation of $\beta$-Hodge heights by taking quotients with cyclic $\bZ_{p^f}$-subgroups. For the case of abelian varieties, it was obtained by Goren-Kassaei \cite[Lemma 5.3.4 and Lemma 5.3.6]{GK}.

\begin{lem}\label{LemGKVar}
Let $\cG$ be a $\bZ_{p^f}\text{-}\ADBT_1$ over $\okey$ with $\beta$-Hodge height $w_\beta$. Let $\cH$ be a finite flat closed cyclic $\bZ_{p^f}$-subgroup of $\cG$ over $\okey$. Put $v_\beta=\deg_\beta(\cG/\cH)$.
\begin{enumerate}
\item\label{LemGKVarV} If we have
\[
v_\beta+ p v_{\sigma^{-1}\circ \beta} <p \text{ for any } \beta\in \bB_f, 
\]
then $w_\beta=v_\beta$ and $\cG$ has the canonical subgroup, which is equal to $\cH$.
\item\label{LemGKVarW} If we have
\[
v_\beta+ p v_{\sigma^{-1}\circ \beta} >p \text{ for any } \beta\in \bB_f, 
\]
then $w_\beta=p(1- v_{\sigma^{-1}\circ\beta})$ and $\cG$ has the canonical subgroup, which is not equal to $\cH$. We refer to any $\cH$ satisfying this inequality as an anti-canonical subgroup of $\cG$.
\item\label{LemGKVarVW} If both of the inequalities in (\ref{LemGKVarV}) and (\ref{LemGKVarW}) are not satisfied, then 
\[
w_\beta+ p w_{\sigma^{-1}\circ \beta} \geq p \text{ for some } \beta\in \bB_f.
\]
\end{enumerate}
\end{lem}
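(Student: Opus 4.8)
The plan is to pass to Breuil--Kisin modules, read off from the $\varphi$-structure a trichotomy relating the $\beta$-degrees $v_\beta=\deg_\beta(\cG/\cH)$ to the $\beta$-Hodge heights $w_\beta$, and then conclude with the existence and uniqueness of the canonical subgroup in Theorem \ref{cansub1} together with a combinatorial argument on the single $\sigma$-orbit $\bB_f$. First I would translate the data: by the Breuil--Kisin classification for $\bZ_{p^f}$-groups, $\cH$ corresponds to a short exact sequence $0\to\SGl\to\SGm\to\SGn\to 0$ with $\SGm=\SGm^*(\cG)$ and $\SGl=\SGm^*(\cG/\cH)$, in which each $\SGl_\beta$ is a rank-one saturated $\varphi$-stable $\SG_1$-submodule of the rank-two free module $\SGm_\beta$ with rank-one free quotient $\SGn_\beta$. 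Fixing a generator $\delta_\beta$ of $\SGl_\beta$ and writing $\varphi_\beta(\delta_{\sigma^{-1}\circ\beta})=\mu_\beta\delta_\beta$, equation \eqref{EqnDegBeta} gives $v_\beta=e^{-1}v_u(\mu_\beta)$; and since $\deg_\beta(\cG)=1$ and $\deg_\beta$ is additive on short exact sequences, $\deg_\beta(\cH)=1-v_\beta$.

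Next I would use Tian's normalized basis $\{e_\beta,e'_\beta\}$ of \eqref{EqnTian}, so that $\varphi$ is given by matrices $\big(\begin{smallmatrix}a_{\beta,1}&a_{\beta,2}\\u^e a_{\beta,3}&u^e a_{\beta,4}\end{smallmatrix}\big)$ with $\big(\begin{smallmatrix}a_{\beta,1}&a_{\beta,2}\\a_{\beta,3}&a_{\beta,4}\end{smallmatrix}\big)\in \mathit{GL}_2(\SG_1)$ and $v_u(a_{\beta,1})=ew_\beta$ (Proposition \ref{DegHdg} (\ref{DegHdg_Hdg})); in particular $a_{\beta,2},a_{\beta,3}$ are units whenever $w_\beta>0$. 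Writing $\delta_\beta=x_\beta e_\beta+y_\beta e'_\beta$ with $(x_\beta,y_\beta)$ primitive and setting $s_\beta=v_u(y_\beta)-v_u(x_\beta)$ (a measure of how far $\SGl_\beta$ lies from $\Fil^1\SGm_{\beta,1}$), the relation $\varphi_\beta(\delta_{\sigma^{-1}\circ\beta})=\mu_\beta\delta_\beta$ becomes, coordinate by coordinate,
\[
\mu_\beta x_\beta=\varphi(x_{\sigma^{-1}\circ\beta})a_{\beta,1}+\varphi(y_{\sigma^{-1}\circ\beta})a_{\beta,2},\qquad
\mu_\beta y_\beta=u^e\big(\varphi(x_{\sigma^{-1}\circ\beta})a_{\beta,3}+\varphi(y_{\sigma^{-1}\circ\beta})a_{\beta,4}\big).
\]
Reading off $u$-adic valuations in these two equations — using that $(a_{\beta,i})$ is invertible, that $v_u(\varphi(\cdot))=p\,v_u(\cdot)$, and that the factor $u^e$ forces the valuation of the second right-hand side to equal $e$ outside degenerate configurations — a finite case analysis on the sign of $s_{\sigma^{-1}\circ\beta}$ and on which monomial dominates each right-hand side should yield the following trichotomy: for every $\beta\in\bB_f$, one has $w_\beta=v_\beta$ if $v_\beta+pv_{\sigma^{-1}\circ\beta}<p$; $w_\beta\geq v_\beta$ if $v_\beta+pv_{\sigma^{-1}\circ\beta}=p$; and $w_\beta=p(1-v_{\sigma^{-1}\circ\beta})$ if $v_\beta+pv_{\sigma^{-1}\circ\beta}>p$ (the degenerate values $w_\beta\in\{0,1\}$ and $v_\beta\in\{0,1\}$ being treated directly). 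In particular $w_\beta\geq p(1-v_{\sigma^{-1}\circ\beta})$ whenever $v_\beta+pv_{\sigma^{-1}\circ\beta}\geq p$, and $w_\beta\geq v_\beta$ whenever $v_\beta+pv_{\sigma^{-1}\circ\beta}\leq p$.

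Given the trichotomy, the three assertions follow cleanly. For (\ref{LemGKVarV}), the hypothesis forces $w_\beta=v_\beta$ for all $\beta$, hence $w_\beta+pw_{\sigma^{-1}\circ\beta}<p$, so Theorem \ref{cansub1} produces the canonical subgroup $\cC$, characterized there as the unique cyclic $\bZ_{p^f}$-subgroup with $\deg_\beta(\cC)+p\deg_{\sigma^{-1}\circ\beta}(\cC)>1$ for all $\beta$; since $\deg_\beta(\cH)+p\deg_{\sigma^{-1}\circ\beta}(\cH)=1+p-(v_\beta+pv_{\sigma^{-1}\circ\beta})>1$, uniqueness gives $\cH=\cC$. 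For (\ref{LemGKVarW}), the hypothesis forces $w_\beta=p(1-v_{\sigma^{-1}\circ\beta})$ for all $\beta$, so $w_\beta+pw_{\sigma^{-1}\circ\beta}=p\big(1+p-(v_{\sigma^{-1}\circ\beta}+pv_{\sigma^{-2}\circ\beta})\big)<p$ by the hypothesis at $\sigma^{-1}\circ\beta$; thus $\cC$ again exists, while $\deg_\beta(\cH)+p\deg_{\sigma^{-1}\circ\beta}(\cH)=1+p-(v_\beta+pv_{\sigma^{-1}\circ\beta})<1$, so $\cH$ fails the characterizing inequality and $\cH\neq\cC$. For (\ref{LemGKVarVW}), if neither (\ref{LemGKVarV}) nor (\ref{LemGKVarW}) holds then $g(\beta):=v_\beta+pv_{\sigma^{-1}\circ\beta}-p$ takes both a nonnegative and a nonpositive value on $\bB_f$, and since $\bB_f$ is a single $\sigma$-orbit there is $\beta$ with $g(\beta)\geq 0$ and $g(\sigma^{-1}\circ\beta)\leq 0$; then $w_\beta\geq p(1-v_{\sigma^{-1}\circ\beta})$ and $w_{\sigma^{-1}\circ\beta}\geq v_{\sigma^{-1}\circ\beta}$ by the trichotomy, whence $w_\beta+pw_{\sigma^{-1}\circ\beta}\geq p(1-v_{\sigma^{-1}\circ\beta})+pv_{\sigma^{-1}\circ\beta}=p$.

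The hard part is the case analysis establishing the trichotomy: one must keep track of the various positions of $\SGl$ (both signs of $s_\beta$, and the configurations where two monomials on a right-hand side have equal valuation so cancellation occurs) and of the degenerate Hodge heights and degrees. Crucially, this step must be carried out entirely within the Breuil--Kisin formalism, since in the deductions above it is also needed in the regime $w_\beta+pw_{\sigma^{-1}\circ\beta}\geq p$, where the canonical subgroup of $\cG$ need not exist and Theorem \ref{cansub1} is unavailable.
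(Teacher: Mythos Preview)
Your approach is correct and strategically the same as the paper's: a Breuil--Kisin computation to establish the pointwise trichotomy relating $w_\beta$ to $v_\beta$, then Theorem \ref{cansub1} for parts (\ref{LemGKVarV}) and (\ref{LemGKVarW}), and the identical adjacent-$\beta$ argument on the $\sigma$-orbit for (\ref{LemGKVarVW}).

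The one tactical difference worth noting is the choice of basis for the case analysis. You propose Tian's basis $\{e_\beta,e'_\beta\}$ of \eqref{EqnTian}, adapted to $\Fil^1$, and then write a generator of $\SGl_\beta$ as $x_\beta e_\beta+y_\beta e'_\beta$; this forces you to track the auxiliary parameters $s_\beta=v_u(y_\beta)-v_u(x_\beta)$ and the possible cancellations among four monomials. The paper instead chooses a basis $\{f_\beta,f'_\beta\}$ with $f_\beta$ generating $\SGp_\beta=\SGm^*(\cG/\cH)_\beta$, so that $\varphi$ becomes upper triangular
\[
\varphi_\beta(f_{\sigma^{-1}\circ\beta},f'_{\sigma^{-1}\circ\beta})=(f_\beta,f'_\beta)\begin{pmatrix}a_\beta&b_\beta\\0&c_\beta\end{pmatrix},\qquad v_R(a_\beta)=v_\beta,\ v_R(c_\beta)=1-v_\beta.
\]
Then $\Fil^1\SGm_{\beta,1}=\langle a_\beta f_\beta,\ b_\beta f_\beta+c_\beta f'_\beta\rangle$ is a rank-one direct summand, which forces $v_R(a_\beta)=0$, $v_R(b_\beta)=0$, or $v_R(c_\beta)=0$ according as $v_\beta=0$, $0<v_\beta<1$, or $v_\beta=1$; the value of $w_\beta$ is then read off from the single generator of $\Fil^1$ in each case. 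This buys a shorter and more transparent case analysis (the only branching is on the three ranges of $v_\beta$ and $v_{\sigma^{-1}\circ\beta}$), with no auxiliary unknowns. Your route works, but the paper's basis choice is the cleaner one.
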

\begin{proof}
Let $\SGp$ and $\SGq$ be the Breuil-Kisin modules corresponding to $\cG/\cH$ and $\cH$, respectively. We have an exact sequence of $\SG_1$-modules
\[
\xymatrix{
0\ar[r] & \SGp_\beta \ar[r] & \SGm_\beta \ar[r] & \SGq_\beta \ar[r] & 0
}
\]
for any $\beta\in \bB_f$. Note that $\SG_1$-modules $\SGp_\beta$ and $\SGq_\beta$ are free of rank one. Let $\{f_\beta, f'_\beta\}$ be a basis of the free $\SG_1$-module $\SGm_\beta$ such that $f_\beta$ is a basis of $\SGp_\beta$ and the image of $f'_\beta$ is a basis of $\SGq_\beta$. We can write as
\begin{equation}\label{EqnPQ}
\varphi_{\beta} (f_{\sigma^{-1}\circ \beta}, f'_{\sigma^{-1}\circ \beta})=(f_\beta, f'_\beta) \begin{pmatrix}a_\beta & b_\beta \\ 0 & c_\beta \end{pmatrix}
\end{equation}
with some $a_\beta, b_\beta, c_\beta\in \SG_1$ such that $v_R(a_\beta)=v_\beta$ and $v_R(c_\beta)=\deg_\beta(\cH)=1-v_\beta$. Thus we obtain
\[
\Fil^1 \SGm_{\beta,1}=\langle a_\beta f_\beta, b_\beta f_\beta+ c_\beta f'_\beta \rangle.
\]
Since it is a direct summand of $\SGm_{\beta,1}$ of rank one over $\cO_{K,1}$, we have 
\begin{equation}\label{EqnTrichot}
\left\{
\begin{array}{ll}
v_R(a_\beta)=0 &  (v_\beta=0),\\
v_R(b_\beta)=0 & (0<v_\beta<1),\\
v_R(c_\beta)=0 & (v_\beta=1)
\end{array}
\right.
\end{equation}
and
\[
\Fil^1\SGm_{\beta,1}=
\left\{\begin{array}{ll}
\langle f_\beta \rangle & (v_\beta=0),\\
\langle b_\beta f_\beta+ c_\beta f'_\beta \rangle & (v_\beta>0).\\
\end{array}\right.
\]
Moreover, in $\SGm_{\sigma\circ \beta,1}$ we have
\begin{equation}\label{EqnHdg}
\left\{
\begin{aligned}
\varphi_{\sigma\circ\beta}(f_\beta)&=a_{\sigma\circ \beta} f_{\sigma\circ \beta},\\
\varphi_{\sigma\circ\beta}(b_\beta f_\beta+ c_\beta f'_\beta)&=(b_\beta^p a_{\sigma\circ \beta}+c_\beta^p b_{\sigma\circ \beta})f_{\sigma\circ \beta}+c_\beta^p c_{\sigma\circ \beta} f'_{\sigma\circ \beta}.
\end{aligned}
\right.
\end{equation}

Now let us consider the assertion (\ref{LemGKVarV}). The assumption implies $v_{\sigma\circ\beta}<1$ for any $\beta\in \bB_f$. Hence $w_{\sigma\circ \beta}$ is equal to the valuation of the coefficient of $f_{\sigma\circ \beta}$ of the right-hand side of the equality (\ref{EqnHdg}) in both cases.
\begin{itemize}
\item If $v_\beta=0$, then $w_{\sigma\circ \beta}=v_R(a_{\sigma\circ \beta})=v_{\sigma\circ \beta}$.
\item If $0<v_\beta<1$, then $v_R(b_\beta)=0$ and
\[
w_{\sigma\circ \beta}=v_R(b_\beta^p a_{\sigma\circ \beta}+c_\beta^p b_{\sigma\circ \beta}).
\]
The assumption also yields $v_{\sigma\circ \beta} <p(1-v_\beta)$ and thus we have $w_{\sigma\circ \beta}=v_{\sigma\circ \beta}$.
\end{itemize}
Thus $\cG$ satisfies the assumption of Theorem \ref{cansub1} and has the canonical subgroup $\cC$. Since $\deg_\beta(\cH)=1-v_\beta$, the uniqueness of the theorem implies $\cH=\cC$.

Next we treat the assertion (\ref{LemGKVarW}). The assumption implies $v_{\sigma\circ\beta}>0$ for any $\beta\in \bB_f$. 
\begin{itemize}
\item If $0<v_{\sigma\circ \beta}<1$, then $v_R(b_{\sigma\circ \beta})=0$ and
\[
w_{\sigma\circ \beta}=v_R(b_\beta^p a_{\sigma\circ \beta}+c_\beta^p b_{\sigma\circ \beta}).
\]
By assumption we have $v_{\sigma\circ \beta} >p(1-v_\beta)$ and thus we obtain $w_{\sigma\circ \beta}=p(1-v_\beta)$.

\item If $v_{\sigma\circ \beta}=1$, then $v_R(c_{\sigma\circ \beta})=0$. This implies that $w_{\sigma\circ \beta}$ is equal to the valuation of the coefficient of $f'_{\sigma\circ \beta}$ of the equality (\ref{EqnHdg}), namely
\[
w_{\sigma\circ \beta}=v_R(c_\beta^p c_{\sigma\circ \beta})=p(1-v_\beta).
\]
\end{itemize}
From this we see that $\cG$ has the canonical subgroup $\cC$. If $\cC=\cH$, then we have
\[
v_\beta=\deg_\beta(\cG/\cH)=w_\beta=p(1-v_{\sigma^{-1}\circ \beta})
\]
for any $\beta\in \bB_f$, which contradicts the assumption.

The assertion (\ref{LemGKVarVW}) can be shown as in \cite[Corollary 5.3.7]{GK}: Take $\beta'\in \bB_f$ such that $v_{\beta'}+p v_{\sigma^{-1}\circ \beta'}\leq p$. Let $i\geq 1$ be the minimal integer satisfying $v_{\sigma^i \circ \beta'}+ pv_{\sigma^{i-1}\circ \beta'}\geq p$. The minimality shows that $\beta=\sigma^{i-1} \circ \beta'$ satisfies
\begin{equation}\label{EqnVW}
v_\beta+p v_{\sigma^{-1}\circ \beta}\leq p,\quad v_{\sigma \circ\beta}+p v_{\beta}\geq p.
\end{equation}
We claim that 
\[
w_\beta\geq v_\beta, \quad w_{\sigma\circ \beta}\geq p(1-v_\beta). 
\]
Indeed, if $v_\beta=0$ then the first inequality is trivial. If $0<v_\beta<1$, then (\ref{EqnHdg}) implies 
\[
w_{\beta}=\left\{
\begin{array}{ll}
v_R(a_{\beta}) & (v_{\sigma^{-1}\circ \beta} =0),\\
v_R(b_{\sigma^{-1}\circ \beta}^p a_{\beta}+c_{\sigma^{-1}\circ \beta}^p b_{\beta})& (v_{\sigma^{-1}\circ \beta} >0).
\end{array}
\right.
\]
From this and (\ref{EqnVW}), we obtain $w_\beta\geq v_\beta$. If $v_\beta=1$, then (\ref{EqnHdg}) gives
\[
w_{\beta}=\left\{
\begin{array}{ll}
1 & (v_{\sigma^{-1}\circ \beta} =0),\\
v_R(c_{\sigma^{-1}\circ\beta}^p c_{\beta})=p(1-v_{\sigma^{-1}\circ \beta})& (v_{\sigma^{-1}\circ \beta} >0)
\end{array}
\right.
\]
and the inequality $w_\beta\geq v_\beta$ follows from (\ref{EqnVW}).

Let us consider the second inequality. If $v_{\sigma\circ \beta}=0$, then (\ref{EqnVW}) implies $v_\beta=1$ and the inequality is trivial. If $0<v_{\sigma\circ \beta}<1$, then (\ref{EqnHdg}) implies 
\[
w_{\sigma \circ \beta}=\left\{
\begin{array}{ll}
v_R(a_{\sigma\circ \beta}) & (v_{\beta} =0),\\
v_R(b_\beta^p a_{\sigma\circ \beta}+c_\beta^p b_{\sigma\circ \beta})& (v_{\beta} >0)
\end{array}
\right.
\]
and from (\ref{EqnVW}) we obtain $w_{\sigma\circ \beta}\geq p(1-v_\beta)$ for both cases. If $v_{\sigma\circ \beta}=1$, then we have $v_\beta\geq (p-1)/p>0$ and
\[
w_{\sigma \circ\beta}=v_R(c_{\beta}^p c_{\sigma \circ\beta})=p(1-v_{\beta}).
\]
This concludes the proof of the claim. Now we have
\[
w_{\sigma\circ \beta}+ p w_\beta  \geq p(1-v_\beta)+ p v_\beta=p 
\]
and the assertion (\ref{LemGKVarVW}) follows.
\end{proof}

\begin{lem}\label{cncisoglem}
Let $n$ be a positive integer. Let $\cG$ be a $\bZ_{p^f}\text{-}\ADBT_{n+1}$ over $\okey$ with $\bZ_{p^f}$-alternating isomorphism $i:\cG\simeq \cG^\vee$. Let $\cH$ be a finite flat closed cyclic $\bZ_{p^f}$-subgroup of $\cG[p]$ over $\okey$. Then $p^{-n}\cH/\cH$ is a $\bZ_{p^f}\text{-}\ADBT_n$ over $\okey$, with its $\bZ_{p^f}$-alternating isomorphism of self-duality induced by $i$.
\end{lem}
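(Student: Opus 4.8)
The plan is to verify the three requirements on $\cG':=p^{-n}\cH/\cH$ one at a time: that it is a truncated Barsotti-Tate group of level $n$, that it has the correct height and dimension with $\omega_{\cG'}$ free of rank one over $\cO_{K,n}\otimes\bZ_{p^f}$, and finally that the isomorphism $i$ descends to a $\bZ_{p^f}$-alternating isomorphism $\cG'\simeq \cG'^\vee$. First I would observe that $\cH$ is cyclic and killed by $p$, so $\cH(\okbar)\simeq \bF_{p^f}$, and the $\bZ_{p^f}$-alternating pairing on $\cG(\okbar)$ restricts to zero on $\cH(\okbar)$ by the isotropy remark recalled in Subsection \ref{SecCansub1} (the proof of \cite[Lemma 2.1.1]{GK}); hence $i$ identifies $\cH$ with $(\cG/\cH)^\vee$, and more generally, since $\cH\subseteq \cG[p]\subseteq p^{-n}\cH$, dualizing the inclusions shows $i$ carries $p^{-n}\cH$ onto the annihilator of $\cH$ under the pairing on $\cG$, which is exactly $p^{-n}\cH$ again (because $\cH^\perp = (\cG/\cH)^\vee$ pulled back is the preimage of $\cH^\vee$'s annihilator, matching $p^{-n}\cH$ by a rank count: $\cH$ has order $p^f$, $p^{-n}\cH$ has order $p^{(n+2)f}$ inside $\cG$ of order $p^{(2n+2)f}$, and these are mutually annihilating). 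Thus $i$ induces an isomorphism $(p^{-n}\cH/\cH) \simeq (p^{-n}\cH/\cH)^\vee$ by the functoriality of Cartier duality for the exact sequence $0\to \cH\to p^{-n}\cH\to p^{-n}\cH/\cH\to 0$ together with $i(p^{-n}\cH)=p^{-n}\cH$, $i(\cH)=(\cG/\cH)^\vee$ matching the quotient.

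Next I would check the truncated Barsotti-Tate property. Over the algebraically closed generic fibre this is a purely étale/linear-algebra statement: on $\okbar$-points, $\cG(\okbar)\simeq (\bZ_{p^f}/p^{n+1})^2$ as a $\bZ_{p^f}$-module (since $\omega_\cG$ is free of rank one over $\cO_{K,n+1}\otimes\bZ_{p^f}$ forces the $\bQ_{p^f}$-Tate module to be free of rank $2$), $\cH(\okbar)$ is a cyclic $\bF_{p^f}$-line, so $p^{-n}\cH(\okbar)/\cH(\okbar)\simeq (\bZ_{p^f}/p^n)^2$ — free of rank $2$ over $\bZ_{p^f}/p^n$, of order $p^{2nf}$, hence height $2f$ and, via the $\bZ_{p^f}$-alternating pairing it inherits, dimension $f$. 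The content is that this holds integrally over $\okey$, i.e.\ that $\cG'$ is flat and its Dieudonné/Breuil-Kisin module is free over $\SG_n$ rather than merely $p$-power torsion and $u$-torsion free. Here I would pass to the Breuil-Kisin classification of Theorem \ref{BKC}: $\SGm^*(\cG)$ is free over $\SG_{n+1}$, $\SGm^*(\cH)$ is the quotient corresponding to $\cH$, and $\SGm^*(p^{-n}\cH/\cH)$ sits in an exact sequence of $\bZ_{p^f}$-Breuil-Kisin modules whose outer terms are $\SGm^*(p^{-n}\cH)$ — a quotient of $\SGm^*(\cG)$, hence... — and $\SGm^*(\cH)$. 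The freeness over $\SG_n$ I would deduce by computing $\lg_{\SG}(\SGm^*(\cG')_\beta)$ via Lemma \ref{degbeta}: $\deg_\beta(\cG')+\deg_\beta(\cG'^\vee)$; using additivity of $\deg_\beta$ along $0\to \cH\to p^{-n}\cH\to \cG'\to 0$ and $0\to p^{-n}\cH\to \cG\to \cH^\vee[\cdots]$, plus the self-duality $\deg_\beta(\cG'^\vee)=\deg_\beta(\cG')$ coming from the induced $i$, this pins the $\beta$-length to $2n$, which forces $\SGm^*(\cG')_\beta$ to be free of rank $2$ over $\SG_n$; freeness of each $\beta$-component over $\SG_n$ with constant rank is equivalent to $\cG'$ being a truncated Barsotti-Tate $\bZ_{p^f}$-group of level $n$.

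Finally I would record that $\omega_{\cG'}$ is free of rank one over $\cO_{K,n}\otimes\bZ_{p^f}$: from Proposition \ref{DegHdg} (\ref{DegHdg_deg}) applied $\beta$-componentwise, together with the length computation above and the split exact sequences of $\cO_{K,n}\otimes\bZ_{p^f}$-modules $\omega_{\cG'}\hookrightarrow \omega_{\cG}\twoheadrightarrow \omega_{\cH}$ restricted through $p^{-n}\cH$ (and the fact that $\omega_\cG$ is free of rank one over $\cO_{K,n+1}\otimes\bZ_{p^f}$ while $\omega_\cH$ has $\beta$-degree $1$), one sees $\omega_{\cG',\beta}\simeq \cO_{K,n}$ for every $\beta$. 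The $\bZ_{p^f}$-alternating condition $\langle x, i(ax)\rangle=1$ for the induced pairing is inherited directly from that of $\cG$ since the pairing on $\cG'$ is, by construction of Cartier duality for the subquotient, the restriction-then-descent of $\langle-,-\rangle_\cG$. \emph{The main obstacle} I anticipate is the integral (as opposed to generic-fibre) verification that $\cG'$ is genuinely a truncated Barsotti-Tate group of level $n$ — i.e.\ controlling flatness and the freeness of the Breuil-Kisin module over $\SG_n$ through the two short exact sequences; this is exactly where the bookkeeping with $\deg_\beta$, Lemma \ref{degbeta}, and the additivity of degrees in $\bZ_{p^f}$-equivariant short exact sequences has to be done carefully, and where one must be sure the induced self-duality is available to halve the total $\beta$-length.
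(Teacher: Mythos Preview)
Your proposal has a genuine gap in the truncated Barsotti-Tate step. You claim that $\lg_{\SG}(\SGm^*(\cG')_\beta)=2n$ ``forces $\SGm^*(\cG')_\beta$ to be free of rank $2$ over $\SG_n$'', but this is false: a $u$-torsion-free $p^n$-torsion $\SG$-module of length $2n$ can be, for instance, $\SG_n\oplus\SG_{n-1}\oplus\SG_1$ when $n\geq 2$, and nothing in your degree bookkeeping (correct as it is) rules this out. You also assert the converse of Theorem~\ref{BKC}(\ref{BKC_eq}), which is not stated in the paper. The paper sidesteps all of this by simply taking the BT$_n$ property of $p^{-n}\cH/\cH$ as known: over $\okey$ every BT$_{n+1}$ is $\Gamma[p^{n+1}]$ for a $p$-divisible group $\Gamma$ (\cite[Th\'{e}or\`{e}me 4.4(e)]{Il}, recalled in the proof of Theorem~\ref{BKC}), and one checks directly that $p^{-n}\cH/\cH=(\Gamma/\cH)[p^n]$.

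With the BT$_n$ property in hand, $\omega_{\cG'}$ is automatically $\cO_{K,n}$-free of rank $f$, so each $\omega_{\cG',\beta}$ is $\cO_{K,n}$-free of some rank $f_\beta$; the paper then pins $f_\beta=1$ by two explicit exact sequences at level one relating $\omega_{p^{-1}\cH,\beta}$, $\omega_{\cH,\beta}$, $\omega_{\cG[p],\beta}$ and $\omega_{p^{-1}\cH/\cH,\beta}$, rather than through Proposition~\ref{DegHdg}. Your self-duality sketch has the right idea but contains slips: ``$i$ identifies $\cH$ with $(\cG/\cH)^\vee$'' is wrong on order grounds (you want $(\cG[p]/\cH)^\vee$), and ``$i(p^{-n}\cH)=p^{-n}\cH$'' conflates $\cG$ with $\cG^\vee$. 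The paper makes this clean by setting $\cH'=(\cG[p]/\cH)^\vee$, observing that isotropy gives $i:\cH\simeq\cH'$, and then invoking a Cartier-duality isomorphism $j:p^{-n}\cH'/\cH'\simeq(p^{-n}\cH/\cH)^\vee$ (as in \cite[\S 4]{Ha_cansub}) to obtain the $\bZ_{p^f}$-alternating self-duality.
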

\begin{proof}
We know that $p^{-n}\cH/\cH$ is a truncated Barsotti-Tate group of level $n$ over $\okey$. Put $\cH'=(\cG[p]/\cH)^\vee$. Since $\cH(\okbar)$ is isotropic, the map $i$ induces an isomorphism $\cH\simeq \cH'$. On the other hand, Cartier duality gives a natural isomorphism $j:p^{-n}\cH'/\cH'\simeq (p^{-n}\cH/\cH)^\vee$ satisfying
\[
\langle\bar{x}, j(\bar{y}) \rangle_{p^{-n}\cH/\cH}=\langle x,y \rangle_{\cG}
\]
for any $x\in p^{-n}\cH(\okbar)$ and $y\in p^{-n}\cH'(\okbar)$, which can be shown as in \cite[\S 4, Proof of Theorem 1.1(b)]{Ha_cansub}. Thus these maps induce a $\bZ_{p^f}$-alternating isomorphism 
\[
p^{-n}\cH/\cH \overset{i}{\to} p^{-n}\cH'/\cH' \overset{j}{\simeq} (p^{-n}\cH/\cH)^\vee.
\]

It remains to prove that the $\cO_{K,n}\otimes \bZ_{p^f}$-module $\omega_{p^{-n}\cH/\cH}$ is free of rank one. Consider the decomposition
\[
\omega_{p^{-n}\cH/\cH}=\bigoplus_{\beta\in \bB_f} \omega_{p^{-n}\cH/\cH,\beta}.
\]
Since we know that the left-hand side is free of rank $f$ as an $\cO_{K,n}$-module, each $\omega_{p^{-n}\cH/\cH,\beta}$ is a free $\cO_{K,n}$-module of rank $f_\beta$ with some non-negative integer $f_\beta$. For $n=1$, we have exact sequences
\[
\xymatrix{
0 \ar[r] & \omega_{\cH,\beta} \ar[r]^-{\times p} & \omega_{p^{-1}\cH,\beta} \ar[r] & \omega_{\cG[p],\beta} \ar[r] & 0,
}
\]
\[
\xymatrix{
0 \ar[r] & \omega_{p^{-1}\cH/\cH,\beta} \ar[r] & \omega_{p^{-1}\cH,\beta} \ar[r] & \omega_{\cH,\beta} \ar[r] & 0
}
\]
and thus $\lg_{\okey}(\omega_{p^{-1}\cH/\cH,\beta})=\lg_{\okey}(\omega_{\cG[p],\beta})$. Since the $\cO_{K,1}$-module $\omega_{\cG[p],\beta}$ is free of rank one, we obtain $f_\beta=1$ and the lemma follows.
\end{proof}

\begin{cor}\label{cncisog}
Let $\cG$ be a $\bZ_{p^f}\text{-}\ADBT_2$ over $\okey$ with $\beta$-Hodge height $w_\beta$. Suppose that the inequality
\[
w_\beta +p w_{\sigma^{-1} \circ \beta} <p
\]
holds for any $\beta\in \bB_f$. Theorem \ref{cansub1} ensures that the canonical subgroup $\cC$ of $\cG[p]$ exists. 
\begin{enumerate}
\item\label{noncanisog} For any finite flat closed cyclic $\bZ_{p^f}$-subgroup $\cH\neq \cC$ of $\cG[p]$ over $\okey$, we have
\[
\Hdg_{\beta}(p^{-1}\cH/\cH)=p^{-1} w_{\sigma \circ \beta} \text{ for any }\beta\in \bB_f.
\]
Moreover, $p^{-1}\cH/\cH$ has the canonical subgroup $\cG[p]/\cH$.

\item\label{canisog} Suppose that the inequality
\[
w_\beta +p w_{\sigma^{-1} \circ \beta} <1
\]
holds for any $\beta\in \bB_f$. Consider the $\bZ_{p^f}\text{-}\ADBT_1$ $p^{-1}\cC/\cC$ over $\okey$. Then we have
\[
\Hdg_{\beta}(p^{-1}\cC/\cC)=p w_{\sigma^{-1} \circ \beta} \text{ for any }\beta\in \bB_f.
\]
Moreover, $\cG[p]/\cC$ is an anti-canonical subgroup of $p^{-1}\cC/\cC$.
\end{enumerate}
\end{cor}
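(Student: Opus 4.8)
The plan is to deduce both formulas from the variation result for $\bZ_{p^f}\text{-}\ADBT_1$'s over $\okey$, namely Lemma~\ref{LemGKVar}, applied once to $\cG[p]$ and once to the quotient $p^{-1}\cH/\cH$. First note that $\cG[p]$ is a $\bZ_{p^f}\text{-}\ADBT_1$ over $\okey$ whose $\beta$-Hodge heights are the $w_\beta$ (the $\beta$-Hodge height of $\cG$ is by definition that of $\cG[p]$), so by Theorem~\ref{cansub1} it has the canonical subgroup $\cC$, with $\deg_\beta(\cG[p]/\cC)=w_\beta$. By Lemma~\ref{cncisoglem} with $n=1$, the quotient $\cG_1:=p^{-1}\cH/\cH$ is again a $\bZ_{p^f}\text{-}\ADBT_1$ over $\okey$, so $\deg_\beta(\cG_1)=1$ for every $\beta$. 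Since $\cH\subseteq\cG[p]\subseteq p^{-1}\cH$ as finite flat closed $\bZ_{p^f}$-subgroups, $\cK:=\cG[p]/\cH$ is a finite flat closed $\bZ_{p^f}$-subgroup of $\cG_1$, and it is cyclic because $(\cG[p]/\cH)(\okbar)=\cG[p](\okbar)/\cH(\okbar)$ has $\bF_{p^f}$-dimension $2-1=1$. Additivity of $\deg_\beta$ along $0\to\cH\to\cG[p]\to\cG[p]/\cH\to 0$ and $0\to\cK\to\cG_1\to\cG_1/\cK\to 0$ gives
\[
\deg_\beta(\cG_1/\cK)=1-\deg_\beta(\cG[p]/\cH)=\deg_\beta(\cH).
\]
Hence it remains to compute $\deg_\beta(\cG[p]/\cH)$ and feed $\deg_\beta(\cG_1/\cK)$ into Lemma~\ref{LemGKVar} applied to the pair $(\cG_1,\cK)$.

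For part~(\ref{noncanisog}), apply Lemma~\ref{LemGKVar} to $\cG[p]$ with the cyclic subgroup $\cH$ and $v_\beta=\deg_\beta(\cG[p]/\cH)$. Case~(\ref{LemGKVarV}) is impossible, since it would force $\cH$ to equal the canonical subgroup, contradicting $\cH\neq\cC$ and the uniqueness in Theorem~\ref{cansub1}; case~(\ref{LemGKVarVW}) is impossible, since it would give $w_\beta+pw_{\sigma^{-1}\circ\beta}\geq p$ for some $\beta$, contradicting the hypothesis. So case~(\ref{LemGKVarW}) holds: $\cH$ is anti-canonical and $w_\beta=p(1-v_{\sigma^{-1}\circ\beta})$, i.e. $\deg_\beta(\cG[p]/\cH)=1-p^{-1}w_{\sigma\circ\beta}$, whence $\deg_\beta(\cG_1/\cK)=p^{-1}w_{\sigma\circ\beta}$. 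Using $w_\gamma\leq 1$ for all $\gamma$ we get $\deg_\beta(\cG_1/\cK)+p\deg_{\sigma^{-1}\circ\beta}(\cG_1/\cK)=p^{-1}w_{\sigma\circ\beta}+w_\beta\leq p^{-1}+1<p$, so Lemma~\ref{LemGKVar}(\ref{LemGKVarV}) applies to $(\cG_1,\cK)$ and yields $\Hdg_\beta(p^{-1}\cH/\cH)=\deg_\beta(\cG_1/\cK)=p^{-1}w_{\sigma\circ\beta}$ and that $\cK=\cG[p]/\cH$ is the canonical subgroup of $p^{-1}\cH/\cH$.

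For part~(\ref{canisog}) take $\cH=\cC$; then $\deg_\beta(\cG[p]/\cC)=w_\beta$ by Theorem~\ref{cansub1}, so $\deg_\beta(\cG_1/\cK)=1-w_\beta$ with $\cG_1=p^{-1}\cC/\cC$ and $\cK=\cG[p]/\cC$. Apply Lemma~\ref{LemGKVar} to $(\cG_1,\cK)$ with $v_\beta=\deg_\beta(\cG_1/\cK)=1-w_\beta$: for every $\beta$,
\[
v_\beta+pv_{\sigma^{-1}\circ\beta}>p\iff (1-w_\beta)+p(1-w_{\sigma^{-1}\circ\beta})>p\iff w_\beta+pw_{\sigma^{-1}\circ\beta}<1,
\]
which is exactly the extra hypothesis of part~(\ref{canisog}). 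So case~(\ref{LemGKVarW}) applies, giving $\Hdg_\beta(p^{-1}\cC/\cC)=p(1-v_{\sigma^{-1}\circ\beta})=pw_{\sigma^{-1}\circ\beta}$ and that $\cK=\cG[p]/\cC$ is an anti-canonical subgroup of $p^{-1}\cC/\cC$.

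The argument is essentially formal once Lemma~\ref{LemGKVar} is in hand, so I do not expect a substantial obstacle; the only non-mechanical points are (a) the identity $\deg_\beta\big((p^{-1}\cH/\cH)/(\cG[p]/\cH)\big)=\deg_\beta(\cH)$, which rests on $p^{-1}\cH/\cH$ being a $\bZ_{p^f}\text{-}\ADBT_1$ (so that all its $\beta$-degrees equal $1$), supplied by Lemma~\ref{cncisoglem}, and (b) verifying that the trichotomy of Lemma~\ref{LemGKVar} lands in the correct branch for $(\cG_1,\cK)$ — in part~(\ref{noncanisog}) this uses only the a priori bound $w_\gamma\leq 1$, whereas in part~(\ref{canisog}) it is precisely the reformulation of the strengthened hypothesis $w_\beta+pw_{\sigma^{-1}\circ\beta}<1$. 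The main care needed is bookkeeping the $\beta$-indices (the shifts by $\sigma$ and $\sigma^{-1}$) correctly throughout.
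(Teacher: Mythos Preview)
Your proof is correct and follows essentially the same route as the paper's: both parts reduce to applying Lemma~\ref{LemGKVar} first to $(\cG[p],\cH)$ and then to $(p^{-1}\cH/\cH,\cG[p]/\cH)$, using Lemma~\ref{cncisoglem} to know the latter is a $\bZ_{p^f}\text{-}\ADBT_1$. The only cosmetic difference is in part~(\ref{noncanisog}): to verify the hypothesis of Lemma~\ref{LemGKVar}(\ref{LemGKVarV}) for $(\cG_1,\cK)$ you use the crude bound $w_\gamma\le 1$ to get $p^{-1}w_{\sigma\circ\beta}+w_\beta\le p^{-1}+1<p$, whereas the paper substitutes the standing hypothesis (with $\beta$ shifted to $\sigma\circ\beta$) to obtain the sharper $p^{-1}w_{\sigma\circ\beta}+w_\beta<1$; either suffices.
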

\begin{proof}
For the assertion (\ref{noncanisog}), Lemma \ref{LemGKVar} (\ref{LemGKVarVW}) implies that $\cH$ is an anti-canonical subgroup and
\begin{align*}
&\deg_\beta(\cG[p]/\cH)+p\deg_{\sigma^{-1}\circ\beta}(\cG[p]/\cH)>p,\\
&\Hdg_\beta(\cG[p])=p(1-\deg_{\sigma^{-1}\circ \beta}(\cG[p]/\cH))=p\deg_{\sigma^{-1}\circ \beta}(\cH).
\end{align*}
Hence we have
\[
\deg_\beta((p^{-1}\cH/\cH)/(\cG[p]/\cH))+p\deg_{\sigma^{-1}\circ\beta}((p^{-1}\cH/\cH)/(\cG[p]/\cH))<1.
\]
Lemma \ref{LemGKVar} (\ref{LemGKVarV}) shows that $p^{-1}\cH/\cH$ has the canonical subgroup $\cG[p]/\cH$ and
\[
\Hdg_\beta(p^{-1}\cH/\cH)=\deg_\beta(\cH)=p^{-1}\Hdg_{\sigma\circ\beta}(\cG).
\]

Let us consider the assertion (\ref{canisog}). Since $\deg_\beta(\cG[p]/\cC)=w_\beta$, we have
\[
\deg_\beta((p^{-1}\cC/\cC)/(\cG[p]/\cC))=\deg_\beta(\cC)=1-w_\beta.
\]
The assumption implies
\[
\deg_\beta((p^{-1}\cC/\cC)/(\cG[p]/\cC))+p \deg_{\sigma^{-1}\circ \beta}((p^{-1}\cC/\cC)/(\cG[p]/\cC))>p
\]
and Lemma \ref{LemGKVar} (\ref{LemGKVarW}) yields the assertion. 
\end{proof}



\subsection{Critical locus}\label{SecQuadRes}

In this subsection, we investigate the behavior of the $U_p$-correspondence at the locus where all the $\beta$-Hodge heights are $p/(p+1)$.

\begin{prop}\label{critisog}
Suppose $f\leq 2$. Let $\cG$ be a $\bZ_{p^f}$-$\ADBT_2$ over $\okey$ with $\Hdg_\beta(\cG)=w_\beta$. Suppose $w_\beta=p/(p+1)$ for any $\beta\in \bB_f$. Then, for any finite flat closed cyclic $\bZ_{p^f}$-subgroup $\cH$ of $\cG[p]$ over $\okey$, we have
\[
\deg_\beta(\cG[p]/\cH)=\frac{p}{p+1},\quad \Hdg_\beta(p^{-1}\cH/\cH)=\frac{1}{p+1}
\]
and $p^{-1}\cH/\cH$ has the canonical subgroup $\cG[p]/\cH$.
\end{prop}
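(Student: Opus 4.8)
The plan is to reduce the proposition to the single assertion that $v_\beta:=\deg_\beta(\cG[p]/\cH)$ equals $p/(p+1)$ for every $\beta\in\bB_f$, and then to read off the remaining statements formally from the results of \S\ref{SecGKVar}. Note first that $\cG[p]$ is a $\bZ_{p^f}\text{-}\ADBT_1$ over $\okey$ and that, since $\Hdg_\beta$ is computed from the $p$-torsion alone, its $\beta$-Hodge height is again $w_\beta=p/(p+1)$; in particular $w_\beta+p\,w_{\sigma^{-1}\circ\beta}=p$ for all $\beta$.

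First I would run Lemma~\ref{LemGKVar} for the pair $(\cG[p],\cH)$. Case~(\ref{LemGKVarV}) is impossible: there $v_\beta+p\,v_{\sigma^{-1}\circ\beta}<p$ for all $\beta$ while $w_\beta=v_\beta$, hence $w_\beta+p\,w_{\sigma^{-1}\circ\beta}<p$, contradicting the equality above. In case~(\ref{LemGKVarW}) one has $w_\beta=p(1-v_{\sigma^{-1}\circ\beta})$, so $v_{\sigma^{-1}\circ\beta}=p/(p+1)$ for all $\beta$, and since $\beta\mapsto\sigma^{-1}\circ\beta$ permutes $\bB_f$ this already gives $v_\gamma=p/(p+1)$ for every $\gamma$. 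The real work is case~(\ref{LemGKVarVW}), and this is where the hypothesis $f\leq 2$ enters in an essential way (cf.\ Remark~\ref{counterexamplef3}). In the proof of Lemma~\ref{LemGKVar}~(\ref{LemGKVarVW}) one constructs $\beta\in\bB_f$ satisfying $v_\beta+p\,v_{\sigma^{-1}\circ\beta}\leq p$, $v_{\sigma\circ\beta}+p\,v_\beta\geq p$, together with $w_\beta\geq v_\beta$ and $w_{\sigma\circ\beta}\geq p(1-v_\beta)$. Substituting $w_\beta=w_{\sigma\circ\beta}=p/(p+1)$ yields $v_\beta\leq p/(p+1)$ and $v_\beta\geq p/(p+1)$, so $v_\beta=p/(p+1)$. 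If $f=1$ then $\bB_f=\{\beta\}$ and we are done; if $f=2$ then $\sigma\circ\beta=\sigma^{-1}\circ\beta$ is the other element of $\bB_f$, and feeding $v_\beta=p/(p+1)$ back into the two inequalities involving only the $v$'s squeezes $v_{\sigma\circ\beta}$ between $p/(p+1)$ and $p/(p+1)$. Thus $v_\gamma=p/(p+1)$ for every $\gamma\in\bB_f$ in all cases, which is the first claim of the proposition.

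Granting this, $\deg_\beta(\cG[p])=1$ and additivity of $\deg_\beta$ along $0\to\cH\to\cG[p]\to\cG[p]/\cH\to 0$ give $\deg_\beta(\cH)=1/(p+1)$. By Lemma~\ref{cncisoglem}, $p^{-1}\cH/\cH$ is a $\bZ_{p^f}\text{-}\ADBT_1$ over $\okey$, and it contains $\cK:=\cG[p]/\cH$ as a finite flat closed cyclic $\bZ_{p^f}$-subgroup: finite flatness is that of a quotient, closedness follows from $\cH\subseteq\cG[p]\subseteq p^{-1}\cH$ inside $\cG$ (finite flat closed subgroup schemes over $\okey$ being determined by their generic fibers), and $\cK(\okbar)$ has $\bF_{p^f}$-rank $2-1=1$. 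Since $\deg_\beta(p^{-1}\cH/\cH)=1$ and $\deg_\beta(\cK)=p/(p+1)$, additivity again gives $v'_\beta:=\deg_\beta\big((p^{-1}\cH/\cH)/\cK\big)=1/(p+1)$ for all $\beta$, whence $v'_\beta+p\,v'_{\sigma^{-1}\circ\beta}=1<p$. Applying Lemma~\ref{LemGKVar}~(\ref{LemGKVarV}) to the pair $(p^{-1}\cH/\cH,\cK)$ then gives $\Hdg_\beta(p^{-1}\cH/\cH)=v'_\beta=1/(p+1)$ and identifies the canonical subgroup of $p^{-1}\cH/\cH$ with $\cK=\cG[p]/\cH$, which finishes the proof.

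The hard part is case~(\ref{LemGKVarVW}): it needs the finer inequalities $w_\beta\geq v_\beta$, $w_{\sigma\circ\beta}\geq p(1-v_\beta)$ which live inside the proof of Lemma~\ref{LemGKVar} rather than in its statement, and the step ``knowing $v_\beta$ at one or two embeddings forces it everywhere'' is exactly the combinatorial fact that collapses once $f\geq 3$.
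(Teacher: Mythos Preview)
Your proof is correct but takes a genuinely different route from the paper's.

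The paper works directly with the Breuil--Kisin module $\SGm=\SGm^*(\cG[p])$: it takes the basis $\{e_\beta,e'_\beta\}$ of (\ref{EqnTian}), writes a generator of $\SGm^*(\cG[p]/\cH)_\beta$ as $x_\beta e_\beta+y_\beta e'_\beta$, first shows $v_R(y_{\sigma^{-1}\circ\beta})>0$ (here the hypothesis $f\leq 2$ is used), normalizes to $x_\beta=1$, and then derives a single polynomial satisfied by $y_\beta$ --- of degree $p+1$ when $f=1$, and of degree $p^2+1$ when $f=2$ after composing two Frobenius steps. A Newton polygon inspection, using the exact valuations $v_R(a_{\beta,1})=p/(p+1)$ and $v_R(a_{\beta,2})=v_R(a_{\beta,3})=0$, forces $v_R(y_\beta)=1/(p+1)$ and hence $v_R(\lambda_\beta)=\deg_\beta(\cG[p]/\cH)=p/(p+1)$. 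The endgame (applying Lemma~\ref{LemGKVar}~(\ref{LemGKVarV}) to the pair $(p^{-1}\cH/\cH,\cG[p]/\cH)$) is then identical to yours.

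Your argument bypasses the Newton polygon computation by running the trichotomy of Lemma~\ref{LemGKVar} on $(\cG[p],\cH)$. Cases (\ref{LemGKVarV}) and (\ref{LemGKVarW}) are dispatched immediately (indeed both are vacuous here, since each would force $w_\beta+p\,w_{\sigma^{-1}\circ\beta}\neq p$). In case (\ref{LemGKVarVW}) you exploit the sharper inequalities $w_\beta\geq v_\beta$ and $w_{\sigma\circ\beta}\geq p(1-v_\beta)$ established \emph{inside} the proof of that lemma together with (\ref{EqnVW}); with $w_\gamma=p/(p+1)$ these pin down $v_\beta=p/(p+1)$ at the distinguished embedding, and when $f\leq 2$ the two inequalities (\ref{EqnVW}) squeeze the remaining $v_\gamma$. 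This is shorter and more structural, at the price of citing an intermediate claim from another proof rather than a stated result; the paper's direct computation is self-contained and makes transparent which feature of the Newton polygon collapses for $f\geq 3$ (compare the explicit polynomial in Remark~\ref{counterexamplef3}).
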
 
\begin{proof}
Put $\SGm=\SGm^*(\cG[p])$ and $\SGp=\SGm^*(\cG[p]/\cH)$. We take a basis $\{e_\beta, e'_\beta\}$ of the $\SG_1$-module $\SGm_\beta$ as in \S\ref{SecCansub1} and consider the equation (\ref{EqnTian}). Take $x_\beta,y_\beta\in \SG_1$ such that the element $f_\beta=x_\beta e_\beta+y_\beta e'_\beta$ is a basis of the free $\SG_1$-module $\SGp_\beta$ of rank one. Then there exists an $f$-tuple $(\lambda_\beta)_{\beta\in \bB_f}$ in $\SG_1$ satisfying
\begin{equation}\label{EqnCrit}
\begin{pmatrix}a_{\beta,1} & a_{\beta,2} \\ u^e a_{\beta,3} & u^e a_{\beta,4} 
\end{pmatrix}
\begin{pmatrix}
x^p_{\sigma^{-1}\circ \beta}\\y^p_{\sigma^{-1}\circ \beta}
\end{pmatrix}
=\lambda_\beta
\begin{pmatrix}
x_\beta\\ y_\beta
\end{pmatrix}
\end{equation}
for any $\beta\in \bB_f$. Note that $v_R(a_{\beta,1})=p/(p+1)$. Since the matrix
\[
\begin{pmatrix}a_{\beta,1} & a_{\beta,2} \\ a_{\beta,3} & a_{\beta,4} 
\end{pmatrix}
\] 
is an element of $\mathit{GL}_2(\SG_1)$, we have $v_R(a_{\beta,2})=v_R(a_{\beta,3})=0$. 

We claim that the inequalities $0<w_\beta<1$ for any $\beta\in \bB_f$ imply $v_R(y_{\sigma^{-1}\circ \beta})>0$. Indeed, if $v_R(y_{\sigma^{-1}\circ \beta})=0$, then $v_R(\lambda_\beta)=v_R(x_\beta)=0$ and $v_R(y_\beta)\geq 1$. This is a contradiction if $f=1$. Using (\ref{EqnCrit}) for $\sigma\circ \beta$ yields $v_R(\lambda_{\sigma\circ \beta})\leq w_{\sigma\circ \beta}$ and thus $v_R(y_{\sigma\circ \beta})\geq 1-w_{\sigma\circ \beta}$. This is a contradiction if $f=2$ and the claim follows.

Since $x_\beta e_\beta+y_\beta e'_\beta$ generates the direct summand $\SGp_\beta$ of the $\SG_1$-module $\SGm_\beta$, the claim implies $v_R(x_\beta)=0$. Replacing $f_\beta$ by $x_\beta^{-1}f_\beta$, we may assume $x_\beta=1$ for any $\beta\in \bB_f$. Then $(y_\beta)_{\beta\in \bB_f}$ satisfies the equation
\begin{equation}\label{EqnCritY}
\left\{
\begin{array}{l}
a_{\beta,1}+a_{\beta,2} y^p_{\sigma^{-1}\circ \beta} = \lambda_\beta,\\
u^e(a_{\beta,3}+a_{\beta,4} y^p_{\sigma^{-1}\circ \beta}) = \lambda_\beta y_\beta.
\end{array}
\right.
\end{equation}

Next we claim that every solution $(y_\beta)_{\beta\in \bB_f}$ of this equation satisfies $v_R(\lambda_\beta)=p/(p+1)$ for any $\beta\in \bB_f$.
For $f=1$, we see that $y=y_\beta$ satisfies the equation
\[
 y^{p+1}-u^e a_{\beta,2}^{-1} a_{\beta,4} y^p+ a_{\beta,2}^{-1}a_{\beta,1}y-u^ea_{\beta,2}^{-1}a_{\beta,3}=0.
\]
An inspection of its Newton polygon shows 
$v_R(y)=1/(p+1)$. Then the second equation of (\ref{EqnCritY}) implies $v_R(\lambda_\beta)=p/(p+1)$.

Let us consider the case $f=2$. Take any $\beta\in \bB_f$. Put
\[
\begin{pmatrix}A & B\\ C& D
\end{pmatrix}
=\begin{pmatrix}a_{\sigma\circ\beta,1} & a_{\sigma\circ\beta,2} \\ u^e a_{\sigma\circ\beta,3} & u^e a_{\sigma\circ\beta,4} 
\end{pmatrix}
\begin{pmatrix}a_{\beta,1}^p & a_{\beta,2}^p \\ u^{p e} a_{\beta,3}^p & u^{p e} a_{\beta,4}^p 
\end{pmatrix}.
\]
Note that
\[
v_R(A)\geq p ,\quad v_R(B)=\frac{p}{p+1},\quad v_R(C)=1+\frac{p^2}{p+1},\quad v_R(D)=1.
\]
We have
\[
\begin{pmatrix}A & B\\ C& D
\end{pmatrix}
\begin{pmatrix}
1 \\ y_{\sigma^{-1}\circ \beta}^{p^2}
\end{pmatrix}
= \lambda_{\sigma\circ \beta}\lambda_\beta^p
\begin{pmatrix}
1 \\ y_{\sigma^{-1}\circ \beta}
\end{pmatrix}
\]
and thus $y=y_{\sigma^{-1}\circ \beta}$ satisfies the equation
\[
y^{p^2+1}-B^{-1}D y^{p^2} + B^{-1}A y - B^{-1}C=0,
\]
where the coefficients are all integral. An inspection of its Newton polygon shows 
$v_R(y_{\sigma^{-1}\circ\beta})=1/(p+1)$. 
Then the second equation of (\ref{EqnCritY}) yields $v_R(\lambda_{\sigma^{-1}\circ \beta})=p/(p+1)$. Since $\beta\in \bB_f$ is arbitrary, we obtain
\[
v_R(y_\beta)=\frac{1}{p+1},\quad v_R(\lambda_\beta)=\frac{p}{p+1}
\]
for any $\beta\in \bB_f$.

Now the claim shows
\begin{align*}
\deg_\beta((p^{-1}\cH/\cH)/(\cG[p]/\cH))&=\deg_\beta(\cH)=1-v_R(\lambda_\beta)=1/(p+1),\\
\deg_\beta(\cG[p]/\cH)&=\deg_\beta(p^{-1}\cH/\cH)-1/(p+1)=p/(p+1)
\end{align*}
for any $\beta\in \bB_f$. Then Lemma \ref{LemGKVar} (\ref{LemGKVarV}) implies that
\[
\Hdg_\beta(p^{-1}\cH/\cH)=1/(p+1)
\]
and that $\cG[p]/\cH$ is the canonical subgroup of $p^{-1}\cH/\cH$.
\end{proof}

\begin{rmk}\label{counterexamplef3}
A naive generalization of Proposition \ref{critisog} has a counterexample for $f=3$, if $p\neq 2$. Suppose $k=\kbar$ and $p+1\mid e$.
Replacing the uniformizer $\pi$ by a scalar multiple, we may assume that $c_0=p^{-1}E(0)$ satisfies $c_0\equiv 1\bmod p$.
Let $r$ be a positive integer. Fix $\beta\in \bB_3$ and consider the following elements of $M_2(\SG)$.
\[
\begin{array}{c}
\hat{A}_\beta= \begin{pmatrix} u^{\frac{p e}{p+1}} & 1\\ c_0^{-1}E(u) & c_0^{-1}E(u) \end{pmatrix},\quad \hat{A}_{\sigma\circ\beta}= \begin{pmatrix} u^{\frac{p e}{p+1}} & -1\\ c_0^{-1}E(u) & c_0^{-1}E(u) \end{pmatrix},\\
 \hat{A}_{\sigma^2\circ\beta}= \begin{pmatrix} u^{\frac{p e}{p+1}} & 1\\ c_0^{-1}E(u) & c_0^{-1}u^{ r}E(u) \end{pmatrix}
\end{array}
\]
We define the $\bZ_{p^3}$-Breuil-Kisin module $\hat{\SGm}=\bigoplus_{\beta\in \bB_3}\hat{\SGm}_\beta$ by
\[
\hat{\SGm}_\beta=\SG \hat{e}_\beta \oplus \SG \hat{e}'_\beta,\quad \varphi_{\beta}(\hat{e}_{\sigma^{-1}\circ \beta}, \hat{e}'_{\sigma^{-1}\circ \beta})=(\hat{e}_\beta, \hat{e}'_\beta)\hat{A}_\beta.
\]
Take $\hat{\alpha}_\beta \in \SG^\times$ for each $\beta\in \bB_3$ satisfying 
\[
\varphi(\hat{\alpha}_{\sigma^{-1}\circ \beta})=c_0E(u)^{-1}\det(\hat{A}_\beta)\hat{\alpha}_\beta\text{ for any }\beta\in \bB_3. 
\]
Then the map
\[
(\hat{e}_\beta,\hat{e}'_\beta)\mapsto (\hat{e}^\vee_\beta, (\hat{e}'_\beta)^\vee) \hat{\alpha}_\beta\begin{pmatrix}0 & 1 \\ -1 & 0\end{pmatrix}
\]
gives a skew-symmetric isomorphism $\hat{\SGm}\to \hat{\SGm}^\vee$. Since $\hat{\SGm}$ corresponds to a Barsotti-Tate group $\Gamma$ over $\okey$, we see that $\hat{\SGm}/p^2\hat{\SGm}$ corresponds to a $\bZ_{p^3}$-$\ADBT_2$ $\cG=\Gamma[p^2]$ over $\okey$. Let $\cH$ be any cyclic $\bZ_{p^3}$-subgroup of $\cG[p]$ over $\okey$. We write the images of $\hat{e}_\beta, \hat{e}'_\beta$ in $\hat{\SGm}/p\hat{\SGm}_\beta$ as $e_\beta, e'_\beta$ and a basis of $\SGm^*(\cG/\cH)_\beta$ as $x_\beta e_\beta+ y_\beta e'_\beta$. 

Suppose $v_R(y_\beta)>0$ for any $\beta\in \bB_3$. Then we may assume $x_\beta=1$, and we see that $y=y_{\sigma^{2}\circ \beta}$ is a root of the equation
\[
y^{p^3+1}-B^{-1}Dy^{p^3}+B^{-1}A y -B^{-1}C=0,
\]
where we put
\begin{align*}
A&=u^{\frac{e(p^3+p^2+p)}{p+1}}+u^{e(p^2+p)}, \quad B=2u^{ep}-u^{\frac{e(p^3+p^2+p)}{p+1}}+u^{e(p^2+p)},\\
C&=u^{\frac{e(p^3+p^2+2p+1)}{p+1}+r}+u^{e(p^2+p+1)+r},\\
D&=u^{\frac{e(p^2+p+1)}{p+1}}-u^{e(p^2+1)}+u^{e(p+1)+r}+u^{e(p^2+p+1)+r}.
\end{align*}
An inspection of its Newton polygon and derivation shows that this equation has exactly $p^3$ roots satisfying $v_R(y)=1/(p+1)$ and one root satisfying $v_R(y)=1+e^{-1}r$. 

The latter case does not occur, since it contradicts the second equation of (\ref{EqnCritY}). In the former case, put $y=u^{e/(p+1)}\eta$. Then $\eta$ satisfies a monic polynomial of degree $p^3+1$ whose reduction modulo $u$ is $X(X^{p^3}-2^{-1}X^{p^3-1}+2^{-1})$. Hensel's lemma and the assumption on $k$ imply $y\in \SG_1$. Thus $\cG[p]$ has exactly $p^3$ cyclic $\bZ_{p^3}$-subgroups over $\okey$ such that, for any $\beta\in \bB_3$, we have $v_R(y_\beta)>0$. 

By the assumption $k=\kbar$, there exist exactly $p^3-1$ characters $G_K\to \bF_{p^3}^\times$. Hence, among these $p^3$ cyclic $\bZ_{p^3}$-subgroups, two define the same character on the generic fiber. This means that $G_K$ acts on $\cG[p](\okbar)$ via this character. In particular, any $\bF_{p^3}$-subspace of $\cG[p](\okbar)$ is $G_K$-stable. Taking the scheme-theoretic closure, we see that $\cG[p]$ has one cyclic $\bZ_{p^3}$-subgroup $\cH$ over $\okey$ satisfying $v_R(y_{\sigma^{-1}\circ\beta_0})=0$ for some $\beta_0\in \bB_3$. 

For this $\cH$, the equation (\ref{EqnCrit}) gives $v_R(\lambda_{\beta_0})=v_R(x_{\beta_0})=0$ and $v_R(y_{\beta_0})\geq 1$. This in turn gives $v_R(\lambda_{\sigma\circ\beta_0})\leq p/(p+1)$ and $v_R(y_{\sigma\circ \beta_0})\geq 1/(p+1)$. Since $x_{\sigma\circ \beta_0}e_{\sigma\circ \beta_0}+y_{\sigma\circ \beta_0} e'_{\sigma\circ \beta_0}$ generates a direct summand, we have $v_R(x_{\sigma\circ \beta_0})=0$ and this implies $v_R(\lambda_{\sigma\circ \beta_0})=p/(p+1)$. 
Thus we obtain 
\[
\deg_{\sigma\circ \beta_0}(\cH)+ p\deg_{\beta_0}(\cH) =p+\frac{1}{p+1}
\]
and $\cG[p]/\cH$ is not the canonical subgroup of $p^{-1}\cH/\cH$.
\end{rmk}



\subsection{Canonical subgroup of higher level}\label{SecCansubh}

We derive from Theorem \ref{cansub1} the existence of the canonical subgroup  of level $n$ for a $\bZ_{p^f}$-$\ADBT_n$, by following an argument of Fargues-Tian \cite[\S 7]{Fa} as in \cite[\S 4]{Ha_cansub}. A similar result was also obtained by Goren-Kassaei \cite[Proposition 5.4.5]{GK} except the compatibility with the Hodge-Tate kernel and lower ramification subgroups. This compatibility shown here will be used to enlarge the locus where the sheaf of overconvergent Hilbert modular forms is defined from that of \cite{AIP2}.

\begin{thm}\label{cansubh}
Let $\cG$ be a $\bZ_{p^f}$-$\ADBT_n$ over $\okey$ with $\beta$-Hodge height $w_\beta$. Put $w=\max\{w_\beta\mid \beta\in \bB_f\}$. 
Suppose that we have
\[
w_\beta +p w_{\sigma^{-1} \circ \beta} <p^{2-n}
\]
for any $\beta\in \bB_f$.
Then there exists a finite flat closed $\bZ_{p^f}$-subgroup $\cC_n$ of $\cG$ of rank $p^{nf}$ over $\okey$ satisfying 
\[
\deg_\beta(\cG/\cC_n)=\sum_{l=0}^{n-1} p^{l} w_{\sigma^{-l}\circ \beta}.
\]
We refer to $\cC_n$ as the canonical subgroup of level $n$ of $\cG$. It has the following properties:
\begin{enumerate}
\item\label{cansubh_isom} Let $\cG'$ be a $\bZ_{p^f}$-$\ADBT_n$ over $\okey$ satisfying the same condition on the $\beta$-Hodge heights as above and $\cC'_n$ the canonical subgroup of level $n$ of $\cG'$. Then any isomorphism of $\bZ_{p^f}$-groups $j:\cG\to \cG'$ over $\okey$ induces an isomorphism $\cC_n\simeq \cC'_n$.
\item\label{cansubh_BC} $\cC_n$ is compatible with base extension of complete discrete valuation rings with perfect residue fields.
\item\label{cansubh_dual} $\cC_n$ is compatible with Cartier duality. Namely, $(\cG/\cC_n)^\vee$ is the canonical subgroup of level $n$ of $\cG^\vee$.
\item\label{cansubh_Frob} The kernel of the $n$-th iterated Frobenius map of $\cG\times\sS_{1-p^{n-1}w}$ coincides with $\cC_n\times \sS_{1-p^{n-1}w}$.
\item\label{cansubh_free}
The $\bZ_{p^f}/p^n\bZ_{p^f}$-module $\cC_n(\okbar)$ is free of rank one. 
\item\label{cansubh_cl}
The scheme-theoretic closure of $\cC_n(\okbar)[p^i]$ in $\cC_n$ is the canonical subgroup $\cC_i$ of level $i$ of $\cG[p^i]$ for any $0\leq i\leq n-1$.

\item\label{cansubh_HT} If $w<(p-1)/p^n$, then $\cC_n(\okbar)$ coincides with $\Ker(\HT_i)$ for any rational number $i$ satisfying 
\[
n-1+\frac{w}{p-1}< i\leq n-\frac{w(p^n-1)}{p-1}.
\] 
\item\label{cansubh_ram} If $w<(p-1)/p^n$, then $\cC_n=\cG_{i}$ for any rational number $i$ satisfying
\[
i'_n=\frac{1}{p^n(p-1)}\leq i \leq i_n=\frac{1}{p^{n-1}(p-1)}-\frac{w}{p-1}.
\] 
\end{enumerate}
\end{thm}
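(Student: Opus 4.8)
The plan is to argue by induction on $n$, the case $n=1$ being Theorem~\ref{cansub1} (there (\ref{cansubh_isom})--(\ref{cansubh_Frob}) are (\ref{cansub1_isom})--(\ref{cansub1_Frob}), (\ref{cansubh_free}) is the cyclicity of $\cC_1$, (\ref{cansubh_cl}) is vacuous, and (\ref{cansubh_HT})--(\ref{cansubh_ram}) follow from (\ref{cansub1_HT}), (\ref{cansub1_ram2}) and Lemma~\ref{canlowram1}). Assume $n\geq 2$ and the theorem in level $n-1$. The hypothesis gives in particular $w_\beta+pw_{\sigma^{-1}\circ\beta}<p^{2-n}\leq p$, so Theorem~\ref{cansub1} applied to the $\bZ_{p^f}$-$\ADBT_1$ group $\cG[p]$ produces its canonical subgroup $\cC_1\subseteq\cG[p]$ with $\deg_\beta(\cG[p]/\cC_1)=w_\beta$, hence $\deg_\beta(\cC_1)=1-w_\beta$. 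Following Fargues--Tian \cite[\S 7]{Fa} (cf.\ \cite[\S 4]{Ha_cansub}), I would lift $\cG$ to a $\bZ_{p^f}$-$p$-divisible group $\Gamma$ over $\okey$ with $\omega_\Gamma$ free of rank one over $\okey\otimes\bZ_{p^f}$ and $\Gamma[p^n]\simeq\cG$ (via \cite[Th\'eor\`eme~4.4~(e)]{Il} and the \'etaleness of $\bZ_{p^f}/\bZ_p$), and put $\bar\Gamma=\Gamma/\cC_1$, again a $\bZ_{p^f}$-$p$-divisible group of the same type. Since the hypothesis also gives $w_\beta+pw_{\sigma^{-1}\circ\beta}<p^{2-n}\leq 1$, and since $\bar\Gamma[p]$ is the group $p^{-1}\cC_1/\cC_1$ formed inside the $\bZ_{p^f}$-$\ADBT_2$ group $\cG[p^2]$, Corollary~\ref{cncisog}(\ref{canisog}) yields $\Hdg_\beta(\bar\Gamma)=p\,w_{\sigma^{-1}\circ\beta}=:w'_\beta$.

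A direct computation shows $w'_\beta+pw'_{\sigma^{-1}\circ\beta}=p\bigl(w_{\sigma^{-1}\circ\beta}+pw_{\sigma^{-2}\circ\beta}\bigr)<p\cdot p^{2-n}=p^{2-(n-1)}$, so the inductive hypothesis applies to $\bar\Gamma[p^{n-1}]$ and gives its level $n-1$ canonical subgroup $\bar\cC_{n-1}$ of rank $p^{(n-1)f}$ with $\deg_\beta(\bar\Gamma[p^{n-1}]/\bar\cC_{n-1})=\sum_{l=0}^{n-2}p^{l}w'_{\sigma^{-l}\circ\beta}=\sum_{l=1}^{n-1}p^{l}w_{\sigma^{-l}\circ\beta}$. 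I would then let $\cC_n$ be the scheme-theoretic preimage of $\bar\cC_{n-1}$ under the isogeny $\cG':=\{x\in\cG\mid p^{n-1}x\in\cC_1\}\twoheadrightarrow\cG'/\cC_1=\bar\Gamma[p^{n-1}]$; this is a finite flat closed $\bZ_{p^f}$-subgroup of $\cG$ of rank $p^{nf}$ containing $\cC_1$, independent of the lift $\Gamma$. Using additivity of $\deg_\beta$ in short exact sequences, the fact that $\deg_\beta$ of a truncated Barsotti--Tate $\bZ_{p^f}$-group of level $k$ equals $k$, and the two displayed degree formulas, one computes $\deg_\beta(\cC_n)=(1-w_\beta)+\bigl((n-1)-\sum_{l=1}^{n-1}p^{l}w_{\sigma^{-l}\circ\beta}\bigr)$, whence $\deg_\beta(\cG/\cC_n)=n-\deg_\beta(\cC_n)=\sum_{l=0}^{n-1}p^{l}w_{\sigma^{-l}\circ\beta}$, as required.

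Properties (\ref{cansubh_isom}) and (\ref{cansubh_BC}) follow because each step of the construction ($\cC_1$, $\cG'$, the quotient, $\bar\cC_{n-1}$, the preimage) is functorial and compatible with base change, using Theorem~\ref{cansub1}(\ref{cansub1_isom}), (\ref{cansub1_BC}) and induction; this also gives uniqueness of $\cC_n$. Property (\ref{cansubh_Frob}) is obtained by induction: $\cC_1$ is the kernel of Frobenius of $\cG\times\sS_{1-w}$, hence of $\cG\times\sS_{1-p^{n-1}w}$, while $\bar\cC_{n-1}$ is the kernel of the $(n-1)$-st Frobenius of $\bar\Gamma[p^{n-1}]\times\sS_{1-p^{n-2}\cdot pw}$; composing these and comparing ranks over the thickening $\sS_{1-p^{n-1}w}$ identifies $\cC_n$ with the kernel of the $n$-th Frobenius there. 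Property (\ref{cansubh_free}), and then (\ref{cansubh_cl}), I would read off from the Breuil--Kisin classification: the construction exhibits $\SGm^*(\cC_n)$ as an object of $\bZ_{p^f}$-$\ModSGf$ with each $\beta$-component free of rank one over $\SG_n$, so Theorem~\ref{BKC}(\ref{BKC_Gal}) gives that $\cC_n(\okbar)$ is free of rank one over $\bZ_{p^f}/p^n\bZ_{p^f}$ and that passing to $p^i$-torsion corresponds to reduction modulo $p^i$, which matches the level $i$ construction; equivalently, $\cC_n$ is itself a truncated Barsotti--Tate $\bZ_{p^f}$-group of level $n$, height and dimension $f$. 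Property (\ref{cansubh_dual}) follows by dualizing the entire construction, using $\Hdg_\beta(\cG^\vee)=w_\beta$, Theorem~\ref{cansub1}(\ref{cansub1_dual}) in level one, and the compatibility of Cartier duality with quotients, sub-objects and $p^i$-torsion.

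It remains to prove (\ref{cansubh_HT}) and (\ref{cansubh_ram}), which is the technical heart and the part advertised as new relative to \cite{GK}. I would proceed by descent along the isogeny tower $\cG\to\cG/\cC_1\to\cdots$, or, equivalently, by a direct Breuil--Kisin computation: writing the Frobenius of $\SGm=\SGm^*(\cG)$ (free of rank two over $\SG_n$ in each $\beta$-component) in a normal form analogous to (\ref{EqnTian}) reduced modulo $p^n$, the $\bZ_{p^f}$-Breuil--Kisin submodule cutting out $\cG/\cC_n$ is generated in the $\beta$-component by an element $e_\beta+u^{e a_\beta}(\cdots)e'_\beta$ whose exponent $a_\beta$ is governed by the degree formula just established. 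Then $\Ker(\HT_i)$ and the lower ramification subgroup $\cG_i$ are computed from the kernels of the reduction maps on $\Hom_{\SG,\varphi}(-,W_n(R))$ via Proposition~\ref{DegHdg}(\ref{DegHdg_HT}) and Theorem~\ref{BKC}(\ref{BKC_ram}), and one checks that the asserted ranges of $i$ are exactly those over which the relevant truncation is injective on the relevant component --- an $n$-fold iteration of the valuation estimates of Lemmas~\ref{lowram}, \ref{canlowram1} and \ref{cok1}, combined with the behavior of lower ramification subgroups and Hodge--Tate maps under isogeny from \cite{Ha_ramcorr,Ha_lowram}. The induction closes precisely because the level $n$ bound $w<(p-1)/p^n$ is equivalent to the level $n-1$ bound $w'=pw<(p-1)/p^{n-1}$ for $\bar\Gamma$. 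The main obstacle is exactly this last step: pinning down the endpoints of the intervals for $i$ in (\ref{cansubh_HT}) and (\ref{cansubh_ram}) requires tight (if routine) control of the valuations of the solutions of the $n$-variable $\varphi$-equation defining $\cC_n$, and one must also be careful with the non-reduced bases $\sS_{1-p^{n-1}w}$ entering (\ref{cansubh_Frob}).
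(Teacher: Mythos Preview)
Your overall inductive strategy and the construction of $\cC_n$ as the preimage of the level $n-1$ canonical subgroup of $\cG/\cC_1$ match the paper (the paper does this directly inside $\cG$ via Lemma~\ref{cncisoglem} rather than by lifting to a $p$-divisible group, but this is a cosmetic difference). The degree formula and properties (\ref{cansubh_isom})--(\ref{cansubh_Frob}) go through essentially as you say.

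There is, however, a genuine gap in your argument for (\ref{cansubh_free}). You write that ``the construction exhibits $\SGm^*(\cC_n)$ as an object \ldots\ with each $\beta$-component free of rank one over $\SG_n$'', but this is exactly the content of what is to be proved, not something one reads off. From the exact sequence $0\to\cC_1\to\cC_n\to\bar\cC_{n-1}\to 0$ one only knows that $\SGm^*(\cC_n)_\beta$ has $\SG$-length $n$; freeness over $\SG_n$ (equivalently, freeness of $\cC_n(\okbar)$ over $\bZ_{p^f}/p^n$) requires showing $\cC_n(\okbar)\cap\cG[p](\okbar)=\cC_1(\okbar)$. The paper proves this by observing that, by the inductive (\ref{cansubh_cl}), the $p$-torsion $(\cC_n/\cC_1)(\okbar)[p]$ is the canonical subgroup of $p^{-1}\cC_1/\cC_1$, whereas by Corollary~\ref{cncisog}(\ref{canisog}) the subgroup $\cG[p]/\cC_1$ is \emph{anti}-canonical there; hence $(\cC_n/\cC_1)\cap(\cG[p]/\cC_1)=0$ on the generic fibre, giving the claim. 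Your sketch never invokes this anti-canonical property, and without it the step fails.

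Your treatment of (\ref{cansubh_ram}) is also too vague. The paper does not prove it by a uniform ``$n$-fold iteration of valuation estimates'' on the Breuil--Kisin side. The inclusion $\cG_{i'_n}\subseteq\cC_n$ is obtained as in \cite[Lemma~5.4]{Ha_lowram} from Lemma~\ref{canlowram1} and the Frobenius-kernel description, but the reverse direction $\cC_n\subseteq\cG_{i_n}$ requires a separate key step, Lemma~\ref{Hilblowram}: one must show that the image of $\cG_{i_n}(\okbar)\xrightarrow{\times p}\cG[p^{n-1}]_{p i_n}(\okbar)$ already contains $\cG[p^{n-1}]_{i_{n-1}}(\okbar)$, which is proved by explicitly solving a system of $\varphi$-equations in $R$ with prescribed valuation bounds. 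Combined with $\cG[p]_{i_n}=\cC_1$ (Lemma~\ref{canlowram1}) and induction, this yields the cardinality inequality that closes the argument. For (\ref{cansubh_HT}) the paper's route is also different from yours: it bounds $\sharp\Ker(\HT_i)\leq p^{nf}$ via the level-one Hodge--Tate description as in \cite[Proposition~13]{Fa}, and then shows $\cC_n(\okbar)\subseteq\Ker(\HT_i)$ by descending along $\cG\to\cG/\cC_1$ and using that $\omega_{\cC_1^\vee}$ is killed by $m_{\Kbar}^{\geqslant w}$.
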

\begin{proof}
We proceed by induction on $n$. The case $n=1$ is Theorem \ref{cansub1}. Suppose that $n\geq 2$ and the assertions hold for $n-1$. Let $\cG$ be a $\bZ_{p^f}$-$\ADBT_n$ satisfying the assumption. Then we have the canonical subgroup $\cC_1$ of the $\bZ_{p^f}$-$\ADBT_1$ $\cG[p]$ and Lemma \ref{cncisoglem} implies that 
$p^{1-n}\cC_1/\cC_1$ is also a $\bZ_{p^f}$-$\ADBT_{n-1}$. By Corollary \ref{cncisog} (\ref{canisog}), we have
\[
\Hdg_\beta(p^{1-n}\cC_1/\cC_1)=p w_{\sigma^{-1}\circ \beta}
\]
and by the induction hypothesis, $p^{1-n}\cC_1/\cC_1$ has the canonical subgroup of level $n-1$, which we write as $\cC_n/\cC_1$ with some $\bZ_{p^f}$-subgroup $\cC_n$ of $\cG$. Then we have
\begin{align*}
\deg_\beta(\cG/\cC_n)&= \deg_\beta(\cG/p^{1-n}\cC_1)+\deg_\beta((p^{1-n}\cC_1/\cC_1)/(\cC_n/\cC_1))\\
&=\deg_\beta(\cG[p]/\cC_1)+\sum_{l=0}^{n-2} p^l (p w_{\sigma^{-l-1}\circ \beta})=\sum_{l=0}^{n-1} p^{l} w_{\sigma^{-l}\circ \beta}.
\end{align*}
The assertions (\ref{cansubh_isom}) and (\ref{cansubh_BC}) follow from the construction and the induction hypothesis. The assertions (\ref{cansubh_dual}) and (\ref{cansubh_Frob}) can be shown exactly in the same way as \cite[Theorem 1.1 (b) and (1)]{Ha_cansub}, using the assertion (\ref{cansubh_isom}). 

Let us show the assertion (\ref{cansubh_free}). By an induction, we can show $\cC_{n-1}\subseteq \cC_n$. By the induction hypothesis, it suffices to show $\cC_n(\okbar)\cap \cG[p](\okbar)=\cC_{1}(\okbar)$ for any $n\geq 2$. From the assertion (\ref{cansubh_cl}) for $p^{1-n}\cC_1/\cC_1$, we see that $(\cC_n/\cC_1)(\okbar)[p]$ is the generic fiber of the canonical subgroup of $p^{-1}\cC_1/\cC_1$. On the other hand, Corollary \ref{cncisog} (\ref{canisog}) implies that $\cG[p]/\cC_1$ is not the canonical subgroup of $p^{-1}\cC_1/\cC_1$. Then we have $(\cC_n/\cC_1)(\okbar)\cap (\cG[p]/\cC_1)(\okbar)=0$ and thus $\cC_n(\okbar)\cap \cG[p](\okbar)\subseteq \cC_1(\okbar)$, from which the assertion (\ref{cansubh_free}) follows. 
The assertion (\ref{cansubh_cl}) follows from $\cC_{n-1}\subseteq \cC_n$ and the assertion (\ref{cansubh_free}).

Next we show the assertion (\ref{cansubh_HT}). Let $i$ be as in the assertion. Put $\epsilon=n-i$. Since we have 
\[
w/(p-1)<1-\epsilon\leq 1-w,
\]
by using Theorem \ref{cansub1} (\ref{cansub1_HT}) we can show $\sharp \Ker(\HT_i)\leq p^{nf}$ as in the proof of \cite[Proposition 13]{Fa}. On the other hand, since $\deg_\beta(\cC_1^\vee)=w_\beta$, the $\cO_{\Kbar}$-module $\omega_{\cC_1^\vee}\otimes \cO_{\Kbar}$ is killed by $m_{\Kbar}^{\geqslant w}$. Take any element $x\in \cC_n(\okbar)$ and denote its image in $(\cG/\cC_1)(\okbar)$ by $\bar{x}$. By the induction hypothesis, we have $\HT_j(\bar{x})=0$ for any $j$ satisfying
\[
n-2+pw/(p-1)< j\leq n-1-w(p^n-p)/(p-1).
\]
Thus we obtain 
\[
m_{\Kbar}^{\geqslant n-1-j}\HT(\bar{x})=0,\quad m_{\Kbar}^{\geqslant n-1-j+w}\HT(x)=0
\]
and $\HT_{1-w+j}(x)=0$, which yields $\cC_n(\okbar)\subseteq \Ker(\HT_i)$. Then the assertion (\ref{cansubh_HT}) follows from $\sharp\cC_n(\okbar)=p^{nf}$.

Finally, we show the assertion (\ref{cansubh_ram}) following the proof of \cite[Theorem 1.2]{Ha_lowram}. Using Lemma \ref{canlowram1} and Theorem \ref{cansub1} (\ref{cansub1_Frob}), the same argument as in the proof of \cite[Lemma 5.4]{Ha_lowram} shows $\cG_{i'_n}\subseteq \cC_n$. For the reverse inclusion, we need the following variant of \cite[Proposition 5.5]{Ha_lowram}.

\begin{lem}\label{Hilblowram}
The image of the map $\cG_{i_n}(\okbar)\overset{\times p}{\to} \cG[p^{n-1}]_{pi_n}(\okbar)$ contains $\cG[p^{n-1}]_{i_{n-1}}(\okbar)$.
\end{lem}
\begin{proof}
Note that the map in the lemma is well-defined by \cite[Lemma 5.3]{Ha_lowram}. Put $\SGm=\SGm^*(\cG[p])$. Consider the basis $\{\delta_\beta,e'_\beta\}$ of the $\SG_1$-module $\SGm$ as in the proof of Theorem \ref{cansub1}. Write as
\[
\varphi(\delta_{\sigma^{-1}\circ\beta}, e'_{\sigma^{-1}\circ\beta})=(\delta_\beta, e'_\beta)\begin{pmatrix}\lambda_\beta& \mu_\beta \\ 0 & \nu_\beta \end{pmatrix}.
\]
We have $v_R(\lambda_\beta)=w_\beta$ and $v_R(\nu_\beta)=1-w_\beta$. Then, in the same way as in the proof of \cite[Proposition 5.5]{Ha_lowram}, we reduce ourselves to showing that for any $\xi_\beta, \eta_\beta\in m_R^{\geqslant i_{n-1}}$, there exist $\zeta_\beta, \omega_\beta\in m_R^{\geqslant i_{n}}$ satisfying
\[
(\xi_{\beta}, \eta_{\beta})+ (\zeta_{\sigma^{-1}\circ\beta}^p,\omega_{\sigma^{-1}\circ\beta}^p)=(\zeta_\beta,\omega_\beta)\begin{pmatrix}\lambda_\beta& \mu_\beta \\ 0 & \nu_\beta \end{pmatrix}
\]
for any $\beta\in \bB_f$. We can show by recursion that the equation on $\zeta_\beta$'s has a solution satisfying $v_R(\zeta_\beta)\geq pi_n$ for any $\beta\in \bB_f$. Fixing such $\zeta_\beta$'s, we obtain the system of equations on $\omega_\beta$'s 
\[
\omega_{\sigma^{-1}\circ\beta}^p- \nu_\beta \omega_\beta - \mu_\beta \zeta_\beta +\eta_{\beta}=0.
\]
Take any $a\in R$ satisfying $v_R(a)=i_n$ and put $\omega_\beta=a \alpha_\beta$. Then $(\alpha_\beta)_{\beta\in \bB_f}$ is a solution of the system of equations
\[
\alpha_{\sigma^{-1}\circ\beta}^p- \frac{\nu_\beta}{a^{p-1}} \alpha_\beta - \frac{\mu_\beta \zeta_\beta}{a^p} +\frac{\eta_{\beta}}{a^p}=0,
\]
where all the coefficients are contained in $R$. This system defines a finite $R$-algebra which is free of rank $p^f$. Since $\Frac(R)$ is algebraically closed and $R$ is normal, we can find a solution $(\alpha_\beta)_{\beta\in \bB_f}$ in $R$ and the lemma follows.
\end{proof}

By the induction hypothesis, we have $\cG[p^{n-1}]_{i_{n-1}}=\cC_{n-1}$. By Lemma \ref{canlowram1}, we also have $\cG[p]_{i_n}=\cC_1$. Then Lemma \ref{Hilblowram} implies $\sharp \cG_{i_n}(\okbar)\geq \sharp \cC_n(\okbar)$. Now the assertion (\ref{cansubh_ram}) follows from the inclusions $\cG_{i_n} \subseteq \cG_{i'_n}\subseteq \cC_n$. This concludes the proof of Theorem \ref{cansubh}.
\end{proof}

\begin{cor}\label{noncanh}
Let $n$ be a positive integer. Let $\cG$ be a $\bZ_{p^f}$-$\ADBT_{n+1}$ over $\okey$ with $\beta$-Hodge height $w_\beta$ satisfying 
\[
w_\beta+p w_{\sigma^{-1}\circ \beta} < p^{2-n}
\]
for any $\beta\in \bB_f$. 
Let $\cC_{n-1}$ and $\cC_1$ be the canonical subgroups of level $n-1$ and level one of $\cG[p^{n-1}]$ and $\cG[p]$, respectively. Let $\cH\neq \cC_1$ be a finite flat closed cyclic $\bZ_{p^f}$-subgroup of $\cG[p]$ over $\okey$. Then the $\bZ_{p^f}$-$\ADBT_n$ $p^{-n}\cH/\cH$ has the canonical subgroup $p^{-1}\cC_{n-1}/\cH$. Moreover, 
the natural map $\cC_n \to p^{-1}\cC_{n-1}/\cH$ is an isomorphism over $K$.
\end{cor}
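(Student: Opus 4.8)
The plan is to realise $\cG':=p^{-n}\cH/\cH$ as a subquotient of $\cG$ whose canonical subgroup structure is already under control, and then to match everything up using the inductive construction of the canonical subgroup of level $n$ from the proof of Theorem \ref{cansubh}.

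First I would record the structure of $\cG'$. By Lemma \ref{cncisoglem} it is a $\bZ_{p^f}$-$\ADBT_n$ over $\okey$, and since $\cH\subseteq\cG[p]$ a direct computation identifies $\cG'[p]$ with $p^{-1}\cH/\cH$, the preimage of $\cH$ under $p:\cG[p^2]\to\cG[p]$ modulo $\cH$. As $\cG[p^2]$ is a $\bZ_{p^f}$-$\ADBT_2$ of $\beta$-Hodge height $w_\beta$ with $w_\beta+pw_{\sigma^{-1}\circ\beta}<p^{2-n}\leq p$ and $\cH\neq\cC_1$, Corollary \ref{cncisog} (\ref{noncanisog}) applies, giving $\Hdg_\beta(\cG')=\Hdg_\beta(\cG'[p])=p^{-1}w_{\sigma\circ\beta}=:w'_\beta$ and showing that $\cD_1:=\cG[p]/\cH$ is the canonical subgroup of $\cG'[p]$. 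Since $w'_\beta+pw'_{\sigma^{-1}\circ\beta}=p^{-1}(w_{\sigma\circ\beta}+pw_\beta)<p^{1-n}$, the group $\cG'$ satisfies the hypothesis of Theorem \ref{cansubh} at level $n$ and thus has a canonical subgroup $\cC'_n$ of level $n$. It remains to prove $\cC'_n=p^{-1}\cC_{n-1}/\cH$ and that the natural map $\cC_n\to\cC'_n$ is an isomorphism over $K$.

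For the identification of $\cC'_n$, the case $n=1$ is precisely the last sentence of Corollary \ref{cncisog} (\ref{noncanisog}), with $\cC_0=0$ so that $p^{-1}\cC_0/\cH=\cD_1$. For $n\geq2$ I would use that, by the construction in the proof of Theorem \ref{cansubh} applied to $\cG'$, the subgroup $\cC'_n$ contains $\cD_1$ and $\cC'_n/\cD_1$ is the canonical subgroup of level $n-1$ of $p^{1-n}\cD_1/\cD_1$, the preimage of $\cD_1$ under $p^{n-1}:\cG'\to\cG'$ modulo $\cD_1$. Tracking preimages gives $p^{1-n}\cD_1=\cG[p^n]/\cH$, hence $p^{1-n}\cD_1/\cD_1=\cG[p^n]/\cG[p]$, and multiplication by $p$ induces a $\bZ_{p^f}$-equivariant isomorphism $\alpha:\cG[p^n]/\cG[p]\to\cG[p^{n-1}]$. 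The source is a $\bZ_{p^f}$-$\ADBT_{n-1}$ of $\beta$-Hodge height $w_\beta$ (Lemma \ref{cncisoglem} applied to $(\cG',\cD_1)$, together with Corollary \ref{cncisog} (\ref{canisog}) applied to $\cG'[p^2]$), as is the target, and both satisfy the hypothesis of Theorem \ref{cansubh}; hence by Theorem \ref{cansubh} (\ref{cansubh_isom}) the isomorphism $\alpha$ carries the canonical subgroup of level $n-1$ of $\cG[p^n]/\cG[p]$ onto $\cC_{n-1}$. Since $\cC_{n-1}$ is killed by $p^{n-1}$ one has $p^{-1}\cC_{n-1}\subseteq\cG[p^n]$ and $\alpha^{-1}(\cC_{n-1})=p^{-1}\cC_{n-1}/\cG[p]$; therefore $\cC'_n/\cD_1=p^{-1}\cC_{n-1}/\cG[p]=(p^{-1}\cC_{n-1}/\cH)/\cD_1$, and as $\cC'_n$ and $p^{-1}\cC_{n-1}/\cH$ are both subgroups of $\cG'$ containing $\cD_1$, this forces $\cC'_n=p^{-1}\cC_{n-1}/\cH$.

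For the last assertion I would argue on generic fibres. The composite $\cC_n\hookrightarrow\cG[p^n]\hookrightarrow p^{-n}\cH\twoheadrightarrow\cG'$ has kernel $\cC_n\cap\cH$, and on $\okbar$-points $\cC_n(\okbar)\cap\cH(\okbar)\subseteq\cC_n(\okbar)[p]\cap\cH(\okbar)=\cC_1(\okbar)\cap\cH(\okbar)=0$ by Theorem \ref{cansubh} (\ref{cansubh_free}), (\ref{cansubh_cl}) and $\cC_1\neq\cH$. Moreover $p\cC_n(\okbar)=\cC_n(\okbar)[p^{n-1}]=\cC_{n-1}(\okbar)$ by the same two properties, so $\cC_n(\okbar)\subseteq(p^{-1}\cC_{n-1})(\okbar)$ and, as $\cC_n$ is $\okey$-flat, the composite factors through $p^{-1}\cC_{n-1}/\cH=\cC'_n$. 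The resulting map $\cC_n\to\cC'_n$ is then injective on generic fibres between finite flat group schemes of common order $p^{nf}$, hence an isomorphism over $K$. I expect the main obstacle to be the bookkeeping in the previous paragraph: correctly computing the preimages $p^{1-n}\cD_1$ and $\alpha^{-1}(\cC_{n-1})$ and checking that $\alpha$ respects enough structure for Theorem \ref{cansubh} (\ref{cansubh_isom}) to apply; once Corollary \ref{cncisog} and Theorem \ref{cansubh} are available, the rest is formal.
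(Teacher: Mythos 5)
Your proof is correct and follows essentially the same route as the paper's: you identify $\cG[p]/\cH$ as the canonical subgroup of $(p^{-n}\cH/\cH)[p]$ via Corollary \ref{cncisog}(\ref{noncanisog}), unwind the inductive construction of the level-$n$ canonical subgroup from the proof of Theorem \ref{cansubh}, transport through the multiplication-by-$p$ isomorphism $\cG[p^n]/\cG[p]\to\cG[p^{n-1}]$ using Theorem \ref{cansubh}(\ref{cansubh_isom}), and deduce the generic-fibre isomorphism from Theorem \ref{cansubh}(\ref{cansubh_free}),(\ref{cansubh_cl}) and an order count. Your argument is a little more explicit than the paper's (e.g.\ in verifying that $\cC_n$ actually maps into $p^{-1}\cC_{n-1}/\cH$), but the ideas and the lemmas invoked coincide.
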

\begin{proof}
By Corollary \ref{cncisog} (\ref{noncanisog}), the $\bZ_{p^f}$-$\ADBT_i$ $p^{-i}\cH/\cH$ has the canonical subgroup of level $i$ for any positive integer $i\leq n$, which we denote by $\bar{\cC}_i$. Moreover, we have $\bar{\cC}_1=\cG[p]/\cH$. By the construction of the canonical subgroup in Theorem \ref{cansubh}, the quotient $\bar{\cC}_n/\bar{\cC}_1$ is equal to the canonical subgroup of level $n-1$ of the $\bZ_{p^f}$-$\ADBT_{n-1}$ $p^{1-n}\bar{\cC}_1/\bar{\cC}_1$. We have the map
\[
p^{1-n}\bar{\cC}_1/\bar{\cC}_1=p^{1-n}(\cG[p]/\cH)/(\cG[p]/\cH)=(\cG[p^n]/\cH)/(\cG[p]/\cH)\overset{\times p}{\to} \cG[p^{n-1}],
\]
where the last arrow is an isomorphism. By Theorem \ref{cansubh} (\ref{cansubh_isom}), we obtain $\bar{\cC}_n=p^{-1}\cC_{n-1}/\cH$. 
Moreover, Theorem \ref{cansubh} (\ref{cansubh_cl}) implies $\cC_n(\okbar)\cap \cH(\okbar)=0$ and the map $\cC_n \to p^{-1}\cC_{n-1}/\cH$ is an injection over $K$. Since the both sides have the same rank over $\okey$, the last assertion follows.
\end{proof}

Finally, we show the following generalization of \cite[Proposition 3.2.1]{AIP} to our setting.

\begin{prop}\label{cokn}
Let $\cG$ be a $\bZ_{p^f}$-$\ADBT_n$ over $\okey$ with $\beta$-Hodge height $w_\beta$. Put $w=\max\{w_\beta\mid \beta\in \bB_f\}$. Suppose $w<(p-1)/p^n$. Let $\cC_n$ be the canonical subgroup of $\cG$ of level $n$, which exists by Theorem \ref{cansubh}. 
\begin{enumerate}
\item For any $i\in e^{-1}\bZ_{\geq 0}$ satisfying $i\leq n-w(p^n-1)/(p-1)$, the natural map 
\[
\omega_{\cG}\otimes_{\okey} \cO_{K, i}\to \omega_{\cC_n}\otimes_{\okey} \cO_{K, i}
\]
is an isomorphism.
\item The cokernel of the linearization of the Hodge-Tate map
\[
\HT\otimes 1: \cC_n^\vee(\okbar)\otimes \okbar \to \omega_{\cC_n}\otimes_{\okey} \okbar
\]
is killed by $m_{\Kbar}^{\geqslant w/(p-1)}$.
\end{enumerate}
\end{prop}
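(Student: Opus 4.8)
The plan is to treat the two assertions in turn; neither requires new ideas beyond what has been set up.

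\emph{Assertion (1).} I would apply the functor of invariant differentials to the exact sequence $0\to\cC_n\to\cG\to\cG/\cC_n\to 0$. The resulting sequence $0\to\omega_{\cG/\cC_n}\to\omega_{\cG}\to\omega_{\cC_n}\to 0$ is right exact for general reasons, and it is left exact because, by the additivity of the $\beta$-degree recalled in \S\ref{SecZfgps}, the kernel of $\omega_{\cG/\cC_n,\beta}\to\omega_{\cG,\beta}$, having $\beta$-degree zero, vanishes. Thus $\Ker(\omega_{\cG,\beta}\to\omega_{\cC_n,\beta})$ is identified with $\omega_{\cG/\cC_n,\beta}$, a finite $\okey$-module of degree $\deg_\beta(\cG/\cC_n)=\sum_{l=0}^{n-1}p^{l}w_{\sigma^{-l}\circ\beta}\leq w(p^n-1)/(p-1)$ by Theorem \ref{cansubh}, and is therefore killed by $m_K^{\geqslant\deg_\beta(\cG/\cC_n)}$. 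Since $\omega_{\cG,\beta}$ is free of rank one over $\cO_{K,n}$, a submodule of it killed by $m_K^{\geqslant d}$ must lie in $m_K^{\geqslant n-d}\omega_{\cG,\beta}$; applying this with $d=\deg_\beta(\cG/\cC_n)$ shows the kernel is contained in $m_K^{\geqslant n-w(p^n-1)/(p-1)}\omega_{\cG,\beta}\subseteq m_K^{\geqslant i}\omega_{\cG,\beta}$ for every $i$ as in the statement. Hence $\omega_{\cG,\beta}\otimes\cO_{K,i}\to\omega_{\cC_n,\beta}\otimes\cO_{K,i}$ is an isomorphism, and summing over $\beta\in\bB_f$ yields (1).

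\emph{Assertion (2).} The strategy is to reduce to Lemma \ref{cok1} in two steps. Dualizing the closed immersion $\cC_n\hookrightarrow\cG$ yields a surjection $\cG^\vee\twoheadrightarrow\cC_n^\vee$, and functoriality of the Hodge-Tate map under Cartier-dual-compatible morphisms produces a commutative square
\[
\xymatrix{
\cG^\vee(\okbar)\otimes\okbar \ar[r]\ar[d]_{\HT\otimes 1} & \cC_n^\vee(\okbar)\otimes\okbar \ar[d]^{\HT\otimes 1}\\
\omega_{\cG}\otimes_{\okey}\okbar \ar[r] & \omega_{\cC_n}\otimes_{\okey}\okbar,
}
\]
whose top arrow is surjective (because $\cG^\vee\to\cC_n^\vee$ is finite flat surjective and $\okbar$ is an integrally closed valuation ring with algebraically closed fraction field) and whose bottom arrow is the restriction map along $\cC_n\hookrightarrow\cG$, hence also surjective. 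It follows that $\Coker(\HT_{\cC_n^\vee}\otimes 1)$ is a quotient of $\Coker(\HT_{\cG^\vee}\otimes 1\colon\cG^\vee(\okbar)\otimes\okbar\to\omega_{\cG}\otimes\okbar)$, so it suffices to show the latter is killed by $m_{\Kbar}^{\geqslant w/(p-1)}$. Now $\cG^\vee$, being a truncated Barsotti-Tate group of level $n$ over $\okey$, is the $p^n$-torsion $\Gamma[p^n]$ of a $p$-divisible group $\Gamma$ over $\okey$, and $\HT_{\cG^\vee}\otimes 1$ is the reduction modulo $p^n$ of $\HT_\Gamma\otimes 1\colon T_p\Gamma\otimes\okbar\to\omega_{\Gamma^\vee}\otimes\okbar$; hence $\Coker(\HT_{\cG^\vee}\otimes 1)$ is a quotient of $\Coker(\HT_\Gamma\otimes 1)$, and it remains to bound the latter.

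Reduction modulo $p$ identifies $\Coker(\HT_\Gamma\otimes 1)$ modulo $p$ with $\Coker(\HT_{\Gamma[p]}\otimes 1)$. Since $\Gamma[p]=\cG^\vee[p]$ is a $\bZ_{p^f}\text{-}\ADBT_1$ with $\beta$-Hodge heights $w_\beta$ and $w<(p-1)/p^n\leq(p-1)/p$, Lemma \ref{cok1} gives $m_{\Kbar}^{\geqslant w/(p-1)}(\omega_{\Gamma^\vee}\otimes\okbar)\subseteq\Img(\HT_\Gamma\otimes 1)+p\,(\omega_{\Gamma^\vee}\otimes\okbar)$. Because $\omega_{\Gamma^\vee}\otimes\okbar$ is a finitely generated $\okbar$-module and $w/(p-1)<1$, Lemma \ref{NAK} applied with $s=w/(p-1)$ and $r=1$ removes the term $p\,(\omega_{\Gamma^\vee}\otimes\okbar)$ and yields $m_{\Kbar}^{\geqslant w/(p-1)}(\omega_{\Gamma^\vee}\otimes\okbar)\subseteq\Img(\HT_\Gamma\otimes 1)$, i.e.\ $\Coker(\HT_\Gamma\otimes 1)$ is killed by $m_{\Kbar}^{\geqslant w/(p-1)}$. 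The points requiring care — and this is really the main obstacle, an obstacle of care rather than of depth — are the two standard compatibilities of the Hodge-Tate map invoked above (with Cartier-dual-compatible morphisms, and with the passage from a $p$-divisible group to its $p^n$-torsion) together with the fact that a truncated Barsotti-Tate group of level $n$ over $\okey$ lifts to a $p$-divisible group; the remaining manipulations are elementary bookkeeping with ideals of $\okbar$ and of $\cO_{K,n}$.
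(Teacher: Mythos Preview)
Your argument is correct, and for assertion (1) it is essentially the paper's proof. For assertion (2) you take a somewhat different route, so it is worth recording the comparison.

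For the passage from $\cC_n^\vee$ to $\cG^\vee$, the paper does \emph{not} use your direct square. Instead it reduces modulo $m_{\Kbar}^{\geqslant b}$ with $b=n-w(p^n-1)/(p-1)$, invokes assertion (1) to identify $\omega_{\cG}\otimes\cO_{\Kbar,b}\simeq\omega_{\cC_n}\otimes\cO_{\Kbar,b}$, deduces that $\Coker(\HT_{\cC_n^\vee,b}\otimes 1)$ is killed by $m_{\Kbar}^{\geqslant w/(p-1)}$, and then applies Lemma~\ref{NAK} a second time (using $b>w/(p-1)$) to remove the truncation. Your observation that the untruncated map $\omega_{\cG}\otimes\okbar\to\omega_{\cC_n}\otimes\okbar$ is already surjective (since $\cC_n\hookrightarrow\cG$ is a closed immersion) lets you pass directly from $\Coker(\HT_{\cG^\vee}\otimes 1)$ to $\Coker(\HT_{\cC_n^\vee}\otimes 1)$, and thereby bypass both the appeal to assertion (1) and the second use of Lemma~\ref{NAK}. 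This is a genuine simplification.

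For the passage from $\cG^\vee$ to level one, the paper stays with the truncated group: it uses the multiplication-by-$p^{n-1}$ map $\cG^\vee\twoheadrightarrow\cG^\vee[p]$ together with the isomorphism $\omega_{\cG}\otimes\cO_{\Kbar,1}\simeq\omega_{\cG[p]}\otimes\cO_{\Kbar,1}$ to identify $\Coker(\HT_{\cG^\vee,1}\otimes 1)\simeq\Coker(\HT_{\cG^\vee[p]}\otimes 1)$, and then applies Lemma~\ref{cok1} and Lemma~\ref{NAK} exactly as you do. Your lifting to a $p$-divisible group $\Gamma$ with $\Gamma[p^n]=\cG^\vee$ accomplishes the same reduction (via $\Coker(\HT_\Gamma\otimes 1)/p\simeq\Coker(\HT_{\Gamma[p]}\otimes 1)$ and $\Gamma[p]=\cG^\vee[p]$), but is an unnecessary detour: the $p^{n-1}$ map already furnishes the surjection onto the level-one piece without leaving the category of truncated groups.
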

\begin{proof}
Put $b=n-w(p^n-1)/(p-1)$. For the first assertion, consider the exact sequence
\[
\xymatrix{
0 \ar[r] & \omega_{\cG/\cC_n} \ar[r] & \omega_{\cG} \ar[r] & \omega_{\cC_n} \ar[r] & 0
}
\]
and the decompositions
\[
\omega_{\cG/\cC_n}=\bigoplus_{\beta\in\bB_f}\omega_{\cG/\cC_n,\beta},\quad \omega_{\cG}=\bigoplus_{\beta\in\bB_f}\omega_{\cG,\beta}.
\]
Note that $\omega_{\cG,\beta}\simeq \cO_{K,n}$. Theorem \ref{cansubh} implies
\[
\deg_{\beta}(\cG/\cC_n)=\sum_{l=0}^{n-1}p^l w_{\sigma^{-l}\circ \beta}\leq \frac{w(p^n-1)}{p-1}\leq n-i.
\]
Thus the image of the natural map $\omega_{\cG/\cC_n, \beta}\to \omega_{\cG,\beta}$ is contained in $m_K^{\geqslant i}\omega_{\cG,\beta}$ for any $\beta\in \bB_f$ and the first assertion follows.

For the second assertion, consider the commutative diagram
\[
\xymatrix{
\cG^\vee(\okbar) \ar[d]_{p^{n-1}} \ar[r]^{\HT_{\cG^\vee}}& \omega_{\cG}\otimes_{\okey}\okbar \ar[r]\ar[d]& \omega_{\cG}\otimes_{\okey}\cO_{\Kbar,1}\ar[d]\\
\cG^\vee[p](\okbar) \ar[r]^{\HT_{\cG^\vee[p]}} & \omega_{\cG[p]}\otimes_{\okey}\okbar \ar@{=}[r]& \omega_{\cG[p]}\otimes_{\okey}\cO_{\Kbar,1},
}
\]
where the horizontal composites are the first Hodge-Tate maps and the left vertical arrow is surjective. Since the right vertical arrow is an isomorphism, the map $\HT_{\cG^\vee,1}$ factors through $\cG^\vee[p](\okbar)$ and we obtain a natural isomorphism of $\okbar$-modules
\[
\Coker(\HT_{\cG^\vee,1}\otimes 1) \simeq \Coker(\HT_{\cG^\vee[p],1}\otimes 1).
\]
By Lemma \ref{cok1}, they are killed by $m_{\Kbar}^{\geqslant w/(p-1)}$ and thus
\[
m_{\Kbar}^{\geqslant w/(p-1)} (\omega_{\cG}\otimes_{\okey}\okbar) \subseteq \Img(\HT_{\cG^\vee}\otimes 1) +p(\omega_{\cG}\otimes_{\okey}\okbar).
\]
Since $w<1$, Lemma \ref{NAK} implies that the $\okbar$-module $\Coker(\HT_{\cG^\vee}\otimes 1)$ is killed by $m_{\Kbar}^{\geqslant w/(p-1)}$.

On the other hand, we have a commutative diagram
\[
\xymatrix{
\cG^\vee(\okbar) \ar[d] \ar[r]^{\HT_{\cG^\vee}}& \omega_{\cG}\otimes_{\okey}\okbar \ar[r]\ar[d]& \omega_{\cG}\otimes_{\okey}\cO_{\Kbar,b}\ar[d]\\
\cC_n^\vee(\okbar) \ar[r]^{\HT_{\cC_n^\vee}} & \omega_{\cC_n}\otimes_{\okey}\okbar \ar[r]& \omega_{\cC_n}\otimes_{\okey}\cO_{\Kbar,b},
}
\]
where the left vertical arrow is surjective.
By a base change argument using Theorem \ref{cansubh} (\ref{cansubh_BC}), the first assertion implies that the right vertical arrow is an isomorphism. Thus we have a surjection of $\okbar$-modules
\[
\Coker(\HT_{\cG^\vee}\otimes 1) \twoheadrightarrow \Coker(\HT_{\cG^\vee,b}\otimes 1) \simeq \Coker(\HT_{\cC_n^\vee,b}\otimes 1)
\]
and $\Coker(\HT_{\cC_n^\vee,b}\otimes 1)$ is also killed by $m_{\Kbar}^{\geqslant w/(p-1)}$. This is equivalent to the inclusion
\[
m_{\Kbar}^{\geqslant w/(p-1)} (\omega_{\cC_n}\otimes_{\okey} \okbar) \subseteq \Img(\HT_{\cC_n^\vee}\otimes 1) + m_{\Kbar}^{\geqslant b} (\omega_{\cC_n}\otimes_{\okey} \okbar). 
\]
Since $w<(p-1)/p^n$, we have $b>w/(p-1)$ and the proposition follows from Lemma \ref{NAK}. 
\end{proof}



\section{Hilbert eigenvariety}\label{SecHilbEV}



\subsection{Hilbert modular varieties}\label{SecHilbMV}

Let $p$ be a rational prime. Let $F$ be a totally real number field of degree $g$ which is unramified over $p$. We denote its ring of integers by $\fro=\cO_F$ and its different by $\cD_F$. For any integer $N$, we put
\[
U_N=\{\epsilon\in \cO_F^\times\mid \epsilon\equiv 1 \bmod N\}. 
\]
We fix once and for all a representative $[\mathrm{Cl}^+(F)]^{(p)}=\{\frc_1=\fro,\frc_2,\ldots, \frc_{h^+}\}$ of the strict class group $\mathrm{Cl}^+(F)$ such that every $\frc_i$ is prime to $p$.

For any prime ideal $\frp \mid p$ of $\cO_F$, let $f_{\frp}$ be the residue degree of $\frp$. Fix a finite extension $K/\bQ_p$ in $\bar{\bQ}_p$ such that $F\otimes K$ splits completely. Let $k$ be the residue field of $K$ and we follow the notation in \S\ref{SecBKMod}. We denote by $\bB_F$ the set of embeddings $F\to K$ and by $\bB_{\frp}$ the subset consisting of embeddings which factor through the completion $F_{\frp}$. Then we can identify $\bB_{\frp}$ with $\bB_{f_{\frp}}$. The set $\bB_F$ is decomposed as
\[
\bB_F=\coprod_{\frp\mid p} \bB_{\frp}.
\]

For any subset $X$ of $F$, we denote by $X^+$ the subset of totally positive elements of $X$. Put $F_\bR=F\otimes \bR$ and $F^*_\bR=\Hom_{\bQ}(F,\bR)$. We denote by $F^{*,+}_\bR$ the subset of $F^*_\bR$ consisting of linear forms which maps the subset $F^{\times,+}$ to $\bR_{>0}$. The group $U_N$ acts on $F$ and $F^{*,+}_\bR$ through $\epsilon\mapsto \epsilon^2$. 

Let $\frc$ be any non-zero fractional ideal of $F$. For any fractional ideals $\fra$, $\frb$ of $F$ satisfying $\fra\frb^{-1}=\frc$, we denote by $\Dec(\fra,\frb)$ the set of rational polyhedral cone decompositions $\sC=\{\sigma\}_{\sigma\in \sC}$ of $F^{*,+}_\bR$ which is projective and smooth with respect to the lattice $\Hom(\fra\frb,\bZ)$ such that the elements of $\sC$ are permuted by the action of $U_N$, the set $\sC/U_N$ is finite and for any $\epsilon\in U_N$ and $\sigma\in \sC$, $\epsilon(\sigma)\cap \sigma\neq \emptyset$ implies $\epsilon=1$, as in \cite[\S 4.1.4]{Hida_PAF}. Here we adopt the convention that $\sigma$ is an open cone. Note that any two elements of $\Dec(\fra,\frb)$ have a common refinement which belongs to $\Dec(\fra,\frb)$. For any such pair $(\fra,\frb)$, we fix once and for all a rational polyhedral cone decomposition $\sC(\fra,\frb)\in \Dec(\fra,\frb)$ and put $\sD(\frc)=\{\sC(\fra,\frb)\mid \fra\frb^{-1}=\frc\}$.



\subsubsection{Hilbert-Blumenthal abelian varieties}\label{SecHBAV}

Let $N\geq 4$ be an integer with $p\nmid N$ and $\frc$ a non-zero fractional ideal of $F$. Let $S$ be a scheme over $\okey$. A Hilbert-Blumenthal abelian variety over $S$, which we abbreviate as HBAV, is a quadruple $(A,\iota,\lambda,\psi)$ such that
\begin{itemize}
\item $A$ is an abelian scheme over $S$ of relative dimension $g$.
\item $\iota: \cO_F\to \End_S(A)$ is a ring homomorphism.
\item $\lambda$ is a $\frc$-polarization. Namely, $\lambda:A\otimes_{\oef} \frc\simeq A^\vee$ is an isomorphism of abelian schemes to the dual abelian scheme $A^\vee$ compatible with $\oef$-action such that the map 
\[
\Hom_{\oef}(A,A^\vee)\simeq \Hom_{\oef}(A,A\otimes_{\oef} \frc),\quad f\mapsto \lambda^{-1}\circ f 
\]
induces an isomorphism of $\cO_F$-modules with notion of positivity $(\cP_A,\cP_A^+)\simeq (\frc,\frc^+)$. Here $\cP_A$ denotes the $\cO_F$-module of symmetric $\cO_F$-homomorphisms from $A$ to $A^\vee$, $\cP_A^+$ is the subset of $\cO_F$-linear polarizations and any element $\gamma\in \frc$ is identified with the element $(x\mapsto x\otimes \gamma)$ of $\Hom_{\oef}(A,A\otimes_{\oef} \frc)$.
\item $\psi: \cD_F^{-1}\otimes \mu_N\to A$ is an $\cO_F$-linear closed immersion of group schemes, which we call a $\Gamma_{00}(N)$-structure.
\end{itemize}
Note that for such data, the $\cO_F\otimes \cO_S$-module $\Lie(A)$ is locally free of rank one \cite[Corollaire 2.9]{DP}. 

Let $(A,\iota,\lambda,\psi)$ be a HBAV over $S$ with $\frc$-polarization $\lambda$ and $\Gamma_{00}(N)$-structure $\psi$. Let $\fra$ be an ideal of $\cO_F$. Let $\cH$ be a finite locally free closed subgroup scheme of $A$ over $S$ which is stable under the $\cO_F$-action such that $\cH$ is isomorphic, etale locally on $S$, to the constant group scheme $\underline{\cO_F/\fra}$ and that $\Img(\psi)\cap \cH=0$. Then we can define on $A/\cH$ a natural structure of a HBAV $(A/\cH,\bar{\iota}, \bar{\lambda},\bar{\psi})$ with $\frc\fra$-polarization $\bar{\lambda}$ \cite[\S 1.9]{KL}.

For any HBAV $(A,\iota,\lambda,\psi)$ over $S$, the group scheme $A[p^n]$ is decomposed as 
\[
A[p^n]=\bigoplus_{\frp\mid p} A[p^n]_\frp=\bigoplus_{\frp\mid p} A[\frp^n]
\]
according with the decomposition
\[
\cO_F\otimes \bZ_p=\prod_{\frp\mid p} \cO_{F_\frp}.
\]
If $S=\Spec(\oel)$ with some extension $L/K$ of complete valuation fields, then each $A[\frp^n]$ is a truncated Barsotti-Tate group of level $n$, height $2f_{\frp}$ and dimension $f_{\frp}$. Moreover, for any $\frp\mid p$, the $\cO_{F_\frp}$-module $\frc\otimes_{\cO_F}\cO_{F_\frp}$ and the $\cO_F/p^n\cO_F$-module $\frc/p^n\frc$ are free of rank one. This implies that any element of $\frc$ which generates the $\cO_F$-module $\frc/p^n\frc$ and the given $\frc$-polarization on $A$ define isomorphisms 
\[
i_n:A[p^n] \to A^\vee[p^n] \simeq A[p^n]^\vee,\quad i_{n,\frp}: A[\frp^n]\to A[\frp^n]^\vee
\]
which are skew-symmetric by \cite[Corollary 1.3 (ii)]{Oda}. Then $i_{n,\frp}$ is $\cO_{F_\frp}$-alternating if $p\neq 2$ and $\langle x,i_{n,\frp}(ax)\rangle_{A[\frp^n]}^2=1$ for any $x\in A[\frp^n](\oelbar)$ and $a\in \cO_{F_\frp}$ if $p=2$, where $\bar{L}$ is an algebraic closure of $L$. For $p=2$, by choosing a generator of the $\cO_F$-module $\frc/p^{n+1}\frc$, we may assume that the isomorphisms $i_{n,\frp}$ and $i_{n+1,\frp}$ are compatible with each other. In this case, for any lift $\hat{x}$ of $x$ in $A[\frp^{n+1}](\oelbar)$ and $a\in \cO_{F_\frp}$, we have
\begin{align*}
\langle x, i_{n,\frp}(ax) \rangle_{A[\frp^n]}&=\langle x, pi_{n+1,\frp}(a\hat{x})\rangle_{A[\frp^n]}=\langle x, i_{n+1,\frp}(a\hat{x})\rangle_{A[\frp^{n+1}]}\\
&=\langle \hat{x}, i_{n+1,\frp}(a\hat{x})\rangle_{A[\frp^{n+1}]}^p=1.
\end{align*}
Thus $i_{n,\frp}$ is $\cO_{F_{\frp}}$-alternating also for $p=2$. Hence each $A[\frp^n]$ is an $\cO_{F_{\frp}}$-$\ADBT_n$ over $\oel$. From this, we see that $i_n$ is $\cO_F$-alternating, namely $\langle x, i_{n}(ax) \rangle_{A[p^n]}=1$ for any $x\in A[p^n](\oelbar)$ and $a\in \cO_{F}$. For any $\beta\in \bB_{\frp}$, we put $\Hdg_\beta(A)=\Hdg_\beta(A[\frp])$.

On the other hand, for any finite flat group scheme $\cH$ over $\oel$ with an $\cO_F$-action, we have the decompositions
\[
\cH=\bigoplus_{\frp\mid p} \cH_{\frp},\quad \omega_{\cH}=\bigoplus_{\beta\in \bB_F} \omega_{\cH,\beta}
\]
as above such that $\cH_{\frp}$ is a finite flat closed subgroup scheme of $\cH$ over $\oel$ and $\omega_{\cH,\beta}=\omega_{\cH_\frp,\beta}$ for any $\beta\in \bB_{\frp}$. 
Since the $i$-th Hodge-Tate map $\HT_i:\cH(\oelbar)\to \omega_{\cH^\vee}\otimes_{\oel}\cO_{\bar{L},i}$ is $\cO_F$-linear, it is also decomposed as the direct sum of the maps
\[
\HT_i=\bigoplus_{\frp\mid p} \HT_{\cH_\frp,i},\quad \HT_{\cH_\frp,i}: \cH_{\frp}(\oelbar)\to \omega_{\cH^\vee_\frp}\otimes_{\oel}\cO_{\bar{L},i}.
\]
For any ideal $\fra\subseteq \cO_F$, we say that $\cH$ is $\fra$-cyclic if the $\cO_F$-module $\cH(\oelbar)$ is isomorphic to $\cO_F/\fra$. Note that, for any HBAV $A$ over $\oel$ as above and any finite flat $p^n$-cyclic $\cO_F$-subgroup scheme $\cH$ of $A$ over $\oel$, we can define on $A/\cH$ a structure of a HBAV $(A/\cH,\bar{\iota},\bar{\lambda},\bar{\psi})$ with $\frc$-polarization $\bar{\lambda}$ as in \cite[\S 2.1]{Ti_2}.

\begin{prop}\label{cansubAV}
Let $L/K$ be a finite extension in $\Kbar$. Let $\frc$ be a non-zero fractional ideal of $F$. Let $A$ be a HBAV over $\oel$ with a $\frc$-polarization. Put $w_\beta=\Hdg_\beta(A)$ and $w=\max\{w_\beta\mid \beta\in \bB_F\}$. Suppose that 
\[
w_\beta +p w_{\sigma^{-1} \circ \beta} <p^{2-n}
\]
holds for any $\beta\in \bB_F$.
For any $\frp\mid p$, let $\cC_{n,\frp}$ be the canonical subgroup of the $\cO_{F_\frp}$-$\ADBT_n$ $A[\frp^n]$ of level $n$, which exists by Theorem \ref{cansubh}. The finite flat closed subgroup scheme
\[
\cC_n(A)=\bigoplus_{\frp\mid p} \cC_{n,\frp}
\]
of $A[p^n]$ over $\oel$ is stable under the $\cO_F$-action. We call $\cC_n(A)$
the canonical subgroup of $A$ of level $n$. It satisfies
\[
\deg_\beta(A[p^n]/\cC_n(A))=\sum_{l=0}^{n-1} p^{l} w_{\sigma^{-l}\circ \beta}
\]
for any $\beta\in \bB_F$ and the $\cO_F/p^n \cO_F$-module $\cC_n(A)(\okbar)$ is free of rank one. Then $\cC_n=\cC_n(A)$ also satisfies the following.
\begin{enumerate}
\item\label{cansubAV_isom}Let $A'$ be a HBAV over $\oel$ satisfying the same condition on the $\beta$-Hodge heights as above. Then any isomorphism of HBAV's $j:A\to A'$ over $\oel$ induces an isomorphism $\cC_n(A)\simeq \cC_n(A')$.
\item\label{cansubAV_BC} $\cC_n$ is compatible with base extension of complete discrete valuation rings with perfect residue fields.
\item\label{cansubAV_isot} $\cC_n$ is isotropic with respect to the $\cO_F$-alternating isomorphism $i_n:A[p^n]\simeq A[p^n]^\vee$ defined by the $\frc$-polarization of $A$ and any element $x\in \frc$ generating the $\cO_F$-module $\frc/p^n\frc$.
\item\label{cansubAV_Frob} The kernel of the $n$-th iterated Frobenius map of $A[p^n] \times\sS_{L,1-p^{n-1}w}$ coincides with $\cC_n\times \sS_{L,1-p^{n-1}w}$.
\item\label{cansubAV_cl}
The scheme-theoretic closure of $\cC_n(\okbar)[p^i]$ in $\cC_n$ is the canonical subgroup $\cC_i$ of level $i$ of $A[p^i]$ for any $0\leq i\leq n-1$.
\end{enumerate}

Put $b=n-w(p^n-1)/(p-1)$. If $w<(p-1)/p^n$, then $\cC_n=\cC_n(A)$ also has the following properties:
\begin{enumerate}
\setcounter{enumi}{5}
\item\label{cansubAV_HT} $\cC_n(\okbar)$ coincides with $\Ker(\HT_i)$ for any rational number $i$ satisfying
\[
n-1+\frac{w}{p-1}< i\leq b.
\]
\item\label{cansubAV_ram} $\cC_n=A[p^n]_i$ for any rational number $i$ satisfying
\[
\frac{1}{p^n(p-1)}\leq i \leq \frac{1}{p^{n-1}(p-1)}-\frac{w}{p-1}.
\]
\item\label{cansubAV_omega} For any $i\in v_p(\oel)$ satisfying $i\leq b$, the natural map 
\[
\omega_A \otimes_{\oel} \cO_{L,i}\to \omega_{\cC_n}\otimes_{\oel} \cO_{L,i}
\]
is an isomorphism.
\item\label{cansubAV_CokHT} The cokernel of the map
\[
\HT\otimes 1: \cC_n^\vee(\okbar)\otimes \okbar \to \omega_{\cC_n} \otimes_{\oel}\okbar
\]
is killed by $m_{\Kbar}^{\geqslant w/(p-1)}$.
\item\label{cansubAV_noncan} For any $\frp\mid p$ and any finite flat closed $\frp$-cyclic $\cO_F$-subgroup scheme $\cH\neq \cC_{1,\frp}$ of $A[p]$ over $\oel$, the HBAV $A/\cH$ has the canonical subgroup $\cC_n(A/\cH)$ of level $n$, which is equal to 
\[
(\bigoplus_{\frq\mid p,\frq\neq \frp}\cC_{n,\frq}) \oplus (p^{-1}\cC_{n-1,\frp}/\cH).
\]
Moreover, the natural map $A\to A/\cH$ induces a map $\cC_n(A)\to \cC_n(A/\cH)$ which is an isomorphism over $L$.
\end{enumerate} 
\end{prop}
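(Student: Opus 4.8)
The plan is to deduce every assertion of the proposition from the corresponding statement for $\bZ_{p^f}$-$\ADBT$'s established in \S\ref{SecCansub1}--\S\ref{SecCansubh}, applied prime by prime through the decomposition $A[p^n]=\bigoplus_{\frp\mid p}A[\frp^n]$. Since $F$ is unramified over $p$ one has $\cO_{F_\frp}=\bZ_{p^{f_\frp}}$, and, as recalled in \S\ref{SecHBAV}, each $A[\frp^n]$ is a $\bZ_{p^{f_\frp}}$-$\ADBT_n$ over $\oel$; the residue field $k_L$ of $\oel$ contains $\bF_{p^{f_\frp}}$ and $L$ is totally ramified over $\Frac(W(k_L))$, so the whole of \S\ref{SecCanSub} applies over $\oel$ with $f$ replaced by $f_\frp$ and $\bB_f$ by $\bB_\frp$ (which $\sigma$ permutes cyclically). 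Under these identifications $\Hdg_\beta(A)=\Hdg_\beta(A[\frp])$ for $\beta\in\bB_\frp$, and the hypothesis $w_\beta+pw_{\sigma^{-1}\circ\beta}<p^{2-n}$ restricts to the same hypothesis on each $\bB_\frp$, whose local maximum $w_\frp:=\max\{w_\beta\mid\beta\in\bB_\frp\}$ satisfies $w_\frp\le w$. So Theorem \ref{cansubh} produces the canonical subgroup $\cC_{n,\frp}$ of $A[\frp^n]$, and I would set $\cC_n(A)=\bigoplus_\frp\cC_{n,\frp}$: it is a finite flat closed subgroup scheme of $A[p^n]$ over $\oel$, and it is $\cO_F$-stable because the decomposition is the one attached to the idempotents of $\cO_F\otimes\bZ_p=\prod_\frp\cO_{F_\frp}$ and each $\cC_{n,\frp}$ is $\cO_{F_\frp}$-stable. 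The degree formula then drops out of Theorem \ref{cansubh} (for $\beta\in\bB_\frp$ the $\beta$-part of $\omega_{A[p^n]/\cC_n(A)}$ comes only from the $\frp$-summand, and $\sigma$ keeps the sum $\sum_l p^l w_{\sigma^{-l}\circ\beta}$ inside $\bB_\frp$), and the freeness of $\cC_n(A)(\okbar)$ of rank one over $\cO_F/p^n\cO_F=\prod_\frp\cO_{F_\frp}/p^n$ from Theorem \ref{cansubh} (\ref{cansubh_free}).

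With $\cC_n(A)$ in place, assertions (\ref{cansubAV_isom}), (\ref{cansubAV_BC}) and (\ref{cansubAV_cl}) would follow from Theorem \ref{cansubh} (\ref{cansubh_isom}), (\ref{cansubh_BC}) and (\ref{cansubh_cl}) applied to each summand, using that an isomorphism of HBAV's, resp. a base extension of complete discrete valuation rings with perfect residue fields, respects the $\cO_F\otimes\bZ_p$-decomposition, and that $\cC_i(A)=\bigoplus_\frp\cC_{i,\frp}$ exists for $i\le n-1$ because the hypothesis at level $i$ is weaker than at level $n$. For the isotropy (\ref{cansubAV_isot}) I would first observe that the Cartier pairing $A[p^n]\times A[p^n]^\vee\to\mu_{p^n}$ is orthogonal for the decomposition, the idempotents $e_\frp$ being self-dual, so it suffices to treat each $\frp$; there $i_{n,\frp}\colon A[\frp^n]\to A[\frp^n]^\vee$ is an isomorphism of $\bZ_{p^{f_\frp}}$-groups with $\Hdg_\beta(A[\frp^n]^\vee)=\Hdg_\beta(A[\frp^n])$, so Theorem \ref{cansubh} (\ref{cansubh_isom}) carries $\cC_{n,\frp}$ to the canonical subgroup of $A[\frp^n]^\vee$, which by Theorem \ref{cansubh} (\ref{cansubh_dual}) is $(A[\frp^n]/\cC_{n,\frp})^\vee$, i.e. the annihilator of $\cC_{n,\frp}$; hence $\langle x,i_{n,\frp}(ay)\rangle_{A[\frp^n]}=1$ for $x,y\in\cC_{n,\frp}(\okbar)$ and $a\in\cO_{F_\frp}$, since $ay\in\cC_{n,\frp}(\okbar)$. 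For (\ref{cansubAV_Frob}), the $n$-th Frobenius of $A[p^n]$ decomposes along the $\frp$'s and $1-p^{n-1}w\le 1-p^{n-1}w_\frp$, so the identification in Theorem \ref{cansubh} (\ref{cansubh_Frob}) over $\sS_{L,1-p^{n-1}w_\frp}$ restricts to $\sS_{L,1-p^{n-1}w}$.

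Under the hypothesis $w<(p-1)/p^n$ (hence $w_\frp<(p-1)/p^n$ for every $\frp$), I would obtain (\ref{cansubAV_HT}) and (\ref{cansubAV_ram}) from Theorem \ref{cansubh} (\ref{cansubh_HT}) and (\ref{cansubh_ram}) applied to each $A[\frp^n]$: since $w_\frp\le w$, the interval of admissible $i$ attached to $w$ is contained in the one attached to $w_\frp$ (as $w$ grows the left endpoint increases and the right endpoint decreases), and $\HT_i$ and the lower ramification subgroups decompose along the $\frp$'s. Assertions (\ref{cansubAV_omega}) and (\ref{cansubAV_CokHT}) would come from Proposition \ref{cokn} (1) and (2) for each $A[\frp^n]$, using that $\omega_{A[\frp^n]}\otimes_{\oel}\cO_{L,i}=\omega_A\otimes_{\oel}\cO_{L,i}$ for $i\le n$ (because $\omega_{A[p^n]}=\omega_A/p^n\omega_A$) and that the bound $m_{\Kbar}^{\geqslant w_\frp/(p-1)}$ implies the asserted $m_{\Kbar}^{\geqslant w/(p-1)}$ since $w_\frp\le w$.

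Finally, for (\ref{cansubAV_noncan}): if $\cH\ne\cC_{1,\frp}$ is a $\frp$-cyclic $\cO_F$-subgroup of $A[p]$, then $\cH\subseteq A[\frp]$, and applying Corollary \ref{noncanh} to the $\bZ_{p^{f_\frp}}$-$\ADBT_{n+1}$ $A[\frp^{n+1}]$ shows that $(A/\cH)[\frp^n]=p^{-n}\cH/\cH$ has canonical subgroup $p^{-1}\cC_{n-1,\frp}/\cH$ and that the natural map $\cC_{n,\frp}\to p^{-1}\cC_{n-1,\frp}/\cH$ is an isomorphism over $L$; for $\frq\ne\frp$ one has $(A/\cH)[\frq^n]=A[\frq^n]$ with unchanged canonical subgroup $\cC_{n,\frq}$. (The condition on Hodge heights for $A/\cH$ is automatic: by Corollary \ref{cncisog} (\ref{noncanisog}), $\Hdg_\beta(A/\cH)=p^{-1}w_{\sigma\circ\beta}$ for $\beta\in\bB_\frp$ and $=w_\beta$ otherwise, and for $\beta\in\bB_\frp$ the inequality $w_{\sigma\circ\beta}+pw_\beta<p^{2-n}$ gives $p^{-1}w_{\sigma\circ\beta}+w_\beta<p^{1-n}<p^{2-n}$.) Summing over the $\frq$'s then yields the stated description of $\cC_n(A/\cH)$, and comparison with $\cC_n(A)=\bigoplus_\frq\cC_{n,\frq}$ shows that the map induced by $A\to A/\cH$ is a direct sum of maps each an isomorphism over $L$. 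I do not expect an essential obstacle: the entire argument is a mechanical transfer of the one-prime results of \S\ref{SecCanSub} through the decomposition $A[p^n]=\bigoplus_\frp A[\frp^n]$, and the two points that need genuine care are the systematic passage between the global maximum $w$ and the per-prime maxima $w_\frp$, with the accompanying shrinking of the intervals of validity in (\ref{cansubAV_HT})--(\ref{cansubAV_CokHT}), and the reduction of the isotropy statement to the one-prime case via Theorem \ref{cansubh} (\ref{cansubh_dual}) and the self-duality of the idempotents $e_\frp$.
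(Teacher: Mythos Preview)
Your proposal is correct and follows essentially the same approach as the paper's proof: both reduce every assertion to the corresponding per-prime statement for $\bZ_{p^{f_\frp}}$-$\ADBT_n$'s via the decomposition $A[p^n]=\bigoplus_{\frp\mid p}A[\frp^n]$, invoking Theorem~\ref{cansubh}, Proposition~\ref{cokn}, and Corollary~\ref{noncanh} together with the inequality $w_\frp\le w$ to pass between the global and local bounds. Your write-up is in fact slightly more explicit than the paper's in a few places (the isotropy done visibly prime by prime, the remark that $\omega_A\otimes\cO_{L,i}\simeq\omega_{A[p^n]}\otimes\cO_{L,i}$ for $i\le n$, and the verification of the Hodge-height inequality for $A/\cH$), but these are elaborations of the same argument rather than a different route.
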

\begin{proof}
The assertion on $\deg_\beta$ follows from that of Theorem \ref{cansubh}, since we have
\[
\deg_\beta(A[p^n]/\cC_n(A))=\deg_\beta(A[p^n]_\frp/\cC_{n,\frp})
\]
for $\frp\mid p$ satisfying $\beta\in \bB_\frp$. The assertion on the freeness follows from Theorem \ref{cansubh} (\ref{cansubh_free}). The assertions (\ref{cansubAV_isom}), (\ref{cansubAV_BC}) and (\ref{cansubAV_cl}) also follow from Theorem \ref{cansubh}. 

Let us show the assertion (\ref{cansubAV_isot}). Theorem \ref{cansubh} (\ref{cansubh_dual}) implies that $(A[p^n]/\cC_n)^\vee$ can be identified with the canonical subgroup of $A[p^n]^\vee$. By Theorem \ref{cansubh} (\ref{cansubh_isom}), the isomorphisms
\[
A[p^n]\overset{x}{\simeq} A^\vee[p^n] \simeq A[p^n]^\vee
\]
preserve the canonical subgroups, and thus their composite induces an isomorphism $\cC_n \simeq (A[p^n]/\cC_n)^\vee$. This shows the assertion (\ref{cansubAV_isot}). Put $w_\frp=\max\{w_\beta\mid \beta\in \bB_\frp\}$. Since we have $1-p^{n-1}w\leq 1-p^{n-1}w_\frp$, the assertion (\ref{cansubAV_Frob}) follows from Theorem \ref{cansubh} (\ref{cansubh_Frob}). 

Suppose $w<(p-1)/p^n$. Then we have
\[
n-1+\frac{w_\frp}{p-1}\leq n-1+\frac{w}{p-1}<n-\frac{w(p^n-1)}{p-1} \leq n-\frac{w_\frp(p^n-1)}{p-1}
\]
for any $\frp\mid p$. Since the map $\HT_b$ is the direct sum of the maps
\[
\HT_{A[\frp^n],b}: A[\frp^n](\okbar)\to \omega_{A[\frp^n]^\vee}\otimes_{\oel}\cO_{\Kbar,b},
\]
Theorem \ref{cansubh} (\ref{cansubh_HT}) implies the assertion (\ref{cansubAV_HT}). Since the formation of lower ramification subgroups commutes with product, the assertion (\ref{cansubAV_ram}) follows from Theorem \ref{cansubh} (\ref{cansubh_ram}). Similarly, the assertions (\ref{cansubAV_omega}) and (\ref{cansubAV_CokHT}) follow from Proposition \ref{cokn}. Since we have the decomposition
\[
(A/\cH)[p^n]=(\bigoplus_{\frq|p,\frq\neq \frp}A[\frq^n])\oplus p^{-n}\cH/\cH,
\]
Corollary \ref{noncanh} shows the assertion (\ref{cansubAV_noncan}). This concludes the proof.
\end{proof}



\subsubsection{Moduli spaces and toroidal compactifications}\label{SecToroidal}

Let $M(\mu_N,\frc)$ be the Hilbert modular variety over $\okey$ which parametrizes the isomorphism classes of HBAV's $(A,\iota, \lambda, \psi)$ such that $\lambda$ is a $\frc$-polarization and $\psi$ is a $\Gamma_{00}(N)$-structure. The scheme $M(\mu_N,\frc)$ is smooth over $\okey$ \cite[Chapter 3, Theorem 6.9]{Gor}. We denote by $A^\univ$ the universal HBAV over $M(\mu_N,\frc)$. 

An unramified cusp for $M(\mu_N,\frc)$ is a triple $(\fra,\frb, \phi_N)$ of fractional ideals $\fra,\frb$ of $F$ satisfying $\fra\frb^{-1}=\frc$ and an isomorphism of $\cO_F$-modules
\[
\phi_N: \fra^{-1}/N\fra^{-1}\simeq \cO_F/N\cO_F.
\]
For each cusp, we have a Tate object $\Tate_{\fra,\frb}(q)$ over a certain base scheme  \cite[\S 4]{Rap}, which is used to construct a toroidal compactification $\bar{M}(\mu_N,\frc)$ of $M(\mu_N,\frc)$. We recall the definition for unramified cusps. Put $M=\fra \frb$, $M_\bR=M\otimes \bR$ and $M^*_{\bR}=\Hom(M,\bR)$. We identify $M\otimes \bQ$ with $F$. Then any $\sC\in \Dec(\fra,\frb)$ gives a rational polyhedral cone decomposition of 
\[
M^{*,+}_{\bR}=\{f\in M^*_{\bR}\mid f(M^+)\subseteq \bR_{>0}\}. 
\]
For each $\sigma\in \sC$, put 
\[
\sigma^\vee = \{ m\in M_\bR\mid l(m)\geq 0 \text{ for any } l\in \sigma\}.
\]
Then we have an affine torus embedding
\[
S=\Spec(\okey[q^m\mid m\in M])\to S_\sigma= \Spec(\okey[q^m\mid m \in M\cap \sigma^\vee]).
\]
The affine schemes $\{S_\sigma\}_{\sigma\in \sC}$ can be glued via $S_\sigma\cap S_\tau=S_{\sigma\cap\tau}$ to define a torus embedding $S \to S_{\sC}$. We denote by $S^\infty_\sigma$ and $S^\infty_{\sC}=\bigcup_{\sigma\in \sC} S^\infty_{\sigma}$ the complements of $S$ in these embeddings with reduced structures. The formal completions along these closed subschemes are denoted by $\hat{S}_\sigma=\Spf(\hat{R}_\sigma)$ and $\hat{S}_{\sC}$. By assumption, we can construct the quotient $\hat{S}_{\sC}/U_N$ by re-gluing $\{\hat{S}_{\sigma}\}_{\sigma\in \sC}$ via the action $\epsilon: \hat{S}_{\sigma}\simeq \hat{S}_{\epsilon\sigma}$ for any $\epsilon\in U_N$.
The closed subscheme $S^\infty_\sigma$ is defined by a principal ideal $\hat{I}_\sigma$ of the ring $\hat{R}_\sigma$ satisfying $\sqrt{\hat{I}_\sigma}=\hat{I}_\sigma$. The ring $\hat{R}_\sigma$ is a Noetherian normal excellent ring which is complete with respect to the $\hat{I}_\sigma$-adic topology. Put $\bar{S}_\sigma=\Spec(\hat{R}_\sigma)$, $\bar{S}_\sigma^\infty=V(\hat{I}_\sigma)$ and $\bar{S}^0_\sigma=\bar{S}_\sigma\setminus \bar{S}_\sigma^\infty$, where the latter is an affine scheme and we denote its affine ring by $\hat{R}_\sigma^0$.

Note that the torus with character group $\fra$ is $(\fra\cD_F)^{-1}\otimes \Gm$. For any $\eta\in \fra$, we denote by $\frX^\eta$ the element of $\cO((\fra\cD_F)^{-1}\otimes \Gm)$ which the character $\eta$ defines. We have an $\cO_F$-linear homomorphism
\[
q: \frb \to (\fra\cD_F)^{-1}\otimes \Gm(\bar{S}_\sigma^0)
\] 
defined by $\xi\mapsto (\frX^\eta\mapsto q^{\xi\eta})$ with $\xi\in \frb$ and $\eta\in \fra$. By Mumford's construction, we obtain the semi-abelian scheme $\Tate_{\fra,\frb}(q)$ over $\bar{S}_\sigma$ such that its restriction to $\bar{S}_\sigma^0$ is an abelian scheme \cite[\S 4]{Rap}. It admits a natural $\cO_F$-action. Over $\bar{S}_\sigma^0$, we have a natural exact sequence
\[
\xymatrix{
0 \ar[r] & \frac{1}{N}(\fra\cD_F)^{-1}\otimes \mu_N\ar[r] & \Tate_{\fra,\frb}(q)[N]|_{\bar{S}_\sigma^0}\ar[r] &\frac{1}{N}\frb/\frb\ar[r] & 0,
}
\]
which defines, for any unramified cusp $(\fra,\frb,\phi_N)$, a $\Gamma_{00}(N)$-structure on $\Tate_{\fra,\frb}(q)|_{\bar{S}_\sigma^0}$ using $\phi_N$. Moreover, the natural isomorphism
\[
 ((\fra\cD_F)^{-1}\otimes \Gm)\otimes_{\oef}\frc \to (\frb\cD_F)^{-1}\otimes \Gm
\]
induces a $\frc$-polarization
\[
\lambda_{\fra,\frb}: \Tate_{\fra,\frb}(q)|_{\bar{S}_\sigma^0} \otimes_{\oef} \frc\to \Tate_{\frb,\fra}(q)|_{\bar{S}_\sigma^0} \simeq (\Tate_{\fra,\frb}(q)|_{\bar{S}_\sigma^0})^\vee .
\]
By these data we consider the Tate object $\Tate_{\fra,\frb}(q)|_{\bar{S}_\sigma^0}$ as a HBAV over $\bar{S}_\sigma^0$, which yields a morphism $\bar{S}_\sigma^0\to M(\mu_N,\frc)$. Then the toroidal compactification $\bar{M}^{\sD(\frc)}(\mu_N,\frc)$ of $M(\mu_N,\frc)$ over $\okey$ with respect to $\sD(\frc)$, which we also denote by $\bar{M}(\mu_N,\frc)$ if no confusion will occur, is constructed in such a way as to satisfy the following \cite[Th\'{e}or\`{e}me 6.18]{Rap}:
\begin{itemize}
\item $\bar{M}(\mu_N,\frc)$ is projective and smooth over $\okey$.
\item $M(\mu_N,\frc)$ is an open subscheme of $\bar{M}(\mu_N,\frc)$ which is fiberwise dense and the complement $D$ of $M(\mu_N,\frc)$ is a normal crossing divisor. In particular, $M(\mu_N,\frc)$ is quasi-compact.
\item The formal completion $\bar{M}(\mu_N,\frc)|^\wedge_{D}$ of $\bar{M}(\mu_N,\frc)$ along the boundary divisor $D$ is isomorphic to 
\[
\coprod \hat{S}_{\sC(\fra,\frb)}/U_N,
\]
where the disjoint union runs over the set of isomorphism classes of cusps.
\item The universal HBAV $A^\univ$ over $M(\mu_N,\frc)$ extends to a semi-abelian scheme $\bar{A}^\univ$ with $\cO_F$-action over $\bar{M}(\mu_N,\frc)$ such that, for any $\sigma\in \sC(\fra,\frb)$, the pull-back of $\bar{A}^\univ$ by the restriction to $\bar{S}_\sigma^0$ of the unique algebraization $\bar{S}_\sigma\to \bar{M}(\mu_N,\frc)$ of the map $\hat{S}_\sigma\to \bar{M}(\mu_N,\frc)|_{D}^\wedge$ for any cusp $(\fra,\frb,\phi_N)$ is isomorphic to $\Tate_{\fra,\frb}(q)|_{\bar{S}_\sigma^0}$.
\end{itemize}

Let $\bar{\SGm}(\mu_N,\frc)$ be the $p$-adic formal completion of $\bar{M}(\mu_N,\frc)$. Let $\bar{\cM}(\mu_N,\frc)$ be its Raynaud generic fiber. Let $\cM(\mu_N,\frc)$ be the analytification of the scheme $M(\mu_N,\frc)\otimes_{\okey}K$, which is a Zariski open subvariety of $\bar{\cM}(\mu_N,\frc)$. 
The semi-abelian scheme $\bar{A}^{\univ}$ defines semi-abelian objects $\bar{\frA}^\univ$ over $\bar{\SGm}(\mu_N,\frc)$ and $\bar{\cA}^\univ$ over $\bar{\cM}(\mu_N,\frc)$ by taking the $p$-adic completion and the Raynaud generic fiber. For the zero section $e$ of $\bar{A}^\univ$, put $\omega_{\bar{A}^\univ}=e^*\Omega^1_{\bar{A}^\univ/\bar{M}(\mu_N,\frc)}$. For any $g$-tuple $\kappa=(k_\beta)_{\beta\in \bB_F}$ in $\bZ$, we define 
\[
\omega_{\bar{A}^\univ,\beta}=\omega_{\bar{A}^\univ}\otimes_{\cO_F, \beta}\okey,\quad \omega^{\kappa}_{\bar{A}^\univ}=\bigotimes_{\beta\in \bB_F} \omega_{\bar{A}^\univ,\beta}^{\otimes k_\beta}.
\]
We also define $\omega_{\bar{\cA}^\univ,\beta}$ and $\omega^{\kappa}_{\bar{\cA}^\univ}$ similarly.
For any $\beta\in \bB_F$, let $h_{\beta}$ be the $\beta$-partial Hasse invariant, which is a section of the invertible sheaf $\omega_{\bar{A}^\univ,\sigma^{-1}\circ \beta}^p\otimes \omega_{\bar{A}^\univ,\beta}^{-1}$ on $\bar{M}(\mu_N,\frc)\times \sS_1$ \cite[\S 2.5]{GK} (see also \cite[\S 7]{AGo}).  For any extension $L/K$ of complete valuation fields, any HBAV $A$ over $\cO_L$ and any $\beta\in \bB_{F}$, consider the element $P$ of $\bar{\cM}(\mu_N,\frc)(L)$ induced by $A$ and a lift $\tilde{h}_\beta$ of $h_\beta$ as a section of $\omega_{\bar{A}^\univ,\sigma^{-1}\circ \beta}^p\otimes \omega_{\bar{A}^\univ,\beta}^{-1}$ over an open neighborhood of $P$. Then we have the equality of truncated valuations
\[
\Hdg_{\beta}(A)=v_p(\tilde{h}_{\beta}(P)).
\]
If $P\in \bar{\cM}(\mu_N,\frc)(L)$ corresponds to a semi-abelian scheme $A$ over $\oel$ which is not an abelian scheme, then we put $\Hdg_{\beta}(A)=v_p(\tilde{h}_{\beta}(P))=0$.

Let $\uv=(v_\beta)_{\beta\in \bB_F}$ be a $g$-tuple in $[0,1]\cap \bQ$. We denote by $\bar{\cM}(\mu_N,\frc)(\uv)$ and $\cM(\mu_N,\frc)(\uv)$ be the admissible open subsets of $\bar{\cM}(\mu_N,\frc)$ and $\cM(\mu_N,\frc)$ defined by $v_p(\tilde{h}_\beta(P))\leq v_\beta$ for any $\beta \in \bB_F$, respectively. Note that $\bar{\cM}(\mu_N,\frc)(\uv)$ is quasi-compact.
We define its integral model $\bar{\SGm}(\mu_N,\frc)(\uv)$ as follows: write $v_\beta=a_\beta/b_\beta$ with non-negative integers $a_\beta$ and $b_\beta\neq 0$. Take a formal open covering $\bar{\SGm}(\mu_N,\frc)=\bigcup \frU_i$ such that every $h_\beta$ lifts to a section $\tilde{h}_\beta$ on each $\frU_i$. Consider the formal scheme whose restriction to each $\frU_i$ is the admissible blow-up of $\frU_i$ along the ideal $(p^{a_\beta},\tilde{h}_\beta^{b_\beta})$, and its locus where this ideal is generated by $\tilde{h}_\beta^{b_\beta}$. Repeat this for any $\beta\in \bB_F$ and define $\bar{\SGm}(\mu_N,\frc)(\uv)$ as the normalization in $\bar{\cM}(\mu_N,\frc)(\uv)$ of the resulting formal scheme. We denote the special fibers of $\bar{M}(\mu_N,\frc)$ and $\bar{\SGm}(\mu_N,\frc)(\uv)$ by $\bar{M}(\mu_N,\frc)_k$ and $\bar{\SGm}(\mu_N,\frc)(\uv)_k$, respectively. We also denote by $\SGm(\mu_N,\frc)(\uv)$ the complement in $\bar{\SGm}(\mu_N,\frc)(\uv)$ of the boundary divisor of the special fiber.

Let $v$ be an element of $[0,1]\cap \bQ$. When $v_\beta=v$ for any $\beta\in \bB_F$, we write $\bar{\cM}(\mu_N,\frc)(\uv)$ as $\bar{\cM}(\mu_N,\frc)(v)$ . Moreover, we denote by $\bar{\cM}(\mu_N,\frc)(v_\tot)$ the quasi-compact admissible open subset defined similarly to $\bar{\cM}(\mu_N,\frc)(v)$ with the usual Hasse invariant
\[
h=\prod_{\beta\in \bB_F}h_\beta
\]
instead of $h_\beta$'s. We also define similar spaces for these two variants, such as $\bar{\SGm}(\mu_N,\frc)(v)$ and $\bar{\SGm}(\mu_N,\frc)(v_\tot)$.
Note that $\bar{\SGm}(\mu_N,\frc)(0)$ is just the formal open subscheme of $\bar{\SGm}(\mu_N,\frc)$ over which all the $\beta$-partial Hasse invariants are invertible.

Let $R$ be a topological $\okey$-algebra which is idyllic with respect to the $p$-adic topology \cite[D\'{e}finition 1.10.1]{Abbes}. By \cite[Corollaire 2.13.9]{Abbes}, any morphism $\hat{f}:\Spf(R)\to \bar{\SGm}(\mu_N,\frc)$ has a unique algebraization $f:\Spec(R)\to \bar{M}(\mu_N,\frc)$, and we have a semi-abelian scheme $G_R=f^*\bar{A}^\univ$ over $\Spec(R)$. Taking the reduction modulo $p$, we see that $\hat{f}$ factors through $\bar{\SGm}(\mu_N,\frc)(0)$ if and only if $G_R$ is ordinary.

Let $\mathbf{NAdm}$ be the category of admissible $p$-adic formal $\okey$-algebras $R$ such that $R$ is normal. Note that we have $R[1/p]^\circ=R$ by \cite[Remark after Proposition 6.3.4/1]{BGR}. By \cite[Lemme 3.1]{Rap}, we can see as in \cite[Proposition 5.2.1.1]{AIP} that any morphism $\Spv(R[1/p])\to \SGm(\mu_N,\frc)(\uv)^\rig$ corresponds uniquely to an isomorphism class of a HBAV $A$ over $\Spec(R)$ such that $\Hdg_\beta(A_x)\leq v_\beta$ for $x\in \Spv(R[1/p])$.

We give a proof of the following lemma for lack of a reference.

\begin{lem}\label{FvPLem}
Let $L/K$ be an extension of complete valuation fields. Let $\cX$ be a connected smooth rigid analytic variety over $L$ and $\cF$ an invertible sheaf on $\cX$. Suppose that $f\in \cF(\cX)$ vanishes on a non-empty admissible open subset $\cU$ of $\cX$. Then $f=0$.
\end{lem}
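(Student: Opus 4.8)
The plan is to reduce to a statement about the vanishing of an analytic function on a smooth connected affinoid and then use the fact that the local rings of a smooth (hence reduced and irreducible-on-connected-components) rigid variety are integral domains. First I would note that the question is local on the source in the following sense: let $\cZ\subseteq \cX$ be the set of points at which $f$ vanishes, i.e. the points $x$ with $f_x\in \frem_x\cF_x$ for the maximal ideal $\frem_x\subseteq\cO_{\cX,x}$. Since $\cF$ is invertible, working on an admissible affinoid open $\cV=\Spv(B)$ over which $\cF$ is trivial, $f|_\cV$ corresponds to an element $g\in B$, and $\cZ\cap\cV$ is the zero locus $V(g)$, which is a Zariski-closed analytic subvariety of $\cV$. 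Hence $\cZ$ is an analytic subset of $\cX$, and in particular its complement is an admissible open subset.

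Next I would use smoothness. Because $\cX$ is smooth over $L$, it is reduced, and each connected component is irreducible (smooth rigid varieties are locally irreducible, and a connected locally irreducible space is irreducible — see \cite[\S 2.1, \S 3.1]{Con_irr}); in particular $\cX$ itself, being connected, is irreducible. On an affinoid piece $\cV=\Spv(B)$ with $\cV$ connected, $B$ is an integral domain. Now $\cU\neq\emptyset$, so $\cU\cap\cV\neq\emptyset$ for some $\cV$ in an admissible affinoid covering of $\cX$ that is compatible with an irreducible decomposition; after shrinking we may take $\cV$ connected and meeting $\cU$. On such a $\cV$, $g\in B$ vanishes on the non-empty admissible open $\cU\cap\cV$, hence vanishes identically on an open subset of the reduced irreducible space $\Spv(B)$; since a non-zero element of the integral domain $B$ has nowhere-dense zero locus (its zero locus is a proper Zariski-closed subset, which cannot contain a non-empty admissible open of an irreducible space), we get $g=0$, i.e. $f|_\cV=0$.

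Finally I would propagate this from one affinoid to all of $\cX$ by a connectedness argument. Let $\cX'\subseteq\cX$ be the union of all connected admissible affinoid opens on which $f$ vanishes; by the previous paragraph $\cX'$ is non-empty. It is clearly admissible open. I claim it is also "closed" in the sense that its complement is admissible open: if $x\notin\cX'$, choose a connected affinoid neighborhood $\cW=\Spv(C)$ of $x$ with $C$ an integral domain; if $f|_\cW$ vanished on some non-empty admissible open of $\cW$ then as above $f|_\cW=0$, putting $x\in\cX'$, contradiction — so $f|_\cW$ does not vanish on any non-empty admissible open of $\cW$, which shows $\cW\cap\cX'$ contains no non-empty affinoid and in fact $\cW\cap\cX'=\emptyset$ (if $\cW$ met a connected affinoid $\cV\subseteq\cX'$, then $f$ would vanish on the non-empty admissible open $\cV\cap\cW$ of $\cW$). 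Hence $\cX\setminus\cX'$ is admissible open, and since $\cX$ is connected and $\cX'\neq\emptyset$ we conclude $\cX'=\cX$, so $f=0$. The main obstacle I anticipate is the bookkeeping around admissible coverings and the local irreducibility/reducedness input from smoothness — ensuring that "$f$ vanishes on a non-empty admissible open of an irreducible affinoid forces $f=0$" is applied only on pieces where the affinoid algebra is genuinely a domain, which is exactly what local irreducibility of the smooth variety \cite[\S 3.1]{Con_irr} provides.
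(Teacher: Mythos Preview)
Your proposal is correct and follows essentially the same strategy as the paper: establish the identity principle on connected smooth affinoids (where the affinoid algebra is a domain) and then propagate via a connectedness argument. The paper's version is slightly more streamlined in that it fixes a single admissible affinoid covering $\cX=\bigcup_i \cX_i$ with each $\cX_i$ connected and $\cF|_{\cX_i}$ trivial, cites \cite[Exercise~4.6.3]{FvP} for the local step, and then checks directly that $I_0=\{i:f|_{\cX_i}=0\}$ and $I_1=I\setminus I_0$ yield an admissible disconnection---thereby sidestepping the admissibility bookkeeping around your intrinsic set $\cX'$.
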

\begin{proof}
Take an admissible affinoid covering $\cX=\bigcup_{i\in I} \cX_i$ such that $\cX_i$ is connected and $\cF$ is trivial on $\cX_i$ for any $i\in I$. We have $\cX_{i_0}\cap \cU\neq \emptyset$ for some $i_0$. Then \cite[Exercise 4.6.3]{FvP} implies $f|_{\cX_{i_0}}=0$. 

Put $I_0=\{i\in I\mid f|_{\cX_i}=0\}$, which is non-empty. \cite[Exercise 4.6.3]{FvP} also implies that $\cX_i\cap \cX_j=\emptyset$ for any $i\in I_0$ and $j\in I_1:=I\setminus I_0$. Then for the subsets
\[
\cX_0=\bigcup_{i\in I_0} \cX_i,\quad \cX_1=\bigcup_{i\in I_1} \cX_i
\]
and $s\in\{0,1\}$, the intersection $\cX_s\cap \cX_i$ equals $\cX_i$ if $i\in I_s$ and $\emptyset$ if $i\notin I_s$. Hence $\cX=\cX_0\coprod \cX_1$ is an admissible covering of $\cX$. Since $\cX$ is connected, we obtain $\cX=\cX_0$ and $f=0$.
\end{proof}

\begin{lem}\label{Mvconn}
The rigid analytic variety $\bar{\cM}(\mu_N,\frc)(v)_{\bC_p}$
is connected for any $v\in [0,1]\cap \bQ$.
\end{lem}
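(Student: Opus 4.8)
The plan is to realise $\bar{\cM}(\mu_N,\frc)(v)_{\bC_p}$ as the Raynaud generic fibre of a normal admissible formal model and to transport the question to the connectedness of that model's special fibre. The key formal-geometric input is the following: if $\frX$ is a quasi-compact normal admissible formal $\okey$-scheme whose special fibre $\frX_k$ is connected, then $\frX^{\rig}$ is connected. Indeed, $\bar{\cM}(\mu_N,\frc)(\uv)$ is quasi-compact, so any idempotent $e$ of $\cO(\frX^{\rig})$ corresponds to a clopen subset; such an $e$ is power-bounded, hence lies in $\cO(\frX^{\rig})^{\circ}=\cO(\frX)$, the last equality being the fact $R[1/p]^{\circ}=R$ for normal admissible $R$ already used in \S\ref{SecToroidal}; its reduction is an idempotent of $\cO(\frX)/p\subseteq\cO(\frX_k)$, hence $0$ or $1$ by connectedness of $\frX_k$, and then $e$ itself is $0$ or $1$ by $p$-adic completeness of $\cO(\frX)$. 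I would apply this to $\frX=\bar{\SGm}(\mu_N,\frc)(v)$, the normal formal model constructed in \S\ref{SecToroidal}, after the harmless base change along $\okey\to\oknrh$, which preserves normality and replaces the special fibre by $\bar{\SGm}(\mu_N,\frc)(v)_k\otimes_k\Fpbar$; since $\bC_p$ and $\knrh$ have the same (algebraically closed) residue field, this reduces everything to showing that $\bar{\SGm}(\mu_N,\frc)(v)_k\otimes_k\Fpbar$ is connected.

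For the latter I would first recall that $\bar{M}(\mu_N,\frc)$ is proper and smooth over $\okey$ with geometrically connected generic fibre (a standard fact on Hilbert modular varieties carrying a $\Gamma_{00}(N)$-structure and a fixed polarisation module; cf.\ \cite{Rap,Gor}), so that $\bar{M}(\mu_N,\frc)_k\otimes_k\Fpbar$ is connected and, being smooth, irreducible. For $v=0$ the formal model $\bar{\SGm}(\mu_N,\frc)(0)$ is simply the open formal subscheme of $\bar{\SGm}(\mu_N,\frc)$ over which every $h_\beta$ is invertible, whose special fibre is the ordinary locus --- a dense open of the irreducible $\bar{M}(\mu_N,\frc)_k$, hence connected --- and we are done. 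For $v=a/b>0$, over a formal open $\frU$ of $\bar{\SGm}(\mu_N,\frc)$ on which the $h_\beta$ lift to $\tilde h_\beta$, the model is obtained from $\frU$ by adjoining coordinates $w_\beta$, for $\beta$ in the set $B_0$ of embeddings along which $\tilde h_\beta$ is not a unit on $\frU$, subject to the relations $\tilde h_\beta^{\,b}w_\beta=p^{\,a}$; since the loci $\{h_\beta=0\}$ are smooth divisors meeting transversally \cite{GK}, one checks that this chart is normal (it is a complete intersection in a regular scheme, it is $S_2$, and it is regular away from a locus of codimension $\ge2$), so no normalisation is needed and its special fibre is $\bigcap_{\beta\in B_0}\{\bar h_\beta^{\,b}w_\beta=0\}$. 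Globally, $\bar{\SGm}(\mu_N,\frc)(v)_k$ then contains the closed subscheme $\{w_\beta=0\text{ for all }\beta\}$, which is isomorphic to $\bar{M}(\mu_N,\frc)_k$ (an isomorphism over the ordinary locus, extended by the zero section), while every other irreducible component projects onto a Goren--Kassaei stratum $\bigcap_{\beta\in T}\{h_\beta=0\}$ with $\emptyset\ne T\subseteq\bB_F$ --- which is non-empty --- and contains the $\{w_\beta=0\}$-section over that stratum; hence every component meets the connected irreducible subscheme $\{w_\beta=0\ \forall\beta\}$, so $\bar{\SGm}(\mu_N,\frc)(v)_k$ is connected.

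The step I expect to be the main obstacle is pinning down the special fibre of the formal model: one must verify that the chart described above is already normal before taking the normalisation in the definition of $\bar{\SGm}(\mu_N,\frc)(v)$ --- for which the essential input is the smoothness and normal crossing of the partial Hasse invariant divisors \cite{GK} --- and then check that no irreducible component of the special fibre becomes detached from the ``main'' copy of $\bar{M}(\mu_N,\frc)_k$, which relies on the non-emptiness of all the strata $\bigcap_{\beta\in T}\{h_\beta=0\}$. The geometric connectedness of $\bar{M}(\mu_N,\frc)$ itself I would treat as standard and simply cite.
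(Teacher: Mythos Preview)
Your approach is quite different from the paper's. The paper never analyses the special fibre of $\bar{\SGm}(\mu_N,\frc)(v)$; after reducing to finite extensions $K'/K$ via \cite[Theorem 3.2.1]{Con_irr}, it treats $v=0$ by combining Ribet's theorem on geometric connectedness of the ordinary locus with Berthelot's tube--connectedness result \cite[Proposition 1.3.3]{Berthelot}, and for $v>0$ argues by contradiction on the rigid side: a hypothetical connected component $\cU$ avoiding $\bar{\cM}(\mu_N,\frc)(0)$ would, by the maximum modulus principle on a finite affinoid cover, satisfy $\max_\beta\Hdg_\beta\geq\delta>0$ throughout; but the Goren--Kassaei polydisc description (\ref{Xlocal}), (\ref{GK_tbeta}) of $\spc^{-1}(\bar P)$ for any $P\in\cU$ then produces a connected admissible open of $\bar{\cM}(\mu_N,\frc)(v)$ meeting both $\cU$ and $\bar{\cM}(\mu_N,\frc)(\varepsilon)$ for $\varepsilon<\delta$, a contradiction.

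There is a genuine error in your normality claim. The chart with relations $\tilde h_\beta^{\,b}w_\beta=p^{a}$ is \emph{not} regular in codimension~$1$ once $b>1$ and $|B_0|\geq 2$; since $b>1$ for every $v\in(0,1)$ and, for $g\geq 2$, points with $\tau(\bar P)=\bB_F$ force $|B_0|=g\geq 2$, this is exactly the interesting range. Concretely, for $b=2$, $a=1$, $|B_0|=2$ the local ring is $R'=\okey\langle t_1,t_2,w_1,w_2\rangle/(t_1^{2}w_1-\pi,\ t_2^{2}w_2-\pi)$; at every closed point with $t_1=t_2=0$ both relations are $\equiv -\pi\bmod\frem^{2}$, so the singular locus contains the codimension-$1$ set $\{t_1=t_2=0\}$, and indeed $\pi/(t_1t_2)\in\Frac(R')$ satisfies $X^{2}=w_1w_2$ but does not lie in $R'$. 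Thus the normalisation step in the definition of $\bar{\SGm}(\mu_N,\frc)(v)$ genuinely alters the scheme, your description of its special fibre is unjustified, and the connectedness argument collapses. A repair along your lines is conceivable---over a finite $K'/K$ with $e_{K'}v\in\bZ$ one can use the semistable model with relations $\tilde h_\beta\,w'_\beta=\varpi^{e_{K'}v}$ for a uniformiser $\varpi$ of $K'$, which one checks really is normal---but this is a different model, and you would then have to run the argument over a cofinal family of such $K'$ (not over $\knrh$ alone: ``same residue field as $\bC_p$'' does not by itself transport connectedness to $\bC_p$).
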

\begin{proof}
Since $\bar{\cM}(\mu_N,\frc)(v)$ is separated, it is enough to show that for any sufficiently large finite extension $K'/K$, the base extension $\bar{\cM}(\mu_N,\frc)(v)_{K'}$
is connected \cite[Theorem 3.2.1]{Con_irr}. Replacing $K$ by $K'$, we may assume $K'=K$.

By Ribet's theorem (see \cite[Chapter 3, Theorem 6.19]{Gor}), the ordinary locus $\bar{\SGm}(\mu_N,\frc)(0)_k$ is geometrically connected. Since the rigid analytic variety $\bar{\cM}(\mu_N,\frc)(0)$ is the tube for the immersion $\bar{\SGm}(\mu_N,\frc)(0)_{k}\to \bar{\SGm}(\mu_N,\frc)$, \cite[Proposition 1.3.3]{Berthelot} implies that $\bar{\cM}(\mu_N,\frc)(0)$ is connected.

Consider the case of $v>0$. Suppose that $\bar{\cM}(\mu_N,\frc)(v)$ is not connected. Then we can take its connected component $\cU$ which does not intersect $\bar{\cM}(\mu_N,\frc)(0)$. Since $\cU$ is quasi-compact, there exists a finite admissible affinoid covering $\cU=\bigcup_{i=1}^m \cU_i$ of $\cU$ such that any $\beta$-partial Hasse invariant can be lifted to a section over $\cU_i$. Using the maximal modulus principle on each $\cU_i$, we see that there exists a positive rational number $\delta$ satisfying
\[
\max\{\Hdg_\beta(x)\mid \beta\in \bB_F\}\geq \delta 
\]
for any $x\in \cU$. Then, for any rational number $\varepsilon$ satisfying $0<\varepsilon <\delta$, we have $\bar{\cM}(\mu_N,\frc)(\varepsilon)\cap \cU=\emptyset$. 

On the other hand, let us consider the specialization map 
\[
\spc:\bar{\cM}(\mu_N,\frc)\to \bar{\SGm}(\mu_N,\frc)_{k}
\]
with respect to $\bar{\SGm}(\mu_N,\frc)$. Take any $P\in \cU$ and consider its specialization $\bar{P}=\spc(P)$. Since $P\notin \bar{\cM}(\mu_N,\frc)(0)$, it corresponds to a HBAV. Then \cite[(2.5.1)]{GK} and \cite[Lemma 7.2.5]{deJong} give an identification
\begin{equation}\label{Xlocal}
\spc^{-1}(\bar{P})=\prod_{\beta\in\bB_F} \cA_\beta[0,1), 
\end{equation}
where $\cA_\beta[\rho,\rho')$ is the annulus with parameter $t_\beta$ defined by $\rho\leq |t_\beta| < \rho'$. By \cite[\S 4.2]{GK}, we may assume that the parameter $t_\beta$ satisfies
\begin{equation}\label{GK_tbeta}
\Hdg_\beta(A)=\left\{\begin{array}{ll}v_p(t_\beta(Q))  & (\beta\in \tau(\bar{P})) \\ 0 & (\beta\notin \tau(\bar{P}))\end{array} \right.
\end{equation}
for any $Q\in \spc^{-1}(\bar{P})$ and for any $\beta\in \bB_F$, 
where $A$ is the HBAV corresponding to $Q$ and $\tau(\bar{P})$ is defined by \cite[(2.3.3)]{GK}. In particular, we have $\Hdg_\beta(A)\leq v_p(t_\beta(Q))$ for any $\beta\in \bB_F$. For any positive rational number $\varepsilon$, put
\[
\spc^{-1}(\bar{P})(\varepsilon)'=\prod_{\beta\in\tau(\bar{P})} \cA_\beta[p^{-\varepsilon},1)\times \prod_{\beta\notin\tau(\bar{P})} \cA_\beta[0,1).
\]
Since $\spc^{-1}(\bar{P})(v)'$ is a connected admissible open subset of $\bar{\cM}(\mu_N,\frc)(v)$ containing $P$, it is contained in $\cU$. 
However, for any $\varepsilon$ satisfying $\varepsilon<\min\{\delta,v\}$, we have
\[
\emptyset \neq \spc^{-1}(\bar{P})(\varepsilon)'\subseteq \bar{\cM}(\mu_N,\frc)(\varepsilon)\cap \cU,
\]
which is a contradiction.
\end{proof}



\subsubsection{Canonical subgroups over moduli spaces}\label{SecCansubFam}

Let $n$ be a positive integer. Let $\uv=(v_\beta)_{\beta\in \bB_F}$ be a $g$-tuple satisfying 
\[
v_\beta\in [0,(p-1)/p^{n})\cap \bQ 
\]
for any $\beta\in \bB_F$. Note that the $1/(p^n(p-1))$-st lower ramification subgroups can be patched into a rigid analytic family \cite[Lemma 5.6]{Ha_lowram}. 
Let $R$ be an object of $\mathbf{NAdm}$ and put $\cU=\Spv(R[1/p])$. Let $\cU\to \SGm(\mu_N, \frc)(\uv)^\rig$ be any morphism of rigid analytic varieties over $K$. This defines a HBAV $\bar{A}^\univ|_R$ over $\Spec(R)$. For any rig-point $x\in \Spec(R)$, we have the canonical subgroup $\cC_n((\bar{A}^\univ|_R)_x)$. Theorem \ref{cansubAV} (\ref{cansubAV_ram}) implies that they can be patched into an admissible open subgroup of $\bar{\cA}^\univ[p^n]|_{\cU}$. By \cite[Proposition 4.1.3]{AIP}, it uniquely extends to a finite flat subgroup scheme $\cC_n$ of $\bar{A}^\univ|_R$ over $\Spec(R)$.

On the other hand, on a formal open neighborhood $\frU$ of a point of the boundary satisfying $\frU\subseteq \bar{\SGm}(\mu_N,\frc)(0)$, the unit component $\bar{\frA}^\univ[p^n]^0|_{\frU}$ is quasi-finite and flat over $\frU$ with constant degree on each fiber by \cite[p.297 (ii)]{Rap}. Thus it is finite and flat. Then, by gluing along $\SGm(\mu_N,\frc)(0)$, we obtain a finite flat formal subgroup scheme $\cC_n$ of $\bar{\frA}^\univ$ over $\bar{\SGm}(\mu_N,\frc)(\uv)$ and its generic fiber $C_n$, which we refer to as the canonical subgroup of level $n$.

Let $R$ be a topological $\okey$-algebra which is quasi-idyllic with respect to the $p$-adic topology \cite[1.10.1.1]{Abbes}. Since any finitely generated $R$-module is automatically $p$-adically complete \cite[Proposition 1.10.2]{Abbes}, any finitely presented flat formal group scheme over $\Spf(R)$ can be identified with a finitely presented flat group scheme over $\Spec(R)$.
Thus we have a theory of Cartier duality for any finitely presented flat formal group scheme over any quasi-idyllic $p$-adic formal scheme. Then, from the construction, we see that the restriction of the Cartier dual $\cC_n^\vee|_{\bar{\SGm}(\mu_N,\frc)(0)}$ to the ordinary locus is finite and etale.

We have the following variant of \cite[Proposition 4.2.1 and Proposition 4.2.2]{AIP}.

\begin{lem}\label{famcanlem}
Let $\uv=(v_\beta)_{\beta\in\bB_F}$ be a $g$-tuple of non-negative rational numbers satisfying 
\[
v:=\max\{v_\beta\mid \beta\in \bB_F\}< (p-1)/p^n.
\]
Let $R$ be an object of $\mathbf{NAdm}$. For any morphism of admissible formal schemes $\hat{f}:\Spf(R)\to \bar{\SGm}(\mu_N,\frc)(\uv)$ over $\okey$, consider the pull-back $G=\bar{A}^\univ|_R$ by the unique algebraization $\Spec(R)\to \bar{M}(\mu_N,\frc)$ of $\hat{f}$ and $\cH_n=\cC_n|_{\Spf(R)}$, which is a subgroup scheme of the formal completion of $G$.
\begin{enumerate}
\item\label{famcanlem_omega}
For any rational number $i\in e^{-1}\bZ_{\geq 0}$ satisfying $i\leq n-v(p^n-1)/(p-1)$, the natural map $\omega_{G}\otimes_{\okey} \cO_{K,i}\to \omega_{\cH_n}\otimes_{\okey} \cO_{K,i}$ is an isomorphism.
\item\label{famcanlem_HT}
Assume that we have an isomorphism of $\cO_F$-modules $\cH_n^\vee(R)\simeq \cO_F /p^n\cO_F$. Then the cokernel of the linearization of the Hodge-Tate map
\[
\HT_{\cH^\vee_n}\otimes 1: \cH_n^\vee(R)\otimes R\to \omega_{\cH_n}
\]
is killed by $m_K^{\geqslant v/(p-1)}$.
\end{enumerate}
\end{lem}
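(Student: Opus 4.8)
The plan is to reduce both assertions to the pointwise statements Proposition \ref{cansubAV} (\ref{cansubAV_omega}) and (\ref{cansubAV_CokHT}) (equivalently Proposition \ref{cokn}) applied at the rigid points of $\Spv(R[1/p])$. The mechanism is that, $R$ being normal, $R=R[1/p]^\circ$ and $R[1/p]$ is a reduced $K$-affinoid algebra, so for $g\in R$ and $j\in e^{-1}\bZ_{\geq 0}$ one has $g\in m_K^{\geqslant j}R=\pi^{ej}R$ if and only if $v_p(g(x))\geq j$ at every rig-point $x\in\Spv(R[1/p])$; at such an $x$, with residue field $L=k(x)$, the pullback $G_x$ is a HBAV over $\oel$ with $\max_\beta\Hdg_\beta(G_x)\leq v<(p-1)/p^n$, and by the construction of $\cC_n$ over $\bar{\SGm}(\mu_N,\frc)(\uv)$ recalled in \S\ref{SecCansubFam} (resp. the unit component over the ordinary and boundary locus), $(\cH_n)_x$ is the canonical subgroup $\cC_n(G_x)$ of level $n$.

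For (\ref{famcanlem_omega}) I would first record that $\omega_G$ is locally free of rank one over $\cO_F\otimes R$, that $\cH_n\hookrightarrow G$ is a closed immersion whose generic fibre is \'etale, and hence that $J:=\Ker(\omega_G\twoheadrightarrow\omega_{\cH_n})$ equals the image of the injection $\omega_{G/\cH_n}\hookrightarrow\omega_G$. Working Zariski-locally on $\Spec R$ and in each embedding component $\beta$, after trivialising $\omega_{G,\beta}\cong R\cong\omega_{G/\cH_n,\beta}$ the $\beta$-part $J_\beta$ is the principal ideal $(g_\beta)$ generated by the entry $g_\beta\in R$ of the map $\omega_{G/\cH_n,\beta}\to\omega_{G,\beta}$; thus the asserted isomorphism amounts to $g_\beta\in m_K^{\geqslant i}R$ for every $\beta$, i.e. to $v_p(g_\beta(x))\geq i$ at all rig-points $x$. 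At a rig-point $\omega_{(\cH_n)_x,\beta}\cong\oel/(g_\beta(x))$, so $v_p(g_\beta(x))=\deg_\beta((\cH_n)_x)=\deg_\beta(\cC_n(G_x))=n-\sum_{l=0}^{n-1}p^l\Hdg_{\sigma^{-l}\circ\beta}(G_x)\geq n-v(p^n-1)/(p-1)\geq i$ by Proposition \ref{cansubAV}; over the ordinary (and boundary) locus $(\cH_n)_x$ is of multiplicative type and $\deg_\beta((\cH_n)_x)=n\geq i$ directly. This gives (\ref{famcanlem_omega}).

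For (\ref{famcanlem_HT}), set $C=\Coker(\HT_{\cH_n^\vee}\otimes 1)$ and $I=\mathrm{image}(\HT_{\cH_n^\vee}\otimes 1)\subseteq\omega_{\cH_n}$; since $\cH_n$ is killed by $p^n$, $\omega_{\cH_n}$ and hence $C$ are killed by $m_K^{\geqslant n}$, and we must show $m_K^{\geqslant v/(p-1)}C=0$. Because $\omega_{\cH_n}$ is $m_K$-torsion, the iterated form of the Nakayama argument of Lemma \ref{NAK} works over $R$, so it suffices to find $r\in e^{-1}\bZ_{\geq 0}$ with $v/(p-1)<r\leq b:=n-v(p^n-1)/(p-1)$ (note $b>v/(p-1)$ since $v<(p-1)/p^n\leq n(p-1)/p^n$) such that $m_K^{\geqslant v/(p-1)}(C\otimes_{\okey}\cO_{K,r})=0$; after a harmless finite extension of $K$ — the hypothesis $\cH_n^\vee(R)\simeq\cO_F/p^n\cO_F$ being preserved because it already realizes the full geometric order of $\cH_n^\vee$, and the conclusion over $K$ following from that over $K'$ by faithfully flat descent along $\okey\to\cO_{K'}$ — we may assume $b\in e^{-1}\bZ_{\geq 0}$ and take $r=b$. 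Now $C\otimes\cO_{K,r}$ is the cokernel of $I\otimes\cO_{K,r}\to\omega_{\cH_n}\otimes\cO_{K,r}$, and by (\ref{famcanlem_omega}) the target may be replaced by $\omega_G\otimes\cO_{K,r}$, a quotient of the locally free $\omega_G$; exactly as in (\ref{famcanlem_omega}), the vanishing of $m_K^{\geqslant v/(p-1)}(C\otimes\cO_{K,r})$ can then be checked Zariski-locally and at rig-points $x$. There, using that $\cH_n^\vee(R)\hookrightarrow\cH_n^\vee(\oelbar)=\cC_n(G_x)^\vee(\oelbar)$ is a bijection — both being free of rank one over $\cO_F/p^n$, by hypothesis and by Proposition \ref{cansubAV} — so that $I\otimes_R\oelbar$ is the image of $\HT_{\cC_n(G_x)^\vee}\otimes 1$, together with the isomorphism $\omega_{G_x}\otimes\cO_{\bar L,r}\cong\omega_{\cC_n(G_x)}\otimes\cO_{\bar L,r}$, the required fact at $x$ becomes precisely that $\Coker(\HT_{\cC_n(G_x)^\vee}\otimes 1)$ is killed by $m_{\Kbar}^{\geqslant w_x/(p-1)}\subseteq m_{\Kbar}^{\geqslant v/(p-1)}$, where $w_x=\max_\beta\Hdg_\beta(G_x)$ — i.e. Proposition \ref{cansubAV} (\ref{cansubAV_CokHT}) at $x$. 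This gives (\ref{famcanlem_HT}).

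The \emph{main obstacle} is (\ref{famcanlem_HT}): unlike $\omega_G$, the module $\omega_{\cH_n}$ is $p^n$-torsion, hence supported entirely on the special fibre $\Spec(R/p)$, which has no rig-points, so the cokernel bound cannot be reduced to the pointwise statement directly. The device above circumvents this — bound $C$ only modulo $m_K^{\geqslant r}$, transport this across the isomorphism $\omega_{\cH_n}\otimes\cO_{K,r}\cong\omega_G\otimes\cO_{K,r}$ of (\ref{famcanlem_omega}) to a statement about a quotient of the locally free $\omega_G$, where rig-points detect everything, and then remove the truncation by the torsion form of Nakayama's lemma. The only remaining bookkeeping is the compatibility of the truncation levels with $e^{-1}\bZ_{\geq 0}$, for which the auxiliary finite base extension of $K$ is convenient.
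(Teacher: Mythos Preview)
Your argument for (\ref{famcanlem_omega}) is fine and matches the paper.

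For (\ref{famcanlem_HT}) there is a gap at ``the vanishing of $m_K^{\geqslant v/(p-1)}(C\otimes\cO_{K,r})$ can be checked \dots\ at rig-points.'' Even after transporting via (\ref{famcanlem_omega}), the module $C\otimes\cO_{K,r}$ is a quotient of $\omega_G/\pi^{er}\omega_G$, still $\pi^{er}$-torsion and supported entirely on the special fibre; you have not actually escaped the obstacle you correctly identified. Concretely, knowing that $a$ kills $(C\otimes\cO_{K,r})\otimes_R\oelbar$ at every rig-point does \emph{not} give $a(C\otimes\cO_{K,r})=0$: for the submodule $M':=a(C\otimes\cO_{K,r})\subseteq C\otimes\cO_{K,r}$, the map $M'\otimes_R\oel\to(C\otimes\cO_{K,r})\otimes_R\oel$ need not be injective ($\oel$ is not flat over $R$), so vanishing of its image $a\bigl((C\otimes\cO_{K,r})\otimes_R\oel\bigr)$ does not force $M'\otimes_R\oel=0$. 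The paper's remedy is to drop the truncation entirely and instead \emph{lift} the Hodge--Tate map to a matrix $\gamma\in M_g(R)$, via surjections $R^g\twoheadrightarrow\cH_n^\vee(R)\otimes R$ and $R^g\simeq\omega_G\twoheadrightarrow\omega_{\cH_n}$, and then prove the untruncated containment $m_K^{\geqslant v/(p-1)}R^g\subseteq\gamma(R^g)$. The paper checks this at height-one primes of $R$ and intersects using normality; your rig-point idea in fact works cleanly at this stage too: $\det\gamma$ is nonvanishing at every rig-point by Proposition \ref{cansubAV} (\ref{cansubAV_CokHT}), hence lies in $R[1/p]^\times$, so the entries of $a\,\gamma^{-1}e_i$ belong to $R[1/p]$, and the same proposition at each $x$ shows they land in $\oel$, whence in $R[1/p]^\circ=R$, giving $ae_i\in\gamma(R^g)$. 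The point is that membership in the untruncated $\gamma(R^g)$ reduces to a sup-norm bound on specific elements of $R[1/p]$, which rig-points do detect; your truncated formulation does not have this feature, and the Nakayama step is unnecessary.
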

\begin{proof}
Since the ordinary case is trivial, by a gluing argument we may assume that $\hat{f}$ factors through $\SGm(\mu_N,\frc)(\uv)$.
By replacing $\Spf(R)$ with its formal affine open subscheme, we may assume that $R$ is an integral domain and $\omega_{G}$ is a free $\oef\otimes R$-module of rank one. The first assertion follows by reducing it to Proposition \ref{cansubAV} (\ref{cansubAV_omega}) in the same way as \cite[Proposition 4.2.1]{AIP}. For the second assertion, take surjections $R^g \to \cH_n^\vee(R)\otimes R \simeq (R/p^n R)^g$ and $R^g\simeq \omega_{G}\to \omega_{\cH_n}$. Then the map $\HT_{\cH^\vee_n}\otimes 1$ can be identified with the reduction of the map defined by some matrix $\gamma\in M_g(R)$. It suffices to show $m_K^{\geqslant v/(p-1)} R^g\subseteq \gamma(R^g)$. Let $\frp$ be a prime ideal of $R$ of height one and $\hat{R}_\frp$ the completion of the local ring $R_\frp$. Proposition \ref{cansubAV} (\ref{cansubAV_CokHT}) implies $m_K^{\geqslant v/(p-1)} \hat{R}_\frp^g\subseteq \gamma(\hat{R}_\frp^g)$. This shows $m_K^{\geqslant v/(p-1)} R_\frp^g\subseteq \gamma(R_\frp^g)$ and $\det(\gamma)\neq 0$. Since $R$ is normal, $\gamma(R^g)$ is the intersection of $\gamma(R_\frp^g)$ for every such $\frp$ and the assertion follows.
\end{proof}



\subsection{Connected neighborhoods of critical points}\label{SecConnCritLocus}

Let $Y_{\frc,p}$ be the moduli scheme parametrizing the isomorphism classes of pairs $(A, \cH)$ over schemes $S/\Spec(\okey)$, where $A$ is a HBAV over $S$ with a $\frc$-polarization and a $\Gamma_{00}(N)$-structure, and $\cH$ is a finite locally free closed $\cO_F$-subgroup scheme of $A[p]$ of rank $p^g$ over $S$ such that $\cH$ is isotropic in the sense of \cite[\S 2.1]{GK}. Then $Y_{\frc,p}$ is projective over $M(\mu_N,\frc)$ \cite[p.415]{Stam}. For $S=\Spec(\oel)$ with some extension $L/K$ of complete valuation fields, $\cH$ is isotropic in this sense if and only if $\cH$ is $p$-cyclic.

Let $\SGy_{\frc,p}$ be the $p$-adic formal completion of $Y_{\frc,p}$ and $\cY_{\frc,p}$ its Raynaud generic fiber. Note that they are separated. By \cite[Lemme 3.1]{Rap}, we have $\cY_{\frc,p}(L)=\frY_{\frc,p}(\oel)=Y_{\frc,p}(\oel)$ for any extension $L/K$ of complete discrete valuation fields.
In this subsection, we construct a connected admissible affinoid open neighborhood of a point $Q=[(A,\cH)]$ of $\cY_{\frc,p}$ satisfying $\Hdg_\beta(A)=p/(p+1)$ for any $\beta\in \bB_F$ inside the base extension $\cY_{\frc,p,\bC_p}=\cY_{\frc,p}\hat{\otimes}_K \bC_p$, assuming $f_\frp\leq 2$ for any $\frp\mid p$.

\begin{lem}\label{CritNonempty}
There exists a point of $\bar{\cM}(\mu_N,\frc)$ corresponding to a HBAV $A$ over the integer ring $\oel$ of a finite extension $L/K$ satisfying $\Hdg_\beta(A)=p/(p+1)$ for any $\beta\in \bB_F$.
\end{lem}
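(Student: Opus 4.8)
The plan is to produce the required critical HBAV by descending from the deepest stratum of the Goren--Oort stratification in characteristic $p$: first locate a point of the special fibre at which all partial Hasse invariants vanish, and then pass to a point of its tube in $\bar{\cM}(\mu_N,\frc)$ at which the partial Hodge heights are prescribed by the valuations of the tube coordinates.

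\emph{Step 1.} For each $\beta\in\bB_F$ let $W_\beta\subseteq\bar{M}(\mu_N,\frc)_k$ be the vanishing locus of the $\beta$-partial Hasse invariant $h_\beta$, which is an effective divisor. By Goren--Kassaei's analysis of the stratification \cite[\S 2]{GK}, the deepest stratum, on which every $h_\beta$ vanishes, is non-empty; equivalently $W:=\bigcap_{\beta\in\bB_F}W_\beta\neq\emptyset$. Since the semi-abelian schemes attached to the cusps are tori, each $h_\beta$ is invertible along the boundary divisor, so $W$ is contained in the open subscheme $M(\mu_N,\frc)_k$. After replacing $K$ by a finite unramified extension (harmless, since the statement allows an arbitrary finite $L/K$ and $F\otimes K$ stays split completely under extension), I would fix a $k$-rational point $\bar P\in W(k)$; it then corresponds to a HBAV and satisfies $\tau(\bar P)=\bB_F$ in the sense of \cite[(2.3.3)]{GK}.

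\emph{Step 2.} As in the proof of Lemma \ref{Mvconn}, consider the specialization map $\spc\colon\bar{\cM}(\mu_N,\frc)\to\bar{\SGm}(\mu_N,\frc)_k$ and the identification \eqref{Xlocal} of $\spc^{-1}(\bar P)$ with $\prod_{\beta\in\bB_F}\cA_\beta[0,1)$; since $\tau(\bar P)=\bB_F$, by \eqref{GK_tbeta} the coordinates $t_\beta$ can be chosen so that $\Hdg_\beta(A_Q)=v_p(t_\beta(Q))$ for all $\beta\in\bB_F$ and all $Q\in\spc^{-1}(\bar P)$, where $A_Q$ denotes the HBAV attached to $Q$. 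Now take a finite extension $L/K$ with $p/(p+1)\in v_p(L^\times)$, fix $\varpi\in\oel$ with $v_p(\varpi)=p/(p+1)$, and let $Q\in\spc^{-1}(\bar P)(L)\subseteq\bar{\cM}(\mu_N,\frc)(L)$ be the point with $t_\beta(Q)=\varpi$ for every $\beta$; this is well defined because $|\varpi|<1$. Since $\bar P$ lies in the interior $M(\mu_N,\frc)_k$ and $\bar{M}(\mu_N,\frc)$ is proper over $\okey$, the point $Q$ extends to an $\oel$-valued point of $\bar{M}(\mu_N,\frc)$ whose closed point lies in the open subscheme $M(\mu_N,\frc)$, hence $Q$ corresponds to a HBAV $A$ over $\oel$ (cf.\ \cite[Lemme 3.1]{Rap}); by the previous sentence $\Hdg_\beta(A)=v_p(\varpi)=p/(p+1)$ for every $\beta\in\bB_F$, which is what is wanted.

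The main obstacle is Step 1, namely the non-emptiness of the deepest Goren--Oort stratum --- that the $g$ divisors $\{h_\beta=0\}$ on the $g$-dimensional projective variety $\bar{M}(\mu_N,\frc)_k$ have a common point. Rather than reprove this (which would require checking that each $W_\beta$ is a genuine divisor and that successive intersections drop in dimension by one and remain non-empty), I would quote it directly from \cite[\S 2]{GK}; everything else in the argument is either formal or already recorded in the excerpt.
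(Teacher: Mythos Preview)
Your proposal is correct and follows essentially the same approach as the paper: both invoke the non-emptiness of the deepest Goren--Oort stratum $W_{\bB_F}$ from \cite{GK}, then use the tube identification \eqref{Xlocal} and the relation \eqref{GK_tbeta} between partial Hodge heights and tube coordinates to select a point with $\Hdg_\beta = p/(p+1)$ for all $\beta$. Your version is more detailed (extending scalars to get a $k$-point, checking the point lands in the interior and hence gives a HBAV over $\oel$), but the argument is the same.
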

\begin{proof}
Consider the stratum $W_{\bB_F}$ of the special fiber $M(\mu_N,\frc)_k$ as in \cite[\S 2.5]{GK}. Since $W_{\bB_F}$ is non-empty, there exists a point $P\in \bar{\cM}(\mu_N,\frc)$ such that $\bar{P}=\spc(P)\in W_{\bB_F}$ for the specialization map $\spc: \bar{\cM}(\mu_N,\frc)\to \bar{M}(\mu_N,\frc)_k$ as before. Since $\tau(\bar{P})=\bB_F$, the identification (\ref{Xlocal}) and (\ref{GK_tbeta}) yield the lemma.
\end{proof}

\begin{prop}\label{ConnCrit}
Suppose $f_\frp\leq 2$ for any $\frp\mid p$. Let $L/K$ be a finite extension in $\bar{\bQ}_p$ and $l$ the residue field of $L$. Let $K'$ be the composite field of $K$ and $\Frac(W(l))$ in $\bar{\bQ}_p$. Let $[(A,\cH)]$ be an element of $Y_{\frc,p}(\oel)$ satisfying $\Hdg_\beta(A)=p/(p+1)$ for any $\beta\in \bB_F$ and $Q$ the element of $\cY_{\frc,p}(L)$ it defines.
Let 
\[
\spc:\cY_{\frc,p}\to (Y_{\frc,p})_{k}=Y_{\frc,p}\times_{\okey}\Spec(k)
\]
be the specialization map with respect to $\SGy_{\frc,p}$ and put $\bar{Q}=\spc(Q)$. We define
\begin{align*}
\cV_Q=\{Q'=[(A',\cH')] \in \spc^{-1}(\bar{Q})\mid\ &\frac{1}{p+1}\leq\deg_\beta(A'[p]/\cH')\leq \frac{p}{p+1},\quad \\
&\Hdg_\beta(A')\leq \frac{p}{p+1}\text{ for any }\beta\in \bB_F\},
\end{align*}
\[
\cV_Q(\tfrac{1}{p+1})=\{Q'=[(A',\cH')] \in \cV_Q \mid \Hdg_\beta(A')\leq \frac{1}{p+1}\text{ for any }\beta\in \bB_F\}.
\]
Then they are admissible affinoid open subsets of $\cY_{\frc,p}$ defined over $K'$ such that $\cV_Q\hat{\otimes}_{K'}\bC_p$ is connected.
\end{prop}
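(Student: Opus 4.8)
The plan is to describe the tube $\spc^{-1}(\bar Q)$ explicitly — as a product over $\frp\mid p$ of open annuli on which $\Hdg_\beta(A')$ and $\deg_\beta(A'[p]/\cH')$ are piecewise-affine in the annulus parameters — and to read off from that description both the affinoid property and the connectedness over $\bC_p$.

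First I would pass to $K'$, base changing $\SGy_{\frc,p}$ to $\cO_{K'}$, so that the residue field $l$ of $\bar Q$ becomes the residue field of the base; it then suffices to construct $\cV_Q$, $\cV_Q(\tfrac{1}{p+1})$ over $K'$ and to prove connectedness after base change to $\bC_p$. Since $\bar Q$ comes from a HBAV $A$ with $\Hdg_\beta(A)=p/(p+1)$ for every $\beta$, its image $\bar P$ in $(M(\mu_N,\frc))_k$ lies in the stratum $W_{\bB_F}$ of \cite[\S 2.5]{GK}; hence, as in the proof of Lemma \ref{Mvconn}, the identifications (\ref{Xlocal}) and (\ref{GK_tbeta}) exhibit the tube $\spc^{-1}(\bar P)$ inside $\cM(\mu_N,\frc)$ as the open polydisc $\prod_{\beta\in\bB_F}\cA_\beta[0,1)$, with parameters $t_\beta$ satisfying $\Hdg_\beta(A')=v_p(t_\beta)$ for $A'$ near $\bar P$. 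The morphism $\cY_{\frc,p}\to\cM(\mu_N,\frc)$ carries $\spc^{-1}(\bar Q)$ into $\spc^{-1}(\bar P)$, and the fibre over $A'$ splits, according to the primes $\frp\mid p$, into a product of sets of cyclic $\cO_{F_\frp}$-subgroups of $A'[\frp]$.

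The crux is the local structure at $\bar Q$. Fixing $\frp\mid p$ and writing $\SGm^*(A'[\frp])$ in a basis as in (\ref{EqnTian}), a cyclic $\cO_{F_\frp}$-subgroup $\cH'_\frp$ corresponds to a solution $(y_\beta)_{\beta\in\bB_\frp}$ of the system (\ref{EqnCritY}); additivity of the $\beta$-degree together with (\ref{EqnCritY}) give $\deg_\beta(A'[p]/\cH')=1-v_p(y_\beta)$ whenever $v_p(y_\beta)>0$, which holds near $\bar Q$. I would then carry out, in the family over $\spc^{-1}(\bar P)$, the Newton-polygon and Hensel analysis of the proof of Proposition \ref{critisog} — this is exactly where $f_\frp\le 2$ is used in an essential way, the analogous local picture failing for $f_\frp=3$ by Remark \ref{counterexamplef3} — together with Goren--Kassaei's description of the completed local rings of $Y_{\frc,p}$ at points of $W_{\bB_F}$. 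This should identify $\spc^{-1}(\bar Q)$, over $K'$, with a product over $\frp\mid p$ of open annuli (for $f_\frp=1$ these are the supersingular annuli of $X_0(p)$, with $\bar Q$ the corresponding node), show it geometrically connected, and show that on each of the finitely many sheets — labelled by which of the $\cH'_\frp$ is canonical and which are anti-canonical, as controlled by Lemma \ref{LemGKVar} and Corollary \ref{cncisog} — the functions $\Hdg_\beta(A')=v_p(t_\beta)$ and $\deg_\beta(A'[p]/\cH')$ are affine in the valuations of the annulus parameters.

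Granting this model, the defining conditions of $\cV_Q$ and of $\cV_Q(\tfrac{1}{p+1})$ become inequalities among the truncated valuations $\Hdg_\beta(A')=v_p(t_\beta)$ and $\deg_\beta(A'[p]/\cH')$ on $\spc^{-1}(\bar Q)$: on the canonical sheet the bound $\deg_\beta(A'[p]/\cH')\ge\tfrac{1}{p+1}$ forces $\Hdg_\beta(A')\ge\tfrac{1}{p+1}$ and hence keeps $|t_\beta|$ away from $1$, while the anti-canonical sheets meet the region only along the face $\{\Hdg_\beta\equiv p/(p+1)\}$; so the conditions carve out a quasi-compact admissible open, which the explicit coordinates exhibit as an admissible affinoid open defined over $K'$ (the identification (\ref{Xlocal}), the annulus parameters of \cite[\S 4.2]{GK}, and the solutions $(y_\beta)$ all living over $W(l)\subseteq\cO_{K'}$). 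For connectedness, after base change from $K'$ the residue field $l$ of $\bar Q$ lies in that of $\bC_p$, so $\spc^{-1}(\bar Q)\hat{\otimes}_{K'}\bC_p$ is still the tube of a single $\bC_p$-point, a product of open annuli over $\bC_p$; inside it, $\cV_Q\hat{\otimes}_{K'}\bC_p$ is the locus where $\deg_\beta(A'[p]/\cH')\in[\tfrac{1}{p+1},\tfrac{p}{p+1}]$ and $\Hdg_\beta(A')\le\tfrac{p}{p+1}$ for all $\beta$, a product over $\beta$ of connected closed sub-annuli, hence connected. (Imposing the further bound $\Hdg_\beta(A')\le\tfrac{1}{p+1}$ splits each annulus into its two ends, so $\cV_Q(\tfrac{1}{p+1})$ need not be connected; accordingly connectedness is claimed only for $\cV_Q$.) The main obstacle is the third step: promoting the pointwise statement of Proposition \ref{critisog} to a description of the whole tube $\spc^{-1}(\bar Q)$ and controlling how the $\prod_{\frp}(p^{f_\frp}+1)$ cyclic subgroups degenerate together at $\bar Q$ — which, for $f_\frp=2$, rests on the delicate Newton-polygon argument of that proof and is precisely where $f_\frp\le 2$ is indispensable.
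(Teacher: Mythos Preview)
Your starting point—identifying $\spc^{-1}(\bar Q)$ via Stamm's theorem as the rigid generic fibre of
\[
\Spf\bigl(\cO_{K'}[[X_\beta,Y_\beta\mid\beta\in\bB_F]]/(X_\beta Y_\beta-p\mid\beta\in\bB_F)\bigr),
\]
hence a product over $\beta\in\bB_F$ (not over $\frp$) of open annuli, with $\deg_\beta(A'[p]/\cH')=v_p(X_\beta)$—is correct and is what the paper does. The formula you do not write down, and which breaks your argument, is the one for the Hodge height in these coordinates: by \cite[Lemma~2.8.1]{GK} one has
\[
\Hdg_\beta(A')=v_p\bigl(X_\beta+g_\beta\,Y_{\sigma^{-1}\circ\beta}^{\,p}\bigr)
\]
for some unit $g_\beta$. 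Thus the inequality $\Hdg_\beta(A')\le p/(p+1)$ couples the $\beta$-annulus to the $\sigma^{-1}\circ\beta$-annulus, and for $f_\frp=2$ the locus $\cV_Q$ is \emph{not} a product over~$\beta$. Your connectedness step (``a product over $\beta$ of connected closed sub-annuli, hence connected'') fails exactly here. Even for $f_\frp=1$ the Hodge condition does not cut out a sub-annulus: on the inner circle $v_p(X_\beta)=p/(p+1)$ the two terms $X_\beta$ and $g_\beta Y_\beta^p$ have the same valuation and may cancel, so a further nontrivial locus is removed. (Incidentally, the local ring above is a domain, so the tube is already irreducible; there are no ``sheets'' to label.)

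The paper proves connectedness by an explicit integral-model computation. After substituting $X_\beta=\varpi U_\beta$, $Y_\beta=\varpi V_\beta$ with $\varpi=p^{1/(p+1)}$, one obtains a flat $\cO_{\bC_p}$-model $\frB_{Q,\bC_p}$ of the affinoid algebra of $\cV_Q\hat\otimes_{K'}\bC_p$, whose special fibre is
\[
\Fpbar[U_\beta,V_\beta,W_\beta\mid\beta\in\bB_F]\big/\bigl(U_\beta V_\beta,\ V_\beta-W_\beta(1+g'_\beta V_\beta V_{\sigma^{-1}\circ\beta}^{\,p})\ \big|\ \beta\in\bB_F\bigr).
\]
One checks by hand that this ring is reduced and that its spectrum is connected: the irreducible components are indexed by partitions $\bB_F=\bB_U\sqcup\bB_V$ (setting $U_\beta=0$ for $\beta\in\bB_U$ and $V_\beta=0$ for $\beta\in\bB_V$), each is integral, and all of them contain the origin $U_\beta=V_\beta=W_\beta=0$. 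Connectedness of $\cV_Q\hat\otimes_{K'}\bC_p$ then follows from the bijection between $\pi_0$ of the rigid generic fibre and $\pi_0$ of the special fibre. The cross-term $V_\beta V_{\sigma^{-1}\circ\beta}^{\,p}$ is precisely the obstruction to a product decomposition; connectedness comes from the components of the special fibre meeting at a point, not from any product structure.
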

\begin{proof}
By the assumption $f_{\frp}\leq 2$ and Proposition \ref{critisog}, we have the equality $\deg_\beta(A[p]/\cH)=p/(p+1)$ for any $\beta\in \bB_F$. 
\cite[Proposition 4.2]{Ti_2} shows that this value is equal to the one denoted by $\nu_\beta(Q)$ in \cite[\S 4.2]{GK}. In particular, the definition of $\nu_\beta(Q)$ in \cite[\S 4.2]{GK} implies $I(\bar{Q})=\bB_F$ with the notation of \cite[(2.3.2)]{GK}.

We claim that the complete local ring $\hat{\cO}_{Y_{\frc,p},\bar{Q}}$ of $Y_{\frc,p}$ at $\bar{Q}$ is isomorphic to the ring
\begin{equation}\label{frBdef}
\frB'=\cO_{K'}[[X_\beta,Y_\beta\mid \beta\in \bB_F]]/(X_\beta Y_\beta-p\mid \beta\in \bB_F)
\end{equation}
and there exists $g_\beta\in (\frB')^\times$ such that for any finite extension $E/K'$ and any $\cO_{K'}$-algebra homomorphism $x: \frB'\to \cO_E$, the corresponding $\cO_E$-valued point $[(A',\cH')]$ of $Y_{\frc,p}$ satisfies
\[
\deg_\beta(A'[p]/\cH')=v_p(X_\beta(x)),\quad \Hdg_\beta(A')=v_p((X_\beta+g_\beta Y_{\sigma^{-1}\circ \beta}^p)(x)).
\]
Indeed, let $Y_\frc$ be a moduli scheme over $W$ similar to $Y_{\frc,p}$ considered in \cite[\S 2.1]{GK}. Let $R$ be the affine algebra of an affine open neighborhood of $\bar{Q}$ in $Y_\frc$ and $m_{\bar{Q}}$ the maximal ideal of $R$ corresponding to $\bar{Q}$. The ring $\hat{\cO}_{Y_{\frc,p},\bar{Q}}$ is equal to the completion of the local ring of $R\otimes_W \okey$ at the kernel $n_{\bar{Q}}$ of the map $R\otimes_W \okey \to l$ associated to $m_{\bar{Q}}$. Since $K/\Frac(W)$ is finite totally ramified and $p\in m_{\bar{Q}}$, the ring $R_{m_{\bar{Q}}}\otimes_W \cO_K$ is local with maximal ideal $n_{\bar{Q}}(R_{m_{\bar{Q}}}\otimes_W \cO_K)$ and thus it is equal to the localization $(R\otimes_W \cO_K)_{n_{\bar{Q}}}$. We also see that the $m_{\bar{Q}}$-adic topology on the local ring $R_{m_{\bar{Q}}}\otimes_W \cO_K$ is the same as the topology defined by its maximal ideal.

By Stamm's theorem \cite{Stam} (see also \cite[Theorem 2.4.1']{GK}), the $m_{\bar{Q}}$-adic completion $\hat{R}_{m_{\bar{Q}}}$ of the localization $R_{m_{\bar{Q}}}$ is isomorphic to the ring
\[
\frB=W(l)[[X_\beta,Y_\beta\mid \beta\in \bB_F]]/(X_\beta Y_\beta-p\mid \beta\in \bB_F).
\]
Moreover, since $\Hdg_\beta(A)\neq 0$ for any $\beta\in \bB_F$, (\ref{GK_tbeta}) implies $\tau(\bar{Q})=\bB_F$. Thus, for any finite extension $E/\Frac(W(l))$ and any $W(l)$-algebra homomorphism $x:\frB\to \cO_E$, the corresponding HBAV $A'$ satisfies $v(t_\beta(x))=\Hdg_\beta(A')$. By \cite[Lemma 2.8.1]{GK} and the definition of $\nu_\beta(Q)$ in \cite[\S 4.2]{GK}, the isomorphism $\hat{R}_{m_{\bar{Q}}}\simeq \frB$ gives an identification of $\deg_\beta$ and $\Hdg_\beta$ for the ring $\frB$ as claimed before.

Since the ring $\frB/m_{\bar{Q}}^i \frB$ is finite over $W$, we have
\[
(R_{m_{\bar{Q}}}/m_{\bar{Q}}^i R_{m_{\bar{Q}}})\otimes_W \okey \simeq (\frB/m_{\bar{Q}}^i \frB)\otimes_W \okey \simeq \frB'/m_{\bar{Q}}^i \frB'.
\]
Since the $m_{\bar{Q}}$-adic topology on the ring $\frB'$ is the same as the topology defined by its maximal ideal, we obtain the claim. 

By \cite[Lemma 7.2.5]{deJong}, we have
\[
\spc^{-1}(\bar{Q})=(\Spf(\frB'))^\rig.
\]
Thus $\cV_Q$ is the $K'$-affinoid variety whose affinoid ring is the quotient of the Tate algebra 
\[
K' \langle X_\beta, Y_\beta, U_\beta, V_\beta, W_\beta \mid \beta\in \bB_F \rangle
\]
by the ideal generated by
\[
X_\beta^{p+1}-p U_\beta,\quad X_\beta^{p+1} V_\beta - p^p,\quad X_\beta Y_\beta-p,\quad W_\beta(X_\beta +g_\beta Y_{\sigma^{-1}\circ \beta}^p)^{p+1} - p^p
\]
for any $\beta \in \bB_F$. From this, we also obtain a similar description of $\cV_Q(\tfrac{1}{p+1})$ as a $K'$-affinoid variety.

Next we prove that the base extension $\cV_Q\hat{\otimes}_{K'} \bC_p$ is connected. Put $r=1/(p+1)$ and $s=p/(p+1)$. Fix a $(p+1)$-st root $\varpi=p^{1/(p+1)}$ of $p$ in $\bar{\bQ}_p$.
Then the affinoid ring $B_{Q,\bC_p}$ of $\cV_Q\hat{\otimes}_{K'} \bC_p$ is also isomorphic to the quotient of the Tate algebra 
\[
\bC_p \langle X_\beta, Y_\beta, U_\beta, V_\beta, W_\beta \mid \beta\in \bB_F \rangle
\]
by the ideal generated by
\[
X_\beta-\varpi U_\beta,\quad X_\beta V_\beta - \varpi^p,\quad X_\beta Y_\beta-\varpi^{p+1},\quad W_\beta(X_\beta +g_\beta Y_{\sigma^{-1}\circ \beta}^p) - \varpi^p
\]
for any $\beta \in \bB_F$. Note that in the ring $B_{Q,\bC_p}$ we also have $Y_\beta-\varpi V_\beta=0$. Hence $B_{Q,\Cp}$ is isomorphic to the quotient of the ring
\[
\bC_p \langle U_\beta, V_\beta, W_\beta \mid \beta\in \bB_F \rangle
\]
by the ideal generated by
\[
U_\beta V_\beta -\varpi^{p-1},\quad F_\beta:=W_\beta(U_\beta+\varpi^{p-1} g'_\beta V_{\sigma^{-1}\circ \beta}^p)-\varpi^{p-1}
\]
for any $\beta \in \bB_F$ with some $g'_\beta\in \frA_{Q,\bC_p}^\times$, where
\[
\frA_{Q,\bC_p}=\cO_{\bC_p}\langle U_\beta,V_\beta\mid \beta\in \bB_F\rangle/(U_\beta V_\beta-\varpi^{p-1}\mid \beta\in \bB_F).
\]

From these equations, we see that
\[
G_\beta:=V_\beta-W_\beta(1+g'_\beta V_\beta V_{\sigma^{-1}\circ \beta}^p)=0 
\]
in this quotient. Since 
\[
F_\beta\equiv -U_\beta G_\beta \bmod U_\beta V_\beta-\varpi^{p-1},
\]
we obtain 
\[
B_{Q,\bC_p}\simeq \bC_p\langle U_\beta, V_\beta, W_\beta\mid \beta\in \bB_F\rangle/(U_\beta V_\beta-\varpi^{p-1}, G_\beta\mid \beta\in \bB_F).
\]

Note that the ring 
\[
\frB_{Q,\bC_p}=\cO_{\bC_p}\langle U_\beta, V_\beta, W_\beta \mid \beta\in \bB_F\rangle/(U_\beta V_\beta-\varpi^{p-1}, G_\beta\mid \beta\in \bB_F)
\]
is a flat $\cO_{\bC_p}$-algebra. Indeed, consider the polynomial ring $\frA_{Q,\Cp}[W_\beta]$. Since the coefficients of $G_\beta$ as a polynomial of $W_\beta$ generate the unit ideal $\frA_{Q,\bC_p}$, by a limit argument reducing to the Noetherian case and using \cite[(20.F), Corollary 2]{Matsumura} we see that the $\frA_{Q,\bC_p}$-algebra
\[
\frA_{Q,\bC_p}[W_\beta\mid \beta\in \bB_F]/(G_\beta\mid \beta\in \bB_F)
\]
is flat. By \cite[Proposition 1.10.2 (ii)]{Abbes}, the $p$-adic completion of this algebra is $\frB_{Q,\bC_p}$. Since the $\cO_{\bC_p}$-algebra $\frA_{Q,\bC_p}$ is flat, the $p$-adic completion $\frB_{Q,\bC_p}$ is also flat over $\cO_{\bC_p}$.

Put $\bar{G}_\beta=G_\beta \bmod m_{\bC_p}$ and
\[
\bar{R}=\Fpbar [U_\beta, V_\beta, W_\beta\mid \beta\in \bB_F],\quad \bar{J}=(U_\beta V_\beta, \bar{G}_\beta\mid \beta\in \bB_F).
\]
Next we claim that the reduction $\bar{\frB}_{Q,\bC_p}=\bar{R}/\bar{J}$ of $\frB_{Q,\bC_p}$ is reduced and $\Spec(\bar{\frB}_{Q,\bC_p})$ is connected. For the reducedness, it suffices to show that the localization at every maximal ideal is reduced. Let $\SGm$ be any maximal ideal of $\bar{R}$ containing $\bar{J}$. Then we have 
\[
1+g'_\beta V_\beta V_{\sigma^{-1}\circ \beta}^p \notin \SGm
\]
since, supposing the contrary, $\bar{G}_\beta\in \SGm$ implies $V_\beta\in \SGm$ and $1\in \SGm$, which is a contradiction. Thus, in the ring $\bar{R}_{\SGm}$ we have
\[
W_\beta-V_\beta(1+g'_\beta V_\beta V_{\sigma^{-1}\circ \beta}^p)^{-1}\in \bar{J} \bar{R}_{\SGm}
\]
for any $\beta\in \bB_F$. Hence the localization $(\bar{\frB}_{Q,\bC_p})_{\SGm}$ is isomorphic to the localization of the ring
\[
\Fpbar[U_\beta, V_\beta\mid \beta\in \bB_F]/(U_\beta V_\beta\mid \beta\in \bB_F)
\]
at the pull-back of $\SGm$, which is reduced.

Let us show the connectedness. Let $\bB_F=\bB_U \coprod \bB_V$ be a decomposition into the disjoint union of two subsets. Consider the closed subscheme $F_{\bB_U,\bB_V}$ of $\Spec(\bar{\frB}_{Q,\bC_p})$ defined by $U_\beta=0$ for $\beta\in \bB_U$ and $V_\beta=0$ for $\beta\in \bB_V$. Since every $F_{\bB_U,\bB_V}$ contains the point defined by $U_\beta=V_\beta=W_\beta=0$ for any $\beta\in \bB_F$, it is enough to show that $F_{\bB_U, \bB_V}$ is connected for any such decomposition of $\bB_F$. Put 
\[
\bar{\frA}_{\bB_U,\bB_V}=\Fpbar[U_\beta, V_\beta\mid \beta\in\bB_F]/(U_\beta\ (\beta\in \bB_U), V_\beta\ (\beta\in\bB_V)).
\]
Note that the $\bar{\frA}_{\bB_U,\bB_V}$-algebra
\[
\bar{\frA}_{\bB_U,\bB_V}[W_\beta\mid \beta\in \bB_F]/(\bar{G}_\beta\mid \beta\in \bB_F)
\]
is flat. From this we see that the affine algebra of $F_{\bB_U,\bB_V}$ can be identified with the subring
\[
\bar{\frA}_{\bB_U,\bB_V} [\frac{V_\beta}{1+g'_\beta V_\beta V_{\sigma^{-1}\circ\beta}}\mid \beta\in \bB_F]
\]
of $\Frac(\bar{\frA}_{\bB_U,\bB_V})$, which is an integral domain. Hence we obtain the connectedness of $\bar{\frB}_{Q,\bC_p}$. By \cite[Lemma 7.1.9]{deJong}, $\spc^{-1}(\bar{Q})$ is reduced and \cite[Lemma 3.3.1 (1)]{Con_irr} shows that $\cV_Q\hat{\otimes}_{K'} \Cp$ is also reduced. Then \cite[Proposition 1.1]{BLR4} and \cite[Remark after Proposition 6.3.4/1]{BGR} imply that $\frB_{Q,\bC_p}$ is integrally closed in $B_{Q,\bC_p}$ and thus we have
\[
\pi_0(\cV_{Q}\hat{\otimes}_{K'}\bC_p)\simeq \pi_0(\Spec(\frB_{Q,\bC_p}))\simeq \pi_0(\Spec(\bar{\frB}_{Q,\bC_p})).
\]
This shows that $\cV_{Q}\hat{\otimes}_{K'}\bC_p$ is connected.
\end{proof}

\begin{lem}\label{critcanlem}
Suppose $f_\frp\leq 2$ for any $\frp\mid p$. Let $L/K$ be a finite extension. Let $[(A,\cH)]$ be an element of $Y_{\frc,p}(\oel)$ satisfying
\[
\deg_\beta(A[p]/\cH)\leq p/(p+1),\quad \Hdg_\beta(A)\leq p/(p+1)
\]
for any $\beta\in\bB_F$. Then, for any $\frp\mid p$, we have either $A[p]_\frp$ has the canonical subgroup of level one which is equal to $\cH_\frp$, or $\Hdg_\beta(A)=p/(p+1)$ for any $\beta\in \bB_\frp$.
\end{lem}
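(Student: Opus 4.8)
The plan is to work one prime $\frp\mid p$ at a time. Fix $\frp$ and set $f=f_\frp\leq 2$. Since $L/K$ is finite, $\oel$ is a complete discrete valuation ring with perfect residue field, which contains $\bF_{p^f}$ because $F\otimes K$ splits completely; so the formalism of \S\ref{SecBKMod}--\S\ref{SecGKVar} applies over $\oel$. By the discussion in \S\ref{SecHBAV}, $A[\frp]$ is an $\cO_{F_\frp}$-$\ADBT_1$ over $\oel$, hence a $\bZ_{p^{f}}\text{-}\ADBT_1$ over $\oel$ with $\beta$-Hodge heights $w_\beta=\Hdg_\beta(A)$, and $\cH_\frp$ is a finite flat closed cyclic $\bZ_{p^{f}}$-subgroup of $A[\frp]$ over $\oel$. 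Writing $v_\beta=\deg_\beta(A[\frp]/\cH_\frp)$ for $\beta\in\bB_\frp$, the decomposition $A[p]/\cH=\bigoplus_{\frq\mid p}A[\frq]/\cH_\frq$ shows $v_\beta=\deg_\beta(A[p]/\cH)$, so the hypotheses of the lemma give $v_\beta\leq p/(p+1)$ and $w_\beta\leq p/(p+1)$ for all $\beta\in\bB_\frp$. I would then feed $(A[\frp],\cH_\frp)$ into the trichotomy of Lemma \ref{LemGKVar}.

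First I would rule out the anti-canonical case (\ref{LemGKVarW}) of Lemma \ref{LemGKVar}: for every $\beta\in\bB_\frp$,
\[
v_\beta+p\,v_{\sigma^{-1}\circ\beta}\leq(1+p)\cdot\frac{p}{p+1}=p,
\]
so the strict inequality required in (\ref{LemGKVarW}) fails for every $\beta$, a fortiori it does not hold for all $\beta$. Consequently only two possibilities remain. If the inequality of Lemma \ref{LemGKVar} (\ref{LemGKVarV}) holds for every $\beta\in\bB_\frp$, then that part of the lemma shows $A[\frp]$ has the canonical subgroup of level one and it equals $\cH_\frp$; this is the first alternative of the statement for this $\frp$. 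Otherwise neither (\ref{LemGKVarV}) nor (\ref{LemGKVarW}) holds, so Lemma \ref{LemGKVar} (\ref{LemGKVarVW}) yields some $\beta_0\in\bB_\frp$ with $w_{\beta_0}+p\,w_{\sigma^{-1}\circ\beta_0}\geq p$.

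It then remains to observe that this last inequality, together with $w_\beta\leq p/(p+1)$ and the hypothesis $f\leq 2$, forces $\Hdg_\beta(A)=p/(p+1)$ for \emph{every} $\beta\in\bB_\frp$. If $f=1$, then $\sigma$ acts trivially on $\bB_\frp=\{\beta_0\}$ and the inequality reads $(p+1)w_{\beta_0}\geq p$, hence $w_{\beta_0}=p/(p+1)$. If $f=2$, then $\bB_\frp=\{\beta_0,\sigma^{-1}\circ\beta_0\}$, and $w_{\beta_0}+p\,w_{\sigma^{-1}\circ\beta_0}\geq p$ with both terms $\leq p/(p+1)$ forces equality in each, i.e. $w_\beta=p/(p+1)$ for both $\beta\in\bB_\frp$. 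Applying this for every $\frp\mid p$ gives the lemma. The argument is short, and I do not expect a genuine obstacle: the one point to notice is that the bound $\deg_\beta(A[p]/\cH)\leq p/(p+1)$ is exactly what excludes the anti-canonical alternative, after which the constraint $f_\frp\leq 2$ makes the combinatorial inequality produced by Lemma \ref{LemGKVar} (\ref{LemGKVarVW}) collapse to an equality.
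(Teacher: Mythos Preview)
Your proof is correct, and the route differs from the paper's. The paper argues by contrapositive: assuming $\Hdg_{\beta_0}(A)<p/(p+1)$ for some $\beta_0\in\bB_\frp$, it first uses $f_\frp\leq 2$ and the bound $w_\beta\leq p/(p+1)$ to verify $w_\beta+p\,w_{\sigma^{-1}\circ\beta}<p$ for every $\beta\in\bB_\frp$, so the canonical subgroup $\cC_\frp$ of $A[\frp]$ exists by Theorem \ref{cansub1}; then, supposing $\cH_\frp\neq\cC_\frp$, it applies Corollary \ref{cncisog}(\ref{noncanisog}) to the $\bZ_{p^{f_\frp}}\text{-}\ADBT_2$ $A[\frp^2]$ to obtain $\Hdg_\beta(p^{-1}\cH_\frp/\cH_\frp)\leq 1/(p+1)$ and hence $\deg_\beta(A[p]/\cH)\geq p/(p+1)$, forcing all $\Hdg_\beta(A)=p/(p+1)$ and contradicting the choice of $\beta_0$. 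You instead invoke the full trichotomy of Lemma \ref{LemGKVar} directly on $(A[\frp],\cH_\frp)$: the $\deg_\beta$ bound kills the anti-canonical alternative (\ref{LemGKVarW}), and then either (\ref{LemGKVarV}) gives $\cH_\frp=\cC_\frp$, or (\ref{LemGKVarVW}) produces an inequality which, combined with the $\Hdg_\beta$ bound and $f_\frp\leq 2$, collapses to equalities. Your argument stays entirely at level one and avoids passing through $A[\frp^2]$ and Corollary \ref{cncisog}, which is a small economy; the paper's route has the minor advantage of reusing machinery (Corollary \ref{cncisog}) already in place for other purposes.
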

\begin{proof}
Suppose $\Hdg_{\beta_0}(A)<p/(p+1)$ for some $\beta_0\in \bB_\frp$. Since we have $\Hdg_\beta(A)\leq p/(p+1)$ for any $\beta\in \bB_F$, the assumption on $f_\frp$ implies that the inequality
\[
\Hdg_\beta(A)+ p  \Hdg_{\sigma^{-1}\circ\beta}(A)<p
\]
holds for any $\beta\in \bB_\frp$. By Theorem \ref{cansub1}, the $\cO_{F_\frp}$-$\ADBT_1$ $A[p]_\frp$ has the canonical subgroup $\cC_\frp$. 

Suppose $\cH_\frp\neq \cC_\frp$. For any $\beta\in \bB_{\frp}$, Corollary \ref{cncisog} (\ref{noncanisog}) implies that
\[
\Hdg_{\beta}(p^{-1}\cH_{\frp}/\cH_{\frp})=p^{-1}\Hdg_{\sigma\circ\beta}(A[p]_{\frp})=p^{-1}\Hdg_{\sigma\circ\beta}(A[p])\leq 1/(p+1)
\]
and that $A[p]_{\frp}/\cH_{\frp}$ is the canonical subgroup of $p^{-1}\cH_{\frp}/\cH_{\frp}$. Thus we have
\[
\deg_\beta(A[p]/\cH)=\deg_\beta(A[p]_{\frp}/\cH_{\frp})=1-\Hdg_{\beta}(p^{-1}\cH_{\frp}/\cH_{\frp})\geq p/(p+1),
\]
which yields $\deg_\beta(A[p]/\cH)=p/(p+1)$ and $\Hdg_\beta(A[p])=p/(p+1)$
for any $\beta\in \bB_{\frp}$. This contradicts the choice of $\beta_0$.
\end{proof}

\begin{cor}\label{UpExt}
Suppose $f_\frp\leq 2$ for any $\frp\mid p$. Let $L/K$ be a finite extension. 
Let $[(A',\cH')]$ be an element of $Y_{\frc,p}(\oel)$ such that $[(A'_L,\cH'_L)]\in \cV_Q(L)$. Then, 
for any finite flat closed $p$-cyclic $\cO_F$-subgroup scheme $\cD$ of $A'[p]$ over $\oel$ satisfying $\cD_L\cap\cH'_L=0$, we have 
\[
\Hdg_\beta(A'/\cD)\leq 1/(p+1)
\]
for any $\beta\in \bB_F$ and $A'[p]/\cD$ is the canonical subgroup of $A'/\cD$ of level one.
\end{cor}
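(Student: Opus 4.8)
The plan is to decompose everything according to the primes of $F$ above $p$ and treat each prime separately. Fix $\frp\mid p$; then $\cD=\bigoplus_{\frq\mid p}\cD_\frq$ with each $\cD_\frq$ a finite flat closed $\frq$-cyclic $\cO_F$-subgroup of $A'[\frq]$ over $\oel$, so that $\cD_\frp$ is a cyclic $\cO_{F_\frp}$-subgroup of $A'[\frp^2][p]=A'[\frp]$; since the isogeny $A'\to A'/\cD$ factors as $A'\to A'/\cD_\frp\to A'/\cD$ with the second kernel annihilated by an ideal prime to $\frp$, one has $(A'/\cD)[\frp]=(A'/\cD_\frp)[\frp]=p^{-1}\cD_\frp/\cD_\frp$ (with $A'[\frp^2]$ in the role of $\cG$) and $(A'[p]/\cD)_\frp=A'[\frp]/\cD_\frp\subseteq (A'/\cD)[\frp]$. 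From $[(A'_L,\cH'_L)]\in \cV_Q(L)$ and the definition of $\cV_Q$ in Proposition \ref{ConnCrit} I would record that $\deg_\beta(A'[p]/\cH')\leq p/(p+1)$ and $\Hdg_\beta(A')\leq p/(p+1)$ for all $\beta\in\bB_F$, which is precisely the hypothesis of Lemma \ref{critcanlem}; and from $\cD_L\cap\cH'_L=0$ I would note $\cD_\frp\neq\cH'_\frp$, since two finite flat closed subgroup schemes of $A'[\frp]$ over the discrete valuation ring $\oel$ with the same, nonzero, generic fibre must agree. Applying Lemma \ref{critcanlem} to $[(A',\cH')]$ gives, for each $\frp$, one of two cases: either (A) $A'[\frp]$ has the canonical subgroup of level one and it equals $\cH'_\frp$, or (B) $\Hdg_\beta(A')=p/(p+1)$ for every $\beta\in\bB_\frp$.

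In Case B, $A'[\frp^2]$ is an $\cO_{F_\frp}$-$\ADBT_2$ over $\oel$ all of whose $\beta$-Hodge heights equal $p/(p+1)$, and $f_\frp\leq 2$, so Proposition \ref{critisog} applied with $\cG=A'[\frp^2]$ and $\cH=\cD_\frp$ yields $\Hdg_\beta(p^{-1}\cD_\frp/\cD_\frp)=1/(p+1)$ for $\beta\in\bB_\frp$ and shows $A'[\frp]/\cD_\frp$ is the canonical subgroup of $p^{-1}\cD_\frp/\cD_\frp$. In Case A, before invoking Corollary \ref{cncisog} I must check the inequality $\Hdg_\beta(A')+p\,\Hdg_{\sigma^{-1}\circ\beta}(A')<p$ for $\beta\in\bB_\frp$: since Case B fails, some $\Hdg_{\beta_0}(A')<p/(p+1)$, and as all $\Hdg_\beta(A')\leq p/(p+1)$ and $f_\frp\leq 2$ the left-hand side is $<p$ (it equals $p$ only when both Hodge heights are $p/(p+1)$, which would be Case B). Then Corollary \ref{cncisog}~(\ref{noncanisog}) with $\cH=\cD_\frp\neq\cC_\frp=\cH'_\frp$ gives $\Hdg_\beta(p^{-1}\cD_\frp/\cD_\frp)=p^{-1}\Hdg_{\sigma\circ\beta}(A')\leq 1/(p+1)$ and again shows $A'[\frp]/\cD_\frp$ is the canonical subgroup of $p^{-1}\cD_\frp/\cD_\frp$. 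In both cases, using $(A'/\cD)[\frp]=p^{-1}\cD_\frp/\cD_\frp$ and $(A'[p]/\cD)_\frp=A'[\frp]/\cD_\frp$, this gives $\Hdg_\beta(A'/\cD)\leq 1/(p+1)$ for $\beta\in\bB_\frp$ and identifies $(A'[p]/\cD)_\frp$ with the level-one canonical subgroup of $(A'/\cD)[\frp]$.

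Summing over the primes above $p$, I would conclude $\Hdg_\beta(A'/\cD)\leq 1/(p+1)$ for all $\beta\in\bB_F$; hence $\Hdg_\beta(A'/\cD)+p\,\Hdg_{\sigma^{-1}\circ\beta}(A'/\cD)\leq 1<p$, so the level-one canonical subgroup of the HBAV $A'/\cD$ exists, and by its definition in Proposition \ref{cansubAV} it equals $\bigoplus_{\frp\mid p}(A'[\frp]/\cD_\frp)=A'[p]/\cD$, which is the assertion. I do not expect a real obstacle here: the substantive computations (the Newton-polygon analyses) are already carried out in Proposition \ref{critisog} and Corollary \ref{cncisog}, so what remains is the bookkeeping identifying $p^{-1}\cD_\frp/\cD_\frp$ with $(A'/\cD)[\frp]$ and $A'[\frp]/\cD_\frp$ with $(A'[p]/\cD)_\frp$, together with the short case split and the elementary derivation of the strict Hodge-height inequality in Case A. The only point to be slightly careful about is that the results of Section \ref{SecCanSub} quoted here, stated over $\okey$, apply over $\oel$ as well—which is fine, $\oel$ being again a complete discrete valuation ring with perfect (finite) residue field.
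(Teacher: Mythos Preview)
Your proposal is correct and follows essentially the same approach as the paper: decompose along primes $\frp\mid p$, use Lemma \ref{critcanlem} to obtain the dichotomy, apply Corollary \ref{cncisog}(\ref{noncanisog}) when $\cH'_\frp$ is the canonical subgroup and Proposition \ref{critisog} in the critical case, then reassemble via Proposition \ref{cansubAV}. Your version is slightly more explicit about the bookkeeping (the identification $(A'/\cD)[\frp]=p^{-1}\cD_\frp/\cD_\frp$, the strict Hodge-height inequality needed in Case~A, and the passage from $\okey$ to $\oel$), but the argument is the same.
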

\begin{proof}
Write as  $\cD=\bigoplus_{\frp\mid p} \cD_\frp$. The assumption implies $\cD_\frp\neq \cH'_\frp$ for any $\frp\mid p$.
If $\cH'_\frp$ is the canonical subgroup of $A'[p]_\frp$, then Corollary \ref{cncisog} (\ref{noncanisog}) implies that
\[
\Hdg_\beta(A'/\cD)=\Hdg_\beta(p^{-1}\cD_\frp/\cD_\frp)=p^{-1}\Hdg_{\sigma\circ\beta}(A'[p])\leq 1/(p+1)
\] 
for any $\beta\in \bB_\frp$ and that $A'[p]_\frp/\cD_\frp$ is the canonical subgroup of $p^{-1}\cD_\frp/\cD_\frp=(A'/\cD)[p]_\frp$. Otherwise, Lemma \ref{critcanlem} yields $\Hdg_\beta(A')=p/(p+1)$ for any $\beta\in \bB_\frp$. By Proposition \ref{critisog}, we see that
\[
\deg_\beta(A'[p]_\frp/\cD_\frp)=p/(p+1),\quad \Hdg_\beta((A'/\cD)[p]_\frp)=1/(p+1)
\]
for any $\beta\in\bB_\frp$ and that $(A'/\cD)[p]_\frp$ has the canonical subgroup $A'[p]_\frp/\cD_\frp$. Hence the HBAV $A'/\cD$ satisfies
\[
\Hdg_\beta(A'/\cD)\leq 1/(p+1)
\]
for any $\beta\in \bB_F$ and it has the canonical subgroup
\[
A'[p]/\cD=\bigoplus_{\frp\mid p} A'[p]_\frp/\cD_\frp
\]
of level one. This concludes the proof of the corollary.
\end{proof}

\begin{lem}\label{VQcan}
Suppose $f_\frp\leq 2$ for any $\frp\mid p$. Then we have 
\[
\cV_Q(\tfrac{1}{p+1})\neq \emptyset.
\]
Moreover, for any finite extension $L/K$ and any element $[(A',\cH')]$ of $Y_{\frc,p}(\oel)$ satisfying $[(A'_L,\cH'_L)]\in \cV_Q(\tfrac{1}{p+1})(L)$, the HBAV $A'$ has the canonical subgroup $\cH'$. 
\end{lem}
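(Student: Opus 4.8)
The plan is to handle the two assertions separately, leaning on the explicit local description of $\cY_{\frc,p}$ around $\bar Q$ established in the proof of Proposition \ref{ConnCrit} and on Lemma \ref{critcanlem}. First I would prove non-emptiness of $\cV_Q(\tfrac1{p+1})$ by exhibiting a single point. Recall from the proof of Proposition \ref{ConnCrit} that $\spc^{-1}(\bar Q) = (\Spf(\frB'))^\rig$ with $\frB' = \cO_{K'}[[X_\beta,Y_\beta\mid\beta\in\bB_F]]/(X_\beta Y_\beta - p\mid\beta\in\bB_F)$, and that an $\cO_{K'}$-algebra homomorphism $x\colon\frB'\to\cO_E$ (for $E/K'$ finite) produces a point $[(A',\cH')]$ with $\deg_\beta(A'[p]/\cH') = v_p(X_\beta(x))$ and $\Hdg_\beta(A') = v_p\bigl((X_\beta + g_\beta Y_{\sigma^{-1}\circ\beta}^p)(x)\bigr)$ for suitable $g_\beta\in(\frB')^\times$. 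Fixing the $(p+1)$-st root $\varpi = p^{1/(p+1)}$ of $p$ as in that proof and setting $E = K'(\varpi)$, I would take the homomorphism $x$ sending $X_\beta\mapsto\varpi$ and $Y_\beta\mapsto\varpi^p$ for every $\beta$ — legitimate since $\varpi\cdot\varpi^p = p$. For the resulting point one has $\deg_\beta(A'[p]/\cH') = v_p(\varpi) = 1/(p+1)\in[\tfrac1{p+1},\tfrac p{p+1}]$, and since $v_p(g_\beta(x)\varpi^{p^2}) = p^2/(p+1)$ strictly exceeds $v_p(\varpi) = 1/(p+1)$ there is no cancellation, whence $\Hdg_\beta(A') = v_p(\varpi + g_\beta(x)\varpi^{p^2}) = 1/(p+1)$ for every $\beta$. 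Thus the point lies in $\cV_Q(\tfrac1{p+1})$, giving non-emptiness.

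For the second assertion, let $[(A',\cH')]\in Y_{\frc,p}(\oel)$ with $[(A'_L,\cH'_L)]\in\cV_Q(\tfrac1{p+1})(L)$. From the definitions of $\cV_Q$ and $\cV_Q(\tfrac1{p+1})$ one reads off $\deg_\beta(A'[p]/\cH')\leq p/(p+1)$ and $\Hdg_\beta(A')\leq 1/(p+1) < p/(p+1)$ for every $\beta\in\bB_F$. Applying Lemma \ref{critcanlem} to $[(A',\cH')]$, for each $\frp\mid p$ the alternative $\Hdg_\beta(A') = p/(p+1)$ for all $\beta\in\bB_\frp$ is impossible, so $A'[p]_\frp$ has the canonical subgroup of level one and it equals $\cH'_\frp$. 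Since moreover $\Hdg_\beta(A') + p\,\Hdg_{\sigma^{-1}\circ\beta}(A')\leq \tfrac1{p+1} + \tfrac p{p+1} = 1 < p$ for every $\beta$, Proposition \ref{cansubAV} gives that $A'$ has a canonical subgroup $\cC_1(A')$ of level one, which decomposes as $\bigoplus_{\frp\mid p}\cC_{1,\frp}$ with $\cC_{1,\frp}$ the canonical subgroup of $A'[p]_\frp$. Putting these together yields $\cC_1(A') = \bigoplus_{\frp\mid p}\cH'_\frp = \cH'$, as desired.

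The only step needing genuine care is the non-emptiness: the point must be forced to specialize to $\bar Q$, so one really needs the coordinates $X_\beta, Y_\beta$ coming from Stamm's local model rather than a soft construction (for instance, applying Corollary \ref{UpExt} to the point $Q$ would yield a pair $(A/\cD, A[p]/\cD)$ with the correct $\beta$-Hodge heights but no control on its specialization). Once the point is in hand, the second assertion is a direct consequence of Lemma \ref{critcanlem} and Proposition \ref{cansubAV}.
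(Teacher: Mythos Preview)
Your proof is correct and follows essentially the same path as the paper. For non-emptiness the paper also uses the local parameters $X_\beta,Y_\beta$ from Proposition \ref{ConnCrit} to produce a point with $\deg_\beta(A'[p]/\cH')=1/(p+1)$, but instead of computing $\Hdg_\beta$ directly from the formula $v_p(X_\beta+g_\beta Y_{\sigma^{-1}\circ\beta}^p)$ it invokes Lemma \ref{LemGKVar} (\ref{LemGKVarV}) to conclude $\Hdg_\beta(A')=\deg_\beta(A'[p]/\cH')=1/(p+1)$. For the second assertion the paper simply cites Theorem \ref{cansub1}: once $\Hdg_\beta(A')\leq 1/(p+1)$ the hypothesis $f_\frp\leq 2$ plays no role, so your appeal to Lemma \ref{critcanlem} works but is a bit more than needed.
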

\begin{proof}
Recall that we have $\spc^{-1}(\bar{Q})=(\Spf(\frB'))^\rig$ with the ring $\frB'$ of (\ref{frBdef}) in the proof of Proposition \ref{ConnCrit}. From the description of $\deg_\beta$ in terms of the parameter $X_\beta$ of the ring $\frB'$, we see that there exists a point $[(A',\cH')]\in Y_{\frc,p}(\oel)$ with some finite extension $L/K$ such that $[(A'_L,\cH'_L)]\in \spc^{-1}(\bar{Q})$ and
\[
\deg_\beta(A'[p]/\cH')=1/(p+1)
\]
for any $\beta\in \bB_F$. Then Lemma \ref{LemGKVar} (\ref{LemGKVarV}) implies that $\Hdg_\beta(A')=1/(p+1)$ for any $\beta\in \bB_F$ and thus $[(A'_L,\cH'_L)]\in \cV_{Q}(\tfrac{1}{p+1})(L)$. The last assertion follows from Theorem \ref{cansub1}.
\end{proof}

Since $\cY_{\frc,p}$ is separated, Proposition \ref{ConnCrit} implies that the base extension $\cV_{Q,\bC_p}=\cV_Q\hat{\otimes}_K \bC_p$ is an admissible affinoid open subset of $\cY_{\frc,p,\bC_p}$ whose connected components are all isomorphic to $\cV_Q\hat{\otimes}_{K'} \bC_p$. Each connected component contains an affinoid subdomain of $\cV_Q(\tfrac{1}{p+1})\hat{\otimes}_K \bC_p$ which is isomorphic to $\cV_Q(\tfrac{1}{p+1})\hat{\otimes}_{K'} \bC_p$. By Lemma \ref{VQcan}, we have
\[
\cV_Q(\tfrac{1}{p+1})\hat{\otimes}_{K'} \bC_p\neq \emptyset.
\] 
The point $Q\in \cY_{\frc,p}(L)$ defines a point of $\cY_{\frc,p,\bC_p}(\bC_p)$ by the natural inclusion $L\to \bC_p$, which we also denote by $Q$. Let $\cV^0_{Q,\bC_p}$ be the connected component of $\cV_{Q,\bC_p}$ containing $Q$ and $\cV^0_{Q,\bC_p}(\tfrac{1}{p+1})$ be a copy of $\cV_Q(\tfrac{1}{p+1})\hat{\otimes}_{K'} \bC_p$ which is contained in $\cV^0_{Q,\bC_p}$. These are both non-empty admissible affinoid open subsets of $\cY_{\frc,p,\bC_p}$.



\subsection{Overconvergent Hilbert modular forms and the eigenvariety}\label{SecOCHMF}

In this subsection, we recall the construction of sheaves of overconvergent Hilbert modular forms and the associated eigenvariety, due to Andreatta-Iovita-Pilloni \cite{AIP2}.



\subsubsection{Overconvergent modular forms over Hilbert modular varieties}\label{SecOCHMFDef}

Put $\bT=\mathrm{Res}_{\cO_F/\bZ}(\Gm)$. Let $\hat{\bT}$ be its formal completion along the unit section. For any $w \in e^{-1}\bZ_{\geq 1}$, let $\bT_w^0$ be the formal subgroup scheme of $\hat{\bT}$ over $\Spf(\okey)$ representing the functor
\[
\frB \mapsto \Ker(\bT(\frB)\to \bT(\frB/\pi^{e w}\frB)).
\]
on the category of admissible formal $\okey$-algebras $\frB$. Then $\bT^0_w$ is a quasi-compact admissible formal group scheme over $\okey$.

Let $\cW$ be the Berthelot generic fiber of $\Spf(\cO_K[[\bT(\bZ_p)]])$ and we denote the universal character on this space by
\[
\kappa^\univ: \bT(\bZ_p) \to \cO^\circ(\cW)^\times=\cO_K[[\bT(\bZ_p)]]^\times.
\]
Here $\cO^\circ$ is the sheaf of rigid analytic functions with absolute value bounded by one and the last equality follows from \cite[Theorem 7.4.1]{deJong}.
For any morphism $\cX\to \cW$ of rigid analytic varieties over $K$, we denote by $\kappa^\cX$ the restriction 
\[
\kappa^\cX:\bT(\bZ_p)\overset{\kappa^\univ}{\to}\cO^\circ(\cW)^\times \to \cO^\circ(\cX)^\times
\]
of $\kappa^\univ$ to $\cX$. Consider the case where $\cX$ is a reduced $K$-affinoid variety $\cU=\Spv(A)$. Then the subring $A^\circ$ of power-bounded elements is $p$-adically complete. For any positive integer $n$, put $q_n=2$ if $p=2$ and $n=1$, and $q_n=1$ otherwise. When we consider the case of $p=2$ and $n=1$, we assume that $2$ splits completely in $F$. The character $\kappa^\cU$ is said to be $n$-analytic if the restriction to $\bT^0_n(\bZ_p)$ factors as
\[
\xymatrix{
\bT^0_n(\bZ_p)\ar@{=}[r]&1+p^n(\cO_F\otimes \bZ_p) \ar[r]^-{\kappa^\cU}\ar[d]_{\log} & (A^\circ)^\times \\
&q_n p^n(\cO_F\otimes \bZ_p)\ar[r]_-{\psi} & q_n^2 p^n A^\circ\ar[u]_{\exp}
}
\]
with some $\bZ_p$-linear map $\psi$. In this case, we also say that the morphism $\cU\to \cW$ is $n$-analytic. Any $\kappa^{\cU}$ is $n$-analytic for some $n$ by the maximal modulus principle. Note that any $n$-analytic character defines an analytic character $\bT^0_n(\bZ_p)\to A^\times$, even for the case of $p=2$ and $n=1$.

Proposition \ref{cansubAV} and Lemma \ref{famcanlem} enable us to generalize the construction in \cite[\S 3.3]{AIP2}. Let $n$ be a positive integer and $\uv=(v_\beta)_{\beta\in \bB_F}$ a $g$-tuple in $[0,(p-1)/p^{n})\cap \bQ$. Put $v=\max\{v_\beta\mid \beta\in \bB_F\}$. Let $C_n$ be the canonical subgroup of $\bar{\cA}^\univ$ of level $n$ over $\bar{\cM}(\mu_N,\frc)(\uv)$, as before. Put
\[
\bar{\cM}(\Gamma_1(p^n),\mu_N,\frc)(\uv)=\Isom_{\bar{\cM}(\mu_N,\frc)(\uv)}(C_n, \cD_F^{-1}\otimes \mu_{p^n}).
\]
We denote by $\bar{\SGm}(\Gamma_1(p^n),\mu_N,\frc)(\uv)$ the normalization of $\bar{\SGm}(\mu_N,\frc)(\uv)$ in $\bar{\cM}(\Gamma_1(p^n), \mu_N, \frc)(\uv)$. Note that, since $\cC^\vee_n$ is finite and etale over $\bar{\SGm}(\mu_N,\frc)(0)$, we have
\begin{equation}\label{ordmoduli}
\bar{\SGm}(\Gamma_1(p^n),\mu_N,\frc)(0)=\Isom_{\bar{\SGm}(\mu_N,\frc)(0)}(\cC_n, \cD_F^{-1}\otimes \mu_{p^n}),
\end{equation}
which is a $\bT(\bZ/p^n\bZ)$-torsor over $\bar{\SGm}(\mu_N,\frc)(0)$.

Let $\cF$ be the locally free $\cO_F\otimes \cO_{\bar{\SGm}(\Gamma_1(p^n),\mu_N,\frc)(\uv)}$-module of rank one constructed as in \cite[Proposition 3.3]{AIP2}. Let $w$ be an element of $e^{-1}\bZ$ satisfying $n-1\leq w< n-p^n v/(p-1)$, which exists for a sufficiently large $K$. 
Let
\[
\gamma_w: \frIW_w^+ \to \bar{\SGm}(\Gamma_1(p^n),\mu_N, \frc)(\uv)
\]
be the $p$-adic formal $\bT^0_w$-torsor over 
$\bar{\SGm}(\Gamma_1(p^n),\mu_N,\frc)(\uv)$ classifying, for any $R\in\mathbf{NAdm}$ and any morphism of $p$-adic formal schemes $\gamma:\Spf(R)\to \bar{\SGm}(\Gamma_1(p^n),\mu_N,\frc)(\uv)$, the isomorphisms $\alpha: \gamma^*\cF\to \cO_F\otimes R$ such that the composite
\[
\underline{\cO_F/p^n \cO_F}(R) \overset{\gamma}{\simeq} \cC_n^\vee(R)\overset{\HT_w}{\to} \gamma^*\cF/\pi^{ew} \gamma^*\cF \overset{\alpha}{\simeq} \cO_F\otimes R/\pi^{e w} R
\]
sends $1$ to $1$ \cite[\S 3.4]{AIP2}. We also write $\frIW_w^+$ as $\frIW^+_{w,\frc}(\uv)$. We denote the Raynaud generic fiber of $\frIW_w^+$ by $\cIW_w^+$ and also by $\cIW^+_{w,\frc}(\uv)$. 
From (\ref{ordmoduli}), we see that the moduli interpretation of $\frIW_{w,\frc}^+(0)$ as above is also valid for the category of quasi-idyllic $p$-adic $\okey$-algebras $R$.

For the structure morphism
\[
h_n: \bar{\SGm}(\Gamma_1(p^n),\mu_N,\frc)(\uv)\to \bar{\SGm}(\mu_N,\frc)(\uv),
\]
we put $\pi_w=h_n\circ \gamma_w$. We denote by $\gamma_w^\rig$, $h_n^\rig$ and $\pi_w^\rig$ the induced morphisms on the Raynaud generic fibers.
Let $\bT_w$ be the formal subgroup scheme of $\hat{\bT}$ over $\Spf(\okey)$ whose set of $\frB$-valued points are the inverse image of $\bT(\bZ/p^n\bZ)$ by the map $\bT(\frB)\to \bT(\frB/\pi^{e w}\frB)$ for any admissible formal $\okey$-algebra $\frB$. The natural action of $\bT_w^0$ on $\frIW_w^+$ induces an action of $\bT_w$ on $\frIW_w^+$ over $\bar{\SGm}(\mu_N,\frc)(\uv)$ and also on the Raynaud generic fiber $\cIW_w^+$ over $\bar{\cM}(\mu_N,\frc)(\uv)$. Then, for any reduced $K$-affinoid variety $\cU$ and $n$-analytic morphism $\cU\to \cW$, we define
\[
\Omega^{\kappa^{\cU}}=(\pi_w^\rig)_*(\cO_{\cIW_w^+\times \cU})[-\kappa^{\cU}].
\]
By \cite[Proposition 3.12]{AIP2}, it is an invertible sheaf which is independent of the choices of $n$ and $w$. Let $D$ be the boundary divisor of $\bar{\cM}(\mu_N,\frc)$. We also put
\begin{align*}
M(\mu_N, \frc, \kappa^{\cU})(\uv)&=H^0(\bar{\cM}(\mu_N,\frc)(\uv)\times \cU, \Omega^{\kappa^{\cU}}),\\
S(\mu_N,\frc, \kappa^{\cU})(\uv)&=H^0(\bar{\cM}(\mu_N,\frc)(\uv)\times \cU, \Omega^{\kappa^{\cU}}(-D)).
\end{align*}

Note the equality $\SGm(\mu_N,\frc)(\uv)^\rig=\bar{\cM}(\mu_N,\frc)(\uv)\setminus \spc^{-1}(D_k)$, where $D_k$ is the boundary divisor of the special fiber $\bar{\SGm}(\mu_N,\frc)(\uv)_k$.
For any $R\in \mathbf{NAdm}$, let us consider tuples $(A, \iota,\lambda,\psi,u,\alpha)$ over $R$ consisting of a HBAV $(A,\iota, \lambda,\psi)$ over $\Spec(R)$ such that $\Hdg_\beta(A_x)\leq v_\beta$ for any $x\in \Spv(R[1/p])$, an isomorphism of $\cO_F$-group schemes
\[
u: \cC_n|_{R[1/p]}\simeq  \cD_F^{-1}\otimes\mu_{p^n}
\] 
for the canonical subgroup $\cC_n$ of $A$ and an isomorphism 
\[
\alpha:\gamma^*\cF\simeq \cO_F\otimes R
\]
satisfying the compatibility with $u$ as above. Then any element $f\in H^0(\SGm(\mu_N,\frc)(\uv)^\rig, \Omega^{\kappa^\cU})$
can be identified with a rule functorially associating, with any such tuple over $R$ endowed with a map $\Spv(R[1/p])\to \cU$, an element $f(A,\iota,\lambda,\psi,u,\alpha)$ of $R[1/p]$ satisfying
\[
f(A,\iota,\lambda,\psi,t^{-1}u,t^{-1}\alpha)=\kappa^{\cU}(t) f(A,\iota,\lambda,\psi,u,\alpha)
\]
for any $t\in \bT(\bZ_p)$. Similarly, any element $f\in H^0(\bar{\SGm}(\mu_N,\frc)(0)^\rig, \Omega^{\kappa^\cU})$ has a similar description as a rule over any quasi-idyllic $p$-adic $\okey$-algebra $R$ endowed with a morphism $\Spf(R)\to \bar{\SGm}(\mu_N,\frc)(0)$.

For a later use, we also recall the definition of an integral structure of the sheaf $\Omega^{\kappa^\cU}$ for an $n$-analytic map $\kappa^\cU: \cU=\Spv(A)\to \cW$ with some reduced $K$-affinoid algebra $A$. Note that $A^\circ$ is topologically of finite type \cite[Corollary 6.4.1/6]{BGR} and thus $\frU=\Spf(A^\circ)$ is an admissible formal scheme over $\Spf(\okey)$. The map $\kappa^\cU$ extends to a formal character
\[
\boldsymbol{\kappa}^{\cU}: \bT_w\times \frU\to \hGm\times \frU.
\]
We put
\[
\bOmega^{\kappa^\cU}=(\pi_w)_*(\cO_{\frIW_w^+\times \frU})[-\boldsymbol{\kappa}^\cU].
\]
It is a coherent $\cO_{\bar{\SGm}(\mu_N,\frc)(\uv)\times\frU}$-module which is independent of the choice of $w$ such that its Raynaud generic fiber is $\Omega^{\kappa^\cU}$ \cite[Proposition 3.12]{AIP2}. Since the map $h_n$ is an etale $\bT(\bZ/p^n\bZ)$-torsor over the ordinary locus $\bar{\SGm}(\mu_N,\frc)(0)$, the restriction of $\bOmega^{\kappa^\cU}$ to $\bar{\SGm}(\mu_N,\frc)(0)\times\frU$ is an invertible sheaf.

Let $\kappa:\bT(\bZ_p)\to K^\times$ be a weight character which is integral, namely it is written as
\[
\bT(\bZ_p)=(\cO_F\otimes \bZ_p)^\times \ni t\otimes 1\mapsto \prod_{\beta\in \bB_F}\beta(t)^{k_\beta} \in K^\times
\]
with some $g$-tuple of integers $(k_\beta)_{\beta\in \bB_F}$. In this case, the sheaf $\Omega^{\kappa}$ is isomorphic to the classical automorphic sheaf \cite[Corollary 3.9]{AIP2}. Indeed, consider $\cI=\Isom_{\bar{\cM}(\mu_N,\frc)}(\cO_F\otimes \cO_{\bar{\cM}(\mu_N,\frc)}, \omega_{\bar{\cA}^\univ})$. Since the Raynaud generic fiber of the sheaf $\cF$ is $\omega_{\bar{\cA}^\univ}$, we have a natural map $\cIW_w^+\to \cI$, which induces an isomorphism $\omega_{\bar{\cA}^\univ}^{\kappa}\to \Omega^{\kappa}$. We also say that an integral weight $\kappa$ is even if every $k_\beta$ is even.

Moreover, we say that a weight character $\kappa:\bT(\bZ_p)\to K^\times$ is $n$-integral ({\it resp.} $n$-even) if its restriction to $\bT^0_n(\bZ_p)$ is equal to the restriction of a character of some integral ({\it resp.} even) weight $(k_\beta)_{\beta\in \bB_F}$. Then, from the construction of the sheaf $\Omega^{\kappa}$, we see that the pull-back $(h^{\rig}_n)^*\Omega^{\kappa}$ to $\bar{\cM}(\Gamma_1(p^n),\mu_N,\frc)(\uv)$ is isomorphic to $(h^{\rig}_n)^* (\bigotimes_{\beta\in\bB_F} \omega^{\otimes k_\beta}_{\bar{\cA}^\univ,\beta})$.
Note that for the case where $p=2$ splits completely in $F$, a $1$-integral weight is $1$-analytic if and only if it is $1$-even.



\subsubsection{Overconvergent arithmetic Hilbert modular forms}\label{SecArithOCHMF}

We define the weight space $\cW^G$ for overconvergent Hilbert modular forms as the Berthelot generic fiber of $\Spf(\cO_K[[\bT(\bZ_p)\times \bZ_p^\times]])$. Any morphism $\cX\to \cW^G$ defines a pair $(\nu^{\cX}, w^{\cX})$ of continuous characters
\[
\nu^{\cX}: \bT(\bZ_p)\to \cO^\circ(\cX)^\times,\quad w^{\cX}:\bZ_p^\times \to \cO^\circ(\cX)^\times
\]
with respect to the supremum semi-norm on $\cX$. The map
\[
\bT(\bZ_p)\to \bT(\bZ_p)\times \bZ_p^\times,\quad t\mapsto (t^2, \Nor_{F/\bQ}(t)) 
\]
induces a morphism $k: \cW^G\to \cW$. For any morphism $\cX\to \cW^G$, put $\kappa^{\cX}=k(\nu^{\cX},w^{\cX})$. 
When $\cX$ is a reduced $K$-affinoid variety, we say that $(\nu^{\cX},w^{\cX})$ is $n$-analytic if $\nu^{\cX}$ and $w^{\cX}$ are both $n$-analytic. Note that if $(\nu^{\cX},w^{\cX})$ is $n$-analytic, then $\kappa^{\cX}$ is also $n$-analytic.
We say that a character $(\nu,w): \bT(\bZ_p)\times \bZ_p^\times\to K^\times$ is integral if it comes from an algebraic character $\bT\times \Gm\to \Gm$. Then it is written as 
\[
\bT(\bZ_p)\times \bZ_p^\times\to K^\times,\quad (t\otimes 1,s)\mapsto \prod_{\beta\in \bB_F} \beta(t)^{k_\beta}s^{k_0}
\]
with some $g$-tuple of integers $(k_\beta)_{\beta\in \bB_F}$ and an integer $k_0$. We say that it is even if every $k_\beta$ and $k_0$ are even. We also say that $(\nu,w)$ is $n$-integral ({\it resp.} $n$-even) if its restriction to $\bT^0_n(\bZ_p)\times (1+p^n\bZ_p)$ is equal to the restriction of some integral ({\it resp.} even) character. 
If $(\nu,w)$ is $n$-integral ({\it resp.} $n$-even), then $k(\nu,w)$ is also $n$-integral ({\it resp.} $n$-even).

Let $\cU$ be a reduced $K$-affinoid variety and $\cU\to \cW^G$ an $n$-analytic morphism. Note that for any $\frc$-polarization $\lambda: A\otimes_{\oef} \frc\to A^\vee$ and any $x\in F^{\times,+}$, the multiplication by $x$ gives an $x^{-1}\frc$-polarization 
\[
x\lambda: A\otimes_{\oef} x^{-1}\frc\overset{\times x}{\simeq} A\otimes_{\oef} \frc\overset{\lambda}{\simeq} A^\vee.
\]
Then the group $\Delta=\cO_F^{\times,+}/U_N^2$ acts on the space $M(\mu_N, \frc, \kappa^{\cU})(\uv)$ by
\[
([\epsilon].f)(A,\iota,\lambda,\psi, u,\alpha)=\nu^\cU(\epsilon)f(A,\iota,\epsilon^{-1}\lambda,\psi, u,\alpha)
\]
for any $f\in M(\mu_N, \frc, \kappa^{\cU})(\uv)$ and $\epsilon\in \cO_F^{\times,+}$. We define
\begin{align*}
M^G(\mu_N,\frc, (\nu^{\cU},w^{\cU}))(\uv)&=M(\mu_N, \frc, \kappa^{\cU})(\uv)^{\Delta},\\
S^G(\mu_N,\frc, (\nu^{\cU},w^{\cU}))(\uv)&=S(\mu_N, \frc, \kappa^{\cU})(\uv)^{\Delta}.
\end{align*}

Let $F^{\times, +,(p)}$ be the subgroup of $F^{\times,+}$ consisting of $p$-adic units. For any $x\in F^{\times,+,(p)}$, we define a map
\[
L_x: M^G(\mu_N,\frc, (\nu^{\cU},w^{\cU}))(\uv) \to M^G(\mu_N,x^{-1}\frc, (\nu^{\cU},w^{\cU}))(\uv)
\]
by the formula
\[
(L_x(f))(A,\iota,\lambda,\psi,u,\alpha)=\nu^{\cU}(x) f(A,\iota, x^{-1}\lambda,\psi,u,\alpha).
\]
Let $\Frac(F)^{(p)}$ be the group of fractional ideals of $F$ which are prime to $p$. Then the spaces
\[
M^G(\mu_N, (\nu^{\cU},w^{\cU}))(\uv),\quad S^G(\mu_N, (\nu^{\cU},w^{\cU}))(\uv)
\]
of arithmetic overconvergent Hilbert modular forms and cusp forms are defined as the quotients
\begin{align*}
&\left(\bigoplus_{\frc\in \Frac(F)^{(p)}} M^G(\mu_N,\frc, (\nu^{\cU},w^{\cU}))(\uv)\right)/(L_x(f)-f\mid x\in F^{\times, +,(p)}),\\
&\left(\bigoplus_{\frc\in \Frac(F)^{(p)}} S^G(\mu_N,\frc, (\nu^{\cU},w^{\cU}))(\uv)\right)/(L_x(f)-f\mid x\in F^{\times, +,(p)}).
\end{align*}
By the same construction, we also have the spaces  
\[
M^G(\mu_N, (\nu^{\cU},w^{\cU}))(v_\tot),\quad S^G(\mu_N, (\nu^{\cU},w^{\cU}))(v_\tot).
\]



\subsubsection{Hecke operators and the Hilbert eigenvariety}\label{SecHeckeOp}

Next we recall the definition of Hecke operators on the space of overconvergent Hilbert modular forms, following \cite[\S 3.7]{AIP2}. Let $n$, $\uv$, $v$ and $w$ be as above. For any HBAV $(A,\iota,\lambda,\psi)$ over a base scheme $S/\Spec(\okey)$, the closed immersion $\psi: \cD^{-1}_F\otimes\mu_N \to A$ gives a subgroup scheme $\Img(\psi)$ of $A$ which is etale locally isomorphic to $\underline{\cD_F^{-1}/N\cD_F^{-1}}$. Let $\frl$ be any non-zero ideal of $\cO_F$. We define 
\[
\cY'_{\frc,\frl}(\uv)\subseteq \cM(\mu_N,\frc)(\uv)\times \cM(\mu_N,\frl\frc)(\uv)
\]
as the subvariety classifying pairs $((A,\iota,\lambda,\psi),(A',\iota',\lambda',\psi'))$ and an isogeny $\pi_{\frl}:A\to A'$ compatible with the other data such that $\Ker(\pi_{\frl})$ is etale locally isomorphic to $\underline{\cO_F/\frl \cO_F}$, $\Ker(\pi_{\frl})\cap \Img(\psi)=0$ and $\Ker(\pi_{\frl})\cap C_1=0$, where $C_1$ is the canonical subgroup of $A$ of level one. Consider the projections
\[
p_1: \cY'_{\frc,\frl}(\uv)\to \cM(\mu_N,\frc)(\uv),\quad p_2: \cY'_{\frc,\frl}(\uv)\to \cM(\mu_N,\frl\frc)(\uv).
\]
Note that the map $p_1$ is finite and etale. For the case where $\frl$ is a prime ideal dividing $p$, we suppose that $p^{-1}v_{\sigma\circ \beta}\leq v_\beta$ for any $\beta\in \bB_{\frl}$. Set $\uv'=(v'_\beta)_{\beta\in \bB_F}$ by $v'_\beta=v_\beta$ for $\beta\notin \bB_\frl$ and $v'_\beta=p^{-1}v_{\sigma\circ \beta}$ for $\beta\in\bB_\frl$. Then Corollary \ref{cncisog} (\ref{noncanisog}) implies that the map $p_2$ factors through the admissible open subset $\cM(\mu_N,\frl\frc)(\uv')\subseteq \cM(\mu_N,\frl\frc)(\uv)$.

Let $\cU$ be a reduced $K$-affinoid variety and $\cU\to \cW$ an $n$-analytic map. Then Proposition \ref{cansubAV} (\ref{cansubAV_noncan})
and the proof of \cite[Corollary 3.25]{AIP2} (see also \cite[Lemma 6.1.1]{AIP}) show that the map $\pi_{\frl}^*: \omega_{A'}\to \omega_A$ induces an isomorphism 
\[
\pi_{\frl}: p_2^* \cIW_{w,\frl \frc}^+(v) \simeq p_1^*\cIW_{w,\frc}^+(v),
\]
which in turn defines an isomorphism
\[
\pi_{\frl}^*: p_1^*(\Omega^{\kappa^{\cU}})\simeq p_2^*(\Omega^{\kappa^{\cU}}).
\]
This gives the Hecke operator
\begin{align*}
H^0(\cM(\mu_N,\frl\frc)(\uv)\times \cU,\Omega^{\kappa^{\cU}})& \overset{p_2^*}{\to} H^0(\cY'_{\frc,\frl}(\uv)\times \cU,p_2^*\Omega^{\kappa^\cU})\\
& \overset{(\pi_{\frl}^*)^{-1}}{\to} H^0(\cY'_{\frc, \frl}(\uv)\times \cU,p_1^*\Omega^{\kappa^\cU})\\
& \overset{\Nor_{F/\bQ}(\frl)^{-1}\Tr_{p_1}}{\to} H^0(\cM(\mu_N,\frc)(\uv)\times \cU,\Omega^{\kappa^\cU}),
\end{align*}
which can be seen as a map $M(\mu_N,\frl\frc,\kappa^\cU)(\uv)\to M(\mu_N,\frc,\kappa^\cU)(\uv)$ by \cite[Theorem 1.6]{Lut}. We denote this map by $T_\frl$ if $(\frl,p)=1$ and $T'_\frl$ otherwise.

On the other hand, for any ideal $\frl$ with $(\frl,pN)=1$, we have a map
\[
s_\frl: \cM(\mu_N,\frc)(\uv)\to \cM(\mu_N, \frl^2 \frc)(\uv),\quad (A,\iota,\lambda,\psi)\mapsto (A\otimes_{\oef} \frl^{-1}, \iota',\frl^2\lambda, \psi').
\]
Here $\iota'$ and $\psi'$ are induced by $\iota$ and $\psi$ via the natural isogeny $A\to A/A[\frl]\simeq A\otimes_{\oef} \frl^{-1}$, and $\frl^2\lambda$ is the $\frl^2\frc$-polarization on $A\otimes_{\oef} \frl^{-1}$ defined by 
\[
(A\otimes_{\oef} \frl^{-1})\otimes_{\oef} \frl^2\frc =(A\otimes _{\oef}\frc)\otimes_{\oef}\frl\overset{\lambda\otimes 1}{\simeq} A^\vee\otimes_{\oef}\frl\simeq (A\otimes_{\oef}\frl^{-1})^\vee.
\]
Then we can show that there exists a natural isomorphism $\pi_\frl^*: \Omega^{\kappa^\cU} \simeq s_\frl^* \Omega^{\kappa^\cU}$ as in \cite[Lemma 6.1.1]{AIP} and we define the operator 
\[
S_\frl: M(\mu_N,\frl^2\frc,\kappa^\cU)(\uv)\to M(\mu_N,\frc,\kappa^\cU)(\uv)
\]
by $S_\frl=\Nor_{F/\bQ}(\frl)^{-2}(\pi_\frl^*)^{-1}\circ s_\frl^*$. This operator satisfies $S_\frl^m=1$ for some positive integer $m$.

To define arithmetic Hecke operators for $\frl$ with $(\frl,p)\neq 1$, let $v_\frp$ be the normalized additive valuation for any $\frp\mid p$. We fix once and for all elements $x_\frp\in F^{\times,+}$ such that $v_\frp(x_\frp)=1$ and $v_{\frp'}(x_\frp)=0$ for any $\frp'\neq \frp$ satisfying $\frp'\mid p$. We define a map 
\[
x_\frp^*: M(\mu_N, x_\frp^{-1}\frc, \kappa^\cU)(\uv)\to M(\mu_N, \frc,\kappa^\cU)(\uv)
\]
by $f\mapsto ((A,\iota,\lambda,\psi)\mapsto f(A,\iota,x_\frp\lambda,\psi))$. Then we denote the composite 
\[
\prod_{\frp\mid p}(x_\frp^*)^{v_\frp(\frl)}\circ T'_\frl:M(\mu_N, \prod_{\frp\mid p}x_\frp^{-v_\frp(\frl)}\frl\frc, \kappa^\cU)(\uv)\to M(\mu_N,\frc,\kappa^\cU)(\uv)
\] 
by $T_\frl$. We also write it as $U_\frl$ if $\frl$ divides a power of $p$.
Then the operators $T_\frl$ for any $\frl$ and $S_\frl$ for $(\frl, pN)=1$ define actions on $M^G(\mu_N,(\nu^\cU,w^\cU))(\uv)$ and $S^G(\mu_N,(\nu^\cU,w^\cU))(\uv)$ which commute with each other. Note that $T_{\frl\frl'}=T_\frl T_{\frl'}$ if $(\frl,\frl')=1$ and that Proposition \ref{cansubAV} (\ref{cansubAV_noncan}) implies
\begin{equation}\label{Heckes}
T_\frem T_{\frem^{s-1}}=\left\{
\begin{array}{ll}
T_{\frem^s}+\Nor_{F/\bQ}(\frem)S_\frem T_{\frem^{s-2}} & (\frem\nmid Np)\\
T_{\frem^s} & (\frem \mid Np)
\end{array}
\right.
\end{equation}
for any maximal ideal $\frem$.

Let $v$ an element of $\bQ\cap(0,\frac{p-1}{p})$. Note that the above definitions of Hecke operators are also valid for $S^G(\mu_N,(\nu^\cU,w^\cU))(v_\tot)$. Then the operator $U_p$ is a compact operator acting on $S^G(\mu_N,(\nu^\cU,w^\cU))(v_\tot)$ which factors as
\[
S^G(\mu_N,(\nu^\cU,w^\cU))(v_\tot)\subseteq S^G(\mu_N,(\nu^\cU,w^\cU))(p^{-1}v_\tot)\to S^G(\mu_N,(\nu^\cU,w^\cU))(v_\tot)
\]
and, for $v<(p-1)/p^2$, also as
\[
S^G(\mu_N,(\nu^\cU,w^\cU))(v_\tot)\to S^G(\mu_N,(\nu^\cU,w^\cU))(p v_\tot)\subseteq  S^G(\mu_N,(\nu^\cU,w^\cU))(v_\tot).
\]

Let $\bT$ be the polynomial ring over $K$ with variables $T_\frl$ for any $\frl$ and $S_\frl$ for $(\frl, pN)=1$. Then the ring $\bT$ acts on $S^G(\mu_N,(\nu^\cU,w^\cU))(v)$ and $S^G(\mu_N,(\nu^\cU,w^\cU))(v_\tot)$ via the Hecke operators defined above.

Now we can construct the eigenvariety from these data, as in \cite[\S 5]{AIP2}. For any positive integer $n$, we fix a positive rational number $v_n<(p-1)/p^{n}$ satisfying $v_n\geq v_{n+1}$ for any $n$. For any admissible affinoid open subset $\cU\subseteq \cW^G$, we put
\[
n(\cU)=\min\{n\in \bZ_{>0}\mid (\nu^\cU,w^\cU)\text{ is $n$-analytic}\}.
\]
We define a Banach $\cO(\cU)$-module $M_{\cU}$ with $\bT$-action as
\[
M_{\cU}=S^G(\mu_N,(\nu^\cU,w^\cU))(v_{n(\cU),\tot}),
\]
on which $U_p$ acts as a compact operator. The proof of \cite[Theorem 4.4]{AIP2} remains valid also for $p=2$ and implies that the $\cO(\cU)$-module $M_\cU$ satisfies the condition $(\mathit{Pr})$. For admissible affinoid open subsets $\cU_1\subseteq \cU_2$ of $\cW^G$, we have $n(\cU_1)\leq n(\cU_2)$ and \cite[Proposition 3.13]{AIP2} yields a map
\[
\alpha_{\cU_1,\cU_2}: M_{\cU_1}\to S^G(\mu_N,(\nu^{\cU_1},w^{\cU_1}))(v_{n(\cU_2),\tot})\simeq M_{\cU_2}\hat{\otimes}_{\cO(\cU_2)}\cO(\cU_1),
\]
where the first arrow is the restriction map. Note that, for any positive rational numbers $v,v'$ satisfying $v'\leq v < pv'<(p-1)/p$, the restriction map
\[
S^G(\mu_N,(\nu^\cU,w^\cU))(v_\tot)\to S^G(\mu_N,(\nu^\cU,w^\cU))(v'_\tot)
\]
is a primitive link. Thus the map $\alpha_{\cU_1,\cU_2}$ is a link satisfying the cocycle condition. Hence, by applying the eigenvariety machine \cite[Construction 5.7]{Buz}, we obtain the Hilbert eigenvariety $\cE\to \cW^G$ as in \cite[Theorem 5.1]{AIP2}.



\subsection{The case over $\bC_p$}\label{SecCompCp}

Since we are ultimately interested in overconvergent Hilbert modular forms over $\bC_p$, we need to give a slight generalization of the construction in \cite{AIP2} over $\bC_p$. As before, for any quasi-separated rigid analytic variety $\cX$ over $K$ and any coherent $\cO_{\cX}$-module $\cF$, we denote the base extensions of $\cX$ and $\cF$ to $\bC_p$ by $\cX_{\bC_p}$ and $\cF_{\bC_p}$, respectively. Similarly, for any quasi-separated admissible formal scheme $\frX$ over $\Spf(\okey)$ and any coherent $\cO_{\frX}$-module $\frF$, we denote their pull-backs to $\Spf(\OCp)$ by $\frX_{\OCp}$ and $\frF_{\OCp}$, respectively. Then, on the Raynaud generic fiber, we have
\[
(\frX^\rig)_{\Cp}=(\frX_{\OCp})^\rig,\quad (\frF^\rig)_{\Cp}=(\frF_{\OCp})^\rig.
\]

Let $\cU=\Spv(A)$ be a reduced $\bC_p$-affinoid variety. From \cite[Theorem 1.2]{BLR4} and \cite[Proposition 1.10.2 (iii)]{Abbes}, we see that $A^\circ$ is an admissible formal $\OCp$-algebra. Put $\frU=\Spf(A^\circ)$. For any morphism $\cU\to \cW_{\Cp}$ or $\cU\to \cW^G_{\Cp}$, we have an associated character $\kappa^\cU$ or $(\nu^\cU,w^\cU)$ and a notion of $n$-analyticity defined in the same way as above. Consider the base extensions
\[
\pi_{w,\OCp}: \frIW_{w,\OCp}^+ \overset{\gamma_{w,\OCp}}{\to} \bar{\SGm}(\Gamma_1(p^n),\mu_N, \frc)(\uv)_{\OCp}\overset{h_{n,\OCp}}{\to} \bar{\SGm}(\mu_N,\frc)(\uv)_{\OCp}
\]
of the maps $\gamma_w$, $h_n$ and $\pi_w$.
Then, for any $n$-analytic morphism $\cU\to \cW_{\Cp}$, we can define the sheaves
\[
\Omega^{\kappa^\cU}=(\pi^\rig_{w,\bC_p})_*(\cO_{\cIW_{w,\Cp}^+\times \cU})[-\kappa^\cU],\quad \bOmega^{\kappa^\cU}=(\pi_{w,\OCp})_*(\cO_{\frIW_{w,\OCp}^+\times \frU})[-\boldsymbol{\kappa}^\cU]
\]
such that $\Omega^{\kappa^\cU}=(\bOmega^{\kappa^\cU})^\rig$ is an invertible $\cO_{\bar{\cM}(\mu_N,\frc)(v)_{\Cp}\times \cU}$-module, as before. \cite[Proposition 1.9.14 and Proposition 1.10.2 (iii)]{Abbes} implies that $\bOmega^{\kappa^\cU}$ is coherent and that its restriction to $\bar{\SGm}(\mu_N,\frc)(0)_{\OCp}$ is invertible: The latter follows from a similar argument to the proof of \cite[\S 7, Proposition 2]{Mum_AV} combined with the fact that $h_{n,\OCp}$ is a $\bT(\bZ/p^n\bZ)$-torsor over $\bar{\SGm}(\mu_N,\frc)(0)_{\OCp}$. Using $\Omega^{\kappa^\cU}$, we define $M(\mu_N,\frc,\kappa^\cU)(\uv)$ and its variants in the same way as the case over $K$.

For any reduced $K$-affinoid variety $\cV$ and any $n$-analytic morphism $\cV\to \cW$, consider the base extension $\cV_{\Cp}\to \cW_{\Cp}$ and the associated character $\kappa^{\cV_{\Cp}}$. Then we can show that there exist natural isomorphisms
\begin{equation}\label{OmegaBC}
(\Omega^{\kappa^\cV})_{\Cp}\simeq \Omega^{\kappa^{\cV_{\Cp}}},\quad \Omega^{\kappa^\cV}(-D)_{\Cp}\simeq \Omega^{\kappa^{\cV_{\Cp}}}(-D)
\end{equation}
in the same way as the proof of \cite[Proposition 3.13]{AIP2}. Similarly, for any morphism $f:\cU'\to \cU$ of reduced $\Cp$-affinoid varieties,
we have natural isomorphisms
\begin{equation}\label{OmegaBC2}
f^*\Omega^{\kappa^\cU}\simeq \Omega^{\kappa^{\cU'}},\quad f^*(\Omega^{\kappa^\cU}(-D))\simeq \Omega^{\kappa^{\cU'}}(-D).
\end{equation}

Let $\bar{M}^*(\mu_N,\frc)$ be the minimal compactification of $M(\mu_N,\frc)$. We have a natural proper map
\[
\bar{M}(\mu_N,\frc)\to \bar{M}^*(\mu_N,\frc).
\] 
Note that a sufficiently large power of the usual Hasse invariant can be considered as a global section of an ample invertible sheaf on $\bar{M}^*(\mu_N,\frc)$. Let $\bar{\SGm}^*(\mu_N,\frc)(v_\tot)$ be the normal admissible formal scheme defined similarly to $\bar{\SGm}(\mu_N,\frc)(v_\tot)$ using $\bar{M}^*(\mu_N,\frc)$ instead of $\bar{M}(\mu_N,\frc)$. Let $\bar{\cM}^*(\mu_N,\frc)(v_\tot)$ be its Raynaud generic fiber. By the above ampleness property, we see that $\bar{\cM}^*(\mu_N,\frc)(v_\tot)$ is a $K$-affinoid variety. 
We also have proper morphisms 
\begin{align*}
\rho:& \bar{\SGm}(\Gamma_1(p^n),\mu_N,\frc)(v_\tot) \to \bar{\SGm}^*(\mu_N,\frc)(v_\tot),\\
\rho^\rig:& \bar{\cM}(\Gamma_1(p^n),\mu_N,\frc)(v_\tot)\to \bar{\cM}^*(\mu_N,\frc)(v_\tot).
\end{align*}
By the base extension, these induce proper morphisms
\begin{align*}
\rho_{\OCp}:& \bar{\SGm}(\Gamma_1(p^n),\mu_N,\frc)(v_\tot)_{\OCp} \to \bar{\SGm}^*(\mu_N,\frc)(v_\tot)_{\OCp},\\
\rho^\rig_{\Cp}:& \bar{\cM}(\Gamma_1(p^n),\mu_N,\frc)(v_\tot)_{\Cp}\to \bar{\cM}^*(\mu_N,\frc)(v_\tot)_{\Cp}.
\end{align*}

\begin{lem}\label{vanishing}
Let $\cV$ be a reduced $K$-affinoid variety and $\cV\to \cW^G$ an $n$-analytic morphism. Then the natural base change map
\[
(\rho\times 1)_* (\bOmega^{\kappa^\cV}(-D))_{\OCp}\to (\rho_{\OCp}\times 1)_*(\bOmega^{\kappa^{\cV}}(-D)_{\OCp})
\]
is an isomorphism. Moreover, we have
\[
R^q (\rho_{\OCp}\times 1)_*(\bOmega^{\kappa^{\cV}}(-D)_{\OCp})=0
\]
for any $q>0$.
\end{lem}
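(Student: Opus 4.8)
The plan is to reduce the statement to a computation of cohomology along the fibers of $\rho$ and $\rho_{\OCp}$, using that $\bOmega^{\kappa^\cV}(-D)$ is, \emph{locally on the source over the base $\cV$}, obtained from an invertible sheaf on $\bar{\SGm}(\Gamma_1(p^n),\mu_N,\frc)(v_\tot)$ by pushing forward along the formal $\bT^0_w$-torsor $\gamma_w$. First I would recall from \cite[\S 3]{AIP2} that over a formal affine open $\frV=\Spf(A^\circ)$ of $\cV$ the module $\bOmega^{\kappa^\cV}$ is the subsheaf of $(\pi_w\times 1)_*\cO_{\frIW^+_w\times\frV}$ on which $\bT_w$ acts through $\boldsymbol{\kappa}^\cV$; since $\rho$ is a morphism over $\bar{\SGm}^*(\mu_N,\frc)(v_\tot)$ and $\gamma_w$ is affine, this reduces $(\rho\times 1)_*$ and $R^q(\rho\times 1)_*$ of $\bOmega^{\kappa^\cV}(-D)$ to a direct summand (cut out by the $\boldsymbol{\kappa}^\cV$-isotypic projector) of the pushforward along $\rho$ of an invertible sheaf on $\bar{\SGm}(\Gamma_1(p^n),\mu_N,\frc)(v_\tot)$ twisted by $-D$. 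The same holds after base change to $\OCp$ because $\frIW^+_{w,\OCp}$, $\gamma_{w,\OCp}$, $h_{n,\OCp}$ are just the pullbacks, and complete tensor product with $\OCp$ commutes with the relevant finite direct-sum decompositions and with the formation of $\bT_w$-isotypic parts.

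Second, I would invoke proper (flat) base change in formal/rigid geometry. The morphism $\rho$ is proper (stated in the excerpt just above), and $\bar{\SGm}^*(\mu_N,\frc)(v_\tot)$ is affine, being the formal model of a $K$-affinoid variety via the ampleness of a power of the Hasse invariant. Since $\OCp$ is flat over $\okey$, the base change map $(\rho\times 1)_*\frF_{\OCp}\to(\rho_{\OCp}\times 1)_*\frF_{\OCp}$ is an isomorphism and likewise for $R^q$, by the formal GAGA / cohomology-and-base-change formalism (one may phrase it after reduction mod $p^m$ and passage to the limit, using that all sheaves involved are coherent and the schemes Noetherian mod $p^m$; see \cite[Proposition 1.9.14, Proposition 1.10.2]{Abbes} for the completion statements). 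This gives the first assertion of the lemma once the higher vanishing is known, and reduces the second assertion to showing $R^q\rho_*\cN(-D)=0$ for $q>0$, where $\cN$ is the relevant invertible sheaf on $\bar{\SGm}(\Gamma_1(p^n),\mu_N,\frc)(v_\tot)$ (or rather a finite locally free sheaf obtained by pushing forward along the torsor, still a direct sum of invertible sheaves after a faithfully flat cover).

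Third, the vanishing $R^q\rho_*\cN(-D)=0$ is exactly the kind of statement proved in \cite{AIP2}: $\rho$ has fibers of dimension $\leq g-1$ but the essential point is that $\bar{M}^*(\mu_N,\frc)$ is the minimal compactification and $\cN(-D)$ is trivial along the fibers of $\bar{M}(\mu_N,\frc)\to\bar{M}^*(\mu_N,\frc)$ over the boundary, so that the higher direct images vanish by the theorem on formal functions together with the known structure of the fibers (this is precisely the argument behind \cite[Proposition 3.13]{AIP2} and its $q>0$ counterpart). I would carry it out by first establishing the statement over $\bar{M}(\mu_N,\frc)$ itself (the algebraic, integral model), then taking $p$-adic completions and Raynaud generic fibers, using that both operations are exact on coherent sheaves in this setting and commute with $R^q\rho_*$ for a proper $\rho$.

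The main obstacle I anticipate is bookkeeping rather than a conceptual difficulty: one must make sure the $\bT_w$-isotypic decomposition, the passage to an fppf (or finite étale over the ordinary locus, plus a normalization elsewhere) cover trivializing $\bOmega^{\kappa^\cV}$, and the base change to $\OCp$ all interact correctly, in particular that the sheaf $\bOmega^{\kappa^\cV}(-D)$ is genuinely coherent on the \emph{non-ordinary} locus (where it need not be invertible) so that proper base change applies — this is where \cite[Proposition 1.9.14]{Abbes} is used. A second, smaller point is to check that $\bar{\SGm}^*(\mu_N,\frc)(v_\tot)_{\OCp}$ is still affine (equivalently that the Hasse-invariant power stays ample after base change), which follows since ampleness is insensitive to the flat base change $\okey\to\OCp$. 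Once these are in place, the lemma follows formally from proper base change plus the $q>0$ vanishing inherited from \cite{AIP2}.
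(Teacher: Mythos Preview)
Your overall strategy --- flat base change along $\okey\to\OCp$ for a proper morphism, combined with the higher vanishing imported from \cite{AIP2} --- is the right one and would succeed. However, you add a layer that the paper avoids and that is both unnecessary and not quite correct as written. Your ``step one'' proposes to strip off the $\bT_w$-isotypic projector and reduce to an invertible sheaf $\cN$ on $\bar{\SGm}(\Gamma_1(p^n),\mu_N,\frc)(v_\tot)$. But $\bOmega^{\kappa^\cV}$ is \emph{already} defined as that isotypic part; it is a coherent sheaf on $\bar{\SGm}(\mu_N,\frc)(v_\tot)\times\frV$ (invertible only over the ordinary locus), and the relevant vanishing result in \cite{AIP2} (Corollary~3.19 there, not Proposition~3.13) is stated directly for $\bOmega^{\kappa^\cV}(-D)$, not for some auxiliary $\cN$. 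So your reduction buys nothing and the sentence ``reduces\ldots to a direct summand\ldots of the pushforward\ldots of an invertible sheaf'' does not describe an actual simplification.

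The paper's argument is shorter and more concrete. It works formal locally on the target: pick a formal affine open $\frY\subseteq\bar{\SGm}^*(\mu_N,\frc)(v_\tot)$, put $\frX=\rho^{-1}(\frY)$, and cite \cite[Corollary~3.19]{AIP2} to get $H^q(\frX\times\frV,\bOmega^{\kappa^\cV}(-D))=0$ for $q>0$. Then take a finite formal affine cover $\frX=\bigcup_i\frX_i$ and write down the \v{C}ech complex for $\bOmega^{\kappa^\cV}(-D)$; by the vanishing it is exact. The one substantive observation is that $\bOmega^{\kappa^\cV}(-D)$ is $\okey$-flat (clear from its construction as a subsheaf of $(\pi_w)_*\cO_{\frIW_w^+\times\frV}$), so each \v{C}ech term is $\okey$-flat. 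Hence reducing mod $p^n$, tensoring with $\OCp$, and taking the inverse limit preserves exactness, yielding simultaneously the base-change isomorphism in degree $0$ and the vanishing of $H^q(\frX_{\OCp}\times\frV_{\OCp},\bOmega^{\kappa^\cV}(-D)_{\OCp})$ for $q>0$. This is exactly the ``reduction mod $p^m$ and passage to the limit'' you allude to, but carried out explicitly via the \v{C}ech complex rather than an appeal to abstract formal base change; in particular there is no need to worry about ampleness after base change or to trivialize anything.
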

\begin{proof}
It is enough to show the claim formal locally. Put $\cV=\Spv(A)$ and $\frV=\Spf(A^\circ)$. Let $\frY$ be a formal affine open subscheme of $\bar{\SGm}^*(\mu_N,\frc)(v_\tot)$ and put $\frX=\rho^{-1}(\frY)$. Since $\rho$ is proper of finite presentation and $\bOmega^{\kappa^\cV}(-D)$ is coherent, \cite[(2.11.8.1)]{Abbes} implies that the restriction
\[
R^q(\rho\times 1)_* (\bOmega^{\kappa^\cV}(-D))|_{\frY\times\frV}
\]
is the coherent sheaf associated to the $\cO(\frY\times \frV)$-module 
\[
H^q(\frX\times \frV,\bOmega^{\kappa^\cV}(-D)). 
\]
By \cite[Corollary 3.19]{AIP2}, we have $H^q(\frX\times \frV,\bOmega^{\kappa^\cV}(-D))=0$ for any $q>0$.

Since $\frX$ is quasi-compact, we can take a finite covering $\frX=\bigcup_{i=1}^r \frX_i$ by formal affine open subschemes $\frX_i$. Consider the \v{C}ech complex for the coherent sheaf $\bOmega^{\kappa^\cV}(-D)$
\[
0 \to H^0(\frX\times \frV, \bOmega^{\kappa^\cV}(-D))\to C^0(\bOmega^{\kappa^\cV}(-D))\to C^1(\bOmega^{\kappa^\cV}(-D))\to\cdots
\]
with respect to the covering $\frX\times \frV =\bigcup_{i=1}^r \frX_i\times \frV$, which is exact by the above vanishing. From the definition, we see that the sheaf $\bOmega^{\kappa^\cV}(-D)$ is flat over $\okey$ and each $\okey$-module $C^q(\bOmega^{\kappa^\cV}(-D))$ is also flat. By taking modulo $p^n$, tensoring $\OCp$ and taking the inverse limit, we see that the sequence is exact even after taking $-\hat{\otimes}_{\okey}\OCp$. This means that the \v{C}ech complex for the coherent sheaf $\bOmega^{\kappa^\cV}(-D)_{\OCp}$ with respect to the formal open covering $\frX_{\OCp}\times \frV_{\OCp}=\bigcup_{i=1}^r \frX_{i,\OCp}\times \frV_{\OCp}$ is exact except the zeroth degree. Taking the zeroth cohomology gives an isomorphism
\[
H^0(\frX\times \frV, \bOmega^{\kappa^\cV}(-D))\hat{\otimes}_{\okey}\OCp\to H^0(\frX_{\OCp}\times \frV_{\OCp}, \bOmega^{\kappa^\cV}(-D)_{\OCp})
\]
and the $q$-th cohomology for $q>0$ gives
\[
H^q(\frX_{\OCp}\times \frV_{\OCp}, \bOmega^{\kappa^\cV}(-D)_{\OCp})=0.
\]
This concludes the proof.
\end{proof}

\begin{lem}\label{CpBC}
Let $\cV$ be a reduced $K$-affinoid variety and $\cV\to \cW^G$ an $n$-analytic morphism. Then the natural map
\[
S^G(\mu_N,(\nu^\cV,w^\cV))(v_\tot)\hat{\otimes}_K \Cp \to S^G(\mu_N,(\nu^{\cV_{\Cp}},w^{\cV_{\Cp}}))(v_\tot)
\]
is an isomorphism.
\end{lem}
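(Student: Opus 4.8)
The plan is to first strip away the group-theoretic bookkeeping, reducing to a single polarization ideal and a single coherent sheaf, and then to invoke Lemma~\ref{vanishing}, which is precisely the integral-model incarnation of the base change statement at issue. For the reduction, I would fix representatives $\frc_1,\dots,\frc_{h^+}$ of $\mathrm{Cl}^+(F)$ prime to $p$. If $\frc,\frc'\in\Frac(F)^{(p)}$ lie in the same strict class then $\frc'=x^{-1}\frc$ for some $x\in F^{\times,+,(p)}$, and $L_x$ identifies the corresponding summands; for $x\in\cO_F^{\times,+}$ the operator $L_x$ coincides with the action of $[x]\in\Delta$ and is therefore the identity on $S^G(\mu_N,\frc,(\nu^\cV,w^\cV))(v_\tot)=S(\mu_N,\frc,\kappa^\cV)(v_\tot)^\Delta$. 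Hence the defining quotient yields a natural isomorphism $S^G(\mu_N,(\nu^\cV,w^\cV))(v_\tot)\simeq\bigoplus_{i=1}^{h^+}S(\mu_N,\frc_i,\kappa^\cV)(v_\tot)^\Delta$, and likewise over $\Cp$, compatibly with the base change maps since $L_x$ and the $\Delta$-action are given by the same geometric recipe in both cases. As $\hat{\otimes}_K\Cp$ commutes with finite direct sums and, $\Delta$ being finite of order invertible in $K$, with the idempotent $|\Delta|^{-1}\sum_{\delta\in\Delta}\delta$ cutting out $\Delta$-invariants, this reduces the assertion to showing that for each $\frc$ the natural map $S(\mu_N,\frc,\kappa^\cV)(v_\tot)\hat{\otimes}_K\Cp\to S(\mu_N,\frc,\kappa^{\cV_{\Cp}})(v_\tot)$ is an isomorphism.

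For this I would push everything forward to the minimal compactification. Let $\bar{\rho}^\rig\colon\bar{\cM}(\mu_N,\frc)(v_\tot)\to\bar{\cM}^*(\mu_N,\frc)(v_\tot)$ be the proper morphism induced by $\bar{M}(\mu_N,\frc)\to\bar{M}^*(\mu_N,\frc)$ (equivalently one may work at level $\Gamma_1(p^n)$ as in Lemma~\ref{vanishing}, the two descriptions matching via the finite pushforward along $h_n$). By adjunction $S(\mu_N,\frc,\kappa^\cV)(v_\tot)=H^0\bigl(\bar{\cM}^*(\mu_N,\frc)(v_\tot)\times\cV,\,(\bar{\rho}^\rig\times 1)_*(\Omega^{\kappa^\cV}(-D))\bigr)$. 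Since $\bar{\cM}^*(\mu_N,\frc)(v_\tot)$ is $K$-affinoid and $\bar{\rho}^\rig$ is proper, Kiehl's finiteness theorem makes this pushforward a coherent sheaf on the $K$-affinoid $\bar{\cM}^*(\mu_N,\frc)(v_\tot)\times\cV$; writing $A_K$ for its coordinate ring, $S(\mu_N,\frc,\kappa^\cV)(v_\tot)$ is then a finite $A_K$-module $M$. Now $A_K\hat{\otimes}_K\Cp$ is the coordinate ring of $\bar{\cM}^*(\mu_N,\frc)(v_\tot)_{\Cp}\times\cV_{\Cp}=\bar{\cM}^*(\mu_N,\frc)_{\Cp}(v_\tot)\times\cV_{\Cp}$ — the admissible blow-up construction cutting out the $(v_\tot)$-locus commuting with $\OCp\otimes_{\okey}(-)$ — and it is flat over $A_K$ by \cite[9.3.6]{BGR}, so $M\hat{\otimes}_K\Cp=M\otimes_{A_K}(A_K\hat{\otimes}_K\Cp)$ is the module of global sections of the coherent sheaf $\bigl((\bar{\rho}^\rig\times 1)_*(\Omega^{\kappa^\cV}(-D))\bigr)_{\Cp}$.

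It then remains to identify this base-changed sheaf with $(\bar{\rho}^\rig_{\Cp}\times 1)_*(\Omega^{\kappa^{\cV_{\Cp}}}(-D))$, which is exactly the content of Lemma~\ref{vanishing} at the level of integral models: I would apply the Raynaud generic fibre functor to the isomorphism there, using that the generic fibre commutes with the base change $(-)_{\OCp}$ (as recorded at the start of \S\ref{SecCompCp}) and with pushforward along a proper formal morphism, together with the identification $(\Omega^{\kappa^\cV}(-D))_{\Cp}\simeq\Omega^{\kappa^{\cV_{\Cp}}}(-D)$ of \eqref{OmegaBC}. Taking global sections and reading the previous paragraph backwards over $\Cp$ then gives $S(\mu_N,\frc,\kappa^\cV)(v_\tot)\hat{\otimes}_K\Cp\simeq S(\mu_N,\frc,\kappa^{\cV_{\Cp}})(v_\tot)$, compatibly with the natural map, which finishes the argument. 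The only genuinely non-formal input — commutation of the proper pushforward to the minimal compactification with the field-extension base change — is already isolated and proved as Lemma~\ref{vanishing} (via the \v{C}ech/flatness computation over $\okey$ and the vanishing $H^{>0}=0$ of \cite{AIP2}); granting it, the remaining work here is purely organizational (finite direct sums, idempotents, generic fibres).
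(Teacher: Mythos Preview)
Your proposal is correct and follows essentially the same route as the paper: the key input is Lemma~\ref{vanishing}, passed to Raynaud generic fibres (the paper cites \cite[Propositions 4.7.23 and 4.7.36]{Abbes} for the compatibility of $(-)^\rig$ with proper pushforward), combined with the identification \eqref{OmegaBC}, after which one takes invariants for the relevant finite groups. The only organizational differences are that the paper works throughout at level $\Gamma_1(p^n)$ and takes the $\bT(\bZ/p^n\bZ)$-equivariant and $\Delta$-fixed parts at the very end, whereas you front-load the $\frc$-decomposition and $\Delta$-invariants; your appeal to Kiehl's finiteness is harmless but unnecessary, since Lemma~\ref{vanishing} already delivers the sheaf-level base change isomorphism directly.
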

\begin{proof}
Put $\cV=\Spv(A)$. 
By taking the Raynaud generic fibers and \cite[Proposition 4.7.23 and Proposition 4.7.36]{Abbes}, we see from Lemma \ref{vanishing} that the base change map
\[
(\rho^\rig \times 1)_* (\Omega^{\kappa^\cV}(-D))_{\Cp}\to (\rho_{\Cp}\times 1)_*(\Omega^{\kappa^{\cV}}(-D)_{\Cp})
\]
is an isomorphism. By (\ref{OmegaBC}), the latter sheaf is isomorphic to the sheaf $(\rho_{\Cp}\times 1)_*(\Omega^{\kappa^{\cV_{\Cp}}}(-D))$. Since $\bar{\cM}^*(\mu_N,\frc)(v_\tot)_{\Cp}\times\cV_{\Cp}$ is a $\Cp$-affinoid variety, taking global sections yields an isomorphism
\begin{equation}\label{IsomCpBC}
\begin{aligned}
H^0(\bar{\cM}(\Gamma_1(p^n)&, \mu_N,\frc)(v_\tot)\times \cV,\Omega^{\kappa^\cV}(-D))\hat{\otimes}_K\Cp\to \\
&H^0(\bar{\cM}(\Gamma_1(p^n), \mu_N,\frc)(v_\tot)_{\Cp}\times \cV_{\Cp},\Omega^{\kappa^{\cV_{\Cp}}}(-D)).
\end{aligned}
\end{equation}
Taking the $\bT(\bZ/p^n\bZ)$-equivariant part and the $\Delta$-fixed part, we obtain the lemma.
\end{proof}

\begin{lem}\label{specializeCp}
Let $\cV=\Spv(A)$ be a reduced $K$-affinoid variety. Let $\cV\to \cW^G$ be an $n$-analytic morphism and $x\in \cV(\Cp)$. 
Let $x^*:A \to \Cp$ be the ring homomorphism defined by $x$.
Suppose that the maximal ideal $m_x$ of $A_{\Cp}=A\hat{\otimes}_K \Cp$ corresponding to $x$ is generated by a regular sequence. Put $(\nu,w)=(\nu^\cV(x),w^\cV(x))$. Then the specialization map
\[
S^G(\mu_N, (\nu^{\cV},w^{\cV}))(v_\tot)\hat{\otimes}_{A,x^*}\Cp\to S^G(\mu_N, (\nu,w))(v_\tot)
\]
is an isomorphism.
\end{lem}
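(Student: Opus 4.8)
The plan is to identify both sides with the cohomology of a coherent sheaf of overconvergent Hilbert modular forms and then to check that this cohomology commutes with the base change $-\hat{\otimes}_{A,x^*}\Cp$. By Lemma~\ref{CpBC} the source becomes $S^G(\mu_N,(\nu^{\cV_{\Cp}},w^{\cV_{\Cp}}))(v_\tot)\otimes_{A_{\Cp},x^*}\Cp$, so I may assume the base is the reduced $\Cp$-affinoid variety $\cV_{\Cp}=\Spv(A_{\Cp})$, with $m_x\subset A_{\Cp}$ generated by a regular sequence $t_1,\dots,t_r$ and $(\nu,w)=(\nu^{\cV_{\Cp}}(x),w^{\cV_{\Cp}}(x))$. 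Writing $M:=H^0(\bar{\cM}(\mu_N,\frc)(v_\tot)_{\Cp}\times\cV_{\Cp},\Omega^{\kappa^{\cV_{\Cp}}}(-D))$, the module $S^G(\mu_N,(\nu^{\cV_{\Cp}},w^{\cV_{\Cp}}))(v_\tot)$ is obtained from the various $M$ by taking $\Delta$-invariants, forming the finite direct sum over $\frc$, and passing to the quotient by the operators $L_x-1$; all of these commute with the right-exact functor $-\otimes_{A_{\Cp},x^*}\Cp$, the invariants being exact because $|\Delta|$ is invertible in $\Cp$. Moreover the fibre sheaf $\Omega^{\kappa}(-D)$ is the pull-back of $\Omega^{\kappa^{\cV_{\Cp}}}(-D)$ along $1\times x$ by~(\ref{OmegaBC2}). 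Hence it suffices to prove that the natural map $M\otimes_{A_{\Cp},x^*}\Cp\to H^0(\bar{\cM}(\mu_N,\frc)(v_\tot)_{\Cp},\Omega^{\kappa}(-D))$ is an isomorphism.

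For this I would use \v{C}ech cohomology. By Lemma~\ref{vanishing} --- pushing $\Omega^{\kappa^{\cV_{\Cp}}}(-D)$ forward along $\rho^\rig_{\Cp}$ to the affinoid minimal compactification and passing to Raynaud generic fibres via \cite[Proposition 4.7.36]{Abbes} --- the sheaves $\Omega^{\kappa^{\cV_{\Cp}}}(-D)$ and $\Omega^{\kappa}(-D)$ have vanishing cohomology in positive degrees on $\bar{\cM}(\mu_N,\frc)(v_\tot)_{\Cp}\times\cV_{\Cp}$ and on $\bar{\cM}(\mu_N,\frc)(v_\tot)_{\Cp}$, respectively. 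Fix a finite covering $\bar{\cM}(\mu_N,\frc)(v_\tot)_{\Cp}=\bigcup_{i=1}^m\cX_i$ by affinoid subdomains. The \v{C}ech complex $C^\bullet=\check{C}^\bullet(\{\cX_i\times\cV_{\Cp}\},\Omega^{\kappa^{\cV_{\Cp}}}(-D))$ is then an exact resolution $0\to M\to C^0\to\cdots\to C^{m-1}\to 0$. Since $\Omega^{\kappa^{\cV_{\Cp}}}(-D)$ is invertible on the product and each $\cO(\cX_{i_0\cdots i_j})\hat{\otimes}_{\Cp}A_{\Cp}$ is flat over $A_{\Cp}$, the terms $C^j$ are finitely generated projective, hence flat, over $A_{\Cp}$, and~(\ref{OmegaBC2}) gives a canonical isomorphism $C^j\otimes_{A_{\Cp},x^*}\Cp\simeq\check{C}^j(\{\cX_i\},\Omega^{\kappa}(-D))$.

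A descending induction along the exact complex $C^\bullet$, using flatness of its terms, shows that $M$ is flat over $A_{\Cp}$ and that $C^\bullet$ remains exact after $-\otimes_{A_{\Cp},x^*}\Cp$; the resulting exact complex is $\check{C}^\bullet(\{\cX_i\},\Omega^{\kappa}(-D))$, whose only nonzero cohomology, in degree $0$, is $H^0(\bar{\cM}(\mu_N,\frc)(v_\tot)_{\Cp},\Omega^{\kappa}(-D))$. Taking kernels in degree zero yields $M\otimes_{A_{\Cp},x^*}\Cp\simeq H^0(\bar{\cM}(\mu_N,\frc)(v_\tot)_{\Cp},\Omega^{\kappa}(-D))$; reinstating the $\Delta$-invariants, the sum over $\frc$ and the quotient by the $L_x-1$ then gives the lemma. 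Alternatively, one tensors $C^\bullet$ over $A_{\Cp}$ with the Koszul complex $K_\bullet(t_1,\dots,t_r)$, which the regular-sequence hypothesis makes into a finite free resolution of $\Cp$, and compares the two spectral sequences of the total complex.

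The step I expect to be the main obstacle is this base change. The point $x$ is a closed point, so the specialisation is a non-flat base change, and the sheaf whose global sections constitute the module --- the push-forward of $\Omega^{\kappa^{\cV_{\Cp}}}(-D)$ to the minimal compactification --- is not locally free, so it cannot be specialised term by term. One is therefore forced back up to $\bar{\cM}(\mu_N,\frc)(v_\tot)_{\Cp}$, where the sheaf is invertible, and must carefully control flatness and exactness through a \v{C}ech resolution; this is exactly what the hypothesis that $m_x$ is generated by a regular sequence secures. A secondary, routine point is that Lemma~\ref{vanishing} and the identifications~(\ref{OmegaBC}) and~(\ref{OmegaBC2}) are available over the $\Cp$-affinoid base $\cV_{\Cp}$, which is immediate since $\cV_{\Cp}$ is the base change to $\Cp$ of the $K$-affinoid variety $\cV$.
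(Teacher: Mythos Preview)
Your approach is correct and closely parallels the paper's, though organized dually: you resolve $M$ by a \v{C}ech complex and then tensor with $\Cp$ over $A_{\Cp}$, whereas the paper resolves $\Cp$ by the Koszul complex of the regular sequence, tensors this with the sheaf $\Omega^{\kappa^{\cV}}(-D)_{\Cp}$ (which remains exact because the sheaf is invertible, hence flat over $A_{\Cp}$), pushes forward to the affinoid minimal compactification using Lemma~\ref{vanishing}, and then takes global sections. These are precisely the two spectral sequences of the double complex $K_\bullet\otimes_{A_{\Cp}} C^\bullet$ you mention at the end, and either one gives the result. The paper's route is slightly more economical in that it never needs to argue that $M$ itself is flat over $A_{\Cp}$.

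Two small corrections. First, the \v{C}ech terms $C^j$ are not finitely generated over $A_{\Cp}$: the rings $\cO(\cX_{i_0\cdots i_j})\hat\otimes_{\Cp}A_{\Cp}$ are infinite-dimensional Banach $A_{\Cp}$-modules. What you actually use, and what holds, is that they are \emph{flat} over $A_{\Cp}$ (the projection of a fibre product of affinoids to a factor is flat); flatness alone is enough for the descending induction on the short exact sequences $0\to B^{j}\to C^{j}\to B^{j+1}\to 0$. Second, Lemma~\ref{vanishing} and the isomorphism~(\ref{IsomCpBC}) are stated for the sheaf pulled back to $\bar{\cM}(\Gamma_1(p^n),\mu_N,\frc)(v_\tot)$, not on $\bar{\cM}(\mu_N,\frc)(v_\tot)$. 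The paper therefore runs the whole argument on the $\Gamma_1(p^n)$-cover and only at the very end takes the $\bT(\bZ/p^n\bZ)$-equivariant part and then the $\Delta$-invariants; you should do the same, or else note that the analogous vanishing on $\bar{\cM}(\mu_N,\frc)(v_\tot)$ follows by taking $\bT(\bZ/p^n\bZ)$-invariants (the group order being prime to $p$).
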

\begin{proof}
This is essentially proved in \cite[Proposition 3.22]{AIP2}. 
Put $\kappa^\cV=k(\nu^\cV,w^\cV)$ and $\kappa=k(\nu,w)$. By the assumption on $m_x$, we have the Koszul resolution
\[
0\to A_{\Cp} \to A^{n_{r}}_{\Cp} \to \cdots \to A^{n_1}_{\Cp}\to A_{\Cp} \to A_{\Cp}/m_x \to 0
\]
with some non-negative integers $n_1,\ldots,n_r$, which induces a finite resolution of the sheaf $(1\times x)_*(\Omega^\kappa(-D))$ by finite direct sums of $\Omega^{\kappa^\cV}(-D)_{\Cp}$. By Lemma \ref{vanishing}, the push-forward of this resolution by the map $\rho^\rig_{\Cp}\times 1$ is exact. Since $\bar{\cM}^*(\mu_N,\frc)(v_\tot)_{\Cp}\times \cV_{\Cp}$ is a $\Cp$-affinoid variety, the sequence obtained by taking global sections is also exact. This and (\ref{IsomCpBC}) yield isomorphisms
\begin{align*}
H^0(\bar{\cM}&(\Gamma_1(p^n),\mu_N,\frc)(v_\tot)\times \cV, \Omega^{\kappa^\cV}(-D))\hat{\otimes}_{A,x^*}\Cp\\
&\simeq H^0(\bar{\cM}(\Gamma_1(p^n),\mu_N,\frc)(v_\tot)_{\Cp}\times \cV_{\Cp}, \Omega^{\kappa^{\cV_{\Cp}}}(-D))\hat{\otimes}_{A_{\Cp},x^*}\Cp\\
&\simeq H^0(\bar{\cM}(\Gamma_1(p^n),\mu_N,\frc)(v_\tot)_{\Cp}, \Omega^{\kappa}(-D)).
\end{align*}
Taking the $\bT(\bZ/p^n\bZ)$-equivariant part and the $\Delta$-fixed part shows the lemma.
\end{proof}

We can extend naturally the Hecke operators over $\Cp$: Let $\cU$ be a reduced $\Cp$-affinoid variety and $\cU\to \cW^G_{\Cp}$ an $n$-analytic morphism. Consider the base extension of the isomorphism $\pi_\frl$
\[
\pi_{\frl, \Cp}: p_2^* \cIW_{w,\frl\frc}^+(v)_{\Cp} \simeq p_1^*\cIW_{w,\frc}^+(v)_{\Cp},
\]
which defines an isomorphism
\[
\pi_{\frl,\Cp}^*: p_1^*(\Omega^{\kappa^{\cU}})\simeq p_2^*(\Omega^{\kappa^{\cU}}).
\]
We define the Hecke operator $T_\frl$ over $\Cp$ for $(\frl,p)=1$ by
\begin{align*}
H^0(\cM(\mu_N,\frl\frc)(\uv)_{\Cp}\times \cU,\Omega^{\kappa^{\cU}})& \overset{p_2^*}{\to} H^0(\cY'_{\frc,\frl}(\uv)_{\Cp}\times \cU,p_2^*\Omega^{\kappa^\cU})\\
& \overset{(\pi_{\frl,\Cp}^*)^{-1}}{\to} H^0(\cY'_{\frc,\frl}(\uv)_{\Cp}\times \cU,p_1^*\Omega^{\kappa^\cU})\\
& \overset{\Nor_{F/\bQ}(\frl)^{-1}\Tr_{p_1}}{\to} H^0(\cM(\mu_N,\frc)(\uv)_{\Cp}\times \cU,\Omega^{\kappa^\cU}).
\end{align*}
Similarly, we have Hecke operators $T_\frl$ for $(\frl,p)\neq 1$ and $S_\frl$ over $\Cp$. We can show that they are compatible with the Hecke operators over $K$ and that the specialization map in Lemma \ref{specializeCp} is $\bT$-linear.



\section{$q$-expansion principle}\label{SecQexp}

In this section, we study the $q$-expansion map for arithmetic overconvergent Hilbert modular forms. 
For any reduced $\bC_p$-affinoid variety $\cU$, any $n$-analytic map $\cU\to \cW^G_{\Cp}$ and any $v\in \bQ\cap[0,\frac{p-1}{p^{n}})$, we have isomorphisms
\begin{align*}
M^G(\mu_N, (\nu^\cU,w^\cU))(v)&\simeq \bigoplus_{\frc\in [\mathrm{Cl}^+(F)]^{(p)}} M^G(\mu_N,\frc, (\nu^\cU,w^\cU))(v),\\
S^G(\mu_N, (\nu^\cU,w^\cU))(v)&\simeq \bigoplus_{\frc\in [\mathrm{Cl}^+(F)]^{(p)}} S^G(\mu_N,\frc, (\nu^\cU,w^\cU))(v)
\end{align*}
by which we identify both sides. For any element $f\in M^G(\mu_N, (\nu^\cU,w^\cU))(v)$, we write $(f_\frc)_{\frc\in [\mathrm{Cl}^+(F)]^{(p)}}$ for the image of $f$ by the above isomorphism. We say that $f$ is an eigenform if it is an eigenvector for any element of $\bT$.



\subsection{$q$-expansion of overconvergent modular forms}\label{SecQexpDef}

For any $\frc\in [\mathrm{Cl}^+(F)]^{(p)}$, let us consider an unramified cusp $(\fra,\frb,\phi)$ of $M(\mu_N,\frc)$ as in \S \ref{SecToroidal}. Using any polyhedral cone decomposition $\sC\in \Dec(\fra,\frb)$ of $F^{*,+}_\bR$, we have the $\hat{I}_\sigma$-adically complete ring $\hat{R}_\sigma$ and the semi-abelian scheme $\Tate_{\fra,\frb}(q)$ over $\bar{S}_\sigma=\Spec(\hat{R}_\sigma)$ for any $\sigma\in \sC$. 

Let $\breve{S}_\sigma=\Spf(\breve{R}_\sigma)$ be the $(p,\hat{I}_\sigma)$-adic formal completion of $\hat{S}_\sigma$. The smoothness assumption on $\sC$ implies that there exists a basis $\xi_1,\ldots,\xi_g$ of the $\bZ$-module $\fra\frb$ satisfying
\[
(\fra\frb)\cap \sigma^\vee=\bZ_{\geq 0}\xi_1+\cdots+\bZ_{\geq 0}\xi_r+\bZ\xi_{r+1}+\cdots+\bZ\xi_g
\]
with some $r$.
For any ring $B$, we write as
\[
B[X_{\leq r}, X^\pm_{>r}]:=B[X_1,\ldots,X_r,X_{r+1}^\pm,\ldots,X_g^\pm].
\]
For any extension $L/K$ of complete valuation fields, 
we denote the $p$-adic completion of $\oel[X_{\leq r}, X^\pm_{>r}]$ by $\oel\langle X_{\leq r}, X^\pm_{>r}\rangle$ and put 
\[
L\langle X_{\leq r}, X^\pm_{>r} \rangle=\oel\langle X_{\leq r}, X^\pm_{>r}\rangle[1/p].
\]
Then the $\okey$-algebra $\hat{R}_\sigma$ is isomorphic to the completion of the ring $\okey[X_{\leq r}, X^\pm_{>r}]$
with respect to the principal ideal $(X_1\cdots X_r)$ via the map $X_i\mapsto q^{\xi_i}$, and the ring $\breve{R}_\sigma$ is isomorphic to the $p$-adic completion of $\hat{R}_\sigma$. Hence the ring $\breve{R}_\sigma$ is normal and the formal scheme $\breve{S}_{\sigma}$ is an object of the category $\mathrm{FS}_{\okey}$ of \cite[Definition 7.0.1]{deJong}. In fact, the ring $\breve{R}_\sigma$ is isomorphic to the ring
\begin{equation}\label{breveRidentify}
\okey\langle X_{\leq r}, X^\pm_{>r}\rangle [[Z]]/(Z-X_{1}\cdots X_r).
\end{equation}
Moreover, since the natural map
\begin{align*}
\cO_{K,m}[X_{\leq r}, X^\pm_{>r}]&/(X_1\cdots X_r)^n \to \\
&\cO_{K,m}[X_{r+1}^\pm,\ldots,X_g^\pm][[X_1,\ldots,X_r]]/(X_1\cdots X_r)^n 
\end{align*}
is injective for any positive integer $m$, by taking the limit we may identify the rings $\hat{R}_\sigma$ and $\breve{R}_\sigma$ with $\okey$-subalgebras of the $\okey$-algebra
\[
\okey\langle q^{\pm\xi_{r+1}},\ldots, q^{\pm\xi_{g}}\rangle[[q^{\xi_1},\ldots,q^{\xi_r}]].
\]

We denote by $\breve{S}_\sigma^\rig$ the Berthelot generic fiber of $\breve{S}_\sigma$. 
Similarly, we denote by $\breve{S}_{\sC}$ and $\breve{S}^\rig_{\sC}$ the formal completion of $\hat{S}_{\sC}$ along the boundary of the special fiber and its Berthelot generic fiber. From the definition, we have formal open and admissible coverings
\[
\breve{S}_{\sC}=\bigcup_{\sigma\in \sC} \breve{S}_{\sigma},\quad \breve{S}^\rig_{\sC}=\bigcup_{\sigma\in \sC} \breve{S}^\rig_{\sigma}.
\]
Since the quotient of $\hat{S}_{\sC}$ by the action of $U_N$ is obtained by a re-gluing, so is the quotient $\breve{S}_{\sC}/U_N$ and this coincides with the formal completion of $\hat{S}_{\sC}/U_N$ along the boundary of the special fiber.

Consider the case $\sC=\sC(\fra,\frb)$. Since the map $\bar{S}_{\sigma}\to \bar{M}(\mu_N,\frc)$ defined using $\Tate_{\fra,\frb}(q)$ induces an isomorphism
\[
\coprod \hat{S}_{\sC(\fra,\frb)}/U_N \to \bar{M}(\mu_N,\frc)|_{D}^{\wedge}
\]
to the formal completion of $\bar{M}(\mu_N,\frc)|_{D}^{\wedge}$ of $\bar{M}(\mu_N,\frc)$ along the boundary divisor $D$, taking the formal completion we obtain an isomorphism
\[
\coprod \breve{S}_{\sC(\fra,\frb)}/U_N \to \bar{\SGm}(\mu_N,\frc)|_{D_k}^{\wedge}
\]
to the formal completion $\bar{\SGm}(\mu_N,\frc)|_{D_k}^{\wedge}$ of $\bar{M}(\mu_N,\frc)$ along the boundary $D_k$ of the special fiber. Let $\spc: \bar{\cM}(\mu_N,\frc)\to \bar{M}(\mu_N,\frc)_k$ be the specialization map with respect to $\bar{\SGm}(\mu_N,\frc)$. Then \cite[Lemma 7.2.5]{deJong} implies $(\bar{\SGm}(\mu_N,\frc)|_{D_k}^{\wedge})^\rig=\spc^{-1}(D_k)$.

Let $\breve{S}^\rig_{\sigma, \Cp}$ and $\breve{S}^\rig_{\sC, \Cp}$ be the base extensions to $\Spv(\Cp)$ of $\breve{S}^\rig_{\sigma}$ and $\breve{S}^\rig_{\sC}$, respectively. Note that $\breve{S}^\rig_{\sigma,\Cp}$ can be identified with the rigid analytic variety over $\Cp$ whose set of $\Cp$-points is
\begin{equation}\label{descripSCp}
\left\{(x_1,\ldots,x_g)\in \mathbb{C}_p^g\middle| 
\begin{array}{cc}
x_i\in \OCp\ (i\leq r),\ x_i\in \cO^\times_{\Cp}\ (i > r), \\
x_1\cdots x_r\in m_{\Cp}
\end{array}
\right\}
\end{equation}
for $r$ as above. Then, with the notation of \cite[Theorem 3.1.5]{Con_MC}, we have 
\[
(\breve{S}_{\sigma})^\rig_{/\Cp}=\breve{S}^\rig_{\sigma, \Cp},\quad (\breve{S}_{\sC})^\rig_{/\Cp}=\breve{S}^\rig_{\sC, \Cp}.
\]
Since the functor $(-)^\rig_{/\Cp}$ sends formal open immersions to open immersions and formal open coverings to admissible coverings, each $\breve{S}^\rig_{\sigma,\Cp}$ is an admissible open subset of $\breve{S}^\rig_{\sC,\Cp}$ such that $\breve{S}^\rig_{\sC,\Cp}=\bigcup_{\sigma\in \sC} \breve{S}^\rig_{\sigma,\Cp}$ is an admissible covering. Moreover, we have
\[
(\breve{S}_{\sC}/U_N)^\rig_{/\Cp}=\breve{S}^\rig_{\sC,\Cp}/U_N.
\]
Note that the formation of the tube $\spc^{-1}(D_k)$ is compatible with the base extension to $\Cp$ \cite[Proposition 1.1.13]{Berthelot}. Thus, for $\sC=\sC(\fra,\frb)$, we obtain maps
\begin{equation}\label{SCopenim}
\coprod_{\sigma\in \sC} \breve{S}^\rig_{\sigma,\Cp} \to \breve{S}_{\sC,\Cp}^\rig/U_N \to \bar{\cM}(\mu_N,\frc)_{\Cp},
\end{equation}
where the first map is a surjective local isomorphism and the second map is an open immersion factoring through $\bar{\cM}(\mu_N,\frc)(0)_{\Cp}$. 

We denote by $\breve{R}_{\sigma,\OCp}$, $\breve{S}_{\sigma, \OCp}$ and $\breve{S}_{\sC, \OCp}$ the base extensions to $\Spf(\OCp)$ of $\breve{R}_{\sigma}$, $\breve{S}_{\sigma}$ and $\breve{S}_{\sC}$, respectively. From the identification (\ref{breveRidentify}), we can show
\[
\breve{R}_{\sigma,\OCp}=\OCp\langle X_{\leq r}, X^\pm_{>r}\rangle [[Z]]/(Z-X_{1}\cdots X_r).
\]
Indeed, first note that the ring $\breve{R}_{\sigma,\OCp}$ is isomorphic to
\begin{equation}\label{doublelim}
\varprojlim_{n\geq 0}\varprojlim_{m\geq 0} \cO_{\Cp,n}[X_{\leq r}, X^\pm_{>r},Z]/(Z-X_{1}\cdots X_r, Z^m).
\end{equation}
Since the ring
\[
\cO_{\Cp,n}[X_{\leq r}, X^\pm_{>r},Z]/(Z-X_{1}\cdots X_r)
\]
is $Z$-torsion free, its $Z$-adic completion is 
\[
\cO_{\Cp,n}[X_{\leq r}, X^\pm_{>r}][[Z]]/(Z-X_{1}\cdots X_r).
\]
Similarly, since an elementary argument shows that the ring
\[
\cO_{\Cp}[X_{\leq r}, X^\pm_{>r}][[Z]]/(Z-X_{1}\cdots X_r)
\]
is $p$-torsion free, taking the $p$-adic completion yields the claim (The reason of this ad hoc proof is that in general we do not know if the completion is compatible with quotients for non-quasi-idyllic rings). 

\begin{lem}\label{breveRdom}
For any extension $L/K$ of complete valuation fields with residue field $k_L$, the rings
\[
\oel\langle X_{\leq r}, X^\pm_{>r}\rangle [[Z]]/(Z-X_{1}\cdots X_r),\quad k_L[X_{\leq r}, X^\pm_{>r}] [[Z]]/(Z-X_{1}\cdots X_r)
\]
are integral domains. In particular, the ring $\breve{R}_{\sigma,\OCp}$ is an integral domain.
\end{lem}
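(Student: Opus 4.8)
The plan is the following. First, since $\OCp$ is the valuation ring of the complete valuation field $\Cp$, which is an extension of $K$, the last assertion is the special case $L=\Cp$ of the first one, combined with the identification $\breve R_{\sigma,\OCp}\cong\OCp\langle X_{\le r},X^\pm_{>r}\rangle[[Z]]/(Z-X_1\cdots X_r)$ established above. So it suffices to prove that, writing $B$ for $\oel\langle X_{\le r},X^\pm_{>r}\rangle$ (resp.\ $k_L[X_{\le r},X^\pm_{>r}]$) and setting $f=X_1\cdots X_r$, the ring $R:=B[[Z]]/(Z-f)$ is an integral domain (here $r\ge 1$). Here $B$ is already an integral domain: in the residue field case it is a Laurent polynomial ring over a field, and otherwise it embeds into $B\otimes_{\oel}L=L\langle X_{\le r},X^\pm_{>r}\rangle$, which is the affinoid algebra of a connected smooth affinoid, hence a domain. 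In particular $f$ is a nonzerodivisor of $B$.

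Next I would introduce the target. Put $C:=\oel\langle X^\pm_{>r}\rangle$ (resp.\ $k_L[X^\pm_{>r}]$), which is an integral domain by the same reasoning, and $T:=C[[X_1,\ldots,X_r]]$; being a formal power series ring over a domain, $T$ is an integral domain. The assignment $X_i\mapsto X_i$, $Z\mapsto f$ defines a ring homomorphism $\Phi\colon B[[Z]]\to T$: this is well defined because $T$ is $(X_1,\ldots,X_r)$-adically complete and separated and $(f^m)\subseteq(X_1,\ldots,X_r)^m$, so $T$ is $f$-adically complete and separated and a power series in $Z$ may be evaluated at $f$. The map $\Phi$ restricts to the evident inclusion $B\hookrightarrow T$ (comparison of coefficients), and $\Phi(Z-f)=0$, so $\Phi$ factors through $\bar\Phi\colon R\to T$. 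It therefore suffices to show that $\bar\Phi$ is injective, for then $R$ is a subring of the domain $T$.

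For the injectivity I would proceed in two steps. First, I would identify $R$ with the $f$-adic completion $\widehat B=\varprojlim_m B/f^mB$ of $B$: the substitution $Z\mapsto f$ gives a surjection $B[[Z]]\to\widehat B$ whose kernel contains $(Z-f)$, and using that $f$ is a nonzerodivisor one checks directly that it equals $(Z-f)$ (given $G=\sum_k a_kZ^k$ in the kernel, write $\sum_{k<m}a_kf^k=f^mb_m$ with $b_m\in B$, and verify $G=(Z-f)\bigl(-\sum_{m\ge 0}b_{m+1}Z^m\bigr)$); this identification is compatible with $\bar\Phi$ and the natural map $\widehat B\to T$. Second, since $T$ is $f$-adically complete and separated, injectivity of $\widehat B\to T$ follows from injectivity of $B/f^mB\to T/f^mT$ for every $m$, i.e.\ from the identity $f^mT\cap B=f^mB$; and this holds because if $b\in B$ equals $(X_1\cdots X_r)^m t$ for some $t\in T$, then the coefficients of $t$ are those of $b$ shifted by $(m,\ldots,m)$ in the exponents of $X_1,\ldots,X_r$, so $t$ is again a restricted series lying in $B$. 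Hence $\bar\Phi$ is injective and $R$ is an integral domain. The steps that need care are the identification $R\cong\widehat B$ (i.e.\ that the kernel is exactly $(Z-f)$) and the verification $f^mT\cap B=f^mB$; the remaining assertions are formal, and I expect the former to be the only real obstacle.
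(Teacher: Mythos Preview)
Your proof is correct and takes a genuinely different route from the paper's. The paper does not embed into a larger power series ring; instead it passes (for the $\oel$-case) to the ring $\breve{R}_L=L\langle X_{\le r},X^\pm_{>r}\rangle[[Z]]/(Z-X_1\cdots X_r)$ over the fraction field, observes that the affinoid algebra $L\langle X_{\le r},X^\pm_{>r}\rangle$ is Noetherian, normal and excellent so that its completion $\breve{R}_L$ is again normal, and then argues that $\Spec(\breve{R}_L)$ is connected because $\breve{R}_L$ is $Z$-adically complete, $Z$-torsion free, and $\Spec(\breve{R}_L/(Z))$ is connected; normality plus connectedness gives an integral domain. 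The $k_L$-case is handled by the same scheme-theoretic argument.

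Your approach is more hands-on: you identify $B[[Z]]/(Z-f)$ with the $f$-adic completion $\widehat B$ and then embed $\widehat B$ into the honest domain $T=C[[X_1,\ldots,X_r]]$ by checking $f^m T\cap B=f^m B$ coefficient-wise. This avoids any appeal to excellence or normality of completions, treats the $\oel$- and $k_L$-cases uniformly, and makes every step explicit. The paper's argument is shorter and more conceptual but leans on heavier commutative algebra (normality of completions of excellent rings, lifting of idempotents). The two steps you flag as needing care---the identification $R\cong\widehat B$ and the saturation $f^m T\cap B=f^m B$---are indeed the only nontrivial points, and your verification of both is sound (the key in the first is that $f$ is a nonzerodivisor in $B$, and in the second that multiplication by the monomial $f^m$ simply shifts exponents, so restrictedness or polynomiality of coefficients is preserved under division).
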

\begin{proof}
For the former ring, we can show that it is a subring of the ring
\[
\breve{R}_L:=L\langle X_{\leq r}, X^\pm_{>r} \rangle [[Z]]/(Z-X_{1}\cdots X_r).
\]
It suffices to show that $\breve{R}_L$ is an integral domain. Since the ring $L\langle X_{\leq r}, X^\pm_{>r}\rangle$ is Noetherian and normal, the ring $\breve{R}_L$ is also normal. Since $\breve{R}_L$ is $Z$-adically complete, $Z$-torsion free and $\Spec(\breve{R}_L/(Z))$ is connected, we see that $\Spec(\breve{R}_L)$ is also connected and the lemma follows. We can show the assertion on the latter ring similarly.
\end{proof}

From the description (\ref{descripSCp}) of $\breve{S}_{\sigma,\Cp}^\rig$, we see that there exists an inclusion 
\[
\cO(\breve{S}_{\sigma,\OCp})=\breve{R}_{\sigma,\OCp}\subseteq \cO^\circ(\breve{S}^\rig_{\sigma,\Cp}). 
\]
By gluing, this yields an inclusion
\begin{equation}\label{BoundedFnS}
\cO(\breve{S}_{\sC,\OCp}) \subseteq \cO^\circ(\breve{S}_{\sC,\Cp}^\rig).
\end{equation}

By the description (\ref{doublelim}) of the ring $\breve{R}_{\sigma,\OCp}$, we have a natural inclusion
\begin{equation}\label{Rincl}
\breve{R}_{\sigma,\OCp} \subseteq \prod_{\xi\in \fra\frb} \OCp q^{\xi},
\end{equation}
which is compatible with the restriction map 
$\breve{R}_{\sigma,\OCp} \to \breve{R}_{\sigma',\OCp}$ for any $\sigma$ and $\sigma'$ such that $\sigma'$ is a face of the closure $\bar{\sigma}$. Then we have an isomorphism
\[
\cO(\breve{S}_{\sC,\OCp})\simeq \bigcap_{\sigma\in \sC} \breve{R}_{\sigma,\OCp}.
\]
Note that, if the dimension of the $\bR$-vector space $\Span_{\bR}(\sigma)$ generated by the elements of $\sigma$ is $g$, then we have
\[
(\fra\frb)\cap \sigma^\vee=\bZ_{\geq 0} \xi_1\oplus\cdots \oplus \bZ_{\geq 0} \xi_g
\]
with some $\xi_1,\ldots,\xi_g\in \fra\frb$. Thus any element of $\breve{R}_{\sigma,\OCp}$ is a formal power series of $q^{\xi_1},\ldots,q^{\xi_g}$ and the ring $\OCp[[q^\xi\mid \xi\in (\fra\frb)\cap\sigma^\vee]]$ can be identified with the subset
\[
\{(a_\xi q^\xi)_{\xi \in \fra\frb}\in \prod_{\xi\in \fra\frb} \OCp q^\xi \mid a_\xi=0\ \text{ for any } \xi\notin (\fra\frb)\cap \sigma^\vee \}. 
\]
From the equality
\[
(\fra\frb)^+\cup \{0\}=\bigcap\{(\fra\frb)\cap \sigma^\vee\mid \sigma\in\sC,\ \dim_{\bR}(\Span_{\bR}(\sigma))=g\},
\]
we have an inclusion
\[
\OCp[[q^\xi \mid \xi\in (\fra\frb)^+\cup\{0\}]]\supseteq \cO(\breve{S}_{\sC,\OCp}).
\]
On the other hand, if we identify as 
\[
F_\bR\simeq \prod_{\beta\in \Hom_{\bQ\text{-alg.}}(F,\bR)} \bR,\quad x\otimes 1\mapsto (\beta(x))_\beta,
\]
then every boundary $\tau$ of $\sigma^\vee$ is outside the closure of the positive cone $F_\bR^{\times,+}$ of $F_\bR$. Hence, for any positive real number $\rho$, the number of elements $\xi$ of $(\fra\frb)^+$ such that the distance from $\xi$ to $\tau$ is less than $\rho$ is finite. This implies that any element of $\OCp[[q^\xi\mid \xi\in (\fra\frb)^+\cup\{0\}]]$ is contained in the completion of the ring
\[
\OCp[q^{\xi_1},\ldots, q^{\xi_r},q^{\pm \xi_{r+1}},\ldots, q^{\pm \xi_g}]
\]
with respect to the $q^{\xi_1}\cdots q^{\xi_r}$-adic topology. 
We can see that this completion is contained in $\breve{R}_{\sigma,\OCp}$. Therefore, we obtain an identification
\[
\OCp[[q^\xi\mid \xi\in (\fra\frb)^+\cup\{0\}]]\simeq \cO(\breve{S}_{\sC,\OCp})
\]
which is compatible with the inclusion (\ref{Rincl}).

Let $\frc$ be an element of $[\mathrm{Cl}^+(F)]^{(p)}$ and $\fra,\frb$ fractional ideals satisfying $\fra\frb^{-1}=\frc$. Suppose $\fra\subseteq \fro$ and $(\fra,Np)=1$. Then the natural inclusion $\fro\subseteq \fra^{-1}$ induces isomorphisms
\[
\phi_{\fra,\frb}: \fra^{-1}/N\fra^{-1} \simeq \fro/N\fro,\quad \phi'_{\fra,\frb}: \fra^{-1}/p^n \fra^{-1} \simeq \fro/p^n\fro.
\]
Consider the unramified cusp $(\fra,\frb,\phi_{\fra,\frb})$ of $M(\mu_N,\frc)$. Take $\sC\in \Dec(\fra,\frb)$ and $\sigma\in \sC$ as above. By the construction of the Tate object, the map $\phi'_{\fra,\frb}$ yields a natural immersion $\cD_F^{-1}\otimes \mu_{p^n}\to \Tate_{\fra,\frb}(q)$ over $\bar{S}_\sigma$, which induces an isomorphism
\[
\omega_{\Tate_{\fra,\frb}(q)}\otimes_{\okey}\cO_{K,n}\simeq \omega_{\cD_F^{-1}\otimes \mu_{p^n}}.
\]
Note that the map $\Tr_{F/\bQ}\otimes 1: \cD_F^{-1}\otimes\Gm\to \Gm$ gives an element $(\Tr_{F/\bQ}\otimes 1)^*\frac{dT}{T}$ of the $\cO_F\otimes \cO(\bar{S}_\sigma)$-module $\omega_{\cD_F^{-1}\otimes \Gm}\simeq \omega_{\Tate_{\fra,\frb}(q)}$. By the pull-back, we obtain a Tate object over $\Spec(\breve{R}_\sigma)$ with a canonical invariant differential which are compatible with those over $\Spec(\breve{R}_\tau)$ for any $\tau\in \sC$ satisfying $\tau\subseteq \bar{\sigma}$.

We denote the $p$-adic completion of $\bar{S}_\sigma$ by $\tilde{S}_\sigma$. We have $\tilde{S}_\sigma=\Spf(\breve{R}_\sigma)$, where we consider the $p$-adic topology on $\breve{R}_\sigma$. Its base extension to $\OCp$ is denoted by $\tilde{S}_{\sigma,\OCp}=\Spf(\tilde{R}_{\sigma,\OCp})$. Here the affine algebra $\tilde{R}_{\sigma,\OCp}$ is the $p$-adic completion of the ring $\breve{R}_{\sigma}\otimes_{\okey} \OCp$. 

The identity map $\breve{R}_\sigma\to \breve{R}_\sigma$ is continuous if we consider the $p$-adic topology on the source and the $(p,\hat{I}_\sigma)$-adic topology on the target. Then, for the case of $\sC=\sC(\fra,\frb)$, its composite with the $p$-adic completion of the map $\bar{S}_{\sigma}\to \bar{M}(\mu_N,\frc)$ gives a morphism of formal schemes $\breve{S}_{\sigma} \to \tilde{S}_\sigma\to \bar{\SGm}(\mu_N,\frc)(0)$, and also a morphism $\breve{S}_{\sC} \to \bar{\SGm}(\mu_N,\frc)(0)$ by gluing. Since $\breve{R}_\sigma$ is Noetherian, the moduli interpretation of $\frIW_{w,\frc}^+(0)$ as in \S\ref{SecOCHMFDef} is also valid for $\breve{R}_\sigma$.
We have a commutative diagram
\[
\xymatrix{
\underline{\bZ/p^n\bZ}(\breve{R}_\sigma) \ar[r]\ar[d] & \underline{\cO_F/p^n\cO_F}(\breve{R}_\sigma) \ar[d] \\
(\mu_{p^n})^\vee(\breve{R}_{\sigma}) \ar[r]\ar[d]_{\HT}& (\cD_F^{-1}\otimes \mu_{p^n})^\vee(\breve{R}_{\sigma}) \ar[d]^{\HT} & \\
\omega_{\mu_{p^n}}\otimes \breve{R}_{\sigma} \ar[r]& \omega_{\cD_F^{-1}\otimes \mu_{p^n}}\otimes \breve{R}_{\sigma},
}
\]
where the top horizontal arrow is the natural inclusion and the other horizontal arrows are induced by the map $\Tr_{F/\bQ}\otimes 1$. 
Thus the above moduli interpretation and the base extension give a morphism of formal schemes over $\Spf(\OCp)$
\[
\tau_{\fra,\frb}: \breve{S}_{\sigma,\OCp}\to \tilde{S}_{\sigma,\OCp} \to \frIW_{w,\frc}^+(0)_{\OCp}.
\]
By gluing, this defines a morphism $\breve{S}_{\sC,\OCp}\to \frIW^+_{w,\frc}(0)_{\OCp}$, which we also denote by $\tau_{\fra,\frb}$.

\begin{lem}\label{pIcompl}
The natural map $\tilde{R}_{\sigma,\OCp} \to \breve{R}_{\sigma,\OCp}$ is injective. In particular, the ring $\tilde{R}_{\sigma,\OCp}$ is an integral domain.
\end{lem}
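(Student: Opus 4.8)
The plan is to reduce the statement modulo $p^{n}$, where the rings become explicit completions of Laurent polynomial rings, and to exploit the flatness of $\OCp$ over $\okey$ together with the presentation (\ref{breveRidentify}).

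First I would record the identifications that come straight from the definitions. Since $\OCp\otimes_{\okey}\okey/p^{n}=\cO_{\Cp,n}$, the $p$-adic completion $\tilde R_{\sigma,\OCp}$ of $\breve R_{\sigma}\otimes_{\okey}\OCp$ equals $\varprojlim_{n}\bigl((\breve R_{\sigma}/p^{n}\breve R_{\sigma})\otimes_{\cO_{K,n}}\cO_{\Cp,n}\bigr)$. By (\ref{breveRidentify}), $\breve R_{\sigma}/p^{n}\breve R_{\sigma}=\varprojlim_{m}M_{m}$ with $M_{m}=\cO_{K,n}[X_{\leq r},X^{\pm}_{>r}]/(X_{1}\cdots X_{r})^{m}$, and, using that $\cO_{\Cp,n}$ is flat over $\cO_{K,n}$, one has $\breve R_{\sigma,\OCp}/p^{n}\breve R_{\sigma,\OCp}=\varprojlim_{m}\bigl(M_{m}\otimes_{\cO_{K,n}}\cO_{\Cp,n}\bigr)$. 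Because an inverse limit of injections is injective, it suffices to prove that for each $n$ the natural map $(\breve R_{\sigma}/p^{n}\breve R_{\sigma})\otimes_{\cO_{K,n}}\cO_{\Cp,n}\to\breve R_{\sigma,\OCp}/p^{n}\breve R_{\sigma,\OCp}$ is injective; then $\tilde R_{\sigma,\OCp}$ becomes a subring of the integral domain $\breve R_{\sigma,\OCp}$ (Lemma \ref{breveRdom}), hence is itself a domain.

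For the injectivity modulo $p^{n}$: extraction of the coefficient $c_{\alpha}\in\cO_{K,n}$ of each admissible monomial $X^{\alpha}$ gives an injective $\cO_{K,n}$-linear map $\breve R_{\sigma}/p^{n}\breve R_{\sigma}\hookrightarrow\prod_{\alpha\in\Lambda}\cO_{K,n}$, where $\Lambda$ is the set of exponents with $\alpha_{i}\geq 0$ for $i\leq r$; and likewise $\breve R_{\sigma,\OCp}/p^{n}\breve R_{\sigma,\OCp}\hookrightarrow\prod_{\alpha\in\Lambda}\cO_{\Cp,n}$, so that the natural map above is coefficientwise base change and factors as $(\breve R_{\sigma}/p^{n}\breve R_{\sigma})\otimes_{\cO_{K,n}}\cO_{\Cp,n}\hookrightarrow\bigl(\prod_{\alpha}\cO_{K,n}\bigr)\otimes_{\cO_{K,n}}\cO_{\Cp,n}\to\prod_{\alpha}\cO_{\Cp,n}$ (the first arrow injective by flatness). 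Thus it is enough to show $\bigl(\prod_{\alpha\in\Lambda}\cO_{K,n}\bigr)\otimes_{\cO_{K,n}}\cO_{\Cp,n}\to\prod_{\alpha\in\Lambda}\cO_{\Cp,n}$ is injective. Here I would write $\cO_{\Cp,n}=\OCp/p^{n}\OCp=\varinjlim_{L}\cO_{L}/p^{n}\cO_{L}$, the filtered colimit over the finite subextensions $L/K$ inside $\bar{\bQ}_{p}$. Since each $\cO_{L}/p^{n}\cO_{L}$ is finite free over $\cO_{K,n}$, tensoring the product by $\cO_{\Cp,n}$ commutes with the colimit and, at each finite stage, with $\prod_{\alpha}$, so the map becomes $\varinjlim_{L}\prod_{\alpha}(\cO_{L}/p^{n}\cO_{L})\to\prod_{\alpha}\varinjlim_{L}(\cO_{L}/p^{n}\cO_{L})$. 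An element of the source is represented at a single finite stage $L_{0}$; if its image vanishes, each component dies at some finite stage, but the transition maps $\cO_{L_{0}}/p^{n}\cO_{L_{0}}\to\cO_{L}/p^{n}\cO_{L}$ are injective — because $\cO_{L}/\cO_{L_{0}}$ is $\okey$-torsion free (if $px\in\cO_{L_{0}}$ with $x\in\cO_{L}$, then $x\in L_{0}\cap\cO_{L}=\cO_{L_{0}}$) and hence $\okey$-flat, so reduction modulo $p^{n}$ preserves injectivity — so each component already vanishes at stage $L_{0}$, and the element is zero.

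The main obstacle is precisely this interchange of $-\otimes_{\okey}\OCp$ with the inverse limits defining the completions and with the infinite products produced by formal power series. The ring $\breve R_{\sigma}$ is not an admissible formal $\okey$-algebra — already modulo $p$ it is a completion rather than a ring of finite type — so the standard flat base change formalism for admissible formal schemes is unavailable, and one must replace it by the coefficientwise embeddings of $\breve R_{\sigma}/p^{n}\breve R_{\sigma}$ and $\breve R_{\sigma,\OCp}/p^{n}\breve R_{\sigma,\OCp}$ into products of copies of $\cO_{K,n}$ resp. $\cO_{\Cp,n}$, and by the description of $\OCp/p^{n}\OCp$ as a filtered colimit of finite free $\cO_{K,n}$-modules with injective transition maps. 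The remaining verifications are routine bookkeeping with (\ref{breveRidentify}).
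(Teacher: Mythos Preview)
Your proposal is correct and follows essentially the same strategy as the paper: both reduce modulo $p^{n}$ and exploit the identification $\cO_{\Cp,n}=\varinjlim_{L/K}\cO_{L,n}$ as a filtered colimit of finite free $\cO_{K,n}$-modules with injective transition maps. The only cosmetic difference is that the paper compares the explicit presentations directly—writing $\tilde{R}_{\sigma,\OCp}\simeq\varprojlim_{n}\varinjlim_{L}\cO_{L,n}[X_{\leq r},X_{>r}^{\pm}][[Z]]/(Z-X_{1}\cdots X_{r})$ via (\ref{doublelimp}) and invoking the evident injectivity $\cO_{L,n}[\ldots]/(X_{1}\cdots X_{r})^{m}\hookrightarrow\cO_{\Cp,n}[\ldots]/(X_{1}\cdots X_{r})^{m}$ together with (\ref{doublelim})—whereas you first pass through coefficientwise embeddings into infinite products and then analyze $(\prod_{\alpha}\cO_{K,n})\otimes\cO_{\Cp,n}\to\prod_{\alpha}\cO_{\Cp,n}$; the underlying mechanism is the same.
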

\begin{proof}
We have an isomorphism
\begin{equation}\label{doublelimp}
\tilde{R}_{\sigma,\OCp}\simeq \varprojlim_n \varinjlim_{L/K} \cO_{L,n}[X_{\leq r}, X_{>r}^\pm][[Z]]/(Z-X_1\cdots X_r),
\end{equation}
where the direct limit is taken with respect to the directed set of finite extensions $L/K$ in $\bar{\bQ}_p$. Since the map
\[
\cO_{L,n}[X_{\leq r}, X_{>r}^\pm]/(X_1\cdots X_r)^m \to \cO_{\Cp,n}[X_{\leq r}, X_{>r}^\pm]/(X_1\cdots X_r)^m 
\]
is injective for any such $L/K$, the injectivity of the lemma follows from (\ref{doublelim}). Lemma \ref{breveRdom} yields the last assertion. 
\end{proof}

For any finite extension $L/K$, we write the $p$-adic completion 
\[
\breve{R}_\sigma \hat{\otimes}_{\okey} \oel=\cO_{L}\langle X_{\leq r}, X_{>r}^\pm\rangle [[Z]]/(Z-X_1\cdots X_r)
\]
also as $\tilde{R}_{\oel}$. Let $\pi_L$ be a uniformizer of $L$. By Lemma \ref{breveRdom}, the ring $\tilde{R}_{\oel}/(\pi_L)$ is an integral domain. Since $\tilde{R}_{\oel}$ is normal, the localization $(\tilde{R}_{\oel})_{(\pi_L)}$ is a discrete valuation ring with uniformizer $\pi_L$ such that $Z$ is invertible.  Put $\tilde{R}_{\infty}=\varinjlim_{L/K} \tilde{R}_{\oel}$ and $m_\infty=\varinjlim_{L/K} (\pi_L)$, where the direct limits are taken as above. Then the localization $(\tilde{R}_{\infty})_{m_\infty}=\varinjlim_{L/K} (\tilde{R}_{\oel})_{(\pi_L)}$ is a valuation ring. Let $\cO_{\cK_\sigma}$ be its $p$-adic completion. By (\ref{doublelimp}), the ring $\tilde{R}_{\sigma,\OCp}$ coincides with the $p$-adic completion of $\tilde{R}_\infty$. Since the $p$-adic topology on $\tilde{R}_{\infty}$ is induced by that on $(\tilde{R}_{\infty})_{m_\infty}$, we obtain an injection $\tilde{R}_{\sigma,\OCp}\to \cO_{\cK_\sigma}$.
This defines a morphism of $p$-adic formal schemes $\Spf(\cO_{\cK_\sigma})\to \tilde{S}_{\sigma,\OCp}$
for any $\sigma\in \sC(\fra,\frb)$.
In particular, we have the pull-back of $\Tate_{\fra,\frb}(q)$ over $\Spec(\cO_{\cK_\sigma})$ which is a HBAV. Since $\cO_{\cK_\sigma}$ is quasi-idyllic, we have the moduli interpretation of any morphism $\Spf(\cO_{\cK_\sigma}) \to \frIW^+_{w,\frc}(0)_{\OCp}$ over $\bar{\SGm}(\mu_N,\frc)(0)_{\OCp}$ as in \S\ref{SecOCHMFDef}. The additional structures of the Tate object over $\Spec(\breve{R}_\sigma)$ defines a canonical test object
\[
(\Tate_{\fra,\frb}(q),\iota_{\fra,\frb},\lambda_{\fra,\frb},\psi_{\fra,\frb}, u_{\fra,\frb},\alpha_{\fra,\frb})
\]
over $\Spec(\cO_{\cK_\sigma})$. This corresponds via the moduli interpretation to a map
\[
\tau_{\fra,\frb,\cO_{\cK_\sigma}}: \Spf(\cO_{\cK_\sigma})\to  \frIW_{w,\frc}^+(0)_{\OCp}
\]
satisfying the following property: The composite $\breve{S}_{\sigma,\OCp}\to \breve{S}_{\sC,\OCp}\overset{\tau_{\fra,\frb}}{\to}\frIW_{w,\frc}^+(0)_{\OCp}$ factors through $\tilde{S}_{\sigma,\OCp}$ and its restriction to $\Spf(\cO_{\cK_\sigma})$ equals $\tau_{\fra,\frb,\cO_{\cK_\sigma}}$, as in the diagram
\begin{equation}\label{OLres}
\begin{gathered}
\xymatrix{
& \breve{S}_{\sigma,\OCp}\ar[r] \ar[d] & \breve{S}_{\sC,\OCp}\ar[d]^{\tau_{\fra,\frb}} \\
\Spf(\cO_{\cK_\sigma}) \ar[r] & \tilde{S}_{\sigma,\OCp}\ar[r] & \frIW_{w,\frc}^+(0)_{\OCp}.
}
\end{gathered}
\end{equation}

Let $\kappa\in \cW(\Cp)$ be any $n$-analytic weight. Since the formal scheme $\bar{\SGm}(\mu_N,\frc)(v)_{\OCp}$ is quasi-compact and the sheaf $\bOmega^\kappa$ is coherent, we have
\[
M(\mu_N,\frc,\kappa)(v)=H^0(\bar{\SGm}(\mu_N,\frc)(v)_{\OCp}, \bOmega^\kappa)[1/p]\subseteq \cO(\frIW_{w,\frc}^+(v)_{\OCp})[1/p].
\]
For any element $f_\frc$ of $M(\mu_N,\frc,\kappa)(v)$, we define the $q$-expansion $f_\frc(q)$ of $f_\frc$ by
\[
f_{\frc}(q)=\tau^*_{\fro,\frc^{-1}}(f_\frc)\in \cO(\breve{S}_{\sC,\OCp})[1/p]=\OCp[[q^\xi\mid \xi\in (\frc^{-1})^+\cup\{0\}]][1/p].
\]
Thus, for any $f=(f_\frc)_{\frc\in[\mathrm{Cl}^+(F)]^{(p)}}$, we can write as
\[
f_\frc(q)=a_{\fro,\frc^{-1}}(f,0)+\sum_{\xi\in (\frc^{-1})^+} a_{\fro,\frc^{-1}}(f,\xi)q^\xi
\]
with some $a_{\fro,\frc^{-1}}(f,\xi)\in \Cp$. For any refinement $\sC'\in \Dec(\fro,\frc^{-1})$ of $\sC$, the natural map $\breve{S}_{\sC',\OCp}\to \breve{S}_{\sC,\OCp}$ induces the identity map on the ring $\OCp[[q^\xi\mid \xi\in (\frc^{-1})^+\cup\{0\}]][1/p]$. Thus we can compute the $q$-expansion by taking any refinement of the fixed cone decomposition $\sC(\fro,\frc^{-1})$ in $\Dec(\fro,\frc^{-1})$.
We say that an eigenform $f$ is normalized if $a_{\fro,\fro}(f,1)=1$.



\subsection{Weak multiplicity one theorem}\label{SecWeakMulti}

Let $(\nu,w)\in \cW^G(\Cp)$ be an $n$-analytic weight.  Let $f=(f_\frc)_{\frc\in[\mathrm{Cl}^+(F)]^{(p)}}$ be a non-zero eigenform in $S^G(\mu_N, (\nu,w))(v)$.  
For any non-zero ideal $\frn\subseteq \fro$, let $\Lambda(\frn)$ be the eigenvalue of $T_\frn$ acting on $f$. We set $\Phi(\frn)$ to be the eigenvalue of $S_\frn$ for $(\frn,Np)=1$ and $\Phi(\frn)=0$ otherwise. We put $\mu_\frn=(\cD_F^{-1}\otimes\Gm)[\frn]$. Any element $\zeta$ of $\mu_{\frn}(L)\subseteq (\cD^{-1}_F\otimes \Gm)(L)$ with some extension $L/K$ defines a ring homomorphism $\zeta: \cO(\cD^{-1}_F\otimes \Gm)\to L$. We put $\zeta^\eta=\zeta(\frX^\eta)$ for any $\eta\in \fro$, which gives a homomorphism $\fro/\frn\to L^\times$. We fix an element $\frc\in [\mathrm{Cl}^+(F)]^{(p)}$.



\subsubsection{$q$-expansion and Hecke operators}\label{SecQExpHeckeOp}

For any $\sC\in \Dec(\fra,\frb)$ and any maximal ideal $\frem$ of $\fro$, we can find $\sC'\in \Dec(\fra,\frem^{-1}\frb)$ which is a refinement of $\sC$. For any $\sigma\in \sC$ and $\tau\in \sC'$ satisfying $\sigma\supseteq \tau$, we have natural maps $\hat{R}_\sigma\to \hat{R}_\tau$, $\hat{R}_\sigma^0\to \hat{R}_\tau^0$ and $\breve{R}_\sigma\to \breve{R}_\tau$. Consider the case $\fra=\fro$. Let $\zeta$ be an element of $\mu_\frem(K)$. Fix an isomorphism of $\fro$-modules 
\[
\rho: \frem^{-1}\frb/\frb\simeq \fro/\frem.
\]
Then we have a natural ring homomorphism 
\[
q\zeta^\rho: \hat{R}_\tau\to \hat{R}_\tau,\quad q^\xi \mapsto q^\xi \zeta^{\rho(\xi)}. 
\]
We denote by $\Tate_{\fro,\frem^{-1}\frb}(q\zeta^\rho)$ the pull-back of $\Tate_{\fro,\frem^{-1}\frb}(q)$ by this map.

On the other hand, we have $\Dec(\fra,\frb)=\Dec(\fra,\eta\frb)$ for any cusp $(\fra,\frb,\phi)$ and $\eta\in F^{\times,+}$. Thus any $\sigma\in \sC$ gives similar rings to $\hat{R}_\sigma$, $\hat{R}_\sigma^0$ and $\breve{R}_\sigma$ for the cusp $(\fra,\eta\frb, \phi)$, which are denoted by $\hat{R}_{\eta,\sigma}
$, $\hat{R}_{\eta,\sigma}^0$ and $\breve{R}_{\eta,\sigma}$, respectively. We have a natural ring homomorphism
\[
q^\eta: \hat{R}_\sigma \to \hat{R}_{\eta,\sigma},\quad q^\xi\mapsto q^{\xi\eta}.
\]
We denote by $\Tate_{\fra,\frb}(q^\eta)$ the pull-back of $\Tate_{\fra,\frb}(q)$ by this map.

We will omit entries of test objects $(A,\iota,\lambda,\psi,u,\alpha)$ for overconvergent Hilbert modular forms if they are clear from the context.

\begin{lem}\label{transform}
We have an isomorphism of test objects over $\hat{R}_{\eta,\sigma}^0$
\[
(\Tate_{\fro,\eta\frc^{-1}}(q),\lambda_{\fro,\eta\frc^{-1}})\simeq (\Tate_{\fro,\frc^{-1}}(q^\eta),\eta\lambda_{\fro,\frc^{-1}}).
\]
\end{lem}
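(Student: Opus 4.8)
The statement is a purely formal comparison of two Tate objects together with their polarizations, and the key point is to unwind the definition of the Mumford construction with respect to the two different parameters. Recall that $\mathrm{Tate}_{\fro,\frc^{-1}}(q)$ is built from the semiabelian scheme associated to the period homomorphism $q:\frc^{-1}\to (\cD_F)^{-1}\otimes\Gm(\hat R_\sigma^0)$, $\xi\mapsto(\frX^\eta\mapsto q^{\xi\eta})$, and $\mathrm{Tate}_{\fro,\frc^{-1}}(q^\eta)$ is obtained by precomposing with the multiplication-by-$\eta$ map $\frX^{\eta'}\mapsto q^{\eta\eta'}$, i.e. it is the Tate object attached to the period map $\xi\mapsto(\frX^{\eta'}\mapsto q^{\xi\eta\eta'})$ on the lattice $\frc^{-1}$. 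First I would observe that multiplication by $\eta$ gives an isomorphism of lattices $\eta\cdot(-):\frc^{-1}\to\eta\frc^{-1}$ compatible with the $\cO_F$-action, and that under this relabelling the period homomorphism defining $\mathrm{Tate}_{\fro,\eta\frc^{-1}}(q)$ over $\hat R_{\eta,\sigma}^0$ is identified, via the ring isomorphism $q^\eta:\hat R_\sigma\simeq\hat R_{\eta,\sigma}$ used to define $\mathrm{Tate}_{\fro,\frc^{-1}}(q^\eta)$, with the period map for $\mathrm{Tate}_{\fro,\frc^{-1}}(q^\eta)$. Since Mumford's construction is functorial in the period data, this produces the desired isomorphism $\mathrm{Tate}_{\fro,\eta\frc^{-1}}(q)\simeq\mathrm{Tate}_{\fro,\frc^{-1}}(q^\eta)$ of semiabelian schemes over $\hat R_{\eta,\sigma}^0$, compatible with the $\cO_F$-action and with the $\Gamma_{00}(N)$-structure (the latter because $\eta$ is totally positive and the toric part $\tfrac1N(\fra\cD_F)^{-1}\otimes\mu_N$ is unchanged by rescaling $\frb$).

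Next I would trace through the polarizations. By construction $\lambda_{\fro,\eta\frc^{-1}}$ is induced by the canonical isomorphism $((\fra\cD_F)^{-1}\otimes\Gm)\otimes_{\oef}(\eta\frc)\to(\eta^{-1}\frb\cdot\cD_F)^{-1}\otimes\Gm$ coming from the tautological pairing between character and cocharacter lattices, where here $\fra=\fro$ and $\frb=\eta\frc^{-1}$; concretely it is the pairing $\frc^{-1}\times\eta\frc^{-1}\to F$ composed with multiplication by $\eta$ on one factor. On the other hand $\eta\lambda_{\fro,\frc^{-1}}$ is, by the definition of the action of $F^{\times,+}$ on polarizations recalled in \S\ref{SecHBAV}, the composite of $\lambda_{\fro,\frc^{-1}}$ with multiplication by $\eta$, which unwinds to exactly the same pairing. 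So under the identification of the two Tate objects the polarization data match, and one checks that the positivity structures $((\frc^{-1})^+\subseteq\frc^{-1})$ and $((\eta\frc^{-1})^+\subseteq\eta\frc^{-1})$ correspond under multiplication by the totally positive element $\eta$; this is what pins down that we get a $\eta\frc^{-1}$-polarization on the left matching the $\eta^{-1}\cdot(\eta\frc^{-1})=\frc^{-1}$-polarization $\eta\lambda_{\fro,\frc^{-1}}$ on the right. I would then conclude by noting that these identifications are compatible with passage to the face $\tau$ of $\bar\sigma$ and with the $p$-adic completion, so the isomorphism descends to $\breve S_{\sC,\OCp}$ and is compatible with the canonical invariant differential chosen via $\Tr_{F/\bQ}\otimes 1$ (which, being defined on the toric part $(\fra\cD_F)^{-1}\otimes\Gm$, is insensitive to the rescaling of $\frb$).

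The main obstacle I anticipate is purely bookkeeping: keeping straight which lattice plays the role of the character group and which the cocharacter group in Mumford's construction, and correctly matching the normalization of the polarization $\lambda_{\fra,\frb}$ in \cite{Rap} with the $F^{\times,+}$-action convention $x\lambda$ of \S\ref{SecHBAV}. There is a genuine risk of an off-by-$\eta$ or an inversion in identifying $\eta\lambda_{\fro,\frc^{-1}}$ as a $\frc^{-1}\cdot\eta=\eta\frc^{-1}$-polarization versus the $\frc^{-1}$-polarization it is declared to be; I would resolve this by working on $\Cp$-points, evaluating both polarizations on the toric parts and comparing the induced maps of cocharacter lattices explicitly, where everything reduces to the single identity "multiplication by $\eta$ on $F$" and no real computation remains. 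Beyond that the proof is a routine functoriality argument and should be short.
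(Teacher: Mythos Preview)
Your approach is correct and essentially the same as the paper's: both arguments identify the period homomorphisms for the two Tate objects via multiplication by $\eta$ on the lattice $\frc^{-1}\xrightarrow{\sim}\eta\frc^{-1}$, and then invoke functoriality of Mumford's construction together with an explicit check on the polarization. The paper packages all of this into a single commutative diagram relating the period maps $\eta\frc^{-1}\to\cD_F^{-1}\otimes\Gm(\hat R_{\eta,\sigma}^0)$ and $\frc^{-1}\xrightarrow{q^\eta}\cD_F^{-1}\otimes\Gm|_{q^\eta}(\hat R_{\eta,\sigma}^0)$ through the isomorphism $\times\eta:\cD_F^{-1}\otimes\Gm\to\eta\cD_F^{-1}\otimes\Gm$, whereas you write the same content out in prose; the underlying idea is identical. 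One small caution: your parenthetical about the polarization types (``$\eta\frc^{-1}$-polarization on the left matching the $\frc^{-1}$-polarization on the right'') has the ideals inverted --- since $\fra\frb^{-1}=\fro\cdot(\eta\frc^{-1})^{-1}=\eta^{-1}\frc$, both sides are $\eta^{-1}\frc$-polarizations --- but you already flagged this as the bookkeeping obstacle to watch for, and it does not affect the argument.
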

\begin{proof}
We denote by $|_{q^\eta}$ the pull-back along the map $q^\eta$. Consider the composite
\[
\frc^{-1} \to \cD_F^{-1}\otimes \Gm(\hat{R}_\sigma^0)\to \cD_F^{-1}\otimes \Gm|_{q^\eta}(\hat{R}_{\eta,\sigma}^0)
\]
of the map $\alpha \mapsto (\frX^\xi\mapsto q^{\alpha\xi}\ (\xi\in \fro))$ and the map $q^\eta$, which we also denote by $q^\eta$. We also have a similar map $q^\eta: \eta\frc^{-1}\to \eta\cD_F^{-1}\otimes \Gm|_{q^\eta}(\hat{R}_{\eta,\sigma}^0)$. Then the following diagram over $\hat{R}_{\eta,\sigma}^0$ is commutative.
\[
\xymatrix{
\eta\frc^{-1} \ar[ddd]_{\times \eta^{-1}}\ar[rd]\ar@{=}[rrr]& & & \eta\frc^{-1}\ar[ld]\ar@{=}[ddd] \\
 & \cD_F^{-1}\otimes \Gm(\hat{R}_{\eta,\sigma}^0) \ar@{=}[r]\ar@{=}[d]& \cD_F^{-1}\otimes \Gm(\hat{R}_{\eta,\sigma}^0) \ar[d]_{\wr}^{\times \eta}& \\
 & \cD_F^{-1}\otimes \Gm|_{q^\eta}(\hat{R}_{\eta,\sigma}^0) \ar[r]^{\sim}_{\times \eta} & \eta\cD_F^{-1}\otimes \Gm|_{q^\eta}(\hat{R}_{\eta,\sigma}^0)\\
 \frc^{-1} \ar[ur]_{q^\eta}\ar[rrr]_{\times \eta} & & & \eta\frc^{-1}\ar[ul]^{q^\eta}
}
\]
This yields an isomorphism $\Tate_{\fro,\eta\frc^{-1}}(q)\to \Tate_{\fro,\frc^{-1}}(q^\eta)$ as in the lemma.
\end{proof}

\begin{lem}\label{HeckeQexpT}
Let $\frem$ be a maximal ideal of $\fro$ satisfying $\frem\nmid pN$. Let $\frc$ be an element of $[\mathrm{Cl}^+(F)]^{(p)}$. Take any elements $x,y\in F^{\times,+,(p)}$ such that $\frc'=x\frem \frc$ and $\frc''=xy^{-1}\frem^{-1}\frc$ are elements of $[\mathrm{Cl}^+(F)]^{(p)}$. Fix an isomorphism of $\fro$-modules $\rho:(x\frem\frc)^{-1}/(x\frc)^{-1}\simeq \fro/\frem$. Then we have
\[
(T_\frem f)_{\frc}(q)=\frac{\nu(x)}{\Nor_{F/\bQ}(\frem)}\left(\frac{\Nor_{F/\bQ}(\frem)^2\Phi(\frem)}{\nu(y)} f_{\frc''}(q^{xy^{-1}})+\sum_{\zeta\in \mu_\frem(\bar{\bQ}_p)}f_{\frc'}(q^x\zeta^\rho)\right).
\]
\end{lem}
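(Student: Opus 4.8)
The plan is to compute the pull-back $\tau^{*}_{\fro,\frc^{-1}}(T_\frem f)$ directly from the definition of the Hecke operator $T_\frem$ recalled in \S\ref{SecHeckeOp}, evaluated on the canonical test object $(\Tate_{\fro,\frc^{-1}}(q),\iota,\lambda,\psi,u,\alpha)$ over $\cO_{\cK_\sigma}$ and its finite covers. By the compatibility diagram \eqref{OLres} and the fact that the $q$-expansion is computed as $\tau^{*}_{\fro,\frc^{-1}}$, it suffices to enumerate the geometric points of the fibre of the finite étale map $p_1\colon\cY'_{\frc,\frem}(\uv)\to\cM(\mu_N,\frc)(\uv)$ lying over the Tate point, and, for each such point, to identify the quotient HBAV together with its induced $\Gamma_{00}(N)$-, canonical-subgroup- and $\cIW^{+}_{w}$-level structures.

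First I would note that, since $(\frem,pN)=1$, every finite flat $\frem$-cyclic $\cO_F$-subgroup of $\Tate_{\fro,\frc^{-1}}(q)[\frem]$ automatically meets neither $\Img(\psi)$ nor the canonical subgroup $C_1$ (an $\frem$-group meets a $p$-group or an $N$-group trivially), so the $p_1$-fibre is exactly the set of such subgroups. Using the Mumford exact sequence $0\to\mu_\frem\to\Tate_{\fro,\frc^{-1}}(q)[\frem]\to\frem^{-1}\frc^{-1}/\frc^{-1}\to0$, available after replacing $\sC(\fro,\frc^{-1})$ by a refinement lying in $\Dec(\fro,\frem^{-1}\frc^{-1})$ and a harmless extension of scalars --- neither of which affects the $q$-expansion, by the remark in \S\ref{SecQexpDef} --- these subgroups are: the multiplicative line $\cH_\infty=\mu_\frem$, and $\Nor_{F/\bQ}(\frem)$ lines $\cH_\zeta$ indexed by $\zeta\in\mu_\frem(\bar{\bQ}_p)$, where $\cH_\zeta$ is the $\cO_F$-submodule generated by $q^{\tilde\rho}\zeta$, with $\tilde\rho\in\frem^{-1}\frc^{-1}$ a lift of the generator of $\frem^{-1}\frc^{-1}/\frc^{-1}$ determined by $\rho$.

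Next, for each line I would identify the quotient Tate object. Quotienting by $\cH_\infty$ yields, after the twisting isomorphisms of Lemma \ref{transform} and the operator $L_{xy^{-1}}$ bringing the polarization ideal $\frem^{-1}\frc$ into $[\mathrm{Cl}^{+}(F)]^{(p)}$, the Tate object at the standard cusp of $M(\mu_N,\frc'')$ with $q$ replaced by $q^{xy^{-1}}$; this is precisely the test object entering the definition of $S_\frem$, so its contribution is $\nu(y)^{-1}\Nor_{F/\bQ}(\frem)^{2}\Phi(\frem)\,f_{\frc''}(q^{xy^{-1}})$, the factors $\nu(y)$ and $\Nor_{F/\bQ}(\frem)^{-2}$ coming respectively from $L_{xy^{-1}}$ and from the normalization of $S_\frem$. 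Quotienting by $\cH_\zeta$ yields, after $L_x$, the Tate object at the standard cusp of $M(\mu_N,\frc')$ with $q$ replaced by $q^{x}\zeta^{\rho}$ (the twist $q^{x}\zeta^{\rho}$ being the composite of the maps $q^{\eta}$ and $q\zeta^{\rho}$ of \S\ref{SecQExpHeckeOp}), contributing $\nu(x)\,f_{\frc'}(q^{x}\zeta^{\rho})$. In both cases one must check that the $\cIW^{+}_{w}$-trivialization $\alpha$ transports correctly under $\pi_\frem^{*}\colon\omega_{A'}\to\omega_{A}$; this reduces to the commutativity of the Hodge--Tate diagram preceding the construction of $\tau_{\fra,\frb}$, i.e.\ to the fact that $\pi_\frem$ is an isomorphism on invariant differentials modulo $p^{n}$ because $(\frem,p)=1$.

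Finally I would assemble the pieces: $(\pi_\frem^{*})^{-1}p_2^{*}f$ takes the values just computed on the $\Nor_{F/\bQ}(\frem)+1$ points of the $p_1$-fibre, $\Tr_{p_1}$ sums them, and the overall factor $\Nor_{F/\bQ}(\frem)^{-1}$ in the definition of $T_\frem$ produces
\[
(T_\frem f)_\frc(q)=\frac{\nu(x)}{\Nor_{F/\bQ}(\frem)}\Bigl(\frac{\Nor_{F/\bQ}(\frem)^{2}\Phi(\frem)}{\nu(y)}\,f_{\frc''}(q^{xy^{-1}})+\sum_{\zeta\in\mu_\frem(\bar{\bQ}_p)}f_{\frc'}(q^{x}\zeta^{\rho})\Bigr),
\]
as asserted; independence of the choice of refinement guarantees that this expression agrees with the genuine $q$-expansion in $\OCp[[q^{\xi}\mid\xi\in(\frc^{-1})^{+}\cup\{0\}]][1/p]$. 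The main obstacle will be the bookkeeping of the third paragraph: pinning down precisely the chain of twisting isomorphisms between the various Tate objects so that the character factors $\nu(x),\nu(y)$ and the powers of $\Nor_{F/\bQ}(\frem)$ appear with the correct exponents, and verifying that these isomorphisms match the $u$- and $\alpha$-level structures and not merely the underlying HBAV's.
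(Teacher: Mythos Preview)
Your proposal is correct and follows essentially the same approach as the paper: refine the cone decomposition, enumerate the $\frem$-cyclic subgroups of the Tate object via the Mumford uniformization as $\mu_\frem$ together with the lines $\cH_\zeta$, identify each quotient with a Tate object at the appropriate cusp using Lemma \ref{transform} and the operators $L_x$, $L_y$, $S_\frem$, and sum with the normalizing factor $\Nor_{F/\bQ}(\frem)^{-1}$. The only cosmetic difference is ordering: the paper applies $L_x$ at the outset (passing to $\Tate_{\fro,(x\frc)^{-1}}(q^x)$ and then taking quotients there), whereas you take quotients of $\Tate_{\fro,\frc^{-1}}(q)$ first and apply $L_x$ afterwards.
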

\begin{proof}
For any $\sC\in \Dec(\fro,\frc^{-1})$ and $\sC'\in \Dec(\frem, \frc^{-1})$, we choose $\sC''\in \Dec(\fro, (\frem\frc)^{-1})$ such that $\sC''$ is a common refinement of $\sC$ and $\sC'$. For any $\sigma\in \sC$ and $\sigma'\in \sC'$, take $\tau\in \sC''$ satisfying $\tau\subseteq \sigma,\sigma'$. By the diagram (\ref{OLres}) and the inclusions
\[
\breve{R}_{\tau,\OCp}\supseteq \tilde{R}_{\tau,\OCp}\subseteq\cO_{\cK_\tau}, 
\]
it is enough to show the equality of the lemma after pulling back to $\Spf(\cO_{\cK_\tau})$.

Choose an element $\xi_\frem\in (x\frem\frc)^{-1}$ which gives a generator of the principal $\fro/\frem$-module $(x\frem\frc)^{-1}/(x\frc)^{-1}$. We define an element $Q\in \cD_F^{-1}\otimes\Gm(\hat{R}^0_{x^{-1},\tau})$ by $\frX^\eta\mapsto q^{\xi_\frem\eta}$ for any $\eta\in \fro$. Then, over $\Spec(\cO_{\cK_\tau})$, the $\frem$-cyclic $\cO_F$-subgroup schemes of the Tate object $\Tate_{\fro,(x\frc)^{-1}}(q)$ are exactly those induced by the closed subgroup schemes 
\[
\mu_\frem,\quad \cH_{Q,\zeta}:=(\fro/\frem) Q \zeta\quad (\zeta\in \mu_\frem(\bar{\bQ}_p))
\]
of $\cD_F^{-1}\otimes \Gm$. 
Then the pull-back of $(T_\frem f)_{\frc}(q)$ is equal to
\begin{align*}
(L_x T_\frem f_{\frc'})(\Tate_{\fro,\frc^{-1}}(q),\lambda_{\fro,\frc^{-1}})&=\nu(x) (T_\frem f_{\frc'})(\Tate_{\fro,\frc^{-1}}(q),x^{-1}\lambda_{\fro,\frc^{-1}})\\
&=\nu(x) (T_\frem f_{\frc'})(\Tate_{\fro,(x\frc)^{-1}}(q^x),\lambda_{\fro,(x\frc)^{-1}})
\end{align*}
which equals
\[
\frac{\nu(x)}{\Nor_{F/\bQ}(\frem)}\left( f_{\frc'} (\Tate_{\fro,(x\frc)^{-1}}(q^x)/\mu_\frem)+\sum_{\zeta\in\mu_\frem(\bar{\bQ}_p)}f_{\frc'}(\Tate_{\fro,(x\frc)^{-1}}(q^x)/\cH_{Q,\zeta}|_{q^x}) \right).
\]

For the first term, we have the exact sequence
\[
\xymatrix{
0\ar[r] & \mu_\frem \ar[r] & \cD_F^{-1}\otimes \Gm\ar[r] & \frem^{-1}\cD_F^{-1}\otimes \Gm\ar[r] & 0.
}
\]
For any $\xi\in (x\frc)^{-1}$, the natural map $\cD_F^{-1}\otimes \Gm\to \frem^{-1}\cD_F^{-1}\otimes \Gm$ sends the $\hat{R}_{x^{-1},\tau}^0$-valued point $(\frX^\eta\mapsto q^{\xi\eta}\ (\eta\in \fro))$ to $(\frX^\eta\mapsto q^{\xi\eta}\ (\eta\in \frem))$ and this gives an isomorphism
\[
\Tate_{\fro,(x\frc)^{-1}}(q)/\mu_\frem\simeq \Tate_{\frem,(x\frc)^{-1}}(q)
\]
compatible with natural additional structures. This implies that the evaluation $f_{\frc'} (\Tate_{\fro,(x\frc)^{-1}}(q^x)/\mu_\frem)$ equals
\begin{align*}
f_{\frc'} (\Tate_{\frem,(x\frc)^{-1}}(q^x),&\lambda_{\frem,(x\frc)^{-1}})\\
&=f_{\frc'} (\frem^{-1}\otimes_{\oef} \Tate_{\fro,\frem(x\frc)^{-1}}(q^x),\frem^2 \lambda_{\fro,\frem(x\frc)^{-1}})\\
&=\frac{\Nor_{F/\bQ}(\frem)^2}{\nu(y)}(L_y S_\frem f_{\frc'})(\Tate_{\fro,(\frc'')^{-1}}(q^{xy^{-1}}),\lambda_{\fro,(\frc'')^{-1}})\\
&=\frac{\Nor_{F/\bQ}(\frem)^2\Phi(\frem)}{\nu(y)} f_{\frc''}(\Tate_{\fro,(\frc'')^{-1}}(q^{xy^{-1}})).
\end{align*}

For the second term, the subgroup 
\[
\{(\frX^\eta\mapsto q^{\xi\eta}\zeta^{\rho(\xi\eta)}\ (\eta\in \fro))\mid \xi\in (\frc')^{-1}\}\subseteq \cD_F^{-1}\otimes\Gm(\hat{R}_{x^{-1},\tau}^0)
\]
is generated by $\cH_{Q,\zeta}$ and the image of the subgroup
\[
\{(\frX^\eta\mapsto q^{\xi\eta}\ (\eta\in \fro))\mid \xi\in (x\frc)^{-1}\}\subseteq \cD_F^{-1}\otimes\Gm(\hat{R}_{x^{-1},\sigma}^0)
\]
via the natural map $\hat{R}_{x^{-1},\sigma}^0\to \hat{R}_{x^{-1},\tau}^0$. This yields an isomorphism
\[
\Tate_{\fro,(x\frc)^{-1}}(q)/\cH_{Q,\zeta} \simeq \Tate_{\fro,(\frc')^{-1}}(q\zeta^\rho)
\]
compatible with natural additional structures.
Hence the lemma follows.
\end{proof}

A similar proof also gives the following variant for $\frem\mid Np$.

\begin{lem}\label{HeckeQexpU}
\begin{enumerate}
\item\label{HeckeQexpUN}
For any maximal ideal $\frem\mid N$, take any element $x\in F^{\times,+,(p)}$ satisfying $\frc'=x\frem \frc\in [\mathrm{Cl}^+(F)]^{(p)}$. Fix an isomorphism of $\fro$-modules $\rho:(x\frem\frc)^{-1}/(x\frc)^{-1}\simeq \fro/\frem$. Then we have
\[
(T_\frem f)_{\frc}(q)=\frac{\nu(x)}{\Nor_{F/\bQ}(\frem)}\sum_{\zeta\in \mu_\frem(\bar{\bQ}_p)}f_{\frc'}(q^x\zeta^\rho).
\]
\item\label{HeckeQexpUp}
For any maximal ideal $\frp\mid p$, take any element $x\in F^{\times,+,(p)}$ satisfying $\frc'=xx_{\frp}^{-1}\frp \frc\in [\mathrm{Cl}^+(F)]^{(p)}$. Fix an isomorphism of $\fro$-modules $\rho:(xx_{\frp}^{-1}\frp\frc)^{-1}/(xx_{\frp}^{-1}\frc)^{-1}\simeq \fro/\frp$. Then we have
\[
(U_\frp f)_{\frc}(q)=\frac{\nu(x)}{\Nor_{F/\bQ}(\frp)}\sum_{\zeta\in \mu_\frp(\bar{\bQ}_p)}f_{\frc'}(q^{xx_\frp^{-1}}\zeta^\rho).
\]
\end{enumerate}
\end{lem}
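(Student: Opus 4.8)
\textbf{Proof proposal for Lemma \ref{HeckeQexpU}.}

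The plan is to follow the proof of Lemma \ref{HeckeQexpT} verbatim, simply dropping the contributions that vanish in the ramified case. As in that proof, one reduces to computing the $q$-expansion after pulling back to $\Spf(\cO_{\cK_\tau})$ for suitable cones $\tau$, using the diagram (\ref{OLres}) and the inclusions $\breve{R}_{\tau,\OCp}\supseteq \tilde{R}_{\tau,\OCp}\subseteq\cO_{\cK_\tau}$. For part (\ref{HeckeQexpUN}): since $\frem\mid N$, the subgroup $\mu_\frem$ of $\cD_F^{-1}\otimes\Gm$ meets the image of the $\Gamma_{00}(N)$-structure $\psi_{\fro,(x\frc)^{-1}}$ nontrivially, so it does \emph{not} define an admissible $\frem$-cyclic $\cO_F$-subgroup scheme for the Hecke correspondence; hence among the $\Nor_{F/\bQ}(\frem)+1$ candidate subgroups that appeared in the proof of Lemma \ref{HeckeQexpT}, only the $\Nor_{F/\bQ}(\frem)$ subgroups $\cH_{Q,\zeta}$ survive. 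The computation of each term $f_{\frc'}(\Tate_{\fro,(x\frc)^{-1}}(q^x)/\cH_{Q,\zeta}|_{q^x})$ is identical to the second-term computation in Lemma \ref{HeckeQexpT}, yielding $f_{\frc'}(q^x\zeta^\rho)$, and summing over $\zeta\in\mu_\frem(\bar{\bQ}_p)$ with the normalizing factor $\nu(x)/\Nor_{F/\bQ}(\frem)$ gives the stated formula.

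For part (\ref{HeckeQexpUp}): recall that $U_\frp$ is defined as $(x_\frp^*)^{v_\frp(\frp)}\circ T'_\frp=x_\frp^*\circ T'_\frp$, and $T'_\frp$ is the Hecke operator for $\frl=\frp$ whose correspondence $\cY'_{\frc,\frp}$ imposes the condition $\Ker(\pi_\frp)\cap C_1=0$ in addition to $\Ker(\pi_\frp)\cap\Img(\psi)=0$. Over $\Spec(\cO_{\cK_\tau})$ the canonical subgroup of $\Tate_{\fro,(x x_\frp^{-1}\frc)^{-1}}(q)$ at the cusp is precisely $\mu_\frp$ (the multiplicative part), so the candidate $\mu_\frp$ is excluded by the condition $\Ker(\pi_\frp)\cap C_1=0$, and again only the $\Nor_{F/\bQ}(\frp)$ subgroups $\cH_{Q,\zeta}$ remain. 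The extra twist by $x_\frp^*$ is what converts the $x$-shift appearing naturally for $T'_\frp$ into the $xx_\frp^{-1}$-shift in the statement; this is handled exactly as the factor $L_x$ was handled in Lemma \ref{HeckeQexpT}, using Lemma \ref{transform} to rewrite $(\Tate_{\fro,(x x_\frp^{-1}\frc)^{-1}}(q), x_\frp\lambda)$ in terms of $(\Tate_{\fro,(\frc')^{-1}}(q^{x x_\frp^{-1}}),\lambda_{\fro,(\frc')^{-1}})$ after the quotient by $\cH_{Q,\zeta}$ is identified with $\Tate_{\fro,(\frc')^{-1}}(q\zeta^\rho)$ as in the second-term computation.

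The only point requiring care --- and the main obstacle --- is verifying that the multiplicative subgroup $\mu_\frem$ (resp.\ $\mu_\frp$) really is the one excluded by the correspondence: for $\frem\mid N$ this is the compatibility of the Tate object's level-$N$ structure $\psi_{\fra,\frb}$ with the exact sequence $0\to \tfrac1N(\fra\cD_F)^{-1}\otimes\mu_N\to\Tate_{\fra,\frb}(q)[N]\to\tfrac1N\frb/\frb\to0$ recalled in \S\ref{SecToroidal}, and for $\frp\mid p$ it is the statement that $\Tate_{\fra,\frb}(q)$ is ordinary with canonical subgroup the torus part, which follows from Proposition \ref{cansubAV} (\ref{cansubAV_Frob}) applied over $\cO_{\cK_\tau}$ together with the fact that the Tate object reduces to a torus modulo the ideal of definition. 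Once this identification is in place, no new computation is needed beyond what was already carried out for Lemma \ref{HeckeQexpT}.
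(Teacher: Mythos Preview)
Your proposal is correct and follows exactly the approach the paper intends: the paper's own proof consists of the single sentence ``A similar proof also gives the following variant for $\frem\mid Np$,'' and you have simply spelled out what that means, namely that the candidate subgroup $\mu_\frem$ (resp.\ $\mu_\frp$) is excluded by the level-$N$ structure condition (resp.\ the canonical subgroup condition in the definition of $\cY'_{\frc,\frp}$), leaving only the $\cH_{Q,\zeta}$ terms from the second part of the computation in Lemma~\ref{HeckeQexpT}. One small remark: to identify the canonical subgroup of the Tate object with its multiplicative part you need not invoke Proposition~\ref{cansubAV} over $\cO_{\cK_\tau}$ (whose hypotheses on the base are not literally met); it is more direct to use that the map $\tau_{\fro,\frc^{-1}}$ lands in the ordinary locus $\bar{\SGm}(\mu_N,\frc)(0)$, over which the canonical subgroup was \emph{defined} in \S\ref{SecCansubFam} as the unit component $\bar{\frA}^\univ[p^n]^0$.
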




\subsubsection{$q$-expansion and Hecke eigenvalues}\label{SecQExpHeckeEV}

For any $\xi\in F^\times$, we put $\chi_p(\xi)=\prod_{\frp\mid p} x_{\frp}^{v_\frp(\xi)}$. For any non-zero ideal $\frn\subseteq \fro$, take $\eta\in F^{\times,+}$ satisfying $\frc=\eta^{-1}\frn\in  [\mathrm{Cl}^+(F)]^{(p)}$ and put
\[
C(\frn,f)=\nu(\eta^{-1}\chi_p(\eta))a_{\fro,\frc^{-1}}(f,\eta).
\]
By Lemma \ref{transform}, this is independent of the choice of $\eta$. 
Then we have the following variant of \cite[(2.23)]{Shimura} in our setting.

\begin{lem}\label{ShimuraRelation}
For any non-zero ideal $\frl,\frn$ of $\fro$, we have
\[
C(\frn,T_\frl f)=\sum_{\frl+\frn\subseteq \fra\subseteq \fro} \Nor_{F/\bQ}(\fra)\Phi(\fra) C(\fra^{-2}\frl\frn,f).
\]
\end{lem}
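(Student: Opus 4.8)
The plan is to follow Shimura's argument for \cite[(2.23)]{Shimura}: first reduce the identity to the case where $\frl$ is a prime power, then to the case of a maximal ideal, and finally establish the prime case by a direct computation with the explicit $q$-expansion formulas of Lemma \ref{HeckeQexpT} and Lemma \ref{HeckeQexpU}. Since $f$ is an eigenform we have $T_\frn f=\Lambda(\frn)f$ and $S_\frn f=\Phi(\frn)f$ for every $\frn$, so for each fixed $\frn$ both sides of the asserted identity are $K$-linear in the Hecke eigenvalues of $f$, and the heart of the matter is the relation at primes.

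The first step is the multiplicativity reduction. Suppose the identity holds for two coprime ideals $\frl_1,\frl_2$ and every $\frn$. Using $T_{\frl_1\frl_2}=T_{\frl_1}T_{\frl_2}$, apply the identity for $\frl_1$ to $C(\frn,T_{\frl_1}(T_{\frl_2}f))$ and then the identity for $\frl_2$ to each term $C(\fra_1^{-2}\frl_1\frn,T_{\frl_2}f)$. In both applications the summation ideal $\fra_i$ divides $\gcd(\frl_i,\frn)$, so $\fra_i^{-2}\frl_i\frn$ is again integral and the $\fra_i$ are pairwise coprime; by the Chinese remainder theorem, $\fra=\fra_1\fra_2$ runs bijectively over the ideals with $\frl_1\frl_2+\frn\subseteq\fra\subseteq\fro$, while $\Nor_{F/\bQ}$ and $\Phi$ are multiplicative on coprime ideals (the latter because $S_\frl$ is). Hence the double sum collapses to the single sum, and the identity holds for $\frl_1\frl_2$. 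This reduces us to $\frl=\frem^s$ for a maximal ideal $\frem$; an induction on $s$ using the Hecke relations (\ref{Heckes}), which for the eigenform $f$ read $\Lambda(\frem)\Lambda(\frem^{s-1})=\Lambda(\frem^s)+\Nor_{F/\bQ}(\frem)\Phi(\frem)\Lambda(\frem^{s-2})$ when $\frem\nmid Np$ and $\Lambda(\frem)\Lambda(\frem^{s-1})=\Lambda(\frem^s)$ otherwise, together with the analogous recursion satisfied by the divisor sums on the right-hand side, then reduces the claim to the case $s=1$.

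For the prime case $\frl=\frem$ I would fix $\eta\in F^{\times,+}$ with $\frc:=\eta^{-1}\frn\in[\mathrm{Cl}^+(F)]^{(p)}$, so that $C(\frn,T_\frem f)=\nu(\eta^{-1}\chi_p(\eta))\,a_{\fro,\frc^{-1}}(T_\frem f,\eta)$, and compute the coefficient of $q^\eta$ in $(T_\frem f)_\frc(q)$ from Lemma \ref{HeckeQexpT} (for $\frem\nmid Np$) or Lemma \ref{HeckeQexpU} (for $\frem\mid Np$). In the term $\sum_{\zeta\in\mu_\frem(\bar{\bQ}_p)}f_{\frc'}(q^x\zeta^\rho)$ the character $\zeta\mapsto\zeta^{\rho(\eta/x)}$ of $\mu_\frem(\bar{\bQ}_p)\simeq\fro/\frem$ is trivial, so this sum contributes $\Nor_{F/\bQ}(\frem)\,a_{\fro,\frc'^{-1}}(f,\eta/x)$; and the term involving $f_{\frc''}(q^{xy^{-1}})$ contributes exactly when $\eta y/x\in(\frc'')^{-1}$, which happens precisely when $\frem\mid\frn$. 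Using that $x,y\in F^{\times,+,(p)}$ are $p$-adic units, so $\chi_p(\eta/x)=\chi_p(\eta y/x)=\chi_p(\eta)$, and that $\eta/x$ and $\eta y/x$ represent the relevant classes at the cusps $\frc'$ and $\frc''$, one checks that the $\nu$-factors combine to give exactly $C(\frem\frn,f)$ and $\Nor_{F/\bQ}(\frem)\Phi(\frem)C(\frem^{-1}\frn,f)$ respectively. Since the ideals $\fra$ with $\frem+\frn\subseteq\fra\subseteq\fro$ are $\{\fro\}$ if $\frem\nmid\frn$ and $\{\fro,\frem\}$ if $\frem\mid\frn$, this is precisely the asserted formula; for $\frem\mid Np$ one has $\Phi(\frem)=0$ and Lemma \ref{HeckeQexpU} gives directly $C(\frn,T_\frem f)=C(\frem\frn,f)$, taking care in the case $\frem\mid p$ to carry the factor $x_\frp$ through the definitions of $T_\frem$ and $\chi_p$.

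The main obstacle is the last step: the delicate bookkeeping of the twisting characters $\nu$ and $\chi_p$ and of the changes of fractional ideal representatives when passing between the $q$-expansions attached to the different cusps $\frc$, $\frc'$, $\frc''$ (and, for $\frem\mid p$, the extra $x_\frp$-twist built into the arithmetic normalization of $U_\frp$). Verifying that all these factors cancel to leave exactly $C(\frem\frn,f)$ and $C(\frem^{-1}\frn,f)$ is the computational core; by contrast, the reduction steps in the second paragraph are routine once the divisor sums are reorganized using that each summation ideal divides $\frn$.
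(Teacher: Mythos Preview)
Your approach is correct and is essentially the same as the paper's: reduce to prime powers by multiplicativity, handle the case $\frl=\frem$ by extracting the $q^\eta$-coefficient from Lemma~\ref{HeckeQexpT} (resp.\ Lemma~\ref{HeckeQexpU}) and tracking the $\nu$- and $\chi_p$-factors, then induct on $s$ via the Hecke relation~(\ref{Heckes}). The paper compresses all of this into a few lines, whereas you have spelled out the multiplicative reduction and the cancellation of the twisting factors more explicitly; in particular your observations that $\rho(x^{-1}\eta)=0$ (so the character sum equals $\Nor_{F/\bQ}(\frem)$) and that $x^{-1}y\eta\in(\frc'')^{-1}$ iff $\frem\mid\frn$ are exactly the two computations the paper records.
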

\begin{proof}
We can easily reduce it to the case $\frl=\frem^s$ for some maximal ideal $\frem$. Consider the case of $\frem\nmid Np$ and $s=1$. We follow the notation of Lemma \ref{HeckeQexpT}. Since $x^{-1}\eta\in (x\frc)^{-1}$, we have
\[
\sum_{\zeta\in \mu_\frem(\bar{\bQ}_p)}\zeta^{\rho(x^{-1}\eta)}= \Nor_{F/\bQ}(\frem).
\]
Moreover, $x^{-1}y\eta\in (\frc'')^{-1}$ if and only if $\frem \mid \frn$.
Thus Lemma \ref{HeckeQexpT} implies
\[
C(\frn, T_\frem f)=\left\{\begin{array}{ll} \Nor_{F/\bQ}(\frem)\Phi(\frem) C(\frem^{-1}\frn,f)+C(\frem\frn,f)& (\frem \mid \frn)\\
C(\frem\frn,f) & (\frem \nmid \frn)
\end{array} \right.
\]
and the lemma follows for this case. The case of $\frem\mid Np$ and $s=1$ can be shown similarly from Lemma \ref{HeckeQexpU}. For $s\geq 2$, using the relation (\ref{Heckes}), we can show the lemma by an induction in the same way as the classical case.
\end{proof}

\begin{prop}\label{weakmulti}
For any $\frc\in [\mathrm{Cl}^+(F)]^{(p)}$ and any $\eta\in (\frc^{-1})^+$, put $\frn=\eta\frc\subseteq \fro$.
Then we have
\[
a_{\fro,\frc^{-1}}(f,\eta)=\nu(\eta \chi_p(\eta)^{-1})  \Lambda(\frn) a_{\fro,\fro}(f,1).
\]
\end{prop}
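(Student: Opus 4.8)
The plan is to deduce the proposition from the multiplicativity relation of Lemma \ref{ShimuraRelation}, following the classical computation of Fourier coefficients of Hecke eigenforms. First I would record two elementary facts about the normalized coefficients $C(\frn,f)$. Since each $q$-expansion coefficient $a_{\fro,\frc^{-1}}(f,\xi)$ is $\Cp$-linear in $f$ and $\nu(\eta^{-1}\chi_p(\eta))$ is a scalar, the map $f\mapsto C(\frn,f)$ is $\Cp$-linear for every non-zero ideal $\frn\subseteq\fro$; moreover, taking $\eta=1$ and $\frc=\fro$ gives $C(\fro,f)=a_{\fro,\fro}(f,1)$. Recall also that $C(\frn,f)$ is independent of the auxiliary choice of $\eta$, by Lemma \ref{transform}.

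Next I would reformulate the target identity. Writing $\frn=\eta\frc$ with $\eta\in(\frc^{-1})^+$, the definition gives $C(\frn,f)=\nu(\eta^{-1}\chi_p(\eta))\,a_{\fro,\frc^{-1}}(f,\eta)$, so the asserted equality
\[
a_{\fro,\frc^{-1}}(f,\eta)=\nu(\eta\chi_p(\eta)^{-1})\,\Lambda(\frn)\,a_{\fro,\fro}(f,1)
\]
is equivalent to $C(\frn,f)=\Lambda(\frn)\,C(\fro,f)$. Hence it suffices to establish the latter for every non-zero ideal $\frn\subseteq\fro$.

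For this I would apply Lemma \ref{ShimuraRelation} with its two ideals chosen so that the left-hand side becomes $C(\fro,T_\frn f)$; that is, with the subscript of the Hecke operator taken to be our $\frn$ and the other ideal taken to be $\fro$. Since $\frn+\fro=\fro$, the only ideal $\fra$ satisfying $\fro\subseteq\fra\subseteq\fro$ is $\fra=\fro$, and using $\Nor_{F/\bQ}(\fro)=\Phi(\fro)=1$ the relation collapses to $C(\fro,T_\frn f)=C(\frn,f)$. On the other hand $f$ is an eigenform, so $T_\frn f=\Lambda(\frn)f$, and $\Cp$-linearity of $C(\fro,-)$ gives $C(\fro,T_\frn f)=\Lambda(\frn)\,C(\fro,f)$. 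Comparing the two computations yields $C(\frn,f)=\Lambda(\frn)\,C(\fro,f)$, which is the reformulated statement.

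The argument has no real obstacle once Lemma \ref{ShimuraRelation} is available: the proof is essentially a one-line specialization of that relation. The only points needing a little care are bookkeeping — confirming that $C(\frn,f)$ does not depend on the chosen $\eta$ (already covered by Lemma \ref{transform}), that $C(\fro,f)=a_{\fro,\fro}(f,1)$, and that evaluating $T_\frn$ on the eigenform $f$ produces exactly the eigenvalue $\Lambda(\frn)$ in the normalization fixed in Section \ref{SecQexp}.
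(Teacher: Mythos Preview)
Your proof is correct and matches the paper's argument essentially line for line: both reduce the statement to $C(\frn,f)=\Lambda(\frn)\,C(\fro,f)$, then obtain this by specializing Lemma~\ref{ShimuraRelation} to compute $C(\fro,T_\frn f)$ (where only $\fra=\fro$ contributes) and comparing with the eigenform relation $T_\frn f=\Lambda(\frn)f$.
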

\begin{proof}
We have $a_{\fro,\frc^{-1}}(f,\eta)=\nu(\eta\chi_p(\eta)^{-1}) C(\frn,f)$ and $C(\fro, f)=a_{\fro,\fro}(f,1)$. By Lemma \ref{ShimuraRelation}, we obtain
\begin{align*}
\Lambda(\frn)a_{\fro,\fro}(f,1)&=\Lambda(\frn)C(\fro,f)=C(\fro, T_\frn f)\\
&=C(\frn,f)=\nu(\eta^{-1}\chi_p(\eta)) a_{\fro,\frc^{-1}}(f,\eta),
\end{align*}
from which the proposition follows.
\end{proof}



\subsection{$q$-expansion and integrality}\label{SecQExpIntegral}

Let $\kappa\in \cW(\Cp)$ be any $n$-analytic weight. Put
\[
\mathbf{M}(\mu_N,\frc,\kappa)(0):=H^0(\bar{\SGm}(\mu_N,\frc)(0)_{\OCp},\bOmega^\kappa)\subseteq M(\mu_N,\frc,\kappa)(0).
\]
This is an $\OCp$-lattice of the Banach $\Cp$-module $M(\mu_N,\frc,\kappa)(0)$. Consider the cusp $(\fro,\frc^{-1},\id)$, the fixed cone decomposition $\sC=\sC(\fro,\frc^{-1})\in \Dec(\fro,\frc^{-1})$ and $\sigma\in \sC$. 
By the definition of the $q$-expansion, every coefficient of the $q$-expansion of $f\in \mathbf{M}(\mu_N,\frc,\kappa)(0)$ is an element of $\OCp$. We also have the following converse, which can be considered as a $q$-expansion principle for our setting.

\begin{prop}\label{qexpprinciple}
Let $f_\frc$ be any element of $M(\mu_N,\frc,\kappa)(0)$. If every coefficient of the $q$-expansion $f_\frc(q)$ is in $\OCp$, then we have $f\in \mathbf{M}(\mu_N,\frc,\kappa)(0)$.
\end{prop}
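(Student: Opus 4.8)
The plan is to reduce the statement to a statement about the integral model $\bar{\SGm}(\mu_N,\frc)(0)_{\OCp}$ and then use normality together with the $q$-expansion at a boundary point to control the denominators. More precisely, since $\bar{\cM}(\mu_N,\frc)(0)_{\Cp}$ is the Raynaud generic fiber of the quasi-compact admissible formal scheme $\bar{\SGm}(\mu_N,\frc)(0)_{\OCp}$, and $\bOmega^\kappa$ restricted to the ordinary locus is an invertible sheaf (as recalled in \S\ref{SecCompCp}), the module $M(\mu_N,\frc,\kappa)(0)$ is the generic fiber of $\mathbf{M}(\mu_N,\frc,\kappa)(0)=H^0(\bar{\SGm}(\mu_N,\frc)(0)_{\OCp},\bOmega^\kappa)$. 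So given $f_\frc\in M(\mu_N,\frc,\kappa)(0)$ there is a minimal integer $m\geq 0$ with $p^m f_\frc\in \mathbf{M}(\mu_N,\frc,\kappa)(0)$, and I must show $m=0$. Suppose $m\geq 1$. Then the reduction $\overline{p^m f_\frc}$ of $p^m f_\frc$ in $H^0(\bar{\SGm}(\mu_N,\frc)(0)_{\OCp},\bOmega^\kappa)\otimes_{\OCp}\Fpbar$ is a nonzero section (by minimality of $m$), and I want to derive a contradiction from the hypothesis that the $q$-expansion of $f_\frc$ has coefficients in $\OCp$, i.e. that $p^m$ times the $q$-expansion is divisible by $p^m$, hence vanishes modulo $m_{\Cp}$ to first order.

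First I would work formal-locally near the cusp $(\fro,\frc^{-1},\id)$. Using the morphism $\tau_{\fro,\frc^{-1}}:\breve{S}_{\sC,\OCp}\to \frIW^+_{w,\frc}(0)_{\OCp}$ constructed in \S\ref{SecQexpDef} over $\bar{\SGm}(\mu_N,\frc)(0)_{\OCp}$, pulling back the integral section $p^m f_\frc$ gives an element of $\cO(\breve{S}_{\sC,\OCp})$ whose reduction mod $m_{\Cp}$ is the reduction of the $q$-expansion $p^m f_\frc(q)$. By hypothesis this reduction is zero. Now the key point is that the composite $\breve{S}_{\sC,\OCp}\to \bar{\SGm}(\mu_N,\frc)(0)_{\OCp}$ meets the boundary divisor $D_k$ of the special fiber, and more importantly that $\breve{R}_{\sigma,\OCp}$ is an integral domain by Lemma \ref{breveRdom}, as is $\tilde{R}_{\sigma,\OCp}$ by Lemma \ref{pIcompl}. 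I would further restrict along $\Spf(\cO_{\cK_\sigma})\to \tilde{S}_{\sigma,\OCp}$ as in the diagram (\ref{OLres}), where $\cO_{\cK_\sigma}$ is a complete valuation ring. Since the section $p^m f_\frc$ generates a rank-one $\cO_{\cK_\sigma}$-module after trivializing the invertible sheaf $\bOmega^\kappa$ over this local chart, its reduction mod $m_{\Cp}$ being zero on $\Spf(\cO_{\cK_\sigma})$ forces $p^m f_\frc$ to be divisible by a positive power of the maximal ideal of $\cO_{\cK_\sigma}$ — but $\cO_{\cK_\sigma}$ has residue characteristic $p$ and the trivialization is by a unit, so this says the $q$-expansion coefficient $a_{\fro,\frc^{-1}}(p^m f_\frc,\xi)$ for the relevant generating monomial lies in $p\OCp$...

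The genuine obstacle, and the place where I expect to spend the most care, is passing from "$p^m f_\frc$ vanishes modulo $m_{\Cp}$ on the boundary chart $\Spf(\cO_{\cK_\sigma})$" to "$p^m f_\frc$ vanishes modulo $m_{\Cp}$ on an open neighborhood in $\bar{\SGm}(\mu_N,\frc)(0)_{\OCp}$, hence (by connectedness of the ordinary locus, Ribet's theorem via Lemma \ref{Mvconn}, or rather its special-fiber analogue) vanishes identically, contradicting minimality of $m$". This requires the irreducibility of the special fiber $\bar{\SGm}(\mu_N,\frc)(0)_k$ together with the fact that the boundary chart is a chart of a dense open, so a section of an invertible sheaf on an integral scheme vanishing on a nonempty open vanishes everywhere. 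I would phrase it as: reducing mod $m_{\Cp}$, $\overline{p^m f_\frc}$ is a section over $\bar{\SGm}(\mu_N,\frc)(0)_k\times_{\kbar}\Fpbar$ of a line bundle, and its restriction to the boundary chart pulls back to zero in $\cO_{\cK_\sigma}\otimes\Fpbar$, hence it vanishes on a nonempty open subset of the irreducible special fiber, hence vanishes. Then $\overline{p^m f_\frc}=0$, i.e. $p^m f_\frc\in p\cdot\mathbf{M}(\mu_N,\frc,\kappa)(0)$, so $p^{m-1}f_\frc\in\mathbf{M}(\mu_N,\frc,\kappa)(0)$, contradicting minimality. A subtle check along the way is that the trivialization of $\bOmega^\kappa$ used near the cusp is by the canonical invariant differential $(\Tr_{F/\bQ}\otimes 1)^*\frac{dT}{T}$ and the section $\alpha_{\fro,\frc^{-1}}$, which is a genuine unit on $\breve{R}_{\sigma,\OCp}$, so that evaluating $f_\frc$ against the canonical test object and reading off $q$-expansion coefficients is exactly dividing by this unit — this is what makes "coefficients in $\OCp$" equivalent to "pullback of the section lies in the integral subring $\breve{R}_{\sigma,\OCp}$", which is the content linking the hypothesis to integrality of the section on that chart.
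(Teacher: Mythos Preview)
Your overall strategy is the same as the paper's: pull back to the boundary chart, observe that the reduction vanishes there, use irreducibility of the ordinary special fiber to conclude the reduction vanishes everywhere, and derive a contradiction with minimality. However, the step ``$\overline{p^m f_\frc}=0$, i.e.\ $p^m f_\frc\in p\cdot\mathbf{M}(\mu_N,\frc,\kappa)(0)$'' is a genuine gap. You are working over $\OCp$, whose value group is $\bQ$, not $\bZ$. Vanishing of a section in the special fiber over $\Fpbar$ only tells you that the section lies in $x\,\mathbf{M}$ for \emph{some} $x\in m_{\Cp}$ (this is the paper's Lemma~\ref{AnnFinGen}~(\ref{AnnFinGen_Red})), and $v_p(x)$ could be arbitrarily small. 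You cannot conclude divisibility by $p$, so you cannot deduce $p^{m-1}f_\frc\in\mathbf{M}$, and the induction on integer powers of $p$ does not close.

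The paper repairs exactly this point by replacing your ``minimal integer $m$'' with the full ideal $J=\{x\in\OCp\mid x f_\frc\in\mathbf{M}\}$. One first shows (Lemma~\ref{AnnFinGen}~(\ref{AnnFinGen_Ann})) that $J$ is principal, say $J=(x)$; this uses the maximum modulus principle on a finite affinoid cover together with $A_i^\circ=\frA_i$. Now if $x\in m_{\Cp}$, the same irreducibility argument gives $xf_\frc\in y\,\mathbf{M}$ for some $y\in m_{\Cp}$, whence $x/y\in J=(x)$ and so $y^{-1}\in\OCp$, a contradiction. Thus $x\in\cO_{\Cp}^\times$ and $f_\frc\in\mathbf{M}$. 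Your argument becomes correct once you make this change; everything else you wrote (the passage to $\pi_w^{-1}(\breve{S}_{\sigma,\OCp})$ via $\kappa$-equivariance, the use of irreducibility of $\bar{\SGm}^{\mathrm{ord}}_{\Fpbar}$, Krull's intersection theorem to pass from the formal boundary neighborhood to a Zariski open) matches the paper's proof. The detour through $\Spf(\cO_{\cK_\sigma})$ is unnecessary here; $\breve{S}_{\sigma,\OCp}$ already suffices.
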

\begin{proof}
First we show the following lemma.

\begin{lem}\label{AnnFinGen}
Let $\frX$ be a quasi-compact separated admissible formal scheme over $\OCp$. Let $\frF$ be an invertible sheaf on $\frX$. Let $\frX_{\Fpbar}$ be the special fiber of $\frX$ and $\frF_{\Fpbar}$ the pull-back of $\frF$ to $\frX_{\Fpbar}$.

\begin{enumerate}
\item\label{AnnFinGen_Ann} Suppose that $\frX^\rig$ is reduced and $\frX$ is integrally closed in $\frX^\rig$. Then, for any non-zero element $f\in H^0(\frX,\frF)[1/p]$, the $\OCp$-submodule of $\Cp$
\[
I=\{x\in\Cp\mid x f\in H^0(\frX,\frF)\}
\]
is principal.
\item\label{AnnFinGen_Red} Let $g$ be an element of $H^0(\frX,\frF)$. Suppose that the image of $g$ by the map
\[
H^0(\frX,\frF)\to H^0(\frX_{\Fpbar},\frF_{\Fpbar})
\]
is zero. Then there exists $x\in m_{\Cp}$ satisfying $g\in xH^0(\frX,\frF)$.
\end{enumerate}
\end{lem}
\begin{proof}
For the first assertion, take a finite covering $\frX=\bigcup_{i=1}^r \frU_i$ by formal affine open subschemes $\frU_i=\Spf(\frA_i)$ such that $\frF|_{\frU_i}$ is trivial. Since $\frX$ is separated, the intersection $\frU_{i,j}=\frU_i\cap \frU_j$ is also affine. Put $A_i=\frA_i[1/p]$, $\SGm_i=\Gamma(\frU_i,\frF)$ and $\SGm_{i,j}=\Gamma(\frU_{i,j},\frF)$. Then we have a commutative digram
\[
\xymatrix{
0\ar[r] & \Gamma(\frX,\frF) \ar[r]\ar[d] & \prod_{i=1}^r \SGm_i \ar@<0.5ex>[r]\ar@<-0.5ex>[r]\ar[d] & \prod_{i,j=1}^r \SGm_{i,j} \ar[d] \\
0\ar[r] & \Gamma(\frX,\frF)[1/p] \ar[r] & \prod_{i=1}^r \SGm_i[1/p] \ar@<0.5ex>[r]\ar@<-0.5ex>[r]& \prod_{i,j=1}^r \SGm_{i,j}[1/p],
}
\]
where the rows are exact and the vertical arrows are injective. Put $I_i=\{x\in\Cp\mid x f|_{\frU_i} \in \SGm_i\}$. Note that $I_i=\Cp$ if $f|_{\frU_i}=0$. Since the above diagram implies $I=\bigcap_{i=1}^r I_i$, it is enough to show that $I_i$ is principal if $f|_{\frU_i}\neq 0$.

By choosing a trivialization, we identify $\SGm_i$ with $\frA_i$ and $f|_{\frU_i}\in \SGm_i[1/p]$ with a non-zero element $g_i\in A_i$. Note that $A_i$ is a reduced $\Cp$-affinoid algebra. Since $\frA_i$ is an admissible formal $\OCp$-algebra which is integrally closed in $A_i$, \cite[Remark after Proposition 6.3.4/1]{BGR} implies $A_i^\circ=\frA_i$. Thus, for any $x \in \Cp$, we have
\[
x g_i \in \frA_i \Leftrightarrow |x| |g_i|_{\sup}\leq 1,
\]
where $|g_i|_{\sup}$ is the supremum norm of $g_i$ on $\Spv(A_i)$. By the maximum modulus principle, there exists a non-zero element $\delta\in \Cp$ satisfying $|\delta|=|g_i|_{\sup}$. Hence we obtain
\[
I_i=\{x\in \Cp\mid |x|\leq |\delta|^{-1}\}=\delta^{-1}\OCp
\]
and the first assertion follows.

For the second assertion, consider the covering $\frX=\bigcup_{i=1}^r \frU_i$ as above. Since the reduction of $g|_{\frU_i}$ is also zero, we can write $g|_{\frU_i}=x_i h_i$ with some $x_i\in m_{\Cp}$ and $h_i\in \SGm_i$. Replacing $x_i$ by a generator $x$ of the ideal $(x_1,\ldots,x_r)$, we may assume $g|_{\frU_i}=x h_i$ for any $i$. Since $\SGm_i$ and $\SGm_{i,j}$ are torsion free $\OCp$-modules, the elements $h_i$ can be glued to define $h\in H^0(\frX,\frF)$. Then we obtain $g=xh$ and the second assertion follows. 
\end{proof}

Put $\bar{\SGm}^\ord=\bar{\SGm}(\mu_N,\frc)(0)$, $\bar{\SGm}(\Gamma_1(p^n))^\ord=\bar{\SGm}(\Gamma_1(p^n),\mu_N,\frc)(0)$ and $\frIW^{\ord}=\frIW_{w,\frc}^{+}(0)$. Recall that $\bOmega^\kappa$ is invertible on $\bar{\SGm}^\ord$. We denote the reduction of $\bar{\SGm}^\ord_{\OCp}$ by $\bar{\SGm}^\ord_{\Fpbar}$. Consider the commutative diagram
\[
\xymatrix{
\breve{S}_{\sigma,\OCp}\ar@{=}[dr]\ar[r] \ar@/^2pc/[rrr]^{\tau_{\fro,\frc^{-1}}} & \pi_w^{-1}(\breve{S}_{\sigma,\OCp}) \ar[r] \ar[d] &\pi_w^{-1}(\breve{S}_{\sC,\OCp}) \ar[r] \ar[d]& \frIW^{\ord}_{\OCp} \ar[d]^{\pi_w}\\
& \breve{S}_{\sigma,\OCp} \ar[r] &\breve{S}_{\sC,\OCp} \ar[r]  & \bar{\SGm}^\ord_{\OCp}.
}
\]
Recall that $f_\frc\in \cO(\frIW^{\ord}_{\OCp})[1/p]$. The assumption on $f_\frc(q)$ implies $\tau^*_{\fro,\frc^{-1}}(f_\frc) \in \cO(\breve{S}_{\sigma,\OCp})$.

Consider the special fiber 
\[
\bar{\pi}_w: \frIW^{\ord}_{\Fpbar}\overset{\bar{\gamma}_n}{\to} \bar{\SGm}(\Gamma_1(p^n))^\ord_{\Fpbar} \overset{\bar{h}_n}{\to}  \bar{\SGm}^\ord_{\Fpbar}
\]
of the map $\pi_w$ and the closed immersion $i:\bar{\SGm}^\ord_{\Fpbar}\to \bar{\SGm}^\ord_{\OCp}$. From the construction of the sheaf $\bOmega^\kappa|_{\bar{\SGm}^\ord_{\OCp}}$ as the fixed part of a $\bT(\bZ/p^n\bZ)$-equivariant $\OCp$-flat sheaf on a $\bT(\bZ/p^n\bZ)$-torsor, we see that the subsheaf $\bOmega^{\kappa}|_{\bar{\SGm}^\ord_{\OCp}}\subseteq (\pi_w)_*\cO_{\frIW_{w,\frc}^+(0)_{\OCp}}$ is formal locally a direct summand. Since $\pi_w$ is affine, for any morphism of formal schemes $f:S\to \bar{\SGm}^\ord_{\OCp}$, the composite of natural maps
\[
f^*(\bOmega^{\kappa}|_{\bar{\SGm}^\ord_{\OCp}})\to f^*(\pi_w)_*\cO_{\frIW^{\ord}_{\OCp}}\to (\pi_w|_{f^{-1}(S)})_*\cO_{f^{-1}(S)}
\]
is injective. This yields a commutative diagram
\begin{equation}\label{RedAroundCusp}
\begin{gathered}
\xymatrix{
\bOmega^\kappa(\bar{\SGm}^\ord_{\OCp}) \ar[r]\ar[d]& \cO(\frIW^{\ord}_{\OCp})\ar[d]\\
i^*\bOmega^\kappa(\bar{\SGm}^\ord_{\Fpbar}) \ar[r]\ar[d]& \cO(\frIW^{\ord}_{\Fpbar})\ar[d]\\
i^*\bOmega^\kappa|_{\hat{S}_{\sC,\Fpbar}}(\hat{S}_{\sC,\Fpbar}) \ar[r]& \cO(\bar{\pi}_w^{-1}(\hat{S}_{\sC,\Fpbar}))
}
\end{gathered}
\end{equation}
with injective horizontal arrows, where the base extension $\hat{S}_{\sC,\Fpbar}=\hat{S}_{\sC}\hat{\otimes}_k\Fpbar$ is equal to the special fiber of $\breve{S}_{\sC,\OCp}$.

On the Tate object $\Tate_{\fro,\frc^{-1}}(q)$ over $\Spec(\breve{R}_{\sigma})$, we defined the canonical trivialization of the canonical subgroup and that of the $\bT_w^0(\breve{S}_{\sigma})$-set $\frIW^{\ord}(\breve{S}_{\sigma})$, which are denoted by $u_{\fro,\frc^{-1}}$ and $\alpha_{\fro,\frc^{-1}}$. Since $\breve{R}_\sigma$ is Noetherian, the moduli interpretation of $\frIW^{\ord}$ is available over $\breve{R}_\sigma$ and these trivializations give isomorphisms
\[
 \breve{S}_{\sigma}\times_{\bar{\SGm}^\ord} \bar{\SGm}(\Gamma_1(p^n))^\ord\simeq \coprod_{a\in \bT(\bZ/p^n\bZ)} \breve{S}_{\sigma},\quad \pi_w^{-1}(\breve{S}_{\sigma})\simeq \coprod_{a\in \bT(\bZ/p^n\bZ)} \breve{S}_{\sigma}\times\bT_w^0,
\]
where the latter is an isomorphism of formal $\bT_w^0$-torsors. By the base extension, we also have similar isomorphisms over $\OCp$. Since the latter isomorphism is defined using the trivializations $u_{\fro,\frc^{-1}}$ and $\alpha_{\fro,\frc^{-1}}$, the unit section on the component $a=1$ coincides with the above map $\breve{S}_{\sigma,\OCp} \to \pi_w^{-1}(\breve{S}_{\sigma,\OCp})$.

The $\bT_w^0$-representation $\cO(\breve{S}_{\sigma,\OCp}\times\bT_w^0)$ is decomposed into the direct sum of the free $\breve{R}_{\sigma, \OCp}$-modules $\breve{R}_{\sigma, \OCp} s_{\chi}$ for any formal character $\chi$ of $\bT_w^0$, where $s_\chi$ is a section generating its $\chi$-part. Thus we have 
\[
f_\frc|_{\pi^{-1}_w(\breve{S}_{\sigma,\OCp})}\in \prod_{a\in \bT(\bZ/p^n\bZ)} (\breve{R}_{\sigma,\OCp}[1/p]s_{\kappa}).
\]
Write this element as $(F_a s_{\kappa})_{a\in \bT(\bZ/p^n\bZ)}$ with $F_a\in \breve{R}_{\sigma,\OCp}[1/p]$. Since $\kappa(1)=1$ and $\tau_{\fro,\frc^{-1}}^*(f_\frc)\in \breve{R}_{\sigma,\OCp}$, we obtain $F_1\in \breve{R}_{\sigma,\OCp}$. Since $f_\frc$ is $\kappa$-equivariant for the $\bT(\bZ/p^n\bZ)$-action, we have $F_a=\kappa(\hat{a})F_1$ with a lift $\hat{a}\in \bT(\bZ_p)$ of $a$. Since the image of the character $\kappa$ is contained in $\cO_{\Cp}^\times$, we see that $F_a\in \breve{R}_{\sigma,\OCp}$ for any $a\in \bT(\bZ/p^n\bZ)$. This means
\begin{equation}\label{EqnPiinv}
f_\frc|_{\pi^{-1}_w(\breve{S}_{\sigma,\OCp})}\in \cO(\pi_w^{-1}(\breve{S}_{\sigma,\OCp})).
\end{equation}

To prove the proposition, we may assume $f_\frc\neq 0$. Consider the ideal $J=\{x\in \OCp \mid x f_\frc \in \mathbf{M}(\mu_N,\frc,\kappa)(0)\}$, which is principal by Lemma \ref{AnnFinGen} (\ref{AnnFinGen_Ann}). Put $J=(x)$ and suppose $x\in m_{\Cp}$. Then the $q$-expansion $x f_\frc(q)$ is also integral, and zero modulo $m_{\Cp}$. Thus the commutative diagram (\ref{RedAroundCusp}) and (\ref{EqnPiinv}) imply that the pull-back of $x f_\frc\in \bOmega^\kappa(\bar{\SGm}^\ord_{\OCp})$ to $i^*\bOmega^\kappa|_{\hat{S}_{\sC,\Fpbar}}(\hat{S}_{\sC,\Fpbar})$ vanishes.

Note that the reduction of $\breve{S}_{\sC,\OCp}\to \bar{\SGm}^\ord_{\OCp}$ induces the map on the special fiber
\[
\hat{S}_{\sC,\Fpbar}=(\breve{S}_{\sC,\OCp})_{\Fpbar} \to \bar{M}(\mu_N,\frc)_{\Fpbar}.
\]
Let $\bar{M}(\mu_N,\frc)_{\Fpbar}|^{\wedge}_{D_{\Fpbar}}$ be the formal completion of $\bar{M}(\mu_N,\frc)_{\Fpbar}$ along its boundary $D_{\Fpbar}$. Recall that this map induces maps
\[
\hat{S}_{\sC,\Fpbar} \to \hat{S}_{\sC,\Fpbar}/U_N \to \bar{M}(\mu_N,\frc)_{\Fpbar}|^{\wedge}_{D_{\Fpbar}},
\]
where the first arrow is a surjective local isomorphism and the second arrow is an open immersion. Hence $x f_\frc$ vanishes on a formal open subscheme of the formal completion $\bar{M}(\mu_N,\frc)_{\Fpbar}|^{\wedge}_{D_{\Fpbar}}$. We know that 
the smooth scheme $\bar{\SGm}^\ord_{\Fpbar}$ is irreducible. Since the sheaf $\bOmega^\kappa$ is invertible on the ordinary locus, Krull's intersection theorem implies that $x f_\frc$ vanishes on a non-empty open subscheme of $\bar{\SGm}^\ord_{\Fpbar}$, and thus it also vanishes on $\bar{\SGm}^\ord_{\Fpbar}$. Then Lemma \ref{AnnFinGen} (\ref{AnnFinGen_Red}) implies that $x f_\frc\in y \mathbf{M}(\mu_N,\frc,\kappa)(0)$ for some $y\in m_{\Cp}$. Since the $\OCp$-module $\mathbf{M}(\mu_N,\frc,\kappa)(0)$ is torsion free, this contradicts the choice of $x$. Thus we obtain $x\in \cO_{\Cp}^\times$ and $f_\frc \in \mathbf{M}(\mu_N,\frc,\kappa)(0)$, which concludes the proof of the proposition.
\end{proof}

\begin{cor}\label{HeckeEVintegral}
Let $f=(f_\frc)_{\frc\in[\mathrm{Cl}^+(F)]^{(p)}}$ be a non-zero eigenform in the space $S^G(\mu_N,(\nu,w))(v)$ of weight $(\nu,w)\in \cW^G(\Cp)$. For any non-zero ideal $\frn$ of $\fro$, the Hecke eigenvalue $\Lambda(\frn)$ is $p$-integral.
\end{cor}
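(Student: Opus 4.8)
The plan is to use the weak multiplicity one result (Proposition \ref{weakmulti}) to translate the $p$-integrality of $\Lambda(\frn)$ into a boundedness statement for the $q$-expansion coefficients of $f$, and then to bootstrap boundedness to integrality by a purely formal argument with the Hecke relations.

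First I would check that $a_{\fro,\fro}(f,1)\neq 0$. If it vanished, Proposition \ref{weakmulti} would force $a_{\fro,\frc^{-1}}(f,\eta)=0$ for every $\frc\in[\mathrm{Cl}^+(F)]^{(p)}$ and every $\eta\in(\frc^{-1})^+$; since $f$ is a cusp form the constant terms $a_{\fro,\frc^{-1}}(f,0)$ vanish as well, so $f_\frc(q)=0$ for all $\frc$, i.e.\ each $f_\frc$ vanishes on a non-empty admissible open neighbourhood of the cusp $(\fro,\frc^{-1},\id)$ inside the smooth connected rigid analytic variety $\bar{\cM}(\mu_N,\frc)(v)_{\Cp}$ (connectedness is Lemma \ref{Mvconn}). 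By Lemma \ref{FvPLem} this gives $f_\frc=0$ for all $\frc$, contradicting $f\neq 0$. Next, restricting $f_\frc$ to the ordinary locus yields an element of $M(\mu_N,\frc,\kappa)(0)=H^0(\bar{\SGm}(\mu_N,\frc)(0)_{\OCp},\bOmega^\kappa)[1/p]$, where $\kappa=k(\nu,w)$, so $p^N(f_\frc|_{(0)})$ is an integral section for some $N\geq 0$ and hence has $q$-expansion in $\OCp[[q^\xi\mid\xi\in(\frc^{-1})^+\cup\{0\}]]$; since $[\mathrm{Cl}^+(F)]^{(p)}$ is finite we obtain a uniform bound $|a_{\fro,\frc^{-1}}(f,\eta)|\leq p^N$. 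Now let $\frn\subseteq\fro$ be any non-zero ideal, pick $\frc\in[\mathrm{Cl}^+(F)]^{(p)}$ in the strict ideal class of $\frn$, and write $\frn=\eta\frc$ with $\eta\in(\frc^{-1})^+$. Since $u:=\eta\chi_p(\eta)^{-1}\in F^{\times,+,(p)}$ is a $p$-adic unit and $\nu$ is a continuous character of the profinite group $\bT(\bZ_p)$, one has $|\nu(u)|=1$, so Proposition \ref{weakmulti} gives
\[
|\Lambda(\frn)|=\frac{|a_{\fro,\frc^{-1}}(f,\eta)|}{|\nu(u)|\,|a_{\fro,\fro}(f,1)|}\leq \frac{p^N}{|a_{\fro,\fro}(f,1)|}=:C.
\]
Thus $\{|\Lambda(\frn)|\mid\frn\subseteq\fro\}$ is bounded.

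It remains to upgrade this to $|\Lambda(\frn)|\leq 1$, which I would do formally from the Hecke relations. Since $\Lambda$ is multiplicative ($T_{\frl\frl'}=T_\frl T_{\frl'}$ for coprime $\frl,\frl'$), it suffices to bound $|\Lambda(\frem^s)|$ for a maximal ideal $\frem$; here $\Lambda(\fro)=1$ and $\Lambda(\frem^1)=\Lambda(\frem)$. If $\frem\mid Np$, then (\ref{Heckes}) gives $\Lambda(\frem^s)=\Lambda(\frem)^s$, so $|\Lambda(\frem)|>1$ would make $\{|\Lambda(\frem^s)|\}_s$ unbounded; hence $|\Lambda(\frem)|\leq 1$ and $|\Lambda(\frem^s)|\leq 1$ for all $s$. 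If $\frem\nmid Np$, then (\ref{Heckes}) gives the recursion $\Lambda(\frem^s)=\Lambda(\frem)\Lambda(\frem^{s-1})-\Nor_{F/\bQ}(\frem)\Phi(\frem)\Lambda(\frem^{s-2})$, in which $|\Nor_{F/\bQ}(\frem)|\leq 1$ and $|\Phi(\frem)|=1$ (as $S_\frem^m=1$ for some $m\geq 1$). If $|\Lambda(\frem)|>1$, then by the ultrametric inequality and induction $|\Lambda(\frem^s)|=|\Lambda(\frem)|^s$ for all $s$, contradicting boundedness; hence $|\Lambda(\frem)|\leq 1$, and then a second induction shows $|\Lambda(\frem^s)|\leq\max\{|\Lambda(\frem)\Lambda(\frem^{s-1})|,|\Nor_{F/\bQ}(\frem)\Phi(\frem)\Lambda(\frem^{s-2})|\}\leq 1$. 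Multiplying over the prime-power factors of an arbitrary $\frn$ gives $|\Lambda(\frn)|\leq 1$, i.e.\ $\Lambda(\frn)$ is $p$-integral.

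The only non-formal ingredient, and thus the main point to be careful about, is the first step: one must know that a non-zero cusp form cannot be killed by all of its $q$-expansions at the chosen cusps, which rests on the connectedness of $\bar{\cM}(\mu_N,\frc)(v)_{\Cp}$ (Lemma \ref{Mvconn}), on Lemma \ref{FvPLem}, and on the identification $M(\mu_N,\frc,\kappa)(0)=H^0(\bar{\SGm}(\mu_N,\frc)(0)_{\OCp},\bOmega^\kappa)[1/p]$ together with the integrality of $q$-expansions of integral sections. Everything else is soft: the real content is carried by Proposition \ref{weakmulti}, and the Hecke-relation step is a standard non-archimedean recursion.
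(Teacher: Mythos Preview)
Your proof is correct, but it takes a genuinely different route from the paper's. The paper argues more directly: reducing to a prime $\frem$ via (\ref{Heckes}), it equips $M=\bigoplus_\frc M(\mu_N,\frc,\kappa)(0)$ with the Banach norm coming from the integral lattice $\mathbf{M}=\bigoplus_\frc\mathbf{M}(\mu_N,\frc,\kappa)(0)$, uses Lemma \ref{AnnFinGen} to find $x\in\Cp$ of maximal absolute value with $xf\in\mathbf{M}$, observes via Lemma \ref{ShimuraRelation} that $xT_\frem f$ again has integral $q$-expansion, and then invokes the $q$-expansion principle (Proposition \ref{qexpprinciple}) to conclude $xT_\frem f\in\mathbf{M}$, hence $|\Lambda(\frem)|=|T_\frem f|/|f|\leq 1$. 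In other words, the paper shows that $T_\frem$ has operator norm $\leq 1$ on $\mathbf{M}$. Your argument instead bypasses Proposition \ref{qexpprinciple} entirely: you use Proposition \ref{weakmulti} to express each $\Lambda(\frn)$ as a ratio of $q$-expansion coefficients, obtain a uniform bound $|\Lambda(\frn)|\leq C$, and then bootstrap $C$ down to $1$ by the ultrametric recursion coming from (\ref{Heckes}). Your approach is more elementary in that it avoids the $q$-expansion principle over $\OCp$ (which is the heaviest ingredient here), at the cost of the extra recursion step; the paper's approach is cleaner in that it proves directly that the Hecke operators preserve an integral structure, a statement of independent interest. Both rely on Lemma \ref{Mvconn} and Lemma \ref{FvPLem} to know the form does not vanish identically from its $q$-expansion.
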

\begin{proof}
By (\ref{Heckes}), it is enough to show the case where $\frn$ is a maximal ideal $\frem$. Put $\kappa=k(\nu,w)$. 
Note that by Lemma \ref{FvPLem} and Lemma \ref{Mvconn}, the restriction map $S^G(\mu_N,(\nu,w))(v)\to S^G(\mu_N,(\nu,w))(0)$ is injective.
We consider $\Lambda(\frem)$ as an eigenvalue of the operator $T_\frem$ acting on
\[
M:=\bigoplus_{\frc\in [\mathrm{Cl}^+(F)]^{(p)}} M(\mu_N,\frc,\kappa)(0).
\]
This is a Banach $\Cp$-module with respect to the $p$-adic norm $|-|$ defined by the $\OCp$-lattice
\[
\mathbf{M}:=\bigoplus_{\frc\in [\mathrm{Cl}^+(F)]^{(p)}} \mathbf{M}(\mu_N,\frc,\kappa)(0).
\]
Namely, we put 
\[
|f|=\inf\{|x|^{-1}\mid x\in \mathbb{C}_p^\times,\ xf\in \mathbf{M}\}.
\]

By Lemma \ref{AnnFinGen} (\ref{AnnFinGen_Ann}), we can find an element $x\in \Cp$ of largest absolute value satisfying $x f_\frc\in \mathbf{M}(\mu_N,\frc,\kappa)(0)$ for any $\frc\in [\mathrm{Cl}^+(F)]^{(p)}$. The norm $|f|$ is equal to $|x|^{-1}$. Moreover, any coefficient of the $q$-expansion $x f_\frc(q)$ is contained in $\OCp$. By Lemma \ref{ShimuraRelation}, so is $x T_\frem f$. Hence Proposition \ref{qexpprinciple} shows $x T_\frem f\in \mathbf{M}$. This implies 
\[
|\Lambda(\frem)|=\frac{|T_{\frem} f|}{|f|}\leq \frac{|x|^{-1}}{|x|^{-1}}=1
\]
and the corollary follows.
\end{proof}

\begin{cor}\label{qexpintegral}
Let $f=(f_\frc)_{\frc\in[\mathrm{Cl}^+(F)]^{(p)}}$ be a normalized eigenform in $S^G(\mu_N,(\nu,w))(v)$ of weight $(\nu,w)\in \cW^G(\Cp)$. Then we have
\[
a_{\fro,\frc^{-1}}(f,\eta)\in \OCp
\]
for any $\frc\in [\mathrm{Cl}^+(F)]^{(p)}$ and any $\eta\in (\frc^{-1})^+$. 
\end{cor}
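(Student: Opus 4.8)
The plan is to combine the weak multiplicity one statement (Proposition \ref{weakmulti}) with the integrality of Hecke eigenvalues (Corollary \ref{HeckeEVintegral}), after observing that the value of $\nu$ occurring in Proposition \ref{weakmulti} is a unit.

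Fix $\frc\in[\mathrm{Cl}^+(F)]^{(p)}$ and $\eta\in(\frc^{-1})^+$, and put $\frn=\eta\frc$, which is an integral ideal of $\fro$ since $\eta\in\frc^{-1}$. Because $f$ is normalized, we have $a_{\fro,\fro}(f,1)=1$, so Proposition \ref{weakmulti} gives
\[
a_{\fro,\frc^{-1}}(f,\eta)=\nu(\eta\chi_p(\eta)^{-1})\,\Lambda(\frn).
\]
It therefore suffices to prove that $\nu(\eta\chi_p(\eta)^{-1})\in\OCp^\times$ and $\Lambda(\frn)\in\OCp$; the second of these is exactly Corollary \ref{HeckeEVintegral}.

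For the first, note that by the defining property of the chosen elements $x_\frp$ one has $v_\frp(\chi_p(\eta))=v_\frp(\eta)$ for every prime ideal $\frp\mid p$, so $\eta\chi_p(\eta)^{-1}\in F^\times$ has trivial valuation at every prime above $p$ and hence lies in $(\cO_F\otimes\bZ_p)^\times=\bT(\bZ_p)$. Since $\bT(\bZ_p)$ is a profinite group and $\nu:\bT(\bZ_p)\to\Cp^\times$ is a continuous character, the composite $g\mapsto|\nu(g)|$ is a continuous homomorphism from a compact group to $\bR_{>0}$ and is therefore trivial; thus $\nu(\bT(\bZ_p))\subseteq\OCp^\times$. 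In particular $\nu(\eta\chi_p(\eta)^{-1})\in\OCp^\times$, and combining this with Corollary \ref{HeckeEVintegral} we conclude $a_{\fro,\frc^{-1}}(f,\eta)=\nu(\eta\chi_p(\eta)^{-1})\,\Lambda(\frn)\in\OCp$, as desired.

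I do not expect any serious difficulty here: the corollary is a direct assembly of Proposition \ref{weakmulti} and Corollary \ref{HeckeEVintegral}. The only point requiring (minor) care is the observation that $\eta\chi_p(\eta)^{-1}$ is a $p$-adic unit, which is what is needed in order to apply $\nu$ to it (and then automatically forces the value into $\OCp^\times$).
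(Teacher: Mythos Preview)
Your proof is correct and follows exactly the paper's approach: the paper's proof is the single line ``This follows from Proposition \ref{weakmulti} and Corollary \ref{HeckeEVintegral},'' and you have supplied the details, including the verification that $\eta\chi_p(\eta)^{-1}\in\bT(\bZ_p)$ so that $\nu$ applies and takes a unit value (in fact the paper's setup already records that $\nu$ lands in $\cO^\circ(\cW^G)^\times$, hence in $\OCp^\times$ after specialization, so your compactness argument, while correct, could be shortened).
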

\begin{proof}
This follows from Proposition \ref{weakmulti} and Corollary \ref{HeckeEVintegral}.
\end{proof}

\begin{cor}\label{cuspintegral}
Let $(\nu,w)$ be an element of $\cW^G(\Cp)$.
\begin{enumerate}
\item\label{cuspintegral_int} For any $\frc\in[\mathrm{Cl}^+(F)]^{(p)}$, there exists an admissible affinoid open subset $\cV_\frc\subseteq \bar{\cM}(\mu_N,\frc)(v)_{\Cp}$ such that $(\pi_w^\rig)^{-1}(\cV_\frc)$ meets every connected component of $\cIW^+_{w,\frc}(v)_{\Cp}$ and, for 
any normalized eigenform $f=(f_\frc)_{\frc\in[\mathrm{Cl}^+(F)]^{(p)}}$ in $S^G(\mu_N,(\nu,w))(v)$, the restriction $f_\frc|_{(\pi_w^\rig)^{-1}(\cV_\frc)}$ has absolute value bounded by one. 
\item\label{cuspintegral_zero} Let $f=(f_\frc)_{\frc\in[\mathrm{Cl}^+(F)]^{(p)}}$ be any normalized eigenform in the space $S^G(\mu_N,(\nu,w))(v)$. If $f_\frc(q)=0$ for any $\frc\in[\mathrm{Cl}^+(F)]^{(p)}$, then $f=0$.
\item\label{cuspintegral_eq} Let $f=(f_\frc)_{\frc\in[\mathrm{Cl}^+(F)]^{(p)}}$ and $f'=(f'_\frc)_{\frc\in[\mathrm{Cl}^+(F)]^{(p)}}$ be normalized eigenforms in $S^G(\mu_N,(\nu,w))(v)$. Suppose that the eigenvalues of the Hecke operator $T_\frn$ acting on $f$ and $f'$ are the same for any non-zero ideal $\frn\subseteq \fro$. Then $f=f'$.
\end{enumerate}
\end{cor}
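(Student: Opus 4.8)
The plan is to read off all three statements from the integrality of the Fourier coefficients of a normalized eigenform (Corollary \ref{qexpintegral}), the local structure of the sheaves near a cusp established in \S\ref{SecQexpDef} and in the proof of Proposition \ref{qexpprinciple}, the vanishing principle Lemma \ref{FvPLem} together with the connectedness in Lemma \ref{Mvconn}, and the weak multiplicity one formula Proposition \ref{weakmulti}.

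For (\ref{cuspintegral_int}), I would fix the cusp $(\fro,\frc^{-1},\id)$, the chosen cone decomposition $\sC=\sC(\fro,\frc^{-1})$ and a $g$-dimensional cone $\sigma\in\sC$, and take $\cV_\frc$ to be the image in $\bar{\cM}(\mu_N,\frc)(0)_{\Cp}\subseteq\bar{\cM}(\mu_N,\frc)(v)_{\Cp}$ of a closed polydisc of the form $\{|x_1|\le p^{-\varepsilon}\}$ inside $\breve{S}^\rig_{\sigma,\Cp}$ in the coordinates (\ref{descripSCp}); since $\breve{S}^\rig_{\sC,\Cp}/U_N\to\bar{\cM}(\mu_N,\frc)(0)_{\Cp}$ is an open immersion, $\cV_\frc$ is a nonempty admissible affinoid open. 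By Corollary \ref{qexpintegral} every Fourier coefficient of $f_\frc$ lies in $\OCp$; feeding this into the argument of the proof of Proposition \ref{qexpprinciple}, namely the identity (\ref{EqnPiinv}) together with the relation $F_a=\kappa(\hat a)F_1$ and $|\kappa(\hat a)|=1$, gives $f_\frc|_{\pi_w^{-1}(\breve{S}_{\sC,\OCp})}\in\cO(\pi_w^{-1}(\breve{S}_{\sC,\OCp}))$, so the restriction of $f_\frc$ to $(\pi_w^\rig)^{-1}(\cV_\frc)$ has absolute value bounded by one. To see that $(\pi_w^\rig)^{-1}(\cV_\frc)$ meets every connected component of $\cIW^+_{w,\frc}(v)_{\Cp}$, note that $\cIW^+_{w,\frc}(v)_{\Cp}$ is a torsor under $\bT^0_w$, whose Raynaud generic fiber is an open polydisc and hence connected, so its connected components correspond bijectively via $\gamma_w^\rig$ to those of $\bar{\cM}(\Gamma_1(p^n),\mu_N,\frc)(v)_{\Cp}$; the latter space is normal and finite over the connected $\bar{\cM}(\mu_N,\frc)(v)_{\Cp}$ (Lemma \ref{Mvconn}) and finite \'etale over the dense ordinary locus, hence equidimensional of dimension $g$, so each of its connected components is irreducible of dimension $g$ and therefore maps onto $\bar{\cM}(\mu_N,\frc)(v)_{\Cp}$; as $\cV_\frc\neq\emptyset$, each such component meets $(h_n^\rig)^{-1}(\cV_\frc)$, and pulling back along $\gamma_w^\rig$ yields the claim.

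For (\ref{cuspintegral_zero}): if $f_\frc(q)=\tau^*_{\fro,\frc^{-1}}(f_\frc)=0$, then $F_1=0$, so the relation $F_a=\kappa(\hat a)F_1$ shows that $f_\frc$ vanishes on $\pi_w^{-1}(\breve{S}^\rig_{\sC,\Cp})$, hence, regarded as a global section of the invertible sheaf $\Omega^\kappa(-D)$ on the connected smooth rigid variety $\bar{\cM}(\mu_N,\frc)(v)_{\Cp}$ (an admissible open in the smooth $\bar{\cM}(\mu_N,\frc)_{\Cp}$, connected by Lemma \ref{Mvconn}), it vanishes on the nonempty admissible open $\cV_\frc$; Lemma \ref{FvPLem} then forces $f_\frc=0$ for every $\frc$, i.e. $f=0$. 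This vanishing step uses only that each $f_\frc$ is such a section, not that $f$ is a normalized eigenform. For (\ref{cuspintegral_eq}): since $f$ and $f'$ are cusp forms their constant Fourier coefficients $a_{\fro,\frc^{-1}}(\cdot,0)$ vanish, and Proposition \ref{weakmulti} gives $a_{\fro,\frc^{-1}}(f,\eta)=\nu(\eta\chi_p(\eta)^{-1})\Lambda(\eta\frc)a_{\fro,\fro}(f,1)$ for $\eta\in(\frc^{-1})^+$, with the analogous formula for $f'$; since $a_{\fro,\fro}(f,1)=a_{\fro,\fro}(f',1)=1$ and the $T_\frn$-eigenvalues, hence the $\Lambda(\frn)$, coincide, we obtain $f_\frc(q)=f'_\frc(q)$ for all $\frc$, and the vanishing argument of (\ref{cuspintegral_zero}) applied to each section $f_\frc-f'_\frc$ of $\Omega^\kappa(-D)$ gives $f=f'$.

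The one point where I expect to have to be careful is the assertion in (\ref{cuspintegral_int}) that $(\pi_w^\rig)^{-1}(\cV_\frc)$ meets every connected component of $\cIW^+_{w,\frc}(v)_{\Cp}$: this reduces to checking that every connected component of the normalization $\bar{\cM}(\Gamma_1(p^n),\mu_N,\frc)(v)_{\Cp}$ is equidimensional of dimension $g$, and hence dominates $\bar{\cM}(\mu_N,\frc)(v)_{\Cp}$, which I would deduce from the facts that this normalization is finite over the smooth connected $\bar{\cM}(\mu_N,\frc)(v)_{\Cp}$ and is finite \'etale (a $\bT(\bZ/p^n\bZ)$-torsor) over the dense ordinary locus. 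Everything else is a matter of quoting Corollary \ref{qexpintegral}, the diagrams around the cusps in \S\ref{SecQexpDef}, Lemma \ref{FvPLem}, Lemma \ref{Mvconn} and Proposition \ref{weakmulti}.
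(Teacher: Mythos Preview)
Your proposal is correct and follows essentially the same approach as the paper, with two minor variations worth noting. For the ``meets every component'' claim in (\ref{cuspintegral_int}), the paper handles this more directly: on the rigid generic fiber $h_n^\rig$ is a $\bT(\bZ/p^n\bZ)$-torsor (since $C_n$ is \'etale in characteristic~$0$), hence finitely presented and \'etale everywhere, hence open; connectedness of the base then forces each component upstairs to surject. Your dimension argument works, but you do not need to invoke the normalization or restrict \'etaleness to the ordinary locus. For (\ref{cuspintegral_zero}), the paper applies Lemma~\ref{FvPLem} upstairs on the smooth variety $\cIW^+_{w,\frc}(v)_{\Cp}$, using the component statement from (\ref{cuspintegral_int}), whereas you apply it downstairs to the section of the invertible sheaf $\Omega^\kappa(-D)$ on the connected smooth $\bar{\cM}(\mu_N,\frc)(v)_{\Cp}$; your variant is a legitimate shortcut. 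Part (\ref{cuspintegral_eq}) is handled identically in both.
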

\begin{proof}
Let us prove the first assertion. For any $\sigma\in \sC=\sC(\fro,\frc^{-1})$, Corollary \ref{qexpintegral} and (\ref{BoundedFnS}) show that $\tau_{\fro,\frc^{-1}}^*(f_\frc)$ is a rigid analytic function on $\breve{S}^\rig_{\sigma,\Cp}$ with absolute value bounded by one. As in the proof of Proposition \ref{qexpprinciple}, we can show that $f_\frc|_{(\pi_w^\rig)^{-1}(\breve{S}^\rig_{\sC,\Cp})}$ is a rigid analytic function with absolute value bounded by one. Since the natural map $\breve{S}^\rig_{\sC,\Cp} \to \breve{S}_{\sC,\Cp}^\rig/U_N$ is a surjective local isomorphism, the restriction $f_\frc|_{(\pi_w^\rig)^{-1}(\breve{S}_{\sC,\Cp}^\rig/U_N)}$ is also with absolute value bounded by one. Thus, for any non-empty admissible affinoid open subset $\cV_\frc\subseteq \breve{S}_{\sC,\Cp}^\rig/U_N$, the absolute value of $f_\frc|_{(\pi_w^\rig)^{-1}(\cV_\frc)}$ is bounded by one. Since $\breve{S}_{\sC,\Cp}^\rig/U_N$ is an admissible open subset of $\bar{\cM}(\mu_N,\frc)(v)_{\Cp}$, we see that $\cV_\frc$ is also its admissible open subset.

On the other hand, since the rigid analytic variety $\bar{\cM}(\mu_N,\frc)(v)_{\Cp}$ is connected by Lemma \ref{Mvconn} and the map 
\[
h^\rig_n: \bar{\cM}(\Gamma_1(p^n),\mu_N,\frc)(v)_{\Cp}\to \bar{\cM}(\mu_N,\frc)(v)_{\Cp}
\]
is finitely presented and etale, it is surjective on each connected component of the rigid analytic variety $\bar{\cM}(\Gamma_1(p^n),\mu_N,\frc)(v)_{\Cp}$ and thus $(h_n^\rig)^{-1}(\cV_\frc)$ meets every connected component of it. 

We claim that the map
\[
\gamma_w^\rig: \cIW_{w,\frc}^+(v)_{\Cp}\to \bar{\cM}(\Gamma_1(p^n),\mu_N,\frc)(v)_{\Cp}
\]
induces a bijection
\[
\pi_0(\cIW_{w,\frc}^+(v)_{\Cp})\to \pi_0(\bar{\cM}(\Gamma_1(p^n),\mu_N,\frc)(v)_{\Cp})
\]
between the sets of connected components. Indeed, by \cite[Corollary 3.2.3]{Con_irr},
it is enough to show the claim with $\Cp$ replaced by a finite extension $L/K$. By a finite base extension, we may assume $L=K$. Since the formal schemes $\frIW_{w,\frc}^+(v)$ and $\bar{\SGm}(\Gamma_1(p^n),\mu_N,\frc)(v)$ are both normal, it is enough to show a similar assertion for the formal model $\gamma_w$. Since it is a formal $\bT^0_w$-torsor, it is surjective and the map between the sets of connected components is also surjective. Let $\frY$ be any connected component of $\bar{\SGm}(\Gamma_1(p^n),\mu_N,\frc)(v)$ and $\{\frX_j\}_{j\in J}$ the set of connected components of $\frIW_{w,\frc}^+(v)$ which $\gamma_w$ maps to $\frY$. Suppose $\sharp J\geq 2$. Since $\gamma_w$ is finitely presented and flat, it is open and the connectedness of $\frY$ implies that $\gamma_w(\frX_j)\cap \gamma_w(\frX_{j'})\neq \emptyset$ for some $j\neq j'$. However, for any element $y$ of this intersection, the fiber $\gamma_w^{-1}(y)$ is connected since it is isomorphic to the special fiber of $\bT^0_w$, which is a contradiction. Since $\gamma_w^\rig$ is surjective, the claim shows that every connected component of $\cIW^+_{w,\frc}(v)_{\Cp}$ meets the admissible open subset $(\pi_w^\rig)^{-1}(\cV_\frc)$ and the first assertion follows.

Now suppose that $f_\frc(q)=0$ for any $\frc\in[\mathrm{Cl}^+(F)]^{(p)}$. Then we have $f_\frc|_{(\pi_w^\rig)^{-1}(\cV_\frc)}=0$. Since the rigid analytic variety $\cIW^+_{w,\frc}(v)_{\Cp}$ is smooth over $\Cp$, the first assertion and Lemma \ref{FvPLem} show the second assertion. The third assertion follows from Proposition \ref{weakmulti} and the second one.
\end{proof}



\subsection{Normalized overconvergent modular forms in families}\label{SecNormalizedFamily}

Let $\cU=\Spv(A)$ be a reduced $\Cp$-affinoid variety and put $\frU=\Spf(A^\circ)$. Let $\cU\to \cW^G_{\Cp}$ be an $n$-analytic morphism and consider the associated weight characters $(\nu^\cU,w^\cU)$ as before. Let $f=(f_\frc)_{\frc\in [\mathrm{Cl}^+(F)]^{(p)}}$ be an eigenform in the space $S^G(\mu_N,(\nu^\cU,w^\cU))(v)$. Recall that each $f_\frc$ is an element of $\cO(\frIW_{w,\frc}^+(v)_{\OCp}\times \frU)[1/p]$. 
For the cusp $(\fro,\frc^{-1},\id)$ of $M(\mu_N,\frc)$ and any $\sigma\in \sC=\sC(\fro,\frc^{-1})$, we have the map 
\[
\tau_{\fro,\frc^{-1}}\times 1: \breve{S}_{\sigma,\OCp} \times \frU \to \frIW_{w,\frc}^+(v)_{\OCp}\times \frU 
\]
over $\bar{\SGm}(\mu_N,\frc)(v)_{\OCp}\times \frU$.

As in \S \ref{SecQexpDef}, we see that the ring $\breve{R}_{\sigma,\OCp}\hat{\otimes}_{\OCp}A^\circ$ is isomorphic to the completion of the ring
\[
A^\circ[q^{\xi_1},\ldots, q^{\xi_r}][q^{\pm \xi_{r+1}},\ldots, q^{\pm \xi_g}]
\]
with respect to the $(p, q^{\xi_1}\cdots q^{\xi_r})$-adic topology for some $\xi_1,\ldots,\xi_g\in \frc^{-1}\cap\sigma^\vee$ and thus it can be considered as a subring of the ring
\[
A^\circ\langle q^{\pm \xi_{r+1}},\ldots, q^{\pm \xi_g}\rangle [[q^{\xi_1},\ldots, q^{\xi_r}]].
\]
Hence we obtain the map of the $q^1$-coefficient
\[
\prjt_{q^1}^\cU: \cO(\breve{S}_{\sigma,\OCp}\times \frU)[1/p]\to A.
\]
For any eigenform $f\in S^G(\mu_N,(\nu^\cU,w^\cU))(v)$ as above, we put $a_{\fro,\fro}^\cU(f,1)=\prjt_{q^1}^\cU((\tau_{\fro,\fro}\times 1)^*(f_\fro))\in A$.

For any $x\in \cU(\Cp)$, put $(\nu,w)=(\nu^\cU(x),w^\cU(x))$. The specialization $f(x)=(f_\frc(x))_{\frc\in [\mathrm{Cl}^+(F)]^{(p)}}$ is an element of the space $S^G(\mu_N,(\nu,w))(v)$ over $\Cp$, and we have the usual $q^1$-coefficient $a_{\fro,\fro}(f(x),1)$ of the $q$-expansion of $f(x)$. By the commutative diagram
\[
\xymatrix{
\breve{S}_{\sigma,\OCp}\times \frU \ar[r]^-{\tau_{\fro,\fro}\times 1} & \frIW_{w,\fro}^+(v)_{\OCp}\times\frU \\
\breve{S}_{\sigma,\OCp}\ar[u]^{1\times x} \ar[r]_-{\tau_{\fro,\fro}} & \frIW_{w,\fro}^+(v)_{\OCp}, \ar[u]_{1\times x} \\
}
\]
we obtain
\begin{equation}\label{q1compat}
a_{\fro,\fro}^\cU(f,1)(x)=a_{\fro,\fro}(f(x),1).
\end{equation}

\begin{lem}\label{normalize}
Suppose that $f(x)\neq 0$ for any $x\in \cU(\Cp)$. Then we have 
\[
a_{\fro,\fro}^\cU(f,1)\in A^\times. 
\]
In particular, the specialization $f'(x)$ of $f'=a_{\fro,\fro}^\cU(f,1)^{-1} f$ is a normalized eigenform with the same eigenvalues as $f(x)$ for any $x\in \cU(\Cp)$.
\end{lem}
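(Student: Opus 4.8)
The plan is to reduce the claim that $a_{\fro,\fro}^\cU(f,1)$ is a unit in $A$ to the statement that it does not vanish at any point of $\cU(\Cp)$, and then invoke the standard fact that a global rigid analytic function on a reduced affinoid which is nowhere zero is invertible. First I would observe, using the compatibility formula (\ref{q1compat}), namely
\[
a_{\fro,\fro}^\cU(f,1)(x)=a_{\fro,\fro}(f(x),1)\quad\text{for all }x\in\cU(\Cp),
\]
that it suffices to show $a_{\fro,\fro}(f(x),1)\neq 0$ for every $x\in\cU(\Cp)$. Here $f(x)$ is a nonzero eigenform in $S^G(\mu_N,(\nu,w))(v)$ with $(\nu,w)=(\nu^\cU(x),w^\cU(x))$, by hypothesis. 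The key input is then Corollary \ref{cuspintegral} (\ref{cuspintegral_zero}): if $a_{\fro,\fro}(f(x),1)$ were zero, then by Proposition \ref{weakmulti} every coefficient $a_{\fro,\frc^{-1}}(f(x),\eta)=\nu(\eta\chi_p(\eta)^{-1})\Lambda(\frn)a_{\fro,\fro}(f(x),1)$ would vanish (with $\frn=\eta\frc$), so $f(x)_\frc(q)=0$ for every $\frc\in[\mathrm{Cl}^+(F)]^{(p)}$, hence $f(x)=0$ by Corollary \ref{cuspintegral} (\ref{cuspintegral_zero}), contradicting the assumption. Strictly speaking $f(x)$ need not be normalized, but once we know $a_{\fro,\fro}(f(x),1)\neq 0$ we could normalize it; for the argument as just phrased it is cleanest to apply Proposition \ref{weakmulti} and Corollary \ref{cuspintegral} directly to $f(x)$, noting that their proofs only use the eigenform property together with the relation in Proposition \ref{weakmulti}, which does not require the normalization $a_{\fro,\fro}(f,1)=1$ (it only uses $C(\fro,f)=a_{\fro,\fro}(f,1)$).

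Having established that $g:=a_{\fro,\fro}^\cU(f,1)\in A$ is a rigid analytic function on the reduced $\Cp$-affinoid variety $\cU=\Spv(A)$ with $g(x)\neq 0$ for all $x\in\cU(\Cp)$, I would conclude $g\in A^\times$: the zero locus $V(g)$ is a Zariski-closed subset of $\Spv(A)$ with no $\Cp$-points, and since $A$ is a Jacobson ring (the maximal ideals corresponding to $\Cp$-points are dense) this forces $V(g)=\emptyset$, whence $g$ is a unit. Alternatively one can argue that $|g|_{\sup}$ and $|g^{-1}|_{\sup}$ are both finite by the maximum modulus principle applied to the nowhere-vanishing $g$ on the quasi-compact $\cU$.

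Finally, setting $f'=g^{-1}f$, which makes sense since $g\in A^\times$, the eigenform $f'$ still lies in $S^G(\mu_N,(\nu^\cU,w^\cU))(v)$ (scaling by a unit of $A$ preserves the space and commutes with all Hecke operators), so $f'$ is again an eigenform with the same eigenvalues as $f$, and for each $x\in\cU(\Cp)$ the specialization satisfies $a_{\fro,\fro}(f'(x),1)=g(x)^{-1}a_{\fro,\fro}(f(x),1)=1$ by (\ref{q1compat}), i.e.\ $f'(x)$ is normalized with the same eigenvalues as $f(x)$.

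\textbf{Main obstacle.} The substantive point is the nonvanishing of the $q^1$-coefficient at each specialization, which rests on the weak multiplicity one result; the delicate part there is checking that Proposition \ref{weakmulti} and Corollary \ref{cuspintegral} (\ref{cuspintegral_zero}) apply to the possibly-non-normalized eigenform $f(x)$ — one must trace through that the proof of Proposition \ref{weakmulti} via Lemma \ref{ShimuraRelation} genuinely does not presuppose normalization, and that Corollary \ref{cuspintegral} (\ref{cuspintegral_zero}) can be stated for an arbitrary nonzero eigenform (or, equivalently, one first handles the case $a_{\fro,\fro}(f(x),1)\neq 0$ by normalizing and then observes this case is the only one consistent with $f(x)\neq 0$). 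The passage from "nowhere zero" to "unit" is routine once Jacobson-ness of affinoid algebras is invoked.
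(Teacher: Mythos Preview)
Your proposal is correct and follows essentially the same route as the paper: reduce via (\ref{q1compat}) to showing $a_{\fro,\fro}(f(x),1)\neq 0$, then use Proposition \ref{weakmulti} to kill all $q$-expansion coefficients and Corollary \ref{cuspintegral} (\ref{cuspintegral_zero}) to force $f(x)=0$, contradicting the hypothesis. Your caution about whether Corollary \ref{cuspintegral} (\ref{cuspintegral_zero}) requires normalization is well-placed---the paper applies it to the non-normalized $f(x)$ as you do, and indeed the proof of that corollary (vanishing of $q$-expansion forces vanishing on a neighborhood of the cusp, hence everywhere by Lemma \ref{FvPLem}) uses neither the eigenform property nor the normalization.
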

\begin{proof}
We claim that $a_{\fro,\fro}(f(x),1)\neq 0$ for any $x\in \cU(\Cp)$. Indeed, suppose that $a_{\fro,\fro}(f(x),1)=0$ for some $x\in \cU(\Cp)$. Since $f(x)$ is an eigenform, Proposition \ref{weakmulti} implies that the $q$-expansion $f(x)_\frc(q)$ of $f(x)$ is zero for any $\frc\in [\mathrm{Cl}^+(F)]^{(p)}$. By Corollary \ref{cuspintegral} (\ref{cuspintegral_zero}) we have $f(x)=0$, which is a contradiction. 

Now (\ref{q1compat}) implies that $a_{\fro,\fro}^\cU(f,1)(x)\neq 0$ for any $x\in \cU(\Cp)$. 
Hence we obtain $a_{\fro,\fro}^\cU(f,1)\in A^\times$.
\end{proof}



\subsection{Gluing results}

Here we prove two results on gluing overconvergent Hilbert modular forms, based on the theory of the $q$-expansion developed above. Let $\cX=\Spv(R)$ be any admissible affinoid open subset of $\cW^G$. Put $n=n(\cX)$ and $v=v_n$ as in \S\ref{SecHeckeOp}. Consider the Hilbert eigenvariety $\cE|_\cX\to \cX$, which is constructed from the input data 
\[
(R,S^G(\mu_N,(\nu^\cX,w^\cX))(v_\tot),\bT,U_p).
\]

\subsubsection{Gluing local eigenforms} 

\begin{lem}\label{HMFres}
Let $\cU=\Spv(A)$ be a $\Cp$-affinoid variety and $\cU\to \cX_{\Cp}$ a morphism of rigid analytic varieties over $\Cp$. Let $f$ be an eigenvector of the space $S^G(\mu_N,(\nu^\cX,w^\cX))(v_\tot)\hat{\otimes}_R A$ for the action of $\bT$ such that for any $x\in \cU(\Cp)$, the specialization 
\[
f(x)\in S^G(\mu_N,(\nu^\cX,w^\cX))(v_\tot)\hat{\otimes}_{R,x^*} \Cp
\]
is non-zero. 
Then the image of $f$ by the natural map
\[
S^G(\mu_N,(\nu^\cX,w^\cX))(v_\tot)\hat{\otimes}_R A\to S^G(\mu_N,(\nu^\cU,w^\cU))(v_\tot)
\]
is an eigenform with the same property.
\end{lem}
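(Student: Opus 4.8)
The plan is to trace carefully how the specialization and base-change maps interact. The natural map in question factors as
\[
S^G(\mu_N,(\nu^\cX,w^\cX))(v_\tot)\hat{\otimes}_R A\to S^G(\mu_N,(\nu^{\cU},w^{\cU}))(v_\tot),
\]
and it is $\bT$-linear because the Hecke operators over $\Cp$ are compatible with those over $K$ (as recorded at the end of Subsection \ref{SecCompCp}); hence the image of $f$, call it $\tilde f$, is again an eigenvector for $\bT$ with the same system of eigenvalues. The real content is that $\tilde f$ is \emph{non-zero} and that for every $x\in\cU(\Cp)$ the specialization $\tilde f(x)$ is non-zero, so that $\tilde f$ is an eigenform in the sense that makes Lemma \ref{normalize} and Corollary \ref{qexpintegral} applicable. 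First I would set up the commutative square relating $-\hat{\otimes}_R A$ followed by specialization at $x$ on the one hand, and direct specialization of $f$ at $x$ on the other: by Lemma \ref{specializeCp} (applicable once we know $m_x$ is generated by a regular sequence — see below) the two routes agree, so $\tilde f(x)$ is identified with the image of $f(x)$ under an isomorphism. Thus $\tilde f(x)\neq 0$ for all $x$ follows directly from the hypothesis $f(x)\neq 0$, and in particular $\tilde f\neq 0$.

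The key steps, in order, are: (1) observe $\bT$-linearity of the map, so eigen-ness and eigenvalues are preserved; (2) for a fixed $x\in\cU(\Cp)$, write down the diagram
\[
\xymatrix{
S^G(\mu_N,(\nu^\cX,w^\cX))(v_\tot)\hat{\otimes}_R A \ar[r]\ar[d] & S^G(\mu_N,(\nu^{\cU},w^{\cU}))(v_\tot)\ar[d]\\
S^G(\mu_N,(\nu^\cX,w^\cX))(v_\tot)\hat{\otimes}_{R,x^*}\Cp \ar@{=}[r] & S^G(\mu_N,(\nu^{\cX}(x),w^{\cX}(x)))(v_\tot)
}
\]
where the bottom isomorphism is Lemma \ref{specializeCp} applied over $\cX$ and the right vertical map is the specialization at $x$ for the family over $\cU$, again an isomorphism by Lemma \ref{specializeCp}; commutativity identifies $\tilde f(x)$ with the image of $f(x)$, which is non-zero; (3) conclude that $\tilde f$ is a non-zero eigenform and that $\tilde f(x)\neq 0$ for all $x\in\cU(\Cp)$, which is exactly ``an eigenform with the same property.''

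I expect the main obstacle to be the hypothesis of Lemma \ref{specializeCp}: it requires the maximal ideal $m_x$ of $A_{\Cp}$ (resp.\ of $R_{\Cp}$) to be generated by a regular sequence. For $R$ this is fine since $\cX$ is an admissible affinoid open of the smooth weight space $\cW^G$, so $R_{\Cp}$ is regular and $m_x$ is generated by a regular sequence. For a general reduced $\Cp$-affinoid $\cU$ this need not hold, so the argument must be set up so that one only ever specializes along $\cX$ (where regularity is automatic) and then transports the conclusion to $\cU$ via the functoriality isomorphisms \eqref{OmegaBC2} and the compatibility of Hecke operators; concretely, $\tilde f(x)$ should be computed purely through $f(x)\in S^G(\mu_N,(\nu^\cX,w^\cX))(v_\tot)\hat{\otimes}_{R,x^*}\Cp$ and the pullback isomorphism along the $\Cp$-point $x\colon \Spv(\Cp)\to\cX_{\Cp}$ factoring through $\cU$, never through an a priori ill-behaved specialization of the family over $\cU$ itself. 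Once this is arranged the rest is formal diagram-chasing; I would also double-check that the $\Delta$-invariant and $\bT(\bZ/p^n\bZ)$-equivariant parts are preserved throughout, which follows since all maps in sight are equivariant.
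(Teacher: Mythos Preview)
Your proposal is correct and follows essentially the same approach as the paper. The paper sets up a slightly larger commutative diagram (inserting the intermediate vertex $S^G(\mu_N,(\nu^\cU,w^\cU))(v_\tot)\hat{\otimes}_{A,x^*}\Cp$) and then invokes Lemma~\ref{specializeCp} only for the oblique arrow on the $\cX$-side, exactly the workaround you arrive at: since $\cW^G$ is smooth the maximal ideal of $R\hat{\otimes}_K\Cp$ at $x$ is generated by a regular sequence, so that specialization is an isomorphism, and the non-vanishing of $\tilde f(x)$ follows by diagram chase without ever needing the regular-sequence hypothesis over $A$.
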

\begin{proof}
Put $(\nu,w)=(\nu^\cU(x),w^\cU(x))$. Then we have the commutative diagram
\[
\xymatrix{
S^G(\mu_N,(\nu^\cX,w^\cX))(v_\tot)\hat{\otimes}_R A\ar[r]\ar[d] & S^G(\mu_N,(\nu^\cU,w^\cU))(v_\tot) \ar[d]\\
S^G(\mu_N,(\nu^\cX,w^\cX))(v_\tot)\hat{\otimes}_{R,x^*} \Cp\ar[r]\ar[rd] &S^G(\mu_N,(\nu^\cU,w^\cU))(v_\tot)\hat{\otimes}_{A,x^*} \Cp\ar[d]\\
& S^G(\mu_N,(\nu,w))(v_\tot).
}
\]
Here the lowest two arrows are the specialization maps. Since $\cW^G$ is smooth, the maximal ideal of $R\hat{\otimes}_K\Cp$ corresponding to $x$ is generated by a regular sequence. By Lemma \ref{specializeCp}, the left oblique arrow is an isomorphism. This implies the lemma.
\end{proof}

\begin{prop}\label{GlobalEigenform}
Let $\cZ$ be a smooth rigid analytic variety over $\Cp$ which is principally refined. Let $\varphi:\cZ\to (\cE|_\cX)_{\Cp}$ be a morphism of rigid analytic varieties over $\Cp$. 
Then there exist an element 
\[
f\in \bigoplus_{\frc\in [\mathrm{Cl}^+(F)]^{(p)}} \cO(\cIW_{w,\frc}^+(v_\tot)_{\Cp}\times \cZ)
\]
and an admissible affinoid covering $\cZ=\bigcup_{i\in I} \cU_i$ such that the restriction $f|_{\cU_i}$ for each $i\in I$ is an eigenform of $S^G(\mu_N, (\nu^{\cU_i},w^{\cU_i}))(v_\tot)$ with eigensystem $\varphi^*: \bT\to \cO(\cZ)\to \cO(\cU_i)$ and $f(z)$ is normalized for any $z\in \cZ$.
\end{prop}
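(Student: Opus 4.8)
<br>

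The plan is to combine Proposition \ref{DSL}, which provides local eigenforms attached to the morphism $\varphi$, with the normalization procedure of Lemma \ref{normalize} and the weak multiplicity one result of Corollary \ref{cuspintegral} (\ref{cuspintegral_eq}) to glue the local pieces into a single global section. First I would apply Proposition \ref{DSL} to the input data $(R, S^G(\mu_N,(\nu^\cX,w^\cX))(v_\tot), \bT, U_p)$ and the morphism $\varphi:\cZ\to (\cE|_\cX)_{\Cp}=\cE|_{\cX,\Cp}$. Since $\cZ$ is smooth and principally refined, part (\ref{DSL2}) of that proposition yields an admissible affinoid covering $\cZ=\bigcup_{i\in I}\cU_i$ and, for each $i$, a non-zero element
\[
F_i\in S^G(\mu_N,(\nu^\cX,w^\cX))(v_\tot)\hat{\otimes}_R\cO(\cU_i)
\]
satisfying $(h\otimes 1)F_i=(1\otimes\varphi^*(h))F_i$ for all $h\in\bT$ and $F_i(z)\neq 0$ for every $z\in\cU_i$. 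By Lemma \ref{HMFres}, the image of $F_i$ in $S^G(\mu_N,(\nu^{\cU_i},w^{\cU_i}))(v_\tot)$ is an eigenform whose specialization at each $z\in\cU_i(\Cp)$ is non-zero; here I use that $\cW^G$ is smooth so the points of $\cU_i$ are cut out by regular sequences, as in the proof of Lemma \ref{HMFres}.

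Next I would normalize. By Lemma \ref{normalize}, applied to each $F_i$ on $\cU_i$, the element $a_{\fro,\fro}^{\cU_i}(F_i,1)$ lies in $\cO(\cU_i)^\times$, and setting $f_i=a_{\fro,\fro}^{\cU_i}(F_i,1)^{-1}F_i$ gives an eigenform in $S^G(\mu_N,(\nu^{\cU_i},w^{\cU_i}))(v_\tot)$ with the same eigensystem $\varphi^*|_{\cU_i}$ whose specialization $f_i(z)$ is a normalized eigenform for every $z\in\cU_i(\Cp)$. The eigensystem of $f_i(z)$ depends only on the image $\varphi(z)\in(\cE|_\cX)_{\Cp}$, hence only on $z$, not on the chosen index $i$. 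Therefore, for $z\in\cU_i\cap\cU_j$, the specializations $f_i(z)$ and $f_j(z)$ are normalized eigenforms with identical Hecke eigenvalues for all $T_\frn$; Corollary \ref{cuspintegral} (\ref{cuspintegral_eq}) forces $f_i(z)=f_j(z)$ in $S^G(\mu_N,(\nu,w))(v_\tot)$, where $(\nu,w)=(\nu^\cZ(z),w^\cZ(z))$. Using Lemma \ref{specializeCp} again to identify specializations of the relevant Banach modules, this pointwise equality upgrades to an equality $f_i|_{\cU_i\cap\cU_j}=f_j|_{\cU_i\cap\cU_j}$ as sections of $\bigoplus_\frc\cO(\cIW_{w,\frc}^+(v_\tot)_{\Cp}\times(\cU_i\cap\cU_j))$, because a section of this sheaf over a reduced base is determined by its specializations at all points — equivalently, the difference $f_i-f_j$ vanishes at every $z$ and the relevant modules are flat over the (reduced) affinoid algebras, so the difference is zero.

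Finally, by the sheaf property of $\bigoplus_{\frc}\cO(\cIW_{w,\frc}^+(v_\tot)_{\Cp}\times(-))$ with respect to the admissible covering $\cZ=\bigcup_i\cU_i$, the $f_i$ glue to a single element $f\in\bigoplus_{\frc\in[\mathrm{Cl}^+(F)]^{(p)}}\cO(\cIW_{w,\frc}^+(v_\tot)_{\Cp}\times\cZ)$ with $f|_{\cU_i}=f_i$, which by construction has all the asserted properties. The main obstacle I anticipate is the passage from pointwise equality of specializations to equality of the glued sections over overlaps $\cU_i\cap\cU_j$: one must ensure that the relevant Banach $\cO(\cU_i\cap\cU_j)$-modules $S^G(\mu_N,(\nu^{\cU_i\cap\cU_j},w^{\cU_i\cap\cU_j}))(v_\tot)$, or rather the sheaves $\cO(\cIW_{w,\frc}^+(v_\tot)_{\Cp}\times(\cU_i\cap\cU_j))$ into which they embed, are torsion free (indeed flat) over the reduced affinoid base so that an element vanishing at all points is zero, and that the specialization maps of Lemma \ref{specializeCp} are compatible with these embeddings; this is where the hypotheses of smoothness of $\cW^G$ and principal refinedness of $\cZ$, together with the $q$-expansion principle underlying Corollary \ref{cuspintegral}, are essential.
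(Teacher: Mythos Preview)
Your proposal is correct and follows essentially the same approach as the paper: apply Proposition \ref{DSL} (\ref{DSL2}) to obtain local eigenforms $F_i$ with nowhere-vanishing specializations, pass to $S^G(\mu_N,(\nu^{\cU_i},w^{\cU_i}))(v_\tot)$ via Lemma \ref{HMFres}, normalize via Lemma \ref{normalize}, use Corollary \ref{cuspintegral} (\ref{cuspintegral_eq}) to obtain pointwise agreement on overlaps, and glue. The only streamlining the paper offers is in the gluing step: rather than invoking Lemma \ref{specializeCp} and flatness, it simply observes that $\cIW^+_{w,\frc}(v_\tot)_{\Cp}\times(\cU_i\cap\cU_j)$ is a reduced rigid analytic variety, so a section of its structure sheaf vanishing at every point is zero, which immediately gives $f_i|_{\cU_i\cap\cU_j}=f_j|_{\cU_i\cap\cU_j}$.
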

\begin{proof}
By Proposition \ref{DSL} (\ref{DSL2}), there exist an admissible affinoid covering $\cZ=\bigcup_{i\in I} \cU_i$, $\cU_i=\Spv(A_i)$ with a principal ideal domain $A_i$ and an eigenvector $f_i$ in the space
\[
S^G(\mu_N,(\nu^\cX,w^\cX))(v_\tot)\hat{\otimes}_R A_i
\]
such that for any $z\in \cU_i$, we have $f_i(z)\neq 0$ and
\[
(h\otimes 1)f_i=(1\otimes \varphi^*(h))f_i
\]
for any $h\in \bT$. 
By Lemma \ref{HMFres}, the image $f'_i$ of $f_i$ in the space $S^G(\mu_N,(\nu^{\cU_i},w^{\cU_i}))(v_\tot)$ is an eigenform with eigensystem $\varphi^*:\bT\to A_i$
such that $f'_i(z)\neq 0$ for any $z\in \cU_i$. Since $\cU_i$ is reduced, by Lemma \ref{normalize} we may assume that $f'_i(z)$ is a normalized eigenform for any $z\in \cU_i$. 
For any $z\in \cU_i\cap \cU_j$ and any $h\in \bT$, the $h$-eigenvalues of $f'_i(z)$ and $f'_j(z)$ are both $\varphi^*(h)(z)$. Since they are normalized eigenforms, Corollary \ref{cuspintegral} (\ref{cuspintegral_eq}) implies $f'_i(z)=f'_j(z)$.

Since the rigid analytic variety $\cIW^+_{w,\frc}(v_\tot)_{\Cp}\times \cZ$ is reduced, this equality means that $f'_i$ and $f'_j$ coincide with each other as rigid analytic functions on 
\[
\coprod_{\frc\in [\mathrm{Cl}^+(F)]^{(p)}} \cIW^+_{w,\frc}(v_\tot)_{\Cp} \times (\cU_i\cap\cU_j).
\] 
Thus we can glue $f'_i$'s to produce an element
\[
f\in \bigoplus_{\frc\in [\mathrm{Cl}^+(F)]^{(p)}}\cO(\cIW^+_{w,\frc}(v_\tot)_{\Cp}\times \cZ). 
\]
This concludes the proof.
\end{proof}



\subsubsection{Gluing around cusps} 

Consider the unit disc $\cD_{\Cp}$ over $\Spv(\Cp)$ centered at the origin $O$. Put $\cD^\times_{\Cp}=\cD_{\Cp}\setminus \{O\}$. 

\begin{lem}\label{punctdisc}
Let $\cZ$ be a quasi-compact smooth rigid analytic variety over $\Cp$. Then the ring $\cO(\cZ\times \cD^\times_{\Cp})$ can be identified with the ring of power series $\sum_{n\in\bZ}a_n T^n$ with $a_n\in \cO(\cZ)$ such that
\begin{equation}\label{convergence}
\lim_{n\to +\infty} \sup_{z\in \cZ} |a_n(z)|=0,\quad \lim_{n\to +\infty} \sup_{z\in \cZ} |a_{-n}(z)|\rho^n=0
\end{equation}
for any rational number $\rho$ satisfying $0<\rho\leq 1$.
\end{lem}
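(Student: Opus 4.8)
The plan is to realize $\cD^\times_\Cp$ as the union of an increasing admissible covering by closed annuli, to compute the ring of functions on the product of $\cZ$ with each such annulus, and then to pass to the inverse limit.

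First I would fix, for each rational $\rho$ with $0<\rho\le 1$, the closed sub-annulus $\cA_\rho=\{x\in\cD_\Cp\mid |T(x)|\ge\rho\}$ of $\cD_\Cp$, and check that $\{\cA_\rho\}_\rho$ is an admissible covering of $\cD^\times_\Cp$: each $\cA_\rho$ is a quasi-compact admissible open, their union is $\cD^\times_\Cp$, and every quasi-compact admissible open of $\cD^\times_\Cp$ lies in some $\cA_\rho$, so the standard quasi-compact-exhaustion criterion applies. Pulling this covering back along the projection $\cZ\times\cD^\times_\Cp\to\cD^\times_\Cp$ gives an admissible covering $\cZ\times\cD^\times_\Cp=\bigcup_\rho(\cZ\times\cA_\rho)$, whence
\[
\cO(\cZ\times\cD^\times_\Cp)=\varprojlim_{\rho\to 0^+}\cO(\cZ\times\cA_\rho),
\]
the limit being taken along a countable cofinal sequence $\rho\to 0^+$ with the restriction maps.

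The next step is to identify $\cO(\cZ\times\cA_\rho)$. Since $\cZ$ is quasi-compact one can choose a finite admissible affinoid covering $\cZ=\bigcup_{i=1}^m\cZ_i$, $\cZ_i=\Spv(B_i)$, exhibiting $\cO(\cZ)$ as the equalizer of $\prod_i B_i\rightrightarrows\prod_{i,j}\cO(\cZ_i\cap\cZ_j)$ in the category of Banach $\Cp$-modules (and, $\cZ$ being smooth, hence reduced, the spectral seminorm on $\cO(\cZ)$ is a norm). For an affinoid $\Spv(B)$ one has $\Spv(B)\times\cA_\rho=\Spv(B\,\hat\otimes_\Cp\cO(\cA_\rho))$; and for a cofinal set of $\rho\in|\Cp^\times|$ there is a presentation $\cO(\cA_\rho)=\Cp\langle T,U\rangle/(TU-\varpi)$, $|\varpi|=\rho$, for which $\{T^n\}_{n\ge 0}\cup\{U^n\}_{n\ge 1}$ is an orthonormal Schauder basis of the Banach $\Cp$-module $\cO(\cA_\rho)$. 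Therefore $-\,\hat\otimes_\Cp\cO(\cA_\rho)$ is exact on strict exact sequences of Banach $\Cp$-modules and commutes with the equalizer above, so that $\cO(\cZ\times\cA_\rho)=\cO(\cZ)\,\hat\otimes_\Cp\cO(\cA_\rho)$. Expanding in the Schauder basis and substituting $U^n=\varpi^nT^{-n}$, this completed tensor product is precisely the set of Laurent series $\sum_{n\in\bZ}a_nT^n$, $a_n\in\cO(\cZ)$, that converge on $\cZ\times\cA_\rho$; the coefficient conditions are exactly those obtained by specializing (\ref{convergence}) to this single value of $\rho$ — the first reflecting convergence at the outer radius $1$ (independent of $\rho$), the second convergence at the inner radius $\rho$.

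Finally I would pass to the limit: a compatible system in $\varprojlim_{\rho}\cO(\cZ\times\cA_\rho)$ amounts to one Laurent series $\sum_{n\in\bZ}a_nT^n$ with coefficients in $\cO(\cZ)$ satisfying those conditions for every $\rho\in(0,1]\cap\bQ$ at once, i.e.\ satisfying (\ref{convergence}), and the ring structure transported from $\cO(\cZ\times\cD^\times_\Cp)$ is the evident Cauchy product. I expect the only genuinely delicate point to be the exactness (``flatness'') of completed tensor product over $\Cp$ with a Banach module admitting an orthonormal Schauder basis, which is what reduces the computation of $\cO(\cZ\times\cA_\rho)$ from $\cZ$ to its affinoid pieces; the verification that $\{\cZ\times\cA_\rho\}$ is an admissible covering is routine bookkeeping, resting again on the quasi-compactness of $\cZ$.
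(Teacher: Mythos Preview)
Your proposal is correct and follows the same overall strategy as the paper: exhaust $\cD^\times_{\Cp}$ by closed annuli $\cA_\rho$, identify $\cO(\cZ\times\cA_\rho)$ with the Laurent series satisfying the $\rho$-version of (\ref{convergence}) via a finite affinoid cover of $\cZ$, and then pass to the limit over $\rho\to 0^+$. The only real difference lies in the middle step. The paper identifies $\cO(\cZ\times\cA_\rho)$ with $\cO(\cZ)\langle T,\tfrac{\varpi}{T}\rangle$ by an explicit coefficient-by-coefficient gluing: restricting $f$ to each $\cU_i\times\cA_\rho$ gives Laurent coefficients $a_{i,n}\in A_i$ which patch to $a_n\in\cO(\cZ)$, and finiteness of the cover gives the convergence; conversely one restricts and glues. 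You instead realize $\cO(\cZ)$ as the sheaf equalizer $\prod_i B_i\rightrightarrows\prod_{i,j}\cO(\cZ_i\cap\cZ_j)$ and invoke that $-\,\hat\otimes_{\Cp}\cO(\cA_\rho)$ preserves this equalizer because $\cO(\cA_\rho)$ is orthonormalizable. The content is the same; the paper's argument is slightly more elementary and self-contained (it needs nothing about strictness or exactness of completed tensor products), while yours is more conceptual but, as you note, rests on that exactness fact. One small point: the paper also explicitly records that the restriction maps $\cO(\cZ\times\cD^\times_{\Cp})\to\cO(\cZ\times\cA_\rho)$ are injective (via connectedness of the products and Lemma~\ref{FvPLem}), which lets it write the inverse limit as an intersection; your formulation as $\varprojlim$ sidesteps this, and compatibility of Laurent coefficients under restriction does the rest.
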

\begin{proof}
For any non-negative rational number $\rho\leq 1$, let $\cA[\rho,1]_{\Cp}$ be the closed annulus with parameter $T$ over $\Cp$ defined by $\rho \leq |T| \leq 1$. Then we have an admissible covering
\[
\cD^\times_{\Cp}=\bigcup_{\rho\to 0+} \cA[\rho,1]_{\Cp}
\]
of $\cD^\times_{\Cp}$. Note that, for any connected reduced $\Cp$-affinoid variety $\cU$, \cite[Proposition 1.1]{BLR4} implies that the rigid analytic varieties $\cU\times \cA[\rho,1]_{\Cp}$ and $\cU\times \cD^\times_{\Cp}$ are connected. This shows that, for any connected reduced rigid analytic variety $\cX$ over $\Cp$, the fiber products $\cX\times \cA[\rho,1]_{\Cp}$ and $\cX\times \cD^\times_{\Cp}$ are also connected. By Lemma \ref{FvPLem}, we have injections
\[
\cO(\cZ\times\cD^\times_{\Cp})\to \cO(\cZ\times\cA[\rho,1]_{\Cp})\to \cO(\cZ\times\cA[\rho',1]_{\Cp})
\]
for any $\rho<\rho'$ and thus
\[
\cO(\cZ\times\cD^\times_{\Cp})=\bigcap_{\rho\to 0+} \cO(\cZ\times\cA[\rho,1]_{\Cp}).
\]

Take $\varpi\in \OCp$ satisfying $|\varpi|=\rho$. We define $\cO(\cZ)\langle T, \frac{\varpi}{T}\rangle$ as the ring of formal power series $\sum_{n\in \bZ} a_n T^n$ with $a_n\in \cO(\cZ)$ satisfying (\ref{convergence}) for $\rho$.
It suffices to show
\[
\cO(\cZ\times\cA[\rho,1]_{\Cp})=\cO(\cZ)\langle T, \frac{\varpi}{T}\rangle.
\]
Take a finite admissible affinoid covering $\cZ=\bigcup_{i\in I} \cU_i$ with $\cU_i=\Spv(A_i)$. We have an inclusion
\[
\cO(\cU_i\times \cA[\rho,1]_{\Cp})=A_i\langle T, \frac{\varpi}{T}\rangle \subseteq \prod_{n\in\bZ} A_i T^n
\]
which is compatible with the restriction to any affinoid subdomain of $\cU_i$. 
Take $f\in  \cO(\cZ\times\cA[\rho,1]_{\Cp})$ and put
\[
f|_{\cU_i\times \cA[\rho,1]_{\Cp}}=\sum_{n\in \bZ} a_{i,n} T^n
\]
with $a_{i,n}\in A_i$. Then $a_{i,n}$'s can be glued to obtain an element $a_n\in \cO(\cZ)$. Put $\Phi(f)=\sum_{n\in \bZ} a_n T^n$. Since $I$ is a finite set, we can check that $a_n$'s also satisfy (\ref{convergence}) and thus $\Phi(f)\in \cO(\cZ)\langle T, \frac{\varpi}{T}\rangle$. On the other hand, for any element $g=\sum_{n\in \bZ} a_n T^n$ of $\cO(\cZ)\langle T, \frac{\varpi}{T}\rangle$, put $\Psi(g)_i=\sum_{n\in \bZ} a_n|_{\cU_i} T^n$. Then $\Psi(g)_i\in A_i\langle T, \frac{\varpi}{T}\rangle$, which can be glued to obtain $\Psi(g)\in \cO(\cZ\times\cA[\rho,1]_{\Cp})$. Then $\Phi$ and $\Psi$ are inverse to each other and the lemma follows.
\end{proof}

Next we show the following variant of \cite[Lemma 7.1]{BuzCal}.

\begin{lem}\label{gluecusplem}
Let $\cZ$ be a quasi-compact smooth rigid analytic variety over $\Cp$. Let $\cV$ be an admissible open subset of $\cZ$ which meets every connected component of $\cZ$. Let $f$ be an element of $\cO(\cZ\times\cD^\times_{\Cp})$. Suppose that $f|_{\cV\times \cD^\times_{\Cp}}$ extends to an element of $\cO(\cV \times \cD_{\Cp})$. Then $f$ extends to an element of $\cO(\cZ\times\cD_{\Cp})$.
\end{lem}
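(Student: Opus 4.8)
The plan is to reduce the statement to a vanishing assertion about the Laurent coefficients of $f$ in the disc direction. By Lemma \ref{punctdisc} applied to the quasi-compact variety $\cZ$, I would write $f=\sum_{n\in\bZ}a_nT^n$ with $a_n\in\cO(\cZ)$ satisfying the convergence conditions in (\ref{convergence}). A parallel, and easier, description—obtained by covering $\cZ$ by finitely many affinoids $\cZ=\bigcup_i\cU_i$ and using $\cO(\cU_i\times\cD_{\Cp})=\cO(\cU_i)\langle T\rangle$—identifies $\cO(\cZ\times\cD_{\Cp})$ with the subring of $\cO(\cZ\times\cD^\times_{\Cp})$ consisting of those series with $a_n=0$ for all $n<0$. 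So it suffices to prove that $a_n=0$ for every $n<0$.

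The first step is to show $a_n|_{\cV}=0$ for $n<0$. Since $\cV$ need not be quasi-compact, I cannot apply Lemma \ref{punctdisc} to $\cV$ directly; instead I would fix an admissible affinoid covering $\cV=\bigcup_j\cV_j$ and work over each $\cV_j$. For each $j$, Lemma \ref{punctdisc} identifies $\cO(\cV_j\times\cD^\times_{\Cp})$ with a ring of Laurent series in $T$ over $\cO(\cV_j)$, and the restriction of $f$ has expansion $\sum_n(a_n|_{\cV_j})T^n$; on the other hand the assumed extension $g\in\cO(\cV\times\cD_{\Cp})$ restricts to an element of $\cO(\cV_j\times\cD_{\Cp})=\cO(\cV_j)\langle T\rangle$, hence to a series with no negative-degree terms, and it agrees with $f$ on $\cV_j\times\cD^\times_{\Cp}$. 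Comparing the two expansions, using injectivity of the coefficientwise map, forces $a_n|_{\cV_j}=0$ for $n<0$, and letting $j$ vary gives $a_n|_{\cV}=0$.

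Finally I would propagate this vanishing from $\cV$ to all of $\cZ$. Since $\cZ$ is quasi-compact it has finitely many connected components $\cX_1,\ldots,\cX_m$ with $\cZ=\coprod_i\cX_i$ an admissible covering; each $\cX_i$ is a connected smooth rigid analytic variety over $\Cp$, and by hypothesis $\cV\cap\cX_i$ is a non-empty admissible open subset of $\cX_i$. For $n<0$, the function $a_n|_{\cX_i}\in\cO(\cX_i)$ vanishes on $\cV\cap\cX_i$, so Lemma \ref{FvPLem} (with the trivial invertible sheaf) gives $a_n|_{\cX_i}=0$, whence $a_n=0$ on $\cZ$. Then $f=\sum_{n\geq0}a_nT^n$ lies in $\cO(\cZ\times\cD_{\Cp})$ by the identification above, which is the desired extension. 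The step requiring the most care is the affinoid-local comparison of the two expansions in the second paragraph, together with checking that Lemma \ref{FvPLem} genuinely applies on each component—that connected components of $\cZ$ are smooth and connected and that $\cV$ meets each of them—while the remainder is routine bookkeeping with admissible coverings.
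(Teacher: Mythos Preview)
Your argument is correct and follows essentially the same route as the paper: expand $f$ via Lemma~\ref{punctdisc}, kill the negative Laurent coefficients using the extension hypothesis on $\cV$, and propagate the vanishing to all of $\cZ$ via Lemma~\ref{FvPLem}. The only difference is cosmetic: to cope with the fact that $\cV$ need not be quasi-compact, the paper first shrinks $\cV$ to a finite union of affinoids (one in each connected component of $\cZ$) so that Lemma~\ref{punctdisc} applies directly to $\cV$, whereas you instead pass to an affinoid covering $\cV=\bigcup_j\cV_j$ and compare expansions on each $\cV_j$; both achieve the same end.
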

\begin{proof}
By taking an admissible affinoid open subset of the intersection of $\cV$ and each connected component of $\cZ$ and replacing $\cV$ with their union, we may assume that $\cV$ is quasi-compact.
By Lemma \ref{FvPLem}, the assumption on $\cV$ yields injections
\[
\cO(\cZ)\to \cO(\cV),\quad \cO(\cZ\times\cD^\times_{\Cp}) \to \cO(\cV\times \cD^\times_{\Cp}) \gets \cO(\cV\times \cD_{\Cp}).
\]
From Lemma \ref{punctdisc}, we see that the intersection of $\cO(\cZ\times\cD^\times_{\Cp})$ and $\cO(\cV\times \cD_{\Cp})$ inside $\cO(\cV\times \cD^\times_{\Cp})$ is the set of formal power series $\sum_{n\geq 0} a_n T^n$ with $a_n\in \cO(\cZ)$ satisfying
\[
\lim_{n\to +\infty} \sup_{z\in \cZ} |a_n(z)|=0,
\]
which is equal to $\cO(\cZ\times\cD_{\Cp})$.
\end{proof}

\begin{lem}\label{fillpunct}
\[
\cO^\circ(\cD^\times_{\Cp})\subseteq \cO(\cD_{\Cp}).
\]
\end{lem}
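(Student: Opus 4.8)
The plan is to reduce the statement to membership in the Tate algebra $\Cp\langle T\rangle=\cO(\cD_{\Cp})$ by inspecting the Laurent expansion of a bounded function on the punctured disc. Recall from the proof of Lemma \ref{punctdisc} that $\cD^\times_{\Cp}$ carries the admissible affinoid covering $\cD^\times_{\Cp}=\bigcup_{\rho}\cA[\rho,1]_{\Cp}$, where $\rho$ runs over the elements of $|\Cp^\times|$ with $\rho\leq 1$ and $\cA[\rho,1]_{\Cp}=\Spv(\Cp\langle T,\varpi/T\rangle)$ for a choice of $\varpi\in\OCp$ with $|\varpi|=\rho$. Given $f\in\cO^\circ(\cD^\times_{\Cp})$, I would first apply Lemma \ref{punctdisc} with $\cZ=\Spv(\Cp)$ (which is quasi-compact and smooth over $\Cp$) to write $f=\sum_{n\in\bZ}a_nT^n$ with $a_n\in\Cp$, so that in particular $\lim_{n\to+\infty}|a_n|=0$.

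Next I would restrict $f$ to each annulus. Since $\cO^\circ$ is the subsheaf of functions of absolute value bounded by one, $f|_{\cA[\rho,1]_{\Cp}}$ lies in $\cO^\circ(\cA[\rho,1]_{\Cp})$; as $\cA[\rho,1]_{\Cp}$ is a reduced affinoid, this is the subring of power-bounded elements, which by \cite[Remark after Proposition 6.3.4/1]{BGR} equals $\OCp\langle T,\varpi/T\rangle$. Reading off Laurent coefficients, membership of $f$ in $\OCp\langle T,\varpi/T\rangle$ forces $|a_n|\leq 1$ for all $n\geq 0$ and $|a_{-n}|\leq\rho^{n}$ for all $n\geq 1$. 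The key point is that the latter bound holds for every $\rho\in|\Cp^\times|$ with $\rho\leq 1$, so letting $\rho\to 0^+$ yields $a_{-n}=0$ for all $n\geq 1$. Hence $f=\sum_{n\geq 0}a_nT^n$ with $|a_n|\leq 1$ and $|a_n|\to 0$, i.e. $f\in\Cp\langle T\rangle=\cO(\cD_{\Cp})$, which is the asserted inclusion.

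There is no real obstacle here; the only points needing a little care are the identification of $\cO^\circ$ on each annulus with the integral Tate algebra $\OCp\langle T,\varpi/T\rangle$ (which is what makes the $\rho$-dependent bound on $|a_{-n}|$ available) and keeping the direction of the exponent of $\rho$ straight when comparing the positive and negative parts of the Laurent series. Alternatively, one could bypass Lemma \ref{punctdisc} entirely and argue via the circle supremum norm $\sup_{|T|=r}|f|=\max_{n}|a_n|r^n$ for $r\in|\Cp^\times|\cap(0,1]$: the hypothesis $|f|_{\sup}\leq 1$ on $\cD^\times_{\Cp}$ then gives $|a_n|\leq 1$ (taking $r=1$) and $|a_{-n}|\leq r^{n}$ (taking $r$ small), and the same passage $r\to 0^+$ together with the fact that convergence of $f$ on the unit circle forces $|a_n|\to 0$ gives the conclusion.
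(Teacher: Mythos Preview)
Your proof is correct and follows essentially the same idea as the paper's: write $f$ as a Laurent series, then use boundedness on annuli of arbitrarily small inner radius to force the negative-index coefficients to vanish. The paper phrases this via the Newton polygon (the points $(n,v_p(a_n))$ lie above every line $y=-rx$ with $r\geq 0$, hence $a_n=0$ for $n<0$), which is exactly your alternative argument via the circle supremum norm; your main argument through $\OCp\langle T,\varpi/T\rangle$ is the same bound repackaged.
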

\begin{proof}
Let $f=\sum_{n\in \bZ} a_n T^n$ be an element of $\cO^\circ(\cD^\times_{\Cp})$. Consider the Newton polygon of $f$. Then the assumption implies that any point $(n,v_p(a_n))$ lies above the line $y=-rx$ for any non-negative rational number $r$, which forces $a_n=0$ for any $n<0$.
\end{proof}

\begin{prop}\label{gluecusp}
Let $\varphi:\cD^\times_{\Cp}\to (\cE|_\cX)_{\Cp}$ be a morphism of rigid analytic varieties over $\Cp$ such that the composite $\cD^\times_{\Cp} \to (\cE|_\cX)_{\Cp}\to \cX_{\Cp}$ extends to $\cD_{\Cp}\to \cX_{\Cp}$. Let $(\nu^{\cD_{\Cp}},w^{\cD_{\Cp}})$ be the weight associated to the map $\cD_{\Cp}\to \cX_{\Cp}$.
Suppose that, for some non-negative rational number $v'<(p-1)/p^{n}$, we are given an element
\[
f=(f_\frc)_{\frc\in[\mathrm{Cl}^+(F)]^{(p)}}\in \bigoplus_{\frc\in [\mathrm{Cl}^+(F)]^{(p)}} \cO(\cIW_{w,\frc}^+(v')_{\Cp}\times \cD^\times_{\Cp})
\]
and an admissible affinoid covering $\cD^\times_{\Cp}=\bigcup_{i\in I} \cU_i$ such that the restriction $f|_{\cU_i}$ for each $i\in I$ is an eigenform of $S^G(\mu_N, (\nu^{\cU_i},w^{\cU_i}))(v')$ with eigensystem $\varphi^*: \bT\to \cO(\cD_{\Cp}^\times)\to \cO(\cU_i)$ and $f(z)$ is normalized for any $z\in \cD^\times_{\Cp}$. 
Then there exists an eigenform $f'\in S^G(\mu_N,(\nu^{\cD_{\Cp}},w^{\cD_{\Cp}}))(v')$ such that $f'(z)$ is normalized for any $z\in \cD_{\Cp}$ and it is an eigenform with eigensystem $\varphi^*(z): \bT\to \cO(\cD^\times_{\Cp})\to \Cp$ for any $z\in \cD^\times_{\Cp}$. 
\end{prop}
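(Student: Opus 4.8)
The plan is to use the $q$-expansion and its integrality properties as a bridge between the local eigenforms over the punctured disc and a single eigenform over the whole disc. First I would fix a $\frc\in [\mathrm{Cl}^+(F)]^{(p)}$ and the cusp $(\fro,\frc^{-1},\id)$ with its fixed cone decomposition $\sC=\sC(\fro,\frc^{-1})$. For each $\sigma\in \sC$ we have the map $\tau_{\fro,\frc^{-1}}\times 1: \breve{S}_{\sigma,\OCp}\times \cD^\times_{\Cp}\to \frIW^+_{w,\frc}(v')_{\OCp}\times \cD^\times_{\Cp}$, and pulling back $f_\frc$ along it gives, via the identification of $\cO(\breve{S}_{\sC,\OCp}\hat\otimes \cO^\circ(\cD^\times_{\Cp}))$ with a ring of formal $q$-expansions with coefficients in $\cO^\circ(\cD^\times_{\Cp})$ (cf. the discussion preceding Lemma \ref{punctdisc} combined with (\ref{BoundedFnS}) and Lemma \ref{fillpunct}), a formal $q$-expansion $f_\frc(q)\in \cO(\cD^\times_{\Cp})[[q^\xi\mid \xi\in(\frc^{-1})^+\cup\{0\}]]$. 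The key point is that for each $z\in \cD^\times_{\Cp}$, the specialization $f(z)$ is a \emph{normalized} eigenform, so by Corollary \ref{qexpintegral} all its $q$-expansion coefficients lie in $\OCp$ and the $q^1$-coefficient is $1$. Hence every coefficient of $f_\frc(q)$, viewed as an element of $\cO(\cD^\times_{\Cp})$, has absolute value bounded by one on $\cD^\times_{\Cp}$, and so by Lemma \ref{fillpunct} each coefficient extends to $\cO(\cD_{\Cp})$. Moreover the $q^1$-coefficient $a_{\fro,\fro}(f_\fro(q))$ is the constant function $1$.

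Next, using Proposition \ref{weakmulti}, which expresses $a_{\fro,\frc^{-1}}(f(z),\eta)$ in terms of the Hecke eigenvalue $\Lambda(\eta\frc)(z)=\varphi^*(T_{\eta\frc})(z)$ and $a_{\fro,\fro}(f(z),1)=1$, I would identify the coefficient $a_{\fro,\frc^{-1}}(f_\frc(q))\in\cO(\cD_{\Cp})$ with $\nu^{\cD_\Cp}(\eta\chi_p(\eta)^{-1})\,\varphi^*(T_{\eta\frc})$, a function which a priori is only defined on $\cD^\times_{\Cp}$ but which we now know extends to $\cD_{\Cp}$: this already shows the whole $q$-expansion $f_\frc(q)$ has a well-defined extension over $\cD_{\Cp}$ with coefficients in $\OCp$. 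This step also reconciles the two weight characters, since the continuous character $\nu^{\cU_i}$ is the restriction of $\nu^{\cD_\Cp}$ (the map $\cD^\times_{\Cp}\to\cX_\Cp$ being the restriction of $\cD_\Cp\to\cX_\Cp$).

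The central geometric step is then to extend $f_\frc$ itself, not just its $q$-expansion. Here I would invoke Lemma \ref{gluecusplem}: take $\cZ=\cIW^+_{w,\frc}(v')_{\Cp}$ base-changed suitably, or rather work with $\bar{\cM}(\mu_N,\frc)(v')_{\Cp}$, and use the connected neighborhood $\cV_\frc$ of a cusp provided by Corollary \ref{cuspintegral} (\ref{cuspintegral_int}) — whose preimage $(\pi_w^\rig)^{-1}(\cV_\frc)$ meets every connected component of $\cIW^+_{w,\frc}(v')_{\Cp}$ — as the admissible open subset on which the extension over the puncture is already visible from the $q$-expansion. Concretely: on $(\pi_w^\rig)^{-1}(\breve{S}^\rig_{\sC,\Cp}/U_N)\times \cD^\times_{\Cp}$, which maps into $\cV_\frc\times\cD^\times_\Cp$, the function $f_\frc$ is given by the $q$-expansion $f_\frc(q)$, and since that extends with bounded coefficients over $\cD_\Cp$, the restriction of $f_\frc$ to this locus extends to $\cD_\Cp$. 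Applying Lemma \ref{gluecusplem} with $\cZ=\cIW^+_{w,\frc}(v')_\Cp$ and $\cV=(\pi_w^\rig)^{-1}(\cV_\frc)$ then produces an extension $f'_\frc\in\cO(\cIW^+_{w,\frc}(v')_\Cp\times\cD_\Cp)$ of $f_\frc$. Doing this for each $\frc$ and assembling, together with the integrality of the $q$-expansion coefficients, gives via the $q$-expansion principle Proposition \ref{qexpprinciple} applied to $f'(z)$ for each $z\in\cD_\Cp$ — one checks $f'(z)$ lands in the correct space $S^G(\mu_N,(\nu^{\cD_\Cp}(z),w^{\cD_\Cp}(z)))(v')$; combined with the $\Delta$-invariance and $\bT(\bZ/p^n\bZ)$-equivariance, which are closed conditions preserved under the extension, one obtains $f'\in S^G(\mu_N,(\nu^{\cD_\Cp},w^{\cD_\Cp}))(v')$.

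Finally I would verify that $f'$ has the stated eigenform and normalization properties: for $z\in\cD^\times_\Cp$, $f'(z)=f(z)$ by construction so it is a normalized eigenform with eigensystem $\varphi^*(z)$; and for $z=O$, the Hecke eigenvalue relations $(h\otimes 1)f=(1\otimes\varphi^*(h))f$ on each $\cU_i$ pass to the limit (both sides are rigid analytic functions on $\cIW^+_{w,\frc}(v')_\Cp\times\cD^\times_\Cp$ which agree, hence agree on the extension since $\cIW^+_{w,\frc}(v')_\Cp\times\cD_\Cp$ is reduced and $\cD^\times_\Cp$ is Zariski dense), so $f'$ is a $\bT$-eigenvector over $\cO(\cD_\Cp)$, and the $q^1$-coefficient of $f'(z)$ is the constant $1$ for all $z\in\cD_\Cp$, so $f'(z)$ is normalized — in particular nonzero — for every $z$. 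The main obstacle I anticipate is bookkeeping around the base change and the descent through the $U_N$-quotient and the $\bT^0_w$-torsor: one must be careful that $(\pi_w^\rig)^{-1}(\cV_\frc)$ genuinely meets every connected component of $\cIW^+_{w,\frc}(v')_\Cp$ (which is exactly what Corollary \ref{cuspintegral} (\ref{cuspintegral_int}) together with the connected-component bijection in its proof supplies) so that Lemma \ref{gluecusplem} applies, and that the integral structures line up so the $q$-expansion principle can be invoked fiberwise at $O$.
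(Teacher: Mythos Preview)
Your approach is essentially the same as the paper's: use Corollary \ref{cuspintegral} (\ref{cuspintegral_int}) to bound $f_\frc$ on $(\pi_w^\rig)^{-1}(\cV_\frc)\times\cD^\times_{\Cp}$, apply Lemma \ref{fillpunct} to extend across the puncture on that locus, and then invoke Lemma \ref{gluecusplem} to produce $\tilde f_\frc\in\cO(\cIW^+_{w,\frc}(v')_{\Cp}\times\cD_{\Cp})$; the $\kappa^{\cD_{\Cp}}$-equivariance, $\Delta$-stability and Hecke eigenrelations over $\cD^\times_{\Cp}$ then propagate by the injectivity of restriction (Lemma \ref{FvPLem}), and the Hecke eigenvalues extend to $\cO(\cD_{\Cp})$ via Corollary \ref{HeckeEVintegral} and Lemma \ref{fillpunct}.

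There is one genuine difference at the endgame. The paper does not track the $q^1$-coefficient through the extension; instead it argues that $\tilde f_\frc\in T^{m_\frc}\cO(\cIW^+_{w,\frc}(v')_{\Cp}\times\cD_{\Cp})\setminus T^{m_\frc+1}\cO(\ldots)$ for some $m_\frc$, divides the tuple by $T^{m}$ with $m=\min_\frc m_\frc$ to force nonvanishing at $O$, and only then applies Lemma \ref{normalize} to renormalize. Your observation that the $q^1$-coefficient $a^\cU_{\fro,\fro}(f,1)$ is the constant function $1$ on $\cD^\times_{\Cp}$, hence remains $1$ after extension (by (\ref{q1compat}) and reducedness of $\cD_{\Cp}$), gives directly that $\tilde f(O)$ is already normalized and nonzero, so no division by $T^m$ or appeal to Lemma \ref{normalize} is needed. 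This is a legitimate simplification. On the other hand, your invocation of Proposition \ref{qexpprinciple} is a red herring: that proposition concerns integrality inside $\mathbf{M}(\mu_N,\frc,\kappa)(0)$, not membership in $S^G(\mu_N,(\nu,w))(v')$, and is not needed here---membership in the correct space follows from the equivariance and $\Delta$-stability of $\tilde f_\frc$ already established by the injectivity argument.
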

\begin{proof}
Consider the map $\pi_w^\rig: \cIW_{w,\frc}^+(v')_{\Cp} \to \bar{\cM}(\mu_N,\frc)(v')_{\Cp}$ as before. Let $\cV_\frc$ be an admissible affinoid open subset of $\bar{\cM}(\mu_N,\frc)(v')_{\Cp}$ as in Corollary \ref{cuspintegral} (\ref{cuspintegral_int}). Put $\cI_\frc=(\pi_w^\rig)^{-1}(\cV_\frc)$. Then $\cI_\frc$ is an admissible open subset which meets every connected component of $\cIW_{w,\frc}^+(v')_{\Cp}$ such that $f_\frc(z)|_{\cI_\frc}$ has absolute value bounded by one for any $z\in \cD^\times_{\Cp}$. Hence $f_\frc|_{\cI_\frc\times \cD^\times_{\Cp}}$ also has absolute value bounded by one. 

Note that $\cI_\frc$ is quasi-compact, since $\pi_w$ is quasi-compact.
By Lemma \ref{punctdisc}, we can write as 
\[
f_\frc|_{\cI_\frc\times \cD^\times_{\Cp}}=\sum_{n\in \bZ} a_n T^n
\]
with some $a_n\in \cO(\cI_\frc)$. Lemma \ref{fillpunct} implies $a_n(x)=0$ for any $x\in \cI_\frc$ and any $n<0$. Since $\cI_\frc$ is reduced, we obtain $a_n=0$ for any $n<0$ and thus 
\[
f_\frc|_{\cI_\frc\times \cD^\times_{\Cp}}\in \cO(\cI_\frc\times \cD_{\Cp}).
\]
Therefore, by Lemma \ref{gluecusplem} we see that $f_\frc$ extends to an element $\tilde{f}_\frc$ of $\cO(\cIW_{w,\frc}^+(v')_{\Cp}\times \cD_{\Cp})$. 

Write as $\cD_{\Cp}=\Spv(\Cp\langle T\rangle)$. Note that the ring $\cO(\cIW_{w,\frc}^+(v')_{\Cp}\times \cD_{\Cp})$ is $T$-torsion free. We claim that, if $f_\frc\neq 0$, then there exists a non-negative integer $m_\frc$ satisfying 
\[
\tilde{f}_\frc\in T^{m_\frc} \cO(\cIW_{w,\frc}^+(v')_{\Cp}\times \cD_{\Cp})\setminus T^{m_\frc+1} \cO(\cIW_{w,\frc}^+(v')_{\Cp}\times \cD_{\Cp}).
\]
Indeed, since $\cIW_{w,\frc}^+(v')_{\Cp}$ is smooth, we can take an admissible affinoid covering 
\[
\cIW_{w,\frc}^+(v')_{\Cp}=\bigcup_{j\in J}\cV_j,\quad \cV_j=\Spv(A_j)
\]
such that every $A_j$ is a Noetherian domain. Suppose that
\[
\tilde{f}_\frc\in \bigcap_{m\geq 0} T^m \cO(\cIW_{w,\frc}^+(v')_{\Cp}\times \cD_{\Cp}).
\]
Since $A_j\langle T\rangle$ is also a Noetherian domain, Krull's intersection theorem implies $\tilde{f}_\frc|_{\cV_j\times \cD_{\Cp}}=0$ for any $j\in J$ and thus $\tilde{f}_\frc=0$, which is a contradiction.

Put $m=\min\{m_\frc\mid \frc\in[\mathrm{Cl}^+(F)]^{(p)},f_\frc\neq 0\}$. Let $\tilde{f}'_\frc$ be the unique element of $\cO(\cIW_{w,\frc}^+(v')_{\Cp}\times \cD_{\Cp})$ satisfying $\tilde{f}_\frc=T^m \tilde{f}'_\frc$. Since the maps
\begin{align*}
\cO(\cIW_{w,\frc}^+(v')_{\Cp}\times \cD_{\Cp})&\to \cO(\cIW_{w,\frc}^+(v')_{\Cp}\times \cD^\times_{\Cp})\\
&\to \prod_{i\in I} \cO(\cIW_{w,\frc}^+(v')_{\Cp}\times \cU_i)
\end{align*}
are injective by Lemma \ref{FvPLem}, the element $\tilde{f}'_\frc$ is also $\kappa^{\cD_{\Cp}}$-equivariant and $\Delta$-stable. Hence 
the collection $\tilde{f}'=(\tilde{f}'_\frc)_{\frc\in [\mathrm{Cl}^+(F)]^{(p)}}$ is an element of $S^G(\mu_N,(\nu^{\cD_{\Cp}},w^{\cD_{\Cp}}))(v')$ such that $\tilde{f}'(z)\neq 0$ for any $z\in \cD_{\Cp}$.

Let $\Lambda(\frn)$ be the image of $T_\frn$ ({\it resp.} $S_\frn$) by the map $\varphi^*: \bT\to \cO(\cD^\times_{\Cp})$. By Corollary \ref{HeckeEVintegral}, the specialization $\Lambda(\frn)(z)$ is $p$-integral for any $z\in \cD^\times_{\Cp}$. Thus Lemma \ref{fillpunct} shows $\Lambda(\frn)\in\cO(\cD_{\Cp})$. By the above injectivity, we see that $\tilde{f}'$ is an eigenform on which $T_\frn$ ({\it resp.} $S_\frn$) acts by $\Lambda(\frn)$. Now Lemma \ref{normalize} concludes the proof of the proposition.
\end{proof}



\section{Properness at integral weights}\label{mainpf}

Let $\cE\to \cW^G$ be the Hilbert eigenvariety as in \S\ref{SecHeckeOp}. In this section, we prove the following main theorem of this paper.

\begin{thm}\label{main}
Suppose that $F$ is unramified over $p$ and for any prime ideal $\frp\mid p$ of $F$, the residue degree $f_\frp$ satisfies $f_\frp\leq 2$ ({\it resp.} $p$ splits completely in $F$) for $p\geq 3$ ({\it resp.} $p=2$).
Consider a commutative diagram 
\[
\xymatrix{
\cD^\times_{\Cp} \ar[r]^-{\varphi} \ar[d]& \cE_{\Cp} \ar[d]\\
\cD_{\Cp} \ar[r]_{\psi}\ar@{.>}[ur]& \cW^G_{\Cp}
}
\]
of rigid analytic varieties over $\Cp$, where the left vertical arrow is the natural inclusion. Suppose that $\psi(O)$ is $1$-integral ({\it resp.} $1$-even) in the sense of \S\ref{SecArithOCHMF}. Then there exists a morphism $\cD_{\Cp}\to \cE_{\Cp}$ of rigid analytic varieties over $\Cp$ such that the above diagram with this morphism added is also commutative.
\end{thm}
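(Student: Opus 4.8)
The plan is to follow the Buzzard--Calegari strategy, as the introduction outlines, reducing the existence of the dotted arrow to a non-convergence property of overconvergent Hilbert eigenforms of infinite slope. First I would set up the situation: write $\cX$ for an admissible affinoid open subset of $\cW^G$ through which $\psi$ factors near $O$ (shrinking $\cD_{\Cp}$ if necessary, which is harmless since extending over $O$ is local), put $n=n(\cX)$, $v=v_n$, and work with the input data $(R,S^G(\mu_N,(\nu^\cX,w^\cX))(v_\tot),\bT,U_p)$ as in \S\ref{SecHeckeOp}. Applying Proposition \ref{GlobalEigenform} to $\cZ=\cD^\times_{\Cp}$ (which is principally refined by Remark \ref{DSL-disc}, hence Proposition \ref{DSL} (\ref{DSL2}) applies) together with $\varphi$, I obtain an admissible affinoid covering $\cD^\times_{\Cp}=\bigcup_i\cU_i$ and a family of normalized overconvergent Hilbert eigenforms $f=(f_\frc)_\frc$ over $\cD^\times_{\Cp}$ realizing the eigensystem $\varphi^*$. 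Here one must also do the standard analytic continuation: since the Hecke eigenvalue $\varphi^*(U_p)$ is a unit on $\cD^\times_{\Cp}$ and, by Corollary \ref{HeckeEVintegral}, has specializations of absolute value $\leq 1$, the usual $U_p$-iteration argument (as in \cite{AIP2}, using Corollary \ref{cncisog} (\ref{noncanisog}) and the dynamics of the $U_p$-correspondence recalled in \S\ref{SecHeckeOp}) extends each $f_\frc$ to the larger locus where all partial Hodge heights are $\leq 1/(p+1)$, at the cost of replacing $v$ by a small $v'<(p-1)/p^n$.

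Next I would invoke Proposition \ref{gluecusp}: the composite $\cD^\times_{\Cp}\to\cE_{\Cp}\to\cW^G_{\Cp}$ is $\psi|_{\cD^\times_{\Cp}}$, which extends to $\psi$ on $\cD_{\Cp}$, and the $q$-expansion integrality (Corollary \ref{cuspintegral} (\ref{cuspintegral_int})) forces the $q$-expansion around a cusp parametrized by $\cD^\times_{\Cp}$ to be bounded, hence to extend to $O$; gluing (Lemma \ref{gluecusplem}, Lemma \ref{fillpunct}) produces an eigenform $f'\in S^G(\mu_N,(\nu^{\cD_{\Cp}},w^{\cD_{\Cp}}))(v')$ whose specialization $f'(z)$ is a normalized eigenform with eigensystem $\varphi^*(z)$ for every $z\in\cD^\times_{\Cp}$, and whose specialization $f'(O)$ at the puncture is a nonzero normalized eigenform of weight $\psi(O)$ defined on the locus where all partial Hodge heights are $\leq 1/(p+1)$. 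The key remaining point is that $f'(O)$ is of \emph{finite slope}, i.e.\ $U_p f'(O)\neq 0$ with $U_p$-eigenvalue nonzero. This is where the hypothesis on residue degrees and $1$-integrality (resp.\ $1$-evenness) of $\psi(O)$ enters: since $\psi(O)$ is $1$-integral, on the level-$p^n$ cover the relevant sheaf is classical automorphic, so I can apply the connected-neighborhood construction of \S\ref{SecConnCritLocus}. Concretely, for any critical HBAV $A$ (all $\beta$-Hodge heights $p/(p+1)$) and any $p$-cyclic $\cH$, Proposition \ref{ConnCrit} gives a connected admissible affinoid $\cV^0_{Q,\bC_p}\subseteq\cY_{\frc,p,\bC_p}$ around $Q=[(A,\cH)]$ meeting the canonical locus $\cV^0_{Q,\bC_p}(\tfrac1{p+1})$; by Corollary \ref{UpExt} the form $U_p f'(O)$ can be evaluated on all of $\cV^0_{Q,\bC_p}$, and if $f'(O)$ were of infinite slope then $U_p f'(O)=0$ on the canonical locus, hence (by connectedness and Lemma \ref{FvPLem}, Lemma \ref{Mvconn}) $U_p f'(O)=0$ identically, forcing $f'(O)(A/\cH,A[p]/\cH)=0$ for all critical $A$ and all $p$-cyclic $\cH$ via the combinatorial argument alluded to in the introduction; by Proposition \ref{critisog} the pairs $(A/\cH,A[p]/\cH)$ sweep out an entire region (all partial Hodge heights $1/(p+1)$), so $f'(O)$ vanishes there and hence on its whole connected domain of definition, contradicting $f'(O)\neq 0$. (For $p=2$ one needs $p$ to split completely and $\psi(O)$ to be $1$-even precisely so that the $1$-integral weight is $1$-analytic, as noted at the end of \S\ref{SecOCHMFDef}.)

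Having shown $f'(O)$ is of finite slope, I would finally repackage: $f'$ together with the Hecke eigenvalues $\Lambda(\frn)=\varphi^*(T_\frn),\ \Phi(\frn)=\varphi^*(S_\frn)\in\cO(\cD_{\Cp})$ defines a family of eigensystems of finite slopes over $\cD_{\Cp}$ (finite slope because $\varphi^*(U_p)\in\cO(\cD_{\Cp})^\times$, using Lemma \ref{fillpunct} again to see it is a unit after filling the puncture, and that $f'(O)$ is of finite slope as just proved). By Proposition \ref{Chenevier} applied to the reduced rigid analytic variety $\cD_{\Cp}$ with the morphism $\psi:\cD_{\Cp}\to\cW^G_{\Cp}$, this family of eigensystems yields a unique morphism $\Phi:\cD_{\Cp}\to\cE_{\Cp}$ over $\cW^G_{\Cp}$ whose restriction to $\cD^\times_{\Cp}$ induces the eigensystem $\varphi^*(z)$ at each $z$; since $\cE_{\Cp}$ is separated and $\cD_{\Cp}$ reduced, Lemma \ref{LemRedFactor} (\ref{LemRedFactor2}) shows $\Phi|_{\cD^\times_{\Cp}}=\varphi$, so $\Phi$ is the desired dotted arrow.

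The main obstacle is the finite-slope assertion for $f'(O)$, i.e.\ the non-convergence of infinite-slope eigenforms on the locus where all partial Hodge heights are $\leq1/(p+1)$. Everything hinges on Proposition \ref{critisog} (which uses $f_\frp\leq2$ crucially, and genuinely fails for $f_\frp=3$ by Remark \ref{counterexamplef3}) and on the connectedness in Proposition \ref{ConnCrit} of the local neighborhood $\cV^0_{Q,\bC_p}$; assembling the combinatorial step that propagates the vanishing $(U_pf'(O))(A,\cH)=0$ for all critical $A$ into $f'(O)(A/\cH,A[p]/\cH)=0$ for all such pairs, and then globalizing via Lemma \ref{Mvconn} and Lemma \ref{FvPLem}, is the delicate part and must be carried out carefully; the rest is bookkeeping with the $q$-expansion principle and the eigenvariety machine.
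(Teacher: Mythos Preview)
Your proposal is correct and follows essentially the same route as the paper's proof: reduce to an affinoid weight neighborhood, produce a normalized family of eigenforms via Proposition~\ref{GlobalEigenform}, analytically continue by $U_p$-iteration to radius $\tfrac{1}{p+1}$, extend across the puncture via Proposition~\ref{gluecusp}, prove the specialization at $O$ has finite slope using the critical-locus argument (Propositions~\ref{ConnCrit}, \ref{critisog}, Corollary~\ref{UpExt}, and the combinatorial Lemma~\ref{combinat}), and finally invoke Proposition~\ref{Chenevier}.

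Two small points to tighten. First, you set $n=n(\cX)$, but the argument at the critical locus uses that $\psi(O)$ is $1$-integral to identify $h^*\Omega^\kappa$ with a classical sheaf on the level-$p$ cover; the paper arranges this by shrinking to an affinoid $\cU$ on which the universal character is $1$-analytic and then taking $n=1$ explicitly. Second, the finite-slope step is carried out not with $f'(O)$ itself but with its pullback $g_\frc=h^*f_\frc(O)$ to $\cY^1_{\frc,p}$, and the passage from $g_\frc(A/\cH,u)=0$ on $\bar{\bQ}_p$-points to vanishing on an admissible open of $\bar{\cM}(\mu_N,\frc)(\tfrac{1}{p+1})_{\Cp}$ needs the density Lemma~\ref{AnalyticVanishing}. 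Neither is a conceptual gap; both are details the paper spells out and you should too.
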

\begin{proof}
Let $e_1,\ldots,e_g$ be a basis of the $\bZ_p$-module $2p(\cO_F\otimes \bZ_p)$ and put $E_i=\exp(e_i)\in 1+2p(\cO_F\otimes \bZ_p)$. Similarly, let $e_{g+1}$ be a basis of the $\bZ_p$-module $2p\bZ_p$ and put $E_{g+1}=\exp(e_{g+1})\in 1+2p\bZ_p$. Let $(\nu^\univ, w^\univ)$ be the universal character on $\cW^G$. Note that $\cW^G_{\Cp}$ is the disjoint union of finitely many copies of the open unit polydisc defined by
\[
|X_1|<1,\ldots, |X_{g+1}|<1
\]
with parameters $X_1,\ldots,X_{g+1}$: the connected components are parametrized by the finite order characters 
\[
\varepsilon:\bT(\bZ/2 p\bZ)\times (\bZ/2 p\bZ)^\times \to \cO_{\Cp}^\times
\]
and on each connected component, the point defined by $X_i\mapsto x_i$ corresponds to the character $(\nu,w)$ satisfying $\nu(E_i)=1+x_i$ for any $i\leq g$ and $w(E_{g+1})=1+x_{g+1}$. 

Put $q=p$ if $p\geq 3$ and $q=8$ if $p=2$. Since $\psi(O)$ is $1$-integral, it comes from a $K$-valued point of $\cW^G$, which we also denote by $\psi(O)$. This corresponds to a finite order character $\varepsilon_O$ and a map $X_i\mapsto x_i$ with some $x_i\in q\okey$. For $p=2$, the assumption that $\psi(O)$ is $1$-even implies that $\varepsilon_O$ is trivial on the torsion subgroup of $1+2(\cO_F\otimes \bZ_2)$.
Put $E'_i=(-1)^{p-1}E_i$. The group $1+p(\cO_F\otimes \bZ_p)$ is topologically generated by $E_i$'s and $E'_i$'s. We have
\[
(\nu^\univ, w^\univ)(E_i)=(\nu^\univ, w^\univ)(E'_i)=1+X_i
\]
on the $\varepsilon_O$-component of $\cW^G$. Let $\cU=\Spv(R)$ be the admissible affinoid open subset of the $\varepsilon_O$-component of $\cW^G$ defined by $|X_i-x_i|\leq |q|$ for any $i$. Then $1+X_i=1+x_i+(X_i-x_i)\in 1+qR^\circ$ and the universal character $(\nu^\univ, w^\univ)$ is $1$-analytic on $\cU$.

We denote by $\cD_{\rho,\Cp}$ the closed disc of radius $\rho$ centered at the origin over $\Cp$.
Consider the element $\psi^*(X_i)(T)$ of the ring $\cO(\cD_{\Cp})=\Cp\langle T\rangle$. Since $\psi^*(X_i)(0)=x_i$, there exists a positive rational number $\rho<1$ such that
\[
|t|\leq \rho \Rightarrow |\psi^*(X_i)(t)-x_i|\leq |q|
\]
for any $i$. This means $\psi(\cD_{\rho,\Cp})\subseteq \cU_{\Cp}$.
If we can construct a morphism $\cD_{\rho,\Cp}\to \cE_{\Cp}$ which makes the diagram in the theorem commutative, then by gluing we obtain the desired map $\cD_{\Cp}\to \cE_{\Cp}$. Thus, by shrinking the disc, we may assume that $\psi$ factors through $\cU_{\Cp}$.

Put $n=1$ and $v=v_1$. We may assume $v<1/(p+1)$ so that we have 
\[
\bar{\cM}(\mu_N,\frc)(v_\tot)\subseteq\bar{\cM}(\mu_N,\frc)(\tfrac{1}{p+1}).
\]
By Remark \ref{DSL-disc}, the rigid analytic variety $\cD^\times_{\Cp}$ is principally refined. Applying Proposition \ref{GlobalEigenform} to the map $\varphi: \cD^\times_{\Cp}\to (\cE|_{\cU})_{\Cp}$, we obtain an element
\[
f\in \bigoplus_{\frc\in [\mathrm{Cl}^+(F)]^{(p)}} \cO(\cIW_{w,\frc}^+(v_\tot)_{\Cp}\times \cD^\times_{\Cp})
\]
and an admissible affinoid covering $\cD^\times_{\Cp}=\bigcup_{i\in I} \cU_i$ such that the restriction $f|_{\cU_i}$ for each $i\in I$ is an eigenform of $S^G(\mu_N, (\nu^{\cU_i},w^{\cU_i}))(v_\tot)$ with eigensystem $\varphi^*: \bT\to \cO(\cD_{\Cp}^\times)\to \cO(\cU_i)$ and $f(z)$ is normalized for any $z\in \cD^\times_{\Cp}$.

Since $\varphi^*$ comes from the eigenvariety $\cE$, the $U_p$-eigenvalue $\varphi^*(U_p)\in \cO(\cU_i)$ of $f|_{\cU_i}$ satisfies $\varphi^*(U_p)(z)\neq 0$ for any $z\in \cU_i(\Cp)$, and thus we have $\varphi^*(U_p)\in \cO(\cU_i)^\times$. Since $U_p$ improves the overconvergence from $v$ to $pv$, taking $\varphi^*(U_p)^{-1}U_p(f|_{\cU_i})$ repeatedly, we can find an eigenform
\[
g_i \in S^G(\mu_N,(\nu^{\cU_i},w^{\cU_i}))(\tfrac{1}{p+1})
\]
with eigensystem $\varphi^*: \bT\to \cO(\cD_{\Cp}^\times)\to \cO(\cU_i)$ which extends $f|_{\cU_i}$. Note that for any $z\in \cU_i(\Cp)$ we have a commutative diagram
\[
\xymatrix{
S^G(\mu_N,(\nu^{\cU_i},w^{\cU_i}))(\tfrac{1}{p+1}) \ar[r]\ar[d] & S^G(\mu_N,(\nu^{\cU_i},w^{\cU_i}))(v_\tot)\ar[d] \\
S^G(\mu_N,(\nu^{\cU_i}(z),w^{\cU_i}(z)))(\tfrac{1}{p+1}) \ar[r] & S^G(\mu_N,(\nu^{\cU_i}(z),w^{\cU_i}(z)))(v_\tot), \\
}
\]
where the horizontal arrows are the restriction maps and the vertical arrows are the specialization maps. This implies that the specialization $g_i(z)$ is also non-zero for any $z\in \cU_i(\Cp)$. Since the $q$-expansion is determined by the restriction to the ordinary locus, $g_i(z)$ is also normalized for any $z\in \cU_i(\Cp)$. Since the Hecke eigenvalues of $g_i(z)$ are also given by the eigensystem $\varphi^*(z): \bT\to \cO(\cU_i)\overset{z^*}{\to} \Cp$, a gluing argument as in the proof of Proposition \ref{GlobalEigenform} shows that $g_i$'s can be glued. In other words, we may assume
\[
f\in \bigoplus_{\frc\in [\mathrm{Cl}^+(F)]^{(p)}} \cO(\cIW^+_{w,\frc}(\tfrac{1}{p+1})_{\Cp}\times \cD^\times_{\Cp}).
\]
By Proposition \ref{gluecusp}, we may replace $f$ by an eigenform of the space $S^G(\mu_N,(\nu^{\cD_{\Cp}},w^{\cD_{\Cp}}))(\tfrac{1}{p+1})$ such that every specialization on $\cD_{\Cp}$ is normalized, which we also denote by $f=(f_\frc)_{\frc\in [\mathrm{Cl}^+(F)]^{(p)}}$. By Lemma \ref{specializeCp}, we have an isomorphism
\[
S^G(\mu_N,(\nu^\cU,w^\cU))(v_{\mathrm{tot}})\hat{\otimes}_{R,z^*} k(z) \simeq S^G(\mu_N,(\nu^{\cD_{\Cp}}(z),w^{\cD_{\Cp}}(z)))(v_{\mathrm{tot}})
\]
for any $z\in \cD_{\Cp}$. Thus the map $\bT\to \cO(\cD_{\Cp})$ defined by the eigenvalues of $f$ is a family of eigensystems in $S^G(\mu_N,(\nu^\cU,w^\cU))(v_{\mathrm{tot}})$ over $\cD_{\Cp}$ such that its restriction to $\cD^\times_{\Cp}$ is $\varphi^*:\bT\to \cO(\cD^\times_{\Cp})$. In particular, it is of finite slopes over $\cD_{\Cp}^\times$.
If $f(O)$ is of finite slope, then Proposition \ref{Chenevier} yields a morphism $\cD_{\Cp}\to \cE|_{\cU_{\Cp}}$ with the desired property.

Let us prove that $f(O)$ is of finite slope. Put $\psi(O)=(\nu(O),w(O))$ and $\kappa=k(\nu(O),w(O))$, which are $1$-integral by assumption. Let $\kappa_1=(k_\beta)_{\beta\in \bB_F}$ be the integral weight corresponding to the restriction $\kappa|_{\bT^0_1(\bZ_p)}$. Put $\cX_\frc:=\SGm(\mu_N,\frc)^\rig$. We also write as $\cX_\frc(v')=\SGm(\mu_N,\frc)(v')^\rig$ for any $v'<1$. 
For the morphism $h_1:\bar{\SGm}(\Gamma_1(p),\mu_N,\frc)\to \bar{\SGm}(\mu_N,\frc)$, we put $h=h_1^\rig$ and $\cX_\frc^1(v')=h^{-1}(\cX_\frc(v'))$ for any $v'<(p-1)/p$.

Consider the rigid analytic variety $\cY_{\frc,p}$ as in \S\ref{SecConnCritLocus} and the natural projection $\pi:\cY_{\frc,p}\to \cX_\frc$. Put $\cY_{\frc,p}(v')=\pi^{-1}(\cX_\frc(v'))$. For the universal $p$-cyclic subgroup scheme $H^\univ$ over $\cY_{\frc,p}$, we put 
\[
\cY^1_{\frc,p}=\Isom_{\cY_{\frc,p}}(\cD_F\otimes \mu_p, H^\univ).
\]
We denote by $r$ the natural projection $\cY^1_{\frc,p}\to \cY_{\frc,p}$. Put $\pi^1=\pi\circ r$ and $\cY^1_{\frc,p}(v')=(\pi^1)^{-1}(\cX_\frc(v'))$. 
We write the base extensions to $\Cp$ of these maps also as $h$, $\pi$, $r$ and $\pi^1$, respectively.
We consider $\cU^1:=\cX_\frc^1(\tfrac{1}{p+1})$ as a Zariski open subset of $\cY^1_{\frc,p}(\tfrac{1}{p+1})$.
Then we have an isomorphism $h^*\Omega^\kappa\simeq (\pi^1)^*\Omega^\kappa|_{\cU^1_{\Cp}}$. 
Note that the sheaf $h^*\Omega^\kappa$ in this case of $1$-integral weight is isomorphic to the sheaf $h^*\Omega^{\kappa_1}\simeq h^*\omega^{\kappa_1}_{\bar{\cA}^\univ,\Cp}$ as in \S\ref{SecOCHMFDef}. The sheaf $(\pi^1)^*\Omega^{\kappa_1}$ is defined over the whole rigid analytic variety $\cY^1_{\frc,p,\Cp}$ and satisfies $(\pi^1)^*\Omega^{\kappa_1}|_{\cU^1_{\Cp}}\simeq h^*\Omega^\kappa$.
Thus each $f_\frc(O)$ defines the element
\[
g_\frc:=h^*f_\frc(O)=(\pi^1)^*f_\frc(O)|_{\cU^1_{\Cp}}\in H^0(\cU^1_{\Cp}, (\pi^1)^*\Omega^{\kappa_1}(-D)),
\]
on which any element $a$ of the Galois group $\bT(\bZ/p\bZ)$ of $h:\cU^1\to \cX_{\frc}(\tfrac{1}{p+1})$ acts via the multiplication by $\kappa(\hat{a})$ with any lift $\hat{a}\in\bT(\bZ_p)$ of $a$.

Moreover, we define $Z^1_{\frc,p}$ as the scheme over $K$ classifying triples $(A,u,D)$ consisting of a HBAV $A$ over a base scheme over $K$, an $\cO_F$-closed immersion $u: \cD_F^{-1}\otimes \mu_p\to A$ and a finite flat closed $\cO_F$-subgroup scheme $D$ which is etale locally isomorphic to $\underline{\cO_F/p\cO_F}$ satisfying $\Img(u)\cap D=0$. We denote by $\cZ^1_{\frc,p}$ the analytification of $Z^1_{\frc,p}$ restricted to $\cX_{\frc}$. We have two projections $\cZ^1_{\frc,p}\to \cY^1_{\frc,p}$ given by $(A,u,D)\mapsto (A,u)$ and $(A,u,D)\mapsto (A/D,\bar{u})$ with the image $\bar{u}$ of $u$ in $A/D$, which are denoted by $q_1$ and $q_2$, respectively. Put $\cZ^1_{\frc,p}(v')=q_1^{-1}(\cY^1_{\frc,p}(v'))$. 

We denote the restriction of the rigid analytic variety $\cY'_{\frc,p}(\tfrac{1}{p+1})$ defined in \S\ref{SecHeckeOp} to $\cX_\frc(\tfrac{1}{p+1})$ also by $\cY'_{\frc,p}(\tfrac{1}{p+1})$. 
We have a finite etale morphism
\[
\Pi: q_1^{-1}(\cU^1)\to \cY'_{\frc,p}(\tfrac{1}{p+1}),\quad (A,u,H)\mapsto (A,H).
\]
The base extensions of these maps to $\Cp$ are also denoted by $q_1$, $q_2$ and $\Pi$, respectively. 
By Corollary \ref{cncisog} (\ref{noncanisog}), we have $q_1^{-1}(\cU^1)\subseteq q_2^{-1}(\cU^1)$ and thus $q_1^{-1}(\cU^1_{\Cp})\subseteq q_2^{-1}(\cU^1_{\Cp})$. This yields commutative diagrams
\[
\xymatrix{
\cU^1_{\Cp}\ar[d]_{h} & q_1^{-1}(\cU^1_{\Cp}) \ar[l]_{q_2} \ar[d]^{\Pi} \\
\cX_{\frc,p}(\tfrac{1}{p+1})_{\Cp} & \cY'_{\frc,p}(\tfrac{1}{p+1})_{\Cp}\ar[l]^{p_2},
}
\quad
\xymatrix{
q_1^{-1}(\cU^1_{\Cp}) \ar[d]_{\Pi} \ar[r]^{q_1} & \cU^1_{\Cp}\ar[d]^{h} \\
\cY'_{\frc,p}(\tfrac{1}{p+1})_{\Cp} \ar[r]_{p_1} & \cX_{\frc,p}(\tfrac{1}{p+1})_{\Cp},
}
\]
where the latter is cartesian.

Take any point $Q=[(A,\cH)]\in Y_{\frc,p}(\oel)$ with some finite extension $L/K$ such that $\Hdg_\beta(A)=p/(p+1)$ for any $\beta\in \bB_F$, which exists by Lemma \ref{CritNonempty}. Consider the admissible open subsets $\cV_Q$, $\cV_{Q,\Cp}^0$ and $\cV_{Q,\Cp}^0(\tfrac{1}{p+1})$ defined in \S\ref{SecConnCritLocus}. 
By Corollary \ref{UpExt}, we have 
\[
q_1^{-1}(r^{-1}(\cV_Q))\subseteq q_2^{-1}(\cU^1).
\]
Taking the base extension, we also have
\[
q_1^{-1}(r^{-1}(\cV_{Q,\Cp}^0))\subseteq q_1^{-1}(r^{-1}(\cV_{Q,\Cp}))\subseteq q_2^{-1}(\cU^1_{\Cp}).
\]
Similarly, Lemma \ref{VQcan} shows $r^{-1}(\cV_{Q,\Cp}^0(\tfrac{1}{p+1}))\subseteq \cU^1_{\Cp}$.
Since the weight $\kappa_1$ is integral, we have a natural isomorphism $\pi_p^*:q_2^*(\pi^1)^*\Omega^{\kappa_1} \to q_1^*(\pi^1)^*\Omega^{\kappa_1}$ over $\cZ^1_{\frc,p}$. From these and the above commutative diagrams, we see that the operator $U_p$ extends to an operator
\[
U_Q: H^0(\cU^1_{\Cp},(\pi^1)^*\Omega^{\kappa_1})\to H^0(r^{-1}(\cV_{Q,\Cp}^0), (\pi^1)^*\Omega^{\kappa_1})
\]
which makes the following diagram commutative.
\[
\xymatrix{
H^0(\cU^1_{\Cp},(\pi^1)^*\Omega^{\kappa_1})\ar[r]^-{U_Q}&  H^0(r^{-1}(\cV_{Q,\Cp}^0),(\pi^1)^*\Omega^{\kappa_1}) \ar[d]^{\mathrm{res}}\\
 & H^0(r^{-1}(\cV_{Q,\Cp}^0(\tfrac{1}{p+1})),(\pi^1)^*\Omega^\kappa)\\
H^0(\cX_\frc(\tfrac{1}{p+1})_{\Cp},\Omega^\kappa) \ar[r]_{U_p}\ar[uu]^{h^*}  & H^0(\cX_\frc(\tfrac{1}{p+1})_{\Cp},\Omega^\kappa) \ar[u]_{h^*}
}
\]

Now suppose that $f(O)$ is of infinite slope. 
Then 
\[
(U_{Q}g_\frc)|_{r^{-1}(\cV_{Q,\Cp}^0(\tfrac{1}{p+1}))}=(h^*U_pf_\frc(O))|_{r^{-1}(\cV_{Q,\Cp}^0(\tfrac{1}{p+1}))}=0.
\]
Since $\cV_{Q,\Cp}^0$ is connected and $r$ is finitely presented and etale, the map $r$ defines a surjection from each connected component of $r^{-1}(\cV_{Q,\Cp}^0)$ to $\cV_{Q,\Cp}^0$. Since the admissible open subset $\cV_{Q,\Cp}^0(\tfrac{1}{p+1})$ is non-empty, 
we see that $r^{-1}(\cV_{Q,\Cp}^0(\tfrac{1}{p+1}))$ intersects every connected component of $r^{-1}(\cV_{Q,\Cp}^0)$. Thus
Lemma \ref{FvPLem} implies $U_{Q}g_\frc=0$. In particular, if the point $[(A,\cL)]\in Y_{\frc,p}(\cO_{\bar{\bQ}_p})$ satisfies $\Hdg_\beta(A)=p/(p+1)$ for any $\beta\in \bB_F$, then for any $\cO_F$-isomorphism $m: \cD_F^{-1}\otimes \mu_p\simeq \cL_K$, we have
\begin{equation}\label{sumD}
\sum_{\cD_K\cap \cL_K=0} g_\frc(A/\cD, \bar{m})=0,
\end{equation}
where the sum is taken over the set of finite flat closed $p$-cyclic $\cO_F$-subgroup schemes $\cD$ of $A[p]$ satisfying $\cD_K\cap \cL_K=0$.

\begin{lem}\label{combinat}
For any $p$-cyclic $\cO_F$-subgroup scheme $\cH$ of $A[p]$ and any $\cO_F$-isomorphism $u:\cD_F^{-1}\otimes \mu_p\to (A[p]/\cH)_K$, we have $g_\frc(A/\cH, u)=0$.
\end{lem}
\begin{proof}
For any $p$-cyclic $\cO_F$-subgroup scheme $\cM$ of $A[p]$, write as $\cM=\bigoplus_{\frp\mid p} \cM_\frp$. Similarly, any $\cO_F$-closed immersion $m:\cD_F^{-1}\otimes \mu_p\to A_K$ defines a closed immersion $m_\frp: \cD^{-1}_F/\frp\cD^{-1}_F \otimes \mu_p \to A[\frp]_K$ for any $\frp\mid p$. By fixing a generator of the principal $\cO_F$-module $\cD_F^{-1}/p\cD_F^{-1}$ and a primitive $p$-th root of unity in $\bar{\bQ}_p$, we identify an $\cO_F$-closed immersion $m:\cD^{-1}_F\otimes \mu_p \to A_K$ with an element of $A[p](\bar{\bQ}_p)$. Let $\frP$ be the set of maximal ideals of $\cO_F$ dividing $p$. For any subset $S\subseteq \frP$, we put $S^c=\frP\setminus S$ and
\[
\cM_S=\bigoplus_{\frp\in S}\cM_\frp,\quad \cM^S=\bigoplus_{\frp\in S^c} \cM_\frp.
\]
We define $m_S$ and $m^S$ similarly. We write $\Img(m)$ also as $\langle m \rangle$.

For any $\frp\mid p$, we fix non-zero elements $e_{\frp,1}\in \cH_\frp(\bar{\bQ_p})$ and $e_{\frp,2}\in A[\frp](\bar{\bQ}_p)$ such that $\{e_{\frp,1},e_{\frp,2}\}$ forms a basis of the $\fro/\frp$-module $A[\frp](\bar{\bQ}_p)$. 
Put $I_\frp=\{e_{\frp,1}, a_\frp e_{\frp,1}+e_{\frp,2}\mid a_\frp\in \fro/\frp\}$ and $e_{S,i}=(e_{\frp,i})_{\frp\in S}$ for $i=1,2$.
We claim that, for any element $m^S$ of $\prod_{\frp\in S^c}I_\frp$, we have
\begin{equation}\label{Svan}
\sum_{\cD^S_K\cap \langle m^S \rangle=0}g_\frc(A/(\cH_S\times \cD^S),\overline{e_{S,2}\times m^S})=0,
\end{equation}
where the sum is taken over the set of finite flat closed $(\prod_{\frp\in S^c}\frp)$-cyclic $\cO_F$-subgroup schemes $\cD^S$ of $A$ satisfying $\cD^S_K\cap \langle m^S \rangle=0$.

To show the claim, we proceed by induction on $\sharp S$. The case of $S=\emptyset$ is (\ref{sumD}). Suppose that the claim holds for some $S\neq \frP$. Take $\frp\in S^c$ and put $S'=S\cup\{\frp\}$. Fix $m^{S'}\in \prod_{\frq\in (S')^c}I_\frq$. Taking the sum of (\ref{Svan}) over the set $\{m^S=m_\frp \times m^{S'}\mid m_\frp\in I_\frp\}$, we obtain
\[
\sum_{m_\frp\in I_\frp}\sum_{\cD_{\frp,K}\cap \langle m_\frp \rangle=0} \sum_{\cD^{S'}_K\cap \langle m^{S'} \rangle=0}g_\frc(A/(\cH_S\times \cD_\frp\times \cD^{S'}),\overline{e_{S,2}\times m_\frp\times m^{S'}})=0.
\]
We compute terms in this sum for each $\cD_\frp$.
\begin{itemize}
\item If $\cD_{\frp}(\bar{\bQ}_p)=(\fro/\frp)e_{\frp,1}=\cH_\frp(\bar{\bQ}_p)$ and $\cD_{\frp,K}\cap \langle m_\frp \rangle=0$, then $m_{\frp}=a_\frp e_{\frp,1}+e_{\frp,2}$ with some $a_\frp\in \fro/\frp$. In this case, $\bar{m}_\frp$ is equal to the image $\bar{e}_{\frp,2}$ of $e_{\frp,2}$.

\item If $\cD_\frp(\bar{\bQ}_p)=(\fro/\frp)(a_\frp e_{\frp,1}+e_{\frp,2})$ and $\cD_{\frp,K}\cap \langle m_\frp \rangle=0$, then we have either $m_\frp=e_{\frp,1}$ or $m_\frp=b_\frp e_{\frp,1}+e_{\frp,2}$ with some $b_\frp\neq a_\frp\in \fro/\frp$. In each case, $\bar{m}_\frp$ is equal to the element $\bar{e}_{\frp,1}$ or $(b_\frp-a_\frp)\bar{e}_{\frp,1}$. We put
\[
s_\frp=\sum_{a\in (\fro/\frp)^\times} \kappa([a])
\]
with the Teichm\"{u}ller lift $[a]\in \cO_{F_\frp}^\times$ of $a$.
\end{itemize}
Thus the sum of the terms in which $\cD_\frp$'s of the second case appear is equal to
\begin{align*}
(1+s_\frp)\sum_{\cD^{S'}_K\cap \langle m^{S'} \rangle=0}\sum_{a_\frp\in \fro/\frp}g_\frc(A/(\cH_{S}\times (\fro/\frp)(a_\frp e_{\frp,1}+&e_{\frp,2})\times \cD^{S'}),\\
&\overline{e_{S,2}\times e_{\frp,1}\times m^{S'}}).
\end{align*}
This equals
\[
(1+s_\frp)\sum_{\cD^S_K\cap \langle e_{\frp,1}\times m^{S'}\rangle=0}g_\frc(A/(\cH_S\times \cD^S), \overline{e_{S,2}\times e_{\frp,1}\times m^{S'}}),
\]
which is zero by the induction hypothesis (\ref{Svan}). What remains is the sum of the terms of $\cD_\frp$'s of the first case, which equals
\[
p^{f_\frp}\sum_{\cD^{S'}_K\cap \langle m^{S'} \rangle=0}g_\frc(A/(\cH_{S'}\times \cD^{S'}), \overline{e_{S',2}\times m^{S'}})=0
\]
and the claim follows. Setting $S=\frP$, we obtain $g_\frc(A/\cH, \bar{e}_{\frP,2})=0$. For any $u$ as in the lemma, the map $u_\frp$ corresponds to $a_\frp \bar{e}_{\frp,2}$ for some $a_\frp\in (\fro/\frp)^\times$. Thus we have 
\[
g_\frc(A/\cH, u)=\left(\prod_{\frp\mid p}\prod_{a_\frp\in (\fro/\frp)^\times}\kappa([a_\frp])\right) g_\frc(A/\cH, \bar{e}_{\frP,2})=0
\]
and the lemma follows.
\end{proof}

Consider the admissible open subset of $\cY_{\frc,p}$ defined by
\[
\{[(A,\cH)]\mid \Hdg_\beta(A)=p/(p+1)\text{ for any }\beta\in\bB_F\}
\]
and let $\cV$ be a non-empty admissible affinoid open subset of it. 
Note that the map
\[
W: \cY_{\frc,p}\to \cY_{\frc,p},\quad (A,\cH)\mapsto (A/\cH,A[p]/\cH)
\]
is an isomorphism. By Proposition \ref{critisog}, we have $r^{-1}(W(\cV))\subseteq \cU^1$. Consider the base extensions $W_{\Cp}: \cY_{\frc,p,\Cp}\to \cY_{\frc,p,\Cp}$ and $\cV_{\Cp}$, where the latter is an admissible affinoid open subset of $\cY_{\frc,p,\Cp}$. By Lemma \ref{combinat}, $\pi^*f_\frc(O)$ vanishes on the subset $W(\cV)(\bar{\bQ}_p)$ of the admissible affinoid open subset $W_{\Cp}(\cV_{\Cp})=W(\cV)_{\Cp}$. 

\begin{lem}\label{AnalyticVanishing}
Let $A$ be a reduced $K$-affinoid algebra. Put $X=\Spv(A)$, $A_{\Cp}=A\hat{\otimes}_K \Cp$ and $X_{\Cp}=\Spv(A_{\Cp})$. We consider the set $X(\bar{\bQ}_p)$ as a subset of $X_{\Cp}(\Cp)$ by the natural inclusion $\bar{\bQ}_p\to \Cp$. Suppose that an element $f\in A_{\Cp}$ satisfies $f(x)=0$ for any $x\in X(\bar{\bQ}_p)$. Then $f=0$.
\end{lem}
\begin{proof}
For any positive rational number $\varepsilon$, we put
\[
U_{\varepsilon}=\{x\in X_{\Cp}\mid |f(x)|\leq \varepsilon\}.
\]
We can find an element $f_{\varepsilon}\in A\otimes_K \bar{\bQ}_p$ such that 
\[
|(f-f_{\varepsilon})(x)|\leq \varepsilon\text{ for any }x\in X_{\Cp}.
\]
Then we have $U_{\varepsilon}=\{x\in X_{\Cp}\mid |f_{\varepsilon}(x)|\leq \varepsilon\}$. Take a finite extension $L/K$ satisfying $f_\varepsilon\in A_L:=A\otimes_K L$.
Put $X_L=\Spv(A_L)$. The assumption implies $X(\bar{\bQ}_p)\subseteq U_\varepsilon$, namely $|f_\varepsilon(x)|\leq \varepsilon$ for any $x\in X(\bar{\bQ}_p)$. This shows $X_L=\{x\in X_L\mid |f_\varepsilon(x)|\leq \varepsilon\}$.
Since the formation of rational subsets is compatible with base extensions, we have $X_{\Cp}=U_\varepsilon$ for any $\varepsilon>0$, which implies $f(x)=0$ for any $x\in X_{\Cp}$. Since $X_{\Cp}$ is reduced, we obtain $f=0$ and the lemma follows.
\end{proof}

Since the invertible sheaf $\pi^*\Omega^\kappa$ is the base extension to $\Cp$ of a similar invertible sheaf over $K$, it is trivialized by the base extension of an admissible affinoid covering over $K$. By Lemma \ref{AnalyticVanishing}, we have $\pi^*f_\frc(O)|_{W(\cV)_{\Cp}}=0$.  
Thus $f_\frc(O)$ vanishes on the admissible open subset $\pi(W(\cV)_{\Cp})$ of $\bar{\cM}(\mu_N,\frc)(\tfrac{1}{p+1})_{\Cp}$.
By Lemma \ref{Mvconn}, $\bar{\cM}(\mu_N,\frc)(\tfrac{1}{p+1})_{\Cp}$ is connected. By Lemma \ref{FvPLem}, we obtain $f_\frc(O)=0$ for any $\frc$, which contradicts the fact that $f(O)$ is normalized. This concludes the proof of the theorem.
\end{proof}



\end{document}